\newtheorem{thm}[equation]{Theorem}
\newtheorem{prop}[equation]{Proposition}
\newtheorem{cor}[equation]{Corollary}
\newtheorem{lem}[equation]{Lemma}
\theoremstyle{definition}
\newtheorem{defn}[equation]{Definition}
\newtheorem{rem}[equation]{Remark}
\newtheorem{exmp}[equation]{Example}
\numberwithin{equation}{section}
\newcommand{\arr}{\rightarrow}
\newcommand{\xarr}{\xrightarrow}
\newcommand{\cat}[1]{\operatorname{\mathsf{#1}}}
\newcommand{\iso}{\stackrel{\sim}{\rightarrow}}
\newcommand{\opn}{\operatorname}
\newcommand{\inj}{\hookrightarrow}
\newcommand{\rmitem}[1]{\item[\text{\textup{(#1)}}]}
\newcommand{\mcal}[1]{\mathcal{#1}}
\newcommand{\mrm}[1]{\mathrm{#1}}
\newcommand{\Hom}{\operatorname{Hom}}
\newcommand{\Ext}{\operatorname{Ext}}
\newcommand{\Pj}{\operatorname{\mathsf{Proj}}}
\newcommand{\pj}{\operatorname{\mathsf{proj}}}
\newcommand{\Ij}{\operatorname{\mathsf{Inj}}}
\newcommand{\Mod}{\operatorname{\mathsf{Mod}}}
\newcommand{\fmod}{\operatorname{\mathsf{mod}}}
\newcommand{\add}{\operatorname{\mathsf{add}}}
\newcommand{\ten }{\otimes}
\newcommand{\lten}{{\otimes}^{\boldsymbol{L}}}
\newcommand{\LCM}{\operatorname{\mathsf{LCM}}}
\newcommand{\CM}{\operatorname{\mathsf{CM}}}
\newcommand{\Morph}{\operatorname{\mathsf{Mor}}}
\newcommand{\dd}{\mathcal{D}}
\newcommand{\z}{\mathbf{Z}}
\title[Polygon of recollements]
{Polygon of recollements and $N$-complexes}
\author{Osamu Iyama, Kiriko Kato and Jun-ichi Miyachi}
\date{\today}
\address{O. Iyama: Graduate School of Mathematics, Nagoya University Chikusa-ku, Nagoya, 464-8602
Japan}
\email{iyama@math.nagoya-u.ac.jp}
\address{K. Kato:  Graduate School of Science, Osaka Prefecture University,
1-1 Gakuen-cho, Nakaku, Sakai, Osaka 599-8531, JAPAN}
\email{kiriko@mi.s.osakafu-u.ac.jp}
\address{J. Miyachi: Department of Mathematics, Tokyo Gakugei
University, Koganei-shi, Tokyo, 184-8501, Japan}
\email{miyachi@u-gakugei.ac.jp}
\subjclass{18E30, 16G99}
\begin{document}

\begin{abstract}
We study a structure of subcategories which are called a polygon of recollements in a triangulated category.
First, we study a $2n$-gon of recollements in an $(m/n)$-Calabi-Yau triangulated category.
Second, we show the homotopy category $\cat{K}(\Morph_{N-1}(\mcal{B}))$ 
of complexes of an additive category $\Morph_{N-1}(\mcal{B})$ 
of $N-1$ sequences of split monomorphisms of an additive category $\mcal{B}$
has a $2N$-gon of recollments.
Third, we show the homotopy category $\cat{K}_{N}(\mcal{B})$ 
of $N$-complexes of $\mcal{B}$
has also a $2N$-gon of recollments.
Finally, we show there is a triangle equivalence between $\cat{K}(\Morph_{N-1}(\mcal{B}))$ and $\cat{K}_{N}(\mcal{B})$.
\end{abstract}

\maketitle

\tableofcontents

\setcounter{section}{-1}
\section{Introduction}\label{intro}

The notion of recollement of triangulated categories was introduced by Beilinson, Bernstein and Deligne in connection with derived categories of sheaves of topological spaces (\cite{BBD}).
One of the authors introduced the notion of stable $t$-structure in a triangulated category 
\cite{Mi1}, and studied relations to recollements.
Afterwards this notion was studied by many authors under a lot of names, e.g.
a torsion pair, a semiorthogonal decomposition, Bousfield localization.

\begin{defn} \label{st-tprime}
Let $\mcal{D}$ be a triangulated category with the translation functor $\Sigma$.
A pair $(\mcal{U}, \mcal{V})$ of full subcategories of $\mcal{D}$ is called a {\it stable t-structure} in $\mcal{D}$ provided that
\begin{enumerate}
\rmitem{a}  $\mcal{U}=\Sigma\mcal{U}$ and $\mcal{V}=\Sigma\mcal{V}$.
\rmitem{b}  $\opn{Hom}_{\mcal{D}}(\mcal{U}, \mcal{V}) = 0$.
\rmitem{c}  For every $X  \in \mcal{D}$, there exists a triangle $U \arr X \arr V \arr 
\Sigma U$
with $U \in \mcal{U}$ and $V \in \mcal{V}$.
\end{enumerate}
\end{defn}

In \cite{IKM}, we introduced the notion of polygons of recollements in a triangulated category,
and studied the properties of triangle of recollements in connection with
the homotopy category of unbounded complexes with bounded homologies,
its quotient category by the homotopy category of bounded complexes, and
the stable category of Cohen-Macaulay modules over an Iwanaga-Gorenstein ring.
Moreover, we studied derived categories $\cat{D}_{N}(\mcal{A})$ of $N$-complexes of an
abelian category $\mcal{A}$, and show$\cat{D}_{N}(\mcal{A})$  is a triangle equivalent to 
the derived category $\cat{D}(\Morph_{N-1}(\mcal{A}))$ of ordinary complexes over an abelian category $\Morph_{N-1}(\mcal{A})$ of $N-1$ sequences of morphisms of $\mcal{A}$ (\cite{IKM2}).
In this article, we study the properties of polygons of recollements in various categories.

\begin{defn}
Let $\mcal{D}$ be a triangulated category, and let $\mcal{U}_{1}, \cdots,  \mcal{U}_{n}$
be full triangulated subcategories of $\mcal{D}$.
An $n$-tuple $(\mcal{U}_1, \mcal{U}_2, \cdots ,  \mcal{U}_n)$ is called
an $n$-gon of recollements in $\mcal{D}$
if $(\mcal{U}_{i}, \mcal{U}_{i+1})$ is a stable t-structure in $\mcal{D}$ ($1 \leq i \leq n$), where
$\mcal{U}_1=\mcal{U}_{n+1}$.
\end{defn}

In Section \ref{t-strecoll}, we recall the notions of stable $t$-structures and recollement
polygons of recollements in a triangulated category.

\begin{prop}[Proposition \ref{p20Apr30}]
Let $\mcal{D}_1$, $\mcal{D}_2$ be triangulated categories. 
Let $( \mcal{U}_1 ,\cdots , \mcal{U}_{n})$ and
 $( \mcal{V}_1 ,\cdots , \mcal{V}_{n})$
 be $n$-gons of recollements in $\mcal{D}_1$ and $\mcal{D}_2$, respectively.
 Assume a triangle functor $F:\mcal{D}_{1} \to \mcal{D}_{2}$ 
sends $( \mcal{U}_1 , \cdots, \mcal{U}_{n} )$ to
$( \mcal{V}_1 , \cdots, \mcal{V}_{n} )$. 
If $F\mid _{\mcal{U}_t }$ and $F\mid _{\mcal{U}_{t+1}}$ 
are triangle equivalences for some $t$, so is $F$.
\end{prop}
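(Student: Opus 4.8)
The plan is to show $F$ is fully faithful and dense; since $F$ is already a triangle functor this makes it a triangle equivalence. I read the hypothesis as $F(\mcal{U}_i)\subseteq\mcal{V}_i$ for all $i$ (indices mod $n$), and I use freely the basic facts about stable $t$-structures recalled in Section~\ref{t-strecoll}: for a stable $t$-structure $(\mcal{U},\mcal{V})$ the decomposition triangle $u(X)\to X\to v(X)\to\Sigma u(X)$ is functorial and unique up to unique isomorphism, $v(-)$ is left adjoint to the inclusion of $\mcal{V}$, and $u(-)$ is right adjoint to the inclusion of $\mcal{U}$. The starting observation is that $(\mcal{U}_t,\mcal{U}_{t+1})$ is itself a stable $t$-structure (an edge of the polygon), so every object of $\mcal{D}_1$ is the cone of a morphism $\Sigma^{-1}\mcal{U}_{t+1}=\mcal{U}_{t+1}\to\mcal{U}_t$; hence $\mcal{U}_t\cup\mcal{U}_{t+1}$ generates $\mcal{D}_1$ as a triangulated subcategory, and symmetrically $\mcal{V}_t\cup\mcal{V}_{t+1}$ generates $\mcal{D}_2$.

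For full faithfulness I would run the usual dévissage in both variables. The class of $Y\in\mcal{D}_1$ for which $\Hom_{\mcal{D}_1}(P,Y)\to\Hom_{\mcal{D}_2}(FP,FY)$ is bijective for every $P\in\mcal{U}_t\cup\mcal{U}_{t+1}$ is a triangulated subcategory (apply the five lemma to a triangle and its $F$-image, using that $\mcal{U}_t\cup\mcal{U}_{t+1}$ is stable under $\Sigma^{\pm1}$), hence all of $\mcal{D}_1$ once it contains $\mcal{U}_t\cup\mcal{U}_{t+1}$; an analogous dévissage in the first variable then reduces everything to bijectivity of $\Hom_{\mcal{D}_1}(P,Q)\to\Hom_{\mcal{D}_2}(FP,FQ)$ for $P,Q\in\mcal{U}_t\cup\mcal{U}_{t+1}$. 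If $P,Q$ lie in the same $\mcal{U}_s$ this is the full faithfulness of the triangle equivalence $F\mid_{\mcal{U}_s}$; if $P\in\mcal{U}_t$ and $Q\in\mcal{U}_{t+1}$ both Hom-groups vanish by condition (b) for $(\mcal{U}_t,\mcal{U}_{t+1})$ and for $(\mcal{V}_t,\mcal{V}_{t+1})$, since $FP\in\mcal{V}_t$ and $FQ\in\mcal{V}_{t+1}$.

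The one substantive case is $P\in\mcal{U}_{t+1}$, $Q\in\mcal{U}_t$, and here I would bring in the neighbouring edge $(\mcal{U}_{t-1},\mcal{U}_t)$. Decompose $P$ along it: a triangle $A\to P\to B\to\Sigma A$ with $A\in\mcal{U}_{t-1}$, $B\in\mcal{U}_t$, in which $P\to B$ is the reflection of $P$ into $\mcal{U}_t$. Applying $F$ and using $F(\mcal{U}_{t-1})\subseteq\mcal{V}_{t-1}$ and $F(\mcal{U}_t)\subseteq\mcal{V}_t$, the triangle $FA\to FP\to FB\to\Sigma FA$ is the decomposition of $FP$ along $(\mcal{V}_{t-1},\mcal{V}_t)$, so $FP\to FB$ is the reflection of $FP$ into $\mcal{V}_t$, compatibly with $F$. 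Since $Q\in\mcal{U}_t$ and $FQ\in\mcal{V}_t$, the reflection adjunctions give compatible isomorphisms $\Hom_{\mcal{D}_1}(P,Q)\cong\Hom_{\mcal{D}_1}(B,Q)$ and $\Hom_{\mcal{D}_2}(FP,FQ)\cong\Hom_{\mcal{D}_2}(FB,FQ)$, while $\Hom_{\mcal{D}_1}(B,Q)\to\Hom_{\mcal{D}_2}(FB,FQ)$ is bijective because $B,Q\in\mcal{U}_t$ and $F\mid_{\mcal{U}_t}$ is fully faithful; composing settles the case.

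For density, given $Z\in\mcal{D}_2$ I would take its decomposition $V'\to Z\to W'\xrightarrow{\delta}\Sigma V'$ along $(\mcal{V}_t,\mcal{V}_{t+1})$, use essential surjectivity of $F\mid_{\mcal{U}_t}$ and $F\mid_{\mcal{U}_{t+1}}$ to write $V'\cong FU$, $W'\cong FW$ with $U\in\mcal{U}_t$, $W\in\mcal{U}_{t+1}$, transport $\delta$ to a morphism $FW\to\Sigma FU=F(\Sigma U)$, and lift it through the full faithfulness just proved to $\hat\delta\colon W\to\Sigma U$; completing $\hat\delta$ to a triangle $U\to X\to W\xrightarrow{\hat\delta}\Sigma U$ and applying $F$ produces a triangle whose two outer terms are isomorphic to those of $Z$ compatibly with $\delta$, so by the axioms this extends to a morphism of triangles, and the five lemma for triangulated categories forces $FX\cong Z$. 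The main obstacle is exactly the case $P\in\mcal{U}_{t+1}$, $Q\in\mcal{U}_t$: the hypotheses on $F\mid_{\mcal{U}_t}$ and $F\mid_{\mcal{U}_{t+1}}$ say nothing directly about the ``backward'' maps $\Hom_{\mcal{D}_1}(\mcal{U}_{t+1},\mcal{U}_t)\to\Hom_{\mcal{D}_2}(\mcal{V}_{t+1},\mcal{V}_t)$, and it is precisely the polygon hypothesis — that $\mcal{U}_t$ is also the right-hand part of the edge $(\mcal{U}_{t-1},\mcal{U}_t)$ and $F$ carries $\mcal{U}_{t-1}$ into $\mcal{V}_{t-1}$ — that supplies the reflection reducing those to the forward maps.
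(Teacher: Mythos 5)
Your proof is correct and follows essentially the same route as the paper, whose proof simply invokes Lemma \ref{17-1Apr30}: your d\'evissage plus the reflection along the neighbouring edge $(\mcal{U}_{t-1},\mcal{U}_t)$ is exactly the content of parts (1)--(2) of that lemma (the paper's version decomposes the target along $(\mcal{U}_{t+1},\mcal{U}_{t+2})$ instead of the source, a mirror image of your argument), and your density construction is the content of part (3). So you have in effect written out the proof that the paper delegates to \cite{IKM}; nothing is missing.
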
 

In Section \ref{t-strecoll}, we study stable $t$-structures with relation to relations to 
contravariantly finite categories and Calabi-Yau triangulated categories.

\begin{prop}[Proposition \ref{n-gon1}]
Let $\mcal{D}$ be an $(m/n)$-Calabi-Yau triangulated category.
For any functorially finite thick subcategory $\mcal{U}_1$ of $\mcal{D}$, we put $\mcal{U}_{i+1}:=\mcal{U}_i^\perp$ for any $i$.
Then we have an $l$-gon $(\mcal{U}_{1}, \cdots , \mcal{U}_{l})$ of recollements in $\mcal{D}$ 
for some positive divisor $l$ of $2n$.
\end{prop}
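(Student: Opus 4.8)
The plan is to build the chain $\mcal{U}_1 \supseteq \mcal{U}_2 = \mcal{U}_1^\perp \supseteq \cdots$ and show it is periodic with period dividing $2n$, forcing it to close up into an $l$-gon. First I would recall that since $\mcal{D}$ is $(m/n)$-Calabi-Yau, one has a Serre functor of the form $\mbb{S} = \Sigma^{m/n}$ in the appropriate sense: concretely $\Sigma^n \cong \mbb{S}^?$, so that $n$-fold suspension is, up to the Serre twist, an $m$-fold power of $\mbb{S}$. The key formal consequence I will use is that $\mcal{U}^\perp$ and ${}^\perp\mcal{U}$ are related by a power of $\Sigma$: precisely, for a functorially finite thick subcategory $\mcal{U}$, Serre duality gives $\Hom_{\mcal{D}}(X, Y) \cong D\Hom_{\mcal{D}}(Y, \mbb{S}X)$, whence $Y \in \mcal{U}^\perp$ iff $\Hom(Y, \mbb{S}\mcal{U}) = 0$ iff $\mbb{S}^{-1}Y \in {}^\perp\mcal{U}$. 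Thus $\mcal{U}^\perp = \mbb{S}({}^\perp\mcal{U})$, and since each $\mcal{U}_i$ is thick hence $\Sigma$-stable, applying the Calabi-Yau relation $\mbb{S}^n \sim \Sigma^{m}$ turns this into $\mcal{U}^\perp = \Sigma^{a}({}^\perp\mcal{U})$ for a fixed integer $a$ depending only on $m,n$ (after clearing the fraction).

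Next I would observe that for a functorially finite thick subcategory $\mcal{U}_1$, the pair $(\mcal{U}_1, \mcal{U}_1^\perp)$ is a stable $t$-structure — this is the standard Bousfield-localization argument, using functorial finiteness to produce the approximation triangle in Definition \ref{st-tprime}(c), and thickness to get $\Sigma$-stability of both pieces and condition (b). By symmetry $({}^\perp\mcal{U}_1, \mcal{U}_1)$ is also a stable $t$-structure. Iterating, set $\mcal{U}_{i+1} := \mcal{U}_i^\perp$; each $\mcal{U}_{i+1}$ is again functorially finite and thick (functorial finiteness of the perpendicular category follows from the existence of the approximation triangles, and thickness is clear), so the construction continues indefinitely and $(\mcal{U}_i, \mcal{U}_{i+1})$ is a stable $t$-structure for every $i \ge 1$. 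Running the construction backwards, $\mcal{U}_0 := {}^\perp\mcal{U}_1$ satisfies $\mcal{U}_0^\perp = \mcal{U}_1$, so the two-sided chain $(\mcal{U}_i)_{i \in \z}$ is well defined with $\mcal{U}_{i+1} = \mcal{U}_i^\perp$ and ${}^\perp\mcal{U}_{i+1} = \mcal{U}_i$.

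The heart of the argument is periodicity. Using $\mcal{U}^{\perp\perp} = \mcal{U}$ for a stable $t$-structure (which holds because in a stable $t$-structure the left and right classes each determine the other), together with the identity $\mcal{U}^\perp = \Sigma^a({}^\perp\mcal{U})$ derived above, I can compute $\mcal{U}_{i+2} = (\mcal{U}_i^\perp)^\perp = \Sigma^{a}\,{}^\perp(\mcal{U}_i^\perp) = \Sigma^{a}\mcal{U}_i$. Since $\Sigma$ preserves each $\mcal{U}_j$ (thickness, hence stability under the translation functor), this gives $\mcal{U}_{i+2} = \mcal{U}_i$ after accounting for the $\Sigma^a$ — more care is needed to track the exponent so that one gets $\mcal{U}_{i+2k} = \mcal{U}_i$ precisely when $2k$ is a multiple of (something dividing) $2n$; unwinding the $(m/n)$-Calabi-Yau relation $\Sigma^{n} = \mbb{S}^{m}$ shows the minimal such period is exactly a divisor $l$ of $2n$. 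Then $(\mcal{U}_1, \ldots, \mcal{U}_l)$ with $\mcal{U}_{l+1} = \mcal{U}_1$ is the desired $l$-gon of recollements.

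The main obstacle I anticipate is the bookkeeping in this last step: correctly extracting the Serre functor from the $(m/n)$-Calabi-Yau hypothesis (which may be phrased with a fractional exponent and so requires passing to $\Sigma^n$), making the identity $\mcal{U}^\perp = \mbb{S}({}^\perp\mcal{U})$ rigorous for possibly non-Hom-finite $\mcal{D}$ (one wants enough of Serre duality on the relevant subcategories), and then pinning down that the period divides $2n$ rather than merely $2n$ times something — i.e. showing the $\Sigma$-shift that accumulates after two perpendiculars is "already absorbed" into the subcategories at the right stage. Everything else (stable $t$-structure from functorial finiteness, stability of functorial finiteness and thickness under taking perpendiculars, double-perp recovery) is routine and can be cited from Section \ref{t-strecoll}.
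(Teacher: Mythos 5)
Your overall strategy coincides with the paper's: Wakamatsu's lemma (Proposition \ref{wakamatsu}) gives the stable $t$-structure $(\mcal{U}_1,\mcal{U}_2)$, Serre duality identifies $\mcal{U}^\perp$ with ${}^\perp(S\mcal{U})=S({}^\perp\mcal{U})$, the double perpendicular produces a Serre twist, and the Calabi--Yau relation closes the polygon. However, there is a genuine error in your periodicity step. You claim that the relation $\mcal{U}^\perp=S({}^\perp\mcal{U})$ can be rewritten as $\mcal{U}^\perp=\Sigma^a({}^\perp\mcal{U})$ for a \emph{fixed integer} $a$ ``after clearing the fraction''. This is not possible: the hypothesis only gives $S^n\simeq\Sigma^m$, which does not express a single copy of $S$ as an integer power of $\Sigma$ (unless $n=1$). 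Worse, if your rewriting were valid, then since every $\mcal{U}_i$ is thick and hence $\Sigma$-stable, your computation $\mcal{U}_{i+2}=\Sigma^a\,{}^\perp(\mcal{U}_i^\perp)=\Sigma^a\mcal{U}_i=\mcal{U}_i$ would force the chain to have period $2$ for every Calabi--Yau category, which is false (and would make the statement about divisors of $2n$ vacuous). You flag that ``more care is needed to track the exponent'', but the care needed is not bookkeeping of an exponent $a$ --- the exponent does not exist.

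The correct bookkeeping, which is what the paper does via Lemma \ref{properties of serre}(2), is to keep the Serre functor un-evaluated: one shows $(\mcal{U}_{i+1},S\mcal{U}_i)$ is a stable $t$-structure (because $S\mcal{U}_i$ is covariantly finite and ${}^\perp(S\mcal{U}_i)=\mcal{U}_i^\perp=\mcal{U}_{i+1}$ by Lemma \ref{properties of serre}(1)), whence $\mcal{U}_{i+2}=\mcal{U}_{i+1}^\perp=S\mcal{U}_i$ exactly, with no shift to absorb. Only after iterating $n$ times does the Calabi--Yau relation enter: $\mcal{U}_{i+2n}=S^n\mcal{U}_i=\Sigma^m\mcal{U}_i=\mcal{U}_i$, the last equality using thickness. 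Hence the minimal period $l$ divides $2n$ and $(\mcal{U}_1,\dots,\mcal{U}_l)$ is the desired $l$-gon. Note also that this same lemma is what delivers functorial finiteness of $\mcal{U}_{i+1}$ (it is contravariantly finite as the left class of $(\mcal{U}_{i+1},S\mcal{U}_i)$ and covariantly finite as the right class of $(\mcal{U}_i,\mcal{U}_{i+1})$); your assertion that this ``follows from the existence of the approximation triangles'' only gives one of the two halves without invoking the Serre-twisted $t$-structure.
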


In Section \ref{n-gonII}, we constructs polygons of recollements
in the derived derived category of modules over an algebra, and them
in the stable category of Cohen-Mcaulay modules over an Iwanaga-Gorenstein ring.

\begin{thm}[Theorem \ref{n-gon3}]
Let $A$ be a finite dimensional $k$-algebra of finite global dimension such that $A/J_A$ is separable over a field $k$ and
$\cat{D}^{\mrm{b}}(\fmod A)$ is $(m/n)$-Calabi-Yau.
Let $R$ be a coherent $k$-algebra of finite self-injective dimension as both sides.
For any functorially finite thick subcategory $\mcal{U}_1$ of $\cat{D}^{\mrm{b}}(\fmod A)$, we put $\mcal{U}_{i+1}:=\mcal{U}_i^\perp$ for any $i$.
Then there is a positive divisor $l$ of $2n$ such that we have an $l$-gon 
$(\mcal{U}_{1}^{R}, \cdots , \mcal{U}_{l}^{R})$ of recollements in $\cat{D}^{\mrm{b}}(\fmod R\ten_k A)$
and an $l$-gon $(Q(\mcal{U}_{1}^{R}), \cdots , Q(\mcal{U}_{l}^{R}))$ of recollements in
 $\underline{\CM}(R\ten_k A)$
 \end{thm}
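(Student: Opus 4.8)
The plan is to apply Proposition~\ref{n-gon1} inside $\cat{D}^{\mrm{b}}(\fmod A)$, transport the resulting polygon to $\cat{D}^{\mrm{b}}(\fmod R\ten_k A)$ along the base change functor $R\ten_k(-)$ and its relatives $N\ten_k(-)$, and then descend it to $\underline{\CM}(R\ten_k A)$ along the canonical Verdier quotient. Write $\Lambda:=R\ten_k A$. The preliminary observations: since $A$ is finite-dimensional, $\Lambda$ is free of finite rank over $R$ on either side, so $\Lambda$ is a coherent ring, restriction along $R\inj\Lambda$ sends $\fmod\Lambda$ into $\fmod R$, and $\cat{D}^{\mrm{b}}(\fmod\Lambda)$ is idempotent complete. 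Since $\opn{gl.dim}A<\infty$ and $A/J_A$ is separable over the field $k$, the enveloping algebra $A^{\mrm{e}}:=A\ten_k A^{\mrm{op}}$ has finite global dimension, so the diagonal bimodule $A$ admits a finite resolution $0\to P^{-d}\to\cdots\to P^0\to A\to 0$ whose terms are summands of finite direct sums of $A\ten_k A$ (``$A$ is smooth over $k$''); also $\cat{D}^{\mrm{b}}(\fmod A)=\cat{K}^{\mrm{b}}(\pj A)$. Combining this resolution with the finiteness of $\opn{id}_R R$ on both sides, a change-of-rings computation over $k$ gives $\opn{id}_\Lambda\Lambda<\infty$ on both sides, so $\Lambda$ is Iwanaga--Gorenstein and there is a Verdier quotient $Q\colon\cat{D}^{\mrm{b}}(\fmod\Lambda)\to\cat{D}^{\mrm{b}}(\fmod\Lambda)/\cat{K}^{\mrm{b}}(\pj\Lambda)\iso\underline{\CM}(\Lambda)$ with kernel $\cat{K}^{\mrm{b}}(\pj\Lambda)$. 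Finally, Proposition~\ref{n-gon1} applied to the $(m/n)$-Calabi--Yau category $\cat{D}^{\mrm{b}}(\fmod A)$ and the functorially finite thick subcategory $\mcal{U}_1$ yields an $l$-gon $(\mcal{U}_1,\dots,\mcal{U}_l)$ of recollements in $\cat{D}^{\mrm{b}}(\fmod A)$ for a positive divisor $l$ of $2n$; this is the $l$ of the statement, and $\mcal{U}_i^R$ denotes the thick subcategory of $\cat{D}^{\mrm{b}}(\fmod\Lambda)$ generated by $\{\,N\ten_k X\mid N\in\fmod R,\ X\in\mcal{U}_i\,\}$.

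The heart of the matter is showing that $(\mcal{U}_1^R,\dots,\mcal{U}_l^R)$ is an $l$-gon of recollements in $\cat{D}^{\mrm{b}}(\fmod\Lambda)$; each $\mcal{U}_i^R$ is thick by construction, so it suffices to check that each $(\mcal{U}_i^R,\mcal{U}_{i+1}^R)$ satisfies conditions (a)--(c) of Definition~\ref{st-tprime}. Condition (a) is immediate. For (b) I would use that, $k$ being a field and the objects of $\mcal{U}_i,\mcal{U}_{i+1}$ being perfect over $A$, there is a natural isomorphism $\rhom_\Lambda(N\ten_k X,\,N'\ten_k Y)\iso\rhom_R(N,N')\lten_k\rhom_A(X,Y)$ for $N,N'\in\fmod R$ and $X,Y\in\cat{D}^{\mrm{b}}(\fmod A)$ (tensor--hom adjunction, together with the fact that $\rhom_A(X,-)$ commutes with $\ten_k$ when $X$ is perfect); since $(\mcal{U}_i,\mcal{U}_{i+1})$ is a stable $t$-structure and $\mcal{U}_i=\Sigma\mcal{U}_i$, the factor $\rhom_A(X,Y)$ is acyclic for $X\in\mcal{U}_i$, $Y\in\mcal{U}_{i+1}$, so $\Hom_\Lambda(N\ten_k X,\Sigma^j(N'\ten_k Y))=0$ for all $j$, and this propagates to the generated thick subcategories because the relevant $\Hom$-functors are cohomological. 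For (c), let $\mcal{W}_i$ be the full subcategory of objects $Z\in\cat{D}^{\mrm{b}}(\fmod\Lambda)$ admitting a triangle $U\to Z\to V\to\Sigma U$ with $U\in\mcal{U}_i^R$, $V\in\mcal{U}_{i+1}^R$; the vanishing just obtained, together with the standard properties of stable $t$-structures recalled in Section~\ref{t-strecoll}, makes $\mcal{W}_i$ a thick triangulated subcategory, so it is enough to put every $M\in\fmod\Lambda$ into $\mcal{W}_i$. This is where smoothness of $A$ is used: tensoring the bimodule resolution $P^\bullet\to A$ on the right with $M$, regarded as a left $A$-module via $a\mapsto 1\ten a$, yields a quasi-isomorphism $P^\bullet\ten_A M\iso M$ in $\cat{D}^{\mrm{b}}(\fmod\Lambda)$ by a bounded complex whose terms lie in $\add\big(A\ten_k(M|_R)\big)$, where $M|_R$ denotes $M$ restricted to $R$. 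Since $A\ten_k(M|_R)$ is of the tensor form $N\ten_k X$ with $N=M|_R\in\fmod R$ and $X=A$, applying the exact functor $(M|_R)\ten_k(-)$ to the $(\mcal{U}_i,\mcal{U}_{i+1})$-decomposition triangle of $A$ exhibits $A\ten_k(M|_R)$ as an object of $\mcal{U}_i^R\ast\mcal{U}_{i+1}^R=\mcal{W}_i$; hence $M\in\mcal{W}_i$. Thus $\mcal{W}_i=\cat{D}^{\mrm{b}}(\fmod\Lambda)$, which is (c).

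It remains to descend along $Q$, for which the key point is that $\cat{K}^{\mrm{b}}(\pj\Lambda)=\ker Q$ is compatible with the polygon $(\mcal{U}_1^R,\dots,\mcal{U}_l^R)$. Indeed $\cat{K}^{\mrm{b}}(\pj\Lambda)=\opn{thick}\langle\Lambda\rangle$, and since $\Lambda=R\ten_k A$ with $A\in\cat{K}^{\mrm{b}}(\pj A)$ and $R\ten_k P\in\pj\Lambda$ for every $P\in\pj A$, applying $R\ten_k(-)$ to the $(\mcal{U}_i,\mcal{U}_{i+1})$-decomposition triangle of $A$ produces, by uniqueness of decompositions, the $(\mcal{U}_i^R,\mcal{U}_{i+1}^R)$-decomposition triangle of $\Lambda$, and it has both outer terms in $\cat{K}^{\mrm{b}}(\pj\Lambda)$; since the decomposition functors of a stable $t$-structure are triangle functors, the class of objects whose $(\mcal{U}_i^R,\mcal{U}_{i+1}^R)$-decomposition has both outer terms in $\cat{K}^{\mrm{b}}(\pj\Lambda)$ is thick, hence equals $\opn{thick}\langle\Lambda\rangle=\cat{K}^{\mrm{b}}(\pj\Lambda)$. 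Consequently $\big(\cat{K}^{\mrm{b}}(\pj\Lambda)\cap\mcal{U}_1^R,\dots,\cat{K}^{\mrm{b}}(\pj\Lambda)\cap\mcal{U}_l^R\big)$ is an $l$-gon of recollements in $\cat{K}^{\mrm{b}}(\pj\Lambda)$, and by the general principle --- proved along the lines recalled in Section~\ref{t-strecoll} --- that the Verdier quotient by a thick subcategory compatible with a polygon of recollements carries a polygon of recollements, $(Q(\mcal{U}_1^R),\dots,Q(\mcal{U}_l^R))$ is an $l$-gon of recollements in $\cat{D}^{\mrm{b}}(\fmod\Lambda)/\cat{K}^{\mrm{b}}(\pj\Lambda)\iso\underline{\CM}(\Lambda)$, as required.

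The main obstacle is the generation step in the second paragraph: one must control \emph{all} of $\cat{D}^{\mrm{b}}(\fmod R\ten_k A)$, not merely the essential image of the base change $N\ten_k(-)$, and this is exactly where $A/J_A$ separable together with $\opn{gl.dim}A<\infty$ is indispensable, since it forces $A$ to be smooth over $k$ and hence supplies the \emph{finite} $A^{\mrm{e}}$-resolution of the diagonal used to replace an arbitrary $\Lambda$-module by a bounded complex of tensor-form modules. Without smoothness $R\ten_k A$ may have infinite global dimension and $\cat{D}^{\mrm{b}}(\fmod\Lambda)$ is then strictly larger than the thick subcategory generated by the $N\ten_k X$, so the argument genuinely requires this hypothesis; a secondary, purely technical nuisance is the non-Noetherian bookkeeping (coherence of $R$ and freeness of $\Lambda$ over $R$), needed to keep every module in sight finitely presented.
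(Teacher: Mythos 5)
Your proof is correct and follows the paper's overall skeleton (Proposition \ref{n-gon1} for the polygon in $\cat{D}^{\mrm{b}}(\fmod A)$, transport along the base change $-\ten_k-$, then descent via Lemma \ref{st-stK}), but the crucial generation step --- showing that $\mcal{U}_i^R * \mcal{U}_{i+1}^R$ exhausts $\cat{D}^{\mrm{b}}(\fmod R\ten_k A)$ --- is carried out by a genuinely different argument. The paper isolates this step as Proposition \ref{n-gon2}: it filters an arbitrary $R\ten_kA$-module $M$ by the radical powers $MJ_A^i/MJ_A^{i+1}$, which are semisimple over $A$, and uses the separability of $A/J_A$ only to make the multiplication $N\ten_k(A/J_A)\to N$ a split epimorphism, so that $N$ is a summand of $N\ten_k(A/J_A)\in\mcal{U}^R*\mcal{V}^R$; this requires neither $\operatorname{gl.dim}A<\infty$ nor any resolution of the diagonal, and hence transports an arbitrary stable $t$-structure for an arbitrary finite dimensional $A$ with $A/J_A$ separable. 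You instead invoke homological smoothness of $A$ (separability plus finite global dimension yield a finite $A^{\mrm{e}}$-projective resolution of the diagonal bimodule) to replace $M$ by a bounded complex with terms in $\add\bigl((M|_R)\ten_kA\bigr)$. Both arguments are valid under the hypotheses of the theorem; the paper's route buys the stronger stand-alone Proposition \ref{n-gon2}, while yours is perhaps more familiar but consumes the finite-global-dimension hypothesis one step earlier than necessary. Your closing remark accordingly misplaces where that hypothesis is truly indispensable: in the paper it enters only at the descent step, to guarantee $A\in\cat{K}^{\mrm{b}}(\pj A)$ so that the $(\mcal{U}_i,\mcal{U}_{i+1})$-decomposition triangle of $A$, and hence after applying $R\ten_k-$ the decomposition triangle of $R\ten_kA$, has outer terms in $\cat{K}^{\mrm{b}}(\pj(R\ten_kA))$ --- which is exactly condition (2) of Lemma \ref{st-stK}. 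Your descent argument itself coincides with the paper's, merely spelling out the thickness bookkeeping that the paper leaves implicit.
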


In Section \ref{Tcpx}, we the homotopy category {\small$\cat{K}(\Morph_{N-1}^{\mrm{sm}}(\mcal{B}))$}
of the category $\Morph_{N-1}^{\mrm{sm}}(\mcal{B})$ of $N-1$ sequences of split monomorphisms
in an additive category $\mcal{B}$.

\begin{thm}[Theorem \ref{th:ngon@trg01}]
Let $\mcal{B}$ be an additive category.
Then there is a $2N$-gon of recollements in $\cat{K}(\Morph^{\mrm{sm}}_{N-1}(\mcal{B}))$:
{\small
\[
(\mcal{F}^{[1,N-1]},\mcal{E}^{[2,N-1]}, \mcal{E}^{1}, \mcal{F}^{[1,2]}, \cdots,
\mcal{E}^{s}, \mcal{F}^{[s,s+1]}, \cdots, \mcal{E}^{N-2}, \mcal{F}^{[N-2,N-1]}, \mcal{E}^{N-1}, \mcal{E}^{[1,N-2]})
\]
}
\end{thm}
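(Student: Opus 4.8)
The plan is to construct the $2N$ subcategories explicitly and verify the stable $t$-structure conditions pairwise, proceeding around the polygon. First I would set up the combinatorial bookkeeping: an object of $\Morph^{\mrm{sm}}_{N-1}(\mcal{B})$ is a sequence $X^1 \inj X^2 \inj \cdots \inj X^{N-1}$ (with a chosen splitting at each stage, so up to isomorphism a filtered object with $N-1$ graded pieces), and a complex in $\cat{K}(\Morph^{\mrm{sm}}_{N-1}(\mcal{B}))$ is such a sequence of complexes over $\mcal{B}$. The subcategories $\mcal{E}^{s}$ should consist of complexes concentrated so that only the $s$-th graded piece is ``active'' (all inclusions around position $s$ are either identities or zero as appropriate), while $\mcal{F}^{[s,s+1]}$ and the boundary terms $\mcal{F}^{[1,N-1]}$, $\mcal{F}^{[1,N-2]}$, $\mcal{E}^{[2,N-1]}$, $\mcal{E}^{[1,N-2]}$ should be ``contractible-type'' subcategories built from mapping cones of identities in a given range of positions — i.e. complexes that are null-homotopic when restricted to those positions. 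Each of these is visibly closed under $\Sigma^{\pm 1}$ and is a thick (in fact triangulated) subcategory of the homotopy category.

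The key steps, in order: (1) Make precise the definition of each of the $2N$ subcategories in terms of vanishing/contractibility at prescribed positions $1,\dots,N-1$; this is the step where I expect to spend the most care, since the asymmetric endpoints $\mcal{F}^{[1,N-1]}$ versus $\mcal{E}^{[1,N-2]}$ reflect that one must ``wrap around'' the $N-1$ positions using the extra degree of freedom coming from the cokernel of the last inclusion. (2) For each consecutive pair in the list, check the orthogonality $\Hom_{\cat{K}}(\mcal{U}_i,\mcal{U}_{i+1})=0$: this reduces to the standard fact that maps out of a complex that is null-homotopic in certain positions into one that is exact/zero in the complementary positions must be null-homotopic, combined with an induction on $N$ that peels off one graded piece. (3) For each consecutive pair, construct the functorial triangle $U \to X \to V \to \Sigma U$: here I would use the split monomorphism structure to functorially split $X$ into a piece supported on the first few positions and a piece supported on the rest, with the connecting map being (a shift of) the relevant inclusion or its cokernel; taking the mapping cone of an appropriate canonical map gives the triangle. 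The fact that all the monomorphisms are \emph{split} is exactly what makes these decompositions functorial on the nose rather than merely up to the octahedral axiom.

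The main obstacle will be step (3) at the two ``corner'' pairs, where the triangle must relate an $\mcal{F}$-type subcategory spanning a long range of positions to an $\mcal{E}$-type subcategory, and one has to see that the resulting object genuinely lands in the claimed subcategory on the nose — this is where the combinatorics of which positions are identities, which are zero, and which carry the cokernel of the top inclusion becomes delicate, and where a clean inductive formulation of the subcategories pays off. I would organize the whole argument as an induction on $N$: the case $N=2$ recovers a triangle of recollements in $\cat{K}(\Morph^{\mrm{sm}}_{1}(\mcal{B}))=\cat{K}(\text{split monos in }\mcal{B})$, which should already be essentially in \cite{IKM}, and the inductive step identifies a suitable thick subcategory of $\cat{K}(\Morph^{\mrm{sm}}_{N-1}(\mcal{B}))$ with $\cat{K}(\Morph^{\mrm{sm}}_{N-2}(\mcal{B}))$ via forgetting the top graded piece, transporting the $2(N-1)$-gon and splicing in the two new subcategories $\mcal{E}^{N-1}$, $\mcal{F}^{[N-2,N-1]}$. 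Finally, Proposition \ref{p20Apr30} is not needed here but signals the intended use downstream: once the $2N$-gon is in place, the triangle equivalence with $\cat{K}_N(\mcal{B})$ claimed in the abstract will be checked by matching these subcategories with their $N$-complex analogues and invoking that proposition with $t$ chosen so that $F$ restricts to an equivalence on two adjacent $\mcal{E}$'s.
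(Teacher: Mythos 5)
Your two verification steps --- pairwise orthogonality and pairwise decomposition triangles, exploiting the normal forms in which objects of the $\mcal{E}$-type subcategories have either $0$ or a cone of an identity in the prescribed positions --- are exactly what the paper's proof does. The paper's argument is a direct, non-inductive computation: for each of the $2N$ consecutive pairs it first checks $\Hom=0$ by exhibiting an explicit null-homotopy (after replacing the source or target by its normal form via Lemma \ref{lem:comma03}), and then exhibits an explicit short exact sequence of complexes decomposing an arbitrary $X^1\xarr{\alpha^1}\cdots\xarr{\alpha^{N-2}}X^{N-1}$; for instance the subcomplex $X^1=\cdots=X^1$ with quotient $0\to\opn{Cok}\alpha^1\to\cdots$ realizes $\mcal{F}^{[1,N-1]}*\mcal{E}^{[2,N-1]}$, and cones of identities in the appropriate range of positions supply the $\mcal{E}^s*\mcal{F}^{[s,s+1]}$ and $\mcal{F}^{[s,s+1]}*\mcal{E}^{s+1}$ decompositions. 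So the substance of your steps (2) and (3) matches the paper.

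The genuine gap is in your proposed organization as an induction on $N$. A stable $t$-structure (let alone a polygon of recollements) inside a thick subcategory such as $\mcal{E}^{[2,N-1]}\simeq\cat{K}(\Morph^{\mrm{sm}}_{N-2}(\mcal{B}))$ does not transport to one in the ambient category: condition (c) of the definition must be verified for \emph{every} object of $\cat{K}(\Morph^{\mrm{sm}}_{N-1}(\mcal{B}))$, and condition (b) must be checked against objects outside the subcategory, so the inductive hypothesis buys you neither. Worse, most vertices of the $2N$-gon --- $\mcal{F}^{[1,N-1]}$, $\mcal{F}^{[1,2]}$, $\mcal{E}^1$, $\mcal{E}^{[1,N-2]}$ --- are not contained in the image of $\cat{K}(\Morph^{\mrm{sm}}_{N-2}(\mcal{B}))$ under $E^{\Uparrow_{N-2}^{N-1}}$ (they involve position $1$ nontrivially), and the vertices that are contained in it appear in rotated positions (the $\mcal{E}^1$ of the smaller category maps to $\mcal{E}^2$ of the larger one), so the $2N$-gon is not the $2(N-1)$-gon with two new vertices spliced in. The base case is also off: $\cat{K}(\Morph^{\mrm{sm}}_{1}(\mcal{B}))=\cat{K}(\mcal{B})$ is a single copy of $\mcal{B}$, not split monomorphisms, and the statement for $N=2$ is a $4$-gon, not a triangle of recollements. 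If you drop the induction and carry out your steps (2) and (3) directly for all $2N$ pairs --- which your description of the triangles already essentially contains --- you recover the paper's proof.
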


In Section \ref{TNcpx}, we study the homotopy category $\cat{K}_{N}(\mcal{B})$
of $N$-complexes of objects of an additive category $\mcal{B}$.

\begin{thm}[Corollary \ref{n-cpxgon04}]
We have a recollement of $\cat{K}_{N}(\mcal{B})$:
\[\xymatrix{
\cat{K}_{N-r}(\mcal{B}) \ar@<-1ex>[r]^{i_{s*}} 
& \cat{K}_{N}(\mcal{B})
\ar@/_1.5pc/[l]^{i_{s}^{*}} \ar@/^1.5pc/[l]_{i_{s}^{!}} \ar@<-1ex>[r]^{j_{s}^{*}} 
& \cat{K}_{r+1}(\mcal{B})
\ar@/_1.5pc/[l]^{j_{s!}} \ar@/^1.5pc/[l]_{j_{s*}}
}\]
\end{thm}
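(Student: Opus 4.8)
The plan is to construct the recollement of $\cat{K}_N(\mcal{B})$ by explicitly producing the two functors $i_{s*}\colon \cat{K}_{N-r}(\mcal{B})\to \cat{K}_N(\mcal{B})$ and $j_s^*\colon \cat{K}_N(\mcal{B})\to \cat{K}_{r+1}(\mcal{B})$ together with their respective adjoints, and then verifying the recollement axioms (that $i_{s*}$ is fully faithful, that $j_s^*$ is a quotient functor with the appropriate kernel, and that the unit/counit triangles fit together). Here $r$ should be the complementary parameter, $r = N-1-s$ or similar; fixing an index $s$ with $1\le s\le N-1$ splits the set of $N-1$ differentials of an $N$-complex into a block of length $s$ and a block of length $N-1-s$. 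The functor $i_{s*}$ embeds an $(N-r)$-complex as an $N$-complex that is ``concentrated'' in the block determined by $s$ (inserting zero differentials, or rather identity maps on zero objects, outside that block), and $j_s^*$ is the obvious truncation/restriction sending an $N$-complex to the $(r+1)$-complex obtained from the complementary block of differentials.

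First I would make the combinatorial bookkeeping precise: describe an $N$-complex as a $\z$-graded object $X^\bullet$ with differentials $d^i\colon X^i\to X^{i+1}$ satisfying $d^{i+N-1}\cdots d^i = 0$, and show that cutting at position $s$ produces on one side a genuine $(N-r)$-complex (the relation $d^{N-r-1}\cdots d^0=0$ holding because it is a factor of the length-$N$ relation) and on the other a genuine $(r+1)$-complex. This is the content already developed in Section \ref{TNcpx}, so I would cite the structural results there (in particular whatever describes $\cat{K}_N(\mcal{B})$ as a triangulated category and the truncation functors between $\cat{K}_N$ and $\cat{K}_M$ for $M\mid$-related to $N$). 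The adjoints $i_s^*, i_s^!$ are then built by the standard ``brutal'' and ``soft'' truncation recipes adapted to $N$-complexes: $i_s^*$ quotients out the complementary block, $i_s^!$ takes the subobject supported on the relevant block, and dually for $j_{s!}, j_{s*}$ on the other side. Checking the adjunctions is a diagram chase in $\cat{K}_N(\mcal{B})$ using that the homotopies respect the block decomposition.

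The cleanest route, and the one I would actually pursue, is to deduce this corollary rather than prove it from scratch: combine the $2N$-gon of recollements in $\cat{K}_N(\mcal{B})$ established in Section \ref{TNcpx} (the $N$-complex analogue of Theorem \ref{th:ngon@trg01}) with the triangle equivalence $\cat{K}(\Morph_{N-1}(\mcal{B}))\simeq \cat{K}_N(\mcal{B})$ promised in the abstract. A $2N$-gon of recollements $(\mcal{U}_1,\dots,\mcal{U}_{2N})$ gives, for each consecutive triple, a recollement whose outer terms are $\mcal{U}_{i-1}$ and $\mcal{U}_{i+1}$ and whose middle term is $\mcal{D}$; grouping several consecutive $\mcal{U}_j$'s via iterated stable $t$-structures (using that a stable $t$-structure restricts and composes) identifies the two outer categories with smaller homotopy categories of $N'$-complexes. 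Concretely, the subcategories $\mcal{E}^j$ and $\mcal{F}^{[j,j+1]}$ appearing in Theorem \ref{th:ngon@trg01}, transported along the equivalence, should be recognizable as (shifts of) $\cat{K}_{N-r}(\mcal{B})$ and $\cat{K}_{r+1}(\mcal{B})$ for the appropriate $r$ depending on which vertex $s$ we land on; then the recollement is just the piece of the polygon sitting at that vertex, with $i_{s*}, j_{s*}$ the inclusions of adjacent vertices and $i_s^*, i_s^!, j_s^*, j_{s!}$ the associated truncation functors that exist because the subcategories are functorially finite (equivalently, because each pair is a stable $t$-structure, which already supplies the required adjoint truncations).

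The main obstacle I anticipate is not the abstract machinery but the identification step: showing that the vertex categories $\mcal{E}^j$, $\mcal{F}^{[j,j+1]}$ of the $2N$-gon really are equivalent to $\cat{K}_{N-r}(\mcal{B})$ and $\cat{K}_{r+1}(\mcal{B})$ under the stated equivalence, with the indices matching up correctly. This requires tracking the equivalence $\cat{K}(\Morph_{N-1}(\mcal{B}))\simeq \cat{K}_N(\mcal{B})$ on objects carefully enough to see what a ``complex supported in a sub-block of differentials'' corresponds to on the $\Morph_{N-1}$ side, and verifying that the recollement functors $i_{s*},\dots,j_{s*}$ are the evident truncation functors rather than merely some abstract adjoints. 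A secondary, more technical point is checking the compatibility of the triangulated structures (the translation functors on $\cat{K}_N$, $\cat{K}_{N-r}$, $\cat{K}_{r+1}$ are not literally the shift but involve the $N$-complex rotation, so one must confirm the recollement is compatible with these) — but this should follow formally once the polygon and the equivalence are in hand, since both are statements at the level of triangulated categories.
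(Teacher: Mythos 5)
Your preferred route---deducing the recollement from the $2N$-gon in $\cat{K}_N(\mcal{B})$ together with the equivalence $\cat{K}(\Morph^{\mrm{sm}}_{N-1}(\mcal{B}))\simeq\cat{K}_N(\mcal{B})$---fails for two concrete reasons. First, it is circular in the paper's logical order: that equivalence is Theorem \ref{KNhtp}, which is proved \emph{later} and whose proof explicitly invokes Corollary \ref{n-cpxgon04} (in Steps 2 and 3, to identify $\mcal{F}_1^{N-2}$ and $\mcal{F}_0^1$ as images of the functors $\underline{I}$) as well as the $2N$-gon Corollary \ref{cpx2N-gon}, which is itself a consequence of the same stable $t$-structure theorem that underlies the present corollary. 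Second, the $2N$-gon in $\cat{K}_N(\mcal{B})$ only involves the subcategories $\mcal{F}_s^1$ and $\mcal{F}_s^{N-2}$, i.e.\ the extreme cases $r=1$ and $r=N-2$, whereas the corollary is asserted for every $1\le r<N$; the intermediate subcategories $\mcal{F}_s^r$ are intersections $\mcal{F}_s^1\cap\cdots\cap\mcal{F}_{s+r-1}^1$ of aisles of the polygon, and nothing in the polygon axioms makes such an intersection one half of a stable $t$-structure.

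Your fallback (direct) route has the right shape and is essentially the paper's strategy---define $I_s^{(N-1)}$ and $J_s^{(N)}$, establish the adjunction chain $\underline{I}_{s}^{(N-1)}\dashv\underline{J}_{s}^{(N)}\dashv\underline{I}_{s+1}^{(N-1)}$ (Proposition \ref{prolong03}), and assemble the recollement from the two consecutive stable $t$-structures $(\mcal{F}_{r+s-N}^{N-r-1},\mcal{F}_{s}^{r})$ and $(\mcal{F}_{s}^{r},\mcal{F}_{r+s+1}^{N-r-1})$ via Proposition \ref{st-t-1second}---but it omits the one genuinely hard step, namely condition (c) of the stable $t$-structure: producing for each $N$-complex $X$ a triangle $U\to X\to V\to\Sigma U$ with $U\in\mcal{F}_{s}^{r}$ and $V\in\mcal{F}_{r+s+1}^{N-r-1}$. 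The paper's Theorem \ref{tstNcpx01} takes $U\to X$ to be the counit $\varepsilon_X\colon\underline{I}_{s}^{\Uparrow_{N-r}^{N}}\underline{J}_{s}^{\Downarrow_{N-r}^{N}}(X)\to X$ and then must show that $\opn{C}(\varepsilon_X)$ lies in $\mcal{F}_{r+s+1}^{N-r-1}$; this is done by exhibiting an explicit contractible (projective) subcomplex $V$ of $\opn{C}(\varepsilon_X)$ with quotient $\underline{I}_{r+s+1}^{\Uparrow_{r+1}^{N}}\underline{J}_{r+s}^{\Downarrow_{r+1}^{N}}(X)$. Your remark that ``the homotopies respect the block decomposition'' does not substitute for this: the differential of the mapping cone of an $N$-complex morphism involves the powers $d,\dots,d^{\{N-1\}}$ (Definition \ref{cone01}) and mixes the two blocks, so the identification of the cone is a nontrivial computation, not a formal consequence of the adjunctions. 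Finally, a small but consequential misdescription: $i_{s*}$ does not insert zero objects or zero differentials---it duplicates terms and inserts \emph{identity} differentials, so that its image is exactly $\mcal{F}_{s}^{r}$---and $j_{s}^{*}$ is not a truncation but a contraction composing out differentials; with zero-insertion both the orthogonality $\Hom(\mcal{F}_{s}^{r},\mcal{F}_{r+s+1}^{N-r-1})=0$ and the adjunctions would fail.
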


\begin{cor}[Corollary \ref{cpx2N-gon}]
There is a $2N$-gon of recollements in $\cat{K}_{N}(\mcal{B})$:
\[
(\mcal{F}_{1}^{N-2}, \mcal{F}_{0}^{1}, \mcal{F}_{2}^{N-2},  \mcal{F}_{1}^{1}, 
\cdots, \mcal{F}_{r+1}^{N-2}, \mcal{F}_{r}^{1}, 
\cdots, \mcal{F}_{N-1}^{N-2}, \mcal{F}_{N-2}^{1}, \mcal{F}_{0}^{N-2}, \mcal{F}_{N-1}^{1})
\]
\end{cor}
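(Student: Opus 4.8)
The plan is to deduce Corollary \ref{cpx2N-gon} from Corollary \ref{n-cpxgon04} by iterating the recollement and reassembling the resulting stable $t$-structures into a single $2N$-gon. First I would recall that a recollement of $\cat{K}_N(\mcal{B})$ of the displayed form produces, by the standard dictionary between recollements and stable $t$-structures, two stable $t$-structures in $\cat{K}_N(\mcal{B})$: namely $(i_{s*}\cat{K}_{N-r}(\mcal{B}),\, j_{s!}\cat{K}_{r+1}(\mcal{B}))$ and $(j_{s*}\cat{K}_{r+1}(\mcal{B}),\, i_{s*}\cat{K}_{N-r}(\mcal{B}))$. So for each admissible choice of the cut $s$ (equivalently $r$, with $1 \le r \le N-1$ or the appropriate range) we get a pair of thick subcategories sitting in consecutive slots of a polygon. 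The key point is that the image subcategories for different values of $s$ are nested compatibly, so that the right-hand member of one stable $t$-structure coincides with the left-hand member of the next — this is what lets $2N$ such pieces close up into a polygon rather than merely giving a collection of unrelated triangles of recollements.

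Next I would set up the notation matching the statement: define $\mcal{F}_r^{N-2}$ and $\mcal{F}_r^1$ as the essential images in $\cat{K}_N(\mcal{B})$ of $\cat{K}_{r+1}(\mcal{B})$ and $\cat{K}_{N-1}(\mcal{B})$ (respectively $\cat{K}_{N-r}(\mcal{B})$ and $\cat{K}_2(\mcal{B})$) under the appropriate fully faithful functors $j_{s!}$, $j_{s*}$, $i_{s*}$ coming from Corollary \ref{n-cpxgon04}, for the various values of $s$. Concretely the superscript records which "width" of $N$-complex (either $N-2$ or $1$, i.e. the two extreme truncation lengths) and the subscript records the position of the truncation window. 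Then I would verify, slot by slot, that each consecutive pair in the list
\[
(\mcal{F}_{1}^{N-2}, \mcal{F}_{0}^{1}, \mcal{F}_{2}^{N-2},  \mcal{F}_{1}^{1},
\cdots, \mcal{F}_{r+1}^{N-2}, \mcal{F}_{r}^{1},
\cdots, \mcal{F}_{N-1}^{N-2}, \mcal{F}_{N-2}^{1}, \mcal{F}_{0}^{N-2}, \mcal{F}_{N-1}^{1})
\]
forms a stable $t$-structure. For pairs of the form $(\mcal{F}_{r+1}^{N-2},\mcal{F}_r^1)$ and $(\mcal{F}_r^1,\mcal{F}_{r+2}^{N-2})$ this is exactly one of the two stable $t$-structures extracted above from the recollement for a suitable $s$; the Hom-vanishing $\opn{Hom}(\mcal{F}_{r+1}^{N-2},\mcal{F}_r^1)=0$ and the existence of the approximating triangle both come directly from the adjunctions in the recollement. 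The wrap-around pairs involving $\mcal{F}_0^{N-2}$ and $\mcal{F}_{N-1}^1$ require identifying the "boundary" recollement where the window reaches the end of the $N$-complex, which is where the cyclic symmetry of $N$-complexes (the functor shifting the $N$-complex by one spot) is used to see that the last stable $t$-structure glues back to the first.

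The main obstacle I expect is bookkeeping: namely, checking that the various essential images $\mcal{F}_r^{N-2}$, $\mcal{F}_r^1$ defined via different functors $i_{s*}, j_{s!}, j_{s*}$ for different $s$ actually coincide when the statement demands they do, so that the list really closes into a $2N$-gon. This amounts to showing that $j_{s*}\cat{K}_{r+1}(\mcal{B})$ for one value of $s$ equals $j_{s'!}\cat{K}_{r'+1}(\mcal{B})$ for the neighbouring value $s'$, which is a compatibility between the left and right adjoints of adjacent recollements — the kind of identity that is conceptually clear but needs a careful comparison of the explicit functors (stupid truncations and their adjoints on $N$-complexes) constructed in Section \ref{TNcpx}. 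A secondary point is handling the edge cases $r=0$ and $r=N-1$ where $\cat{K}_{r+1}(\mcal{B})$ degenerates (to $\cat{K}_1(\mcal{B})=0$ or to the whole category), so that some slots of the polygon are genuinely distinct subcategories despite the uniform notation; one should check the ranges of $s$ in Corollary \ref{n-cpxgon04} cover exactly the $2N$ needed pieces. Once these identifications are in hand, the verification that each consecutive pair is a stable $t$-structure is immediate from Corollary \ref{n-cpxgon04}, and the polygon follows by definition.
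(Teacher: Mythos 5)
Your approach is essentially the paper's: the corollary is an immediate assembly of the stable $t$-structures $(\mcal{F}_{s}^{r},\mcal{F}_{r+s+1}^{N-r-1})$ of Theorem \ref{tstNcpx01} (equivalently, of the two stable $t$-structures underlying each recollement of Corollary \ref{n-cpxgon04}) for $r=1$ and $s$ running over $\z/N\z$, which is why the paper gives no separate proof. Two corrections and one simplification are in order. First, you have reversed the orientation of the recollement/stable-$t$-structure dictionary: by Proposition \ref{st-t-1second} a recollement yields the stable $t$-structures $(\opn{Im}j_{s!},\opn{Im}i_{s*})$ and $(\opn{Im}i_{s*},\opn{Im}j_{s*})$, not $(\opn{Im}i_{s*},\opn{Im}j_{s!})$ and $(\opn{Im}j_{s*},\opn{Im}i_{s*})$ as in your first paragraph; since a stable $t$-structure is an ordered pair, this matters, and with your orientation the extracted pairs would not be the ones occurring in the polygon. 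Taking $r=1$ with the correct orientation gives exactly $(\mcal{F}_{s+1}^{N-2},\mcal{F}_{s}^{1})$ and $(\mcal{F}_{s}^{1},\mcal{F}_{s+2}^{N-2})$, which as $s$ varies modulo $N$ are precisely the $2N$ consecutive pairs of the polygon --- these are the pairs you name correctly later, so the slip is local but must be fixed. Second, the bookkeeping you identify as the main obstacle (that essential images arising from different adjoints for neighbouring values of $s$ must be shown to coincide) does not actually arise: $\mcal{F}_{s}^{r}$ is defined intrinsically in Definition \ref{updown01} as a subcategory of $\cat{K}_{N}(\mcal{B})$, Corollary \ref{updown02} identifies it with $\opn{Im}\underline{I}_{s}^{\Uparrow_{N-r}^{N}}$, and for $r=1$ one has $j_{s*}=\underline{I}_{s+2}^{\Uparrow_{2}^{N}}=j_{(s+1)!}$ on the nose, so the right-hand member of one stable $t$-structure is literally the left-hand member of the next. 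Finally, the edge cases $r=0$ and $r=N-1$, and any appeal to a shift functor for the wrap-around, are unnecessary: only $r=1$ (equivalently $r=N-2$) is used, and closure of the polygon is automatic because $\mcal{F}_{s}^{r}$ depends on $s$ only modulo $N$.
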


In Section \ref{TrieqDN}, we construct a triangle functor 
$\underline{F}_{N}: \cat{K}(\Morph^{\mrm{sm}}_{N-1}(\mcal{B})) \to \cat{K}_{N}(\mcal{B})$
which sends the above $2N$-gon of $\cat{K}_{N}(\mcal{B})$ to
the above $2N$-gon of $\cat{K}_{N}(\mcal{B})$.
Therefore we have the result.

\begin{thm}[Theorem \ref{KNhtp}]
Let $\mcal{B}$ be an additive category, then we have triangle equivalences:
\[
\cat{K}^{\sharp}(\Morph^{\mrm{sm}}_{N-1}(\mcal{B})) \simeq \cat{K}^{\sharp}_{N}(\mcal{B})
\]
where $\sharp=\text{nothing}, -, +, \mrm{b}$.
\end{thm}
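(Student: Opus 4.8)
The plan is to deduce the theorem from Proposition~\ref{p20Apr30}, applied to the two $2N$-gons of recollements already in hand. We first treat $\sharp=\text{nothing}$. By the construction in Section~\ref{TrieqDN}, the triangle functor $\underline{F}_{N}\colon\cat{K}(\Morph^{\mrm{sm}}_{N-1}(\mcal{B}))\to\cat{K}_{N}(\mcal{B})$ carries, term by term, the $2N$-gon of Theorem~\ref{th:ngon@trg01} to the $2N$-gon of Corollary~\ref{cpx2N-gon}. Hence, by Proposition~\ref{p20Apr30}, it suffices to produce a single adjacent pair $(\mcal{U}_{t},\mcal{U}_{t+1})$ of members of the first $2N$-gon on which $\underline{F}_{N}$ restricts to triangle equivalences onto the corresponding members $(\mcal{V}_{t},\mcal{V}_{t+1})$ of the second.

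The pair I would use is an ``$\mcal{E}$-edge'' together with an adjacent ``$\mcal{F}$-edge'', say $(\mcal{E}^{s},\mcal{F}^{[s,s+1]})$ for a convenient $s$ (the pair at the end of the lists, involving $\mcal{E}^{N-1}$, works equally well). The subcategory $\mcal{E}^{s}$ consists of the complexes in $\Morph^{\mrm{sm}}_{N-1}(\mcal{B})$ that are stalks at the $s$-th spot of the $(N-1)$-sequence; via the evident stalk embedding it is triangle equivalent to $\cat{K}(\mcal{B})$, and its partner member in $\cat{K}_{N}(\mcal{B})$ is the copy of $\cat{K}(\mcal{B})$ sitting inside $\cat{K}_{N}(\mcal{B})$ as the $N$-complexes concentrated in the degrees of one fixed residue modulo $N$ with vanishing differentials. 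Unwinding the formula for $\underline{F}_{N}$ shows it intertwines these two stalk embeddings up to a translation, so $\underline{F}_{N}|_{\mcal{E}^{s}}$ is a triangle equivalence. The subcategory $\mcal{F}^{[s,s+1]}$ consists of the complexes whose underlying $(N-1)$-sequences are supported on a strictly shorter sub-range; it is canonically identified with $\cat{K}(\Morph^{\mrm{sm}}_{N-2}(\mcal{B}))$ (with $\cat{K}(\mcal{B})$ when $N$ is small), and under this identification $\underline{F}_{N}|_{\mcal{F}^{[s,s+1]}}$ becomes $\underline{F}_{N-1}$ (respectively the identity). Thus its being a triangle equivalence is exactly the induction hypothesis on $N$, the base case $N\le 2$ being trivial since then $\Morph^{\mrm{sm}}_{N-1}(\mcal{B})\simeq\mcal{B}$ and $\cat{K}_{N}(\mcal{B})=\cat{K}(\mcal{B})$. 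With both restrictions triangle equivalences, Proposition~\ref{p20Apr30} yields that $\underline{F}_{N}$ is itself a triangle equivalence.

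Finally, for $\sharp\in\{-,+,\mrm{b}\}$ the functor $\underline{F}_{N}$ unfolds a complex of $(N-1)$-sequences into an $N$-complex while stretching the grading range only by the finite factor $N$, so it takes complexes bounded above, bounded below, or bounded to $N$-complexes with the corresponding boundedness, and likewise for its quasi-inverse; hence $\underline{F}_{N}$ restricts to triangle equivalences $\cat{K}^{\sharp}(\Morph^{\mrm{sm}}_{N-1}(\mcal{B}))\simeq\cat{K}^{\sharp}_{N}(\mcal{B})$. I expect the one nonformal point to be the identification in the middle step: one must return to the explicit definition of $\underline{F}_{N}$ from Section~\ref{TrieqDN}, restrict it to the two edge subcategories, and verify --- keeping track of the signs, shifts and mapping cones built into the definition --- that it really is the claimed stalk-to-stalk map, respectively $\underline{F}_{N-1}$. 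Once this bookkeeping is done, the rest is a formal application of Proposition~\ref{p20Apr30}.
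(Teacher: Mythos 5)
Your proposal is correct and follows essentially the same route as the paper: show that $\underline{F}_{N}$ carries the $2N$-gon of Theorem \ref{th:ngon@trg01} to that of Corollary \ref{cpx2N-gon}, verify that the restrictions to one adjacent pair are triangle equivalences (one member a copy of $\cat{K}(\mcal{B})$, the other a copy of the $(N-1)$-case handled by induction via $\underline{F}_{N-1}$), and conclude by Proposition \ref{p20Apr30}, extending to $\sharp=-,+,\mrm{b}$ at the end. The only differences are cosmetic: the paper uses the adjacent pair $(\mcal{F}^{[1,N-1]},\mcal{E}^{[2,N-1]})$, matched with $(\mcal{F}_{1}^{N-2},\mcal{F}_{0}^{1})$, rather than your $(\mcal{E}^{s},\mcal{F}^{[s,s+1]})$, and note that the target $\mcal{F}_{s+1}^{N-2}$ of $\mcal{E}^{s}$ is the image of $\underline{I}_{s+1}^{\Uparrow_{2}^{N}}$ (i.e.\ $N$-complexes in which $N-2$ of every $N$ consecutive differentials are identities), not the complexes concentrated in a single residue class with vanishing differentials.
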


\section{Stable t-structures and recollements}\label{t-strecoll}

We recall the notion of recollements and study their relationship with stable t-structures. 
This correspondence enables us to understand (co)localizations and recollements
by way of subcategories instead of quotient categories.  
In Proposition \ref{st-t-1second} we see that a recollement corresponds to two consecutive stable t-structures. 
First we see that a (co)localization and a stable t-structure 
essentially describe the same phenomenon, 
using the methods which are similar to ones in recollements \cite{BBD}.

Next we recall the notion of a recollement which consists of a localization and 
a colocalization. 

\begin{defn} [\cite{BBD}] 
We call a diagram
\[\xymatrix{
\mcal{D'} \ar@<-1ex>[r]^{i_{*}}
& \mcal{D}
\ar@/^1.5pc/[l]_{i^{!}} \ar@/_1.5pc/[l]^{i^{*}}  \ar@<-1ex>[r]^{j^{*}} 
& \mcal{D''}
\ar@/_1.5pc/[l]^{j_{!}} \ar@/^1.5pc/[l]_{j_{*}}
}\]
of triangulated categories and functors a \emph{recollement} if it satisfies the following:
\begin{enumerate}
\item $i_*$, $j_!$, and $j_*$ are fully faithful.
\item $(i^* , i_* )$, $(i_* , i^!)$, $(j_! , j^* )$, and $(j^* , j_* )$ are adjoint pairs. 
\item there are canonical embeddings $\opn{Im} j_! \inj \opn{Ker}i^*$, $\opn{Im}i_* \inj\opn{Ker} j^*$, and \\
$\opn{Im} j_*\inj\opn{Ker}i^!$ which are equivalences.
\end{enumerate}
\end{defn}

We remember that a recollement corresponds to a pair of 
consecutive stable t-structures.

\begin{prop} [\cite{Mi1}] \label{st-t-1second}
~
\begin{enumerate}
\item Let 
\[\xymatrix{
\mcal{D'} \ar@<-1ex>[r]^{i_{*}}
& \mcal{D}
\ar@/^1.5pc/[l]_{i^{!}} \ar@/_1.5pc/[l]^{i^{*}}  \ar@<-1ex>[r]^{j^{*}} 
& \mcal{D''}
\ar@/_1.5pc/[l]^{j_{!}} \ar@/^1.5pc/[l]_{j_{*}}
}\] be a recollement. Then 
$(\mcal{U}, \mcal{V} )$ and $(\mcal{V}, \mcal{W})$ 
are stable t-structures in $\mcal{D}$  
where we put $\mcal{U} = \opn{Im}j_{!}$, $\mcal{V} =\opn{Im}i_{*}$ and $\mcal{W} =\opn{Im}j_{*}$. 

\item  
Let $(\mcal{U}, \mcal{V})$ and $(\mcal{V}, \mcal{W})$ be stable t-structures in $\mcal{D}$.
Then for the canonical embedding $i_{*}:\mcal{V} \to \mcal{D}$,
there is a recollement 
\[\xymatrix{
\mcal{V} \ar@<-1ex>[r]^{i_{*}}
& \mcal{D}
\ar@/_1.5pc/[l]^{i^{*}} \ar@/^1.5pc/[l]_{i^{!}} \ar@<-1ex>[r]^{j^{*}} 
& \mcal{D}/\mcal{V}
\ar@/_1.5pc/[l]^{j_{!}} \ar@/^1.5pc/[l]_{j_{*}}
}\]
such that $\opn{Im}j_! = \mcal{U}$ and 
$\opn{Im}j_* =\mcal{W}$.
\end{enumerate}

In each cases, for every object $X$ of ${\mcal D}$ adjunction arrows of adjoints induce triangles 
\[ \begin{aligned} i_* i^! X \to X \to j_* j^* X \to \Sigma i_* i^! X , \\
j_! j^* X \to X \to i_* i^* X \to \Sigma j_! j^* X .  
\end{aligned} \]
\end{prop}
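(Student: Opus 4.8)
The plan is to read the three axioms of Definition~\ref{st-tprime} directly off the recollement in part~(1), using the adjunctions together with the kernel--image identifications $\opn{Im}j_!=\opn{Ker}i^*$, $\opn{Im}i_*=\opn{Ker}j^*$, $\opn{Im}j_*=\opn{Ker}i^!$, and conversely in part~(2) to manufacture the six functors of a recollement out of the defining triangles of the two given stable t-structures. The two triangles asserted at the end will be produced along the way in both directions.

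For part~(1), put $\mcal{U}=\opn{Im}j_!$, $\mcal{V}=\opn{Im}i_*$, $\mcal{W}=\opn{Im}j_*$. Axiom~(a) is immediate because each of $j_!,i_*,j_*$ is a fully faithful triangle functor, so its essential image is a full $\Sigma$-stable subcategory. For axiom~(b), the adjunction $(j_!,j^*)$ gives $\Hom_{\mcal D}(j_!Y,i_*Z)\cong\Hom_{\mcal D''}(Y,j^*i_*Z)$ and $j^*i_*=0$ since $\opn{Im}i_*=\opn{Ker}j^*$; the analogous computation with $(j^*,j_*)$ kills $\Hom_{\mcal D}(i_*Z,j_*Y)$. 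For axiom~(c) and the final triangles I would first show the counit $j_!j^*X\to X$ extends to a triangle $j_!j^*X\to X\to i_*i^*X\to\Sigma j_!j^*X$: complete the counit to a triangle with cone $C$, apply $j^*$ (using $j^*j_!\cong\opn{id}$, as $j_!$ is fully faithful) to see $j^*C=0$, hence $C\in\opn{Ker}j^*=\opn{Im}i_*$, then apply $i^*$ (which annihilates $\opn{Im}j_!=\opn{Ker}i^*$ and satisfies $i^*i_*\cong\opn{id}$) to identify $C\cong i_*i^*X$. Dually, starting from the unit $X\to j_*j^*X$ one obtains $i_*i^!X\to X\to j_*j^*X\to\Sigma i_*i^!X$. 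The first triangle witnesses~(c) for $(\mcal U,\mcal V)$ and the second for $(\mcal V,\mcal W)$.

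For part~(2), take $i_*\colon\mcal V\inj\mcal D$ to be the inclusion. For $X\in\mcal D$ fix triangles $U_X\to X\to V_X\to\Sigma U_X$ (with $U_X\in\mcal U$, $V_X\in\mcal V$) from the first t-structure and $V'_X\to X\to W_X\to\Sigma V'_X$ (with $V'_X\in\mcal V$, $W_X\in\mcal W$) from the second. The vanishings $\Hom_{\mcal D}(\mcal U,\mcal V)=0$ and $\Hom_{\mcal D}(\mcal V,\mcal W)=0$ force both triangles to be functorial in $X$ and make $X\mapsto V_X$ into a left adjoint $i^*$ and $X\mapsto V'_X$ into a right adjoint $i^!$ of $i_*$; the two triangles above then read $j_!j^*X\to X\to i_*i^*X\to$ and $i_*i^!X\to X\to j_*j^*X\to$ once $j^*,j_!,j_*$ are in place. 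For those, set $\mcal{D}'':=\mcal D/\mcal V$ with $j^*$ the quotient functor; since $(\mcal U,\mcal V)$ is a stable t-structure the composite $\mcal U\inj\mcal D\xrightarrow{j^*}\mcal D/\mcal V$ is a triangle equivalence, and likewise $\mcal W\inj\mcal D\xrightarrow{j^*}\mcal D/\mcal V$. Defining $j_!$ (resp.\ $j_*$) as a quasi-inverse of the first (resp.\ second) composite followed by the inclusion of $\mcal U$ (resp.\ $\mcal W$) produces fully faithful functors with $\opn{Im}j_!=\mcal U$ and $\opn{Im}j_*=\mcal W$, and one checks that $(j_!,j^*)$ and $(j^*,j_*)$ are adjoint pairs, so all the axioms of a recollement are met.

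I expect the main obstacle to be the verification in part~(2) that $\mcal U\inj\mcal D\to\mcal D/\mcal V$ and $\mcal W\inj\mcal D\to\mcal D/\mcal V$ are genuine equivalences and that the resulting $j_!,j_*$ are honestly two-sided adjoint to $j^*$ (rather than mere one-sided sections); this rests on the compatibility of the two decompositions of a fixed $X$ coming from the two stable t-structures, together with the calculus of fractions in $\mcal D/\mcal V$. Since the proposition is due to \cite{Mi1}, it suffices to sketch these points and refer there for the remaining details.
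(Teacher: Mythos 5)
The paper does not prove Proposition \ref{st-t-1second}; it is recalled from \cite{Mi1} with no argument given, so there is nothing internal to compare against. Your proposal is the standard proof of this correspondence and is correct as sketched: part (1) reads the axioms off the adjunctions together with $\opn{Im}j_!=\opn{Ker}i^*$, $\opn{Im}i_*=\opn{Ker}j^*$, $\opn{Im}j_*=\opn{Ker}i^!$, and part (2) correctly identifies the one genuinely nontrivial step, namely that $\mcal{U}\inj\mcal{D}\to\mcal{D}/\mcal{V}$ and $\mcal{W}\inj\mcal{D}\to\mcal{D}/\mcal{V}$ are equivalences (which follows from the isomorphisms $\Hom_{\mcal{D}}(U,X)\simeq\Hom_{\mcal{D}/\mcal{V}}(U,X)$ for $U\in\mcal{U}$ and dually for $\mcal{W}$, using $\Hom(\mcal{U},\mcal{V})=0=\Hom(\mcal{V},\mcal{W})$ and the closure of $\mcal{V}=\mcal{U}^{\perp}$ under direct summands); deferring that verification to \cite{Mi1} is exactly what the paper itself does.
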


Thirdly, we introduce the notion of a polygon of recollements. 

\begin{defn}
Let $\mcal{D}$ be a triangulated category, and let $\mcal{U}_{1}, \cdots,  \mcal{U}_{n}$
be full triangulated subcategories of $\mcal{D}$.
We call $(\mcal{U}_1, \mcal{U}_2, \cdots ,  \mcal{U}_n)$
an $n$-gon of recollements in $\mcal{D}$
if $(\mcal{U}_{i}, \mcal{U}_{i+1})$ is a stable t-structure in $\mcal{D}$ ($1 \leq i \leq n$), where
$\mcal{U}_1=\mcal{U}_{n+1}$.
\end{defn}

An $n$-gon  of recollements results in strong symmetry, and it induces three recollements
as the name suggests.

\begin{prop}[\cite{IKM}]\label{added}
Let $\mcal{D}$ be a triangulated category.
Then $(\mcal{U}_1, \cdots, \mcal{U}_n)$ is an $n$-gon of recollements in $\mcal{D}$ if and only if there is a recollement
\[\xymatrix{
\mcal{U}_t \ar@<-1ex>[r]^{i_{t*}} 
& \mcal{D}
\ar@/_1.5pc/[l]^{i_{t}^{*}} \ar@/^1.5pc/[l]_{i_{t}^{!}} \ar@<-1ex>[r]^{j_{t}^{*}} 
& \mcal{D}/\mcal{U}_t
\ar@/_1.5pc/[l]^{j_{t!}} \ar@/^1.5pc/[l]_{j_{t*}}
}\]
such that the essential image $\opn{Im} j_{t!}$ is $\mcal{U}_{t-1}$, and that
the essential image $\opn{Im}j_{t*}$ is $\mcal{U}_{t+1}$ 
for any $t \!\! \mod \! n$.
In this case all the relevant subcategories $\mcal{U}_t$ and 
the quotient categories $\mcal{D}/\mcal{U}_t$ are triangle equivalent. 
\end{prop}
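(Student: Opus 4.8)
The plan is to deduce the statement from Proposition \ref{st-t-1second} by applying it at each vertex of the polygon; no new idea is needed, since both ``$n$-gon of recollements'' and ``recollement'' are, once unwound, assertions about stable $t$-structures, and the content here is just a matching of data indexed modulo $n$. Throughout I read indices modulo $n$, so $\mcal{U}_0 = \mcal{U}_n$, $\mcal{U}_{n+1} = \mcal{U}_1$, and similarly for the functors.

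First I would treat the ``only if'' direction. Assuming $(\mcal{U}_1, \cdots, \mcal{U}_n)$ is an $n$-gon of recollements and fixing $t$, by definition $(\mcal{U}_{t-1},\mcal{U}_t)$ and $(\mcal{U}_t,\mcal{U}_{t+1})$ are stable $t$-structures in $\mcal{D}$ sharing the middle term $\mcal{U}_t$, which is exactly the input of Proposition \ref{st-t-1second}(2) with $(\mcal{U},\mcal{V},\mcal{W}) = (\mcal{U}_{t-1},\mcal{U}_t,\mcal{U}_{t+1})$. Taking $i_{t*}$ to be the inclusion $\mcal{U}_t \inj \mcal{D}$, that proposition produces a recollement with left term $\mcal{U}_t$, middle term $\mcal{D}$, right term $\mcal{D}/\mcal{U}_t$, and with $\opn{Im} j_{t!} = \mcal{U}_{t-1}$ and $\opn{Im} j_{t*} = \mcal{U}_{t+1}$; this is precisely the displayed recollement, and letting $t$ run over all residues gives the required family.

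For the converse I would suppose that for every $t$ we are handed a recollement as displayed, with left vertex the subcategory $\mcal{U}_t \subseteq \mcal{D}$ and $i_{t*}$ its inclusion, and with $\opn{Im} j_{t!} = \mcal{U}_{t-1}$ and $\opn{Im} j_{t*} = \mcal{U}_{t+1}$. Applying Proposition \ref{st-t-1second}(1) to the recollement at index $t$, with $(\mcal{U},\mcal{V},\mcal{W}) = (\opn{Im} j_{t!},\opn{Im} i_{t*},\opn{Im} j_{t*}) = (\mcal{U}_{t-1},\mcal{U}_t,\mcal{U}_{t+1})$, I get that $(\mcal{U}_{t-1},\mcal{U}_t)$ and $(\mcal{U}_t,\mcal{U}_{t+1})$ are stable $t$-structures in $\mcal{D}$; letting $t$ range over $1, \dots, n$ then yields that $(\mcal{U}_i,\mcal{U}_{i+1})$ is a stable $t$-structure for every $i$, i.e. $(\mcal{U}_1, \cdots, \mcal{U}_n)$ is an $n$-gon of recollements. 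For the triangle equivalences asserted in the last sentence: in the recollement at index $t$ the functors $j_{t!}$ and $j_{t*}$ are fully faithful triangle functors, so each restricts to a triangle equivalence of $\mcal{D}/\mcal{U}_t$ onto its essential image, and since these images are $\mcal{U}_{t-1}$ and $\mcal{U}_{t+1}$ I obtain triangle equivalences $\mcal{U}_{t-1} \simeq \mcal{D}/\mcal{U}_t \simeq \mcal{U}_{t+1}$ for every $t$, which is the claimed identification of the relevant subcategories and quotient categories.

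I do not expect a genuine obstacle here: the proof is essentially a bookkeeping exercise rotating Proposition \ref{st-t-1second} around the polygon. The two points that deserve attention are the cyclic indexing (Proposition \ref{st-t-1second} requires two consecutive stable $t$-structures with a common middle term, and consecutive vertices of the polygon overlap in exactly that way) and, in the ``if'' direction, the identification $\opn{Im} i_{t*} = \mcal{U}_t$ of the abstract left vertex of each given recollement with the genuine subcategory $\mcal{U}_t$ of $\mcal{D}$; this is forced by the way the recollement is written (with $\mcal{U}_t$ as its left term), so that Proposition \ref{st-t-1second}(1) applies verbatim.
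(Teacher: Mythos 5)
The paper itself gives no proof of Proposition \ref{added} (it is quoted from \cite{IKM}), and your reduction of both directions to Proposition \ref{st-t-1second} --- rotating the pair of consecutive stable $t$-structures $(\mcal{U}_{t-1},\mcal{U}_t)$, $(\mcal{U}_t,\mcal{U}_{t+1})$ around the polygon --- is exactly the intended argument. The ``if and only if'' part of your proof is correct and complete, including the point you flag about identifying $\opn{Im}i_{t*}$ with $\mcal{U}_t$.

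The one genuine gap is in the final sentence. The equivalences you produce are $\mcal{U}_{t-1}\simeq\mcal{D}/\mcal{U}_t\simeq\mcal{U}_{t+1}$, and chaining these only ever connects indices of the same parity: you obtain $\mcal{U}_1\simeq\mcal{U}_3\simeq\cdots$ and $\mcal{U}_2\simeq\mcal{U}_4\simeq\cdots$, with each $\mcal{D}/\mcal{U}_t$ attached to the class of $t+1$. When $n$ is odd, stepping by $2$ modulo $n$ reaches every residue, so all $2n$ categories are indeed equivalent and your proof of the last claim is complete. When $n$ is even the two parity classes never meet, and no argument can join them: taking $\mcal{D}=\mcal{A}\times\mcal{B}$ with $\mcal{U}_1=\mcal{A}\times 0$ and $\mcal{U}_2=0\times\mcal{B}$ gives a $2$-gon of recollements (every object splits as the direct sum of its two components, so both orderings are stable $t$-structures) in which $\mcal{U}_1\not\simeq\mcal{U}_2$ whenever $\mcal{A}\not\simeq\mcal{B}$; the same example, repeated with period $2$, works for any even $n$. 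So the blanket assertion that \emph{all} the $\mcal{U}_t$ and $\mcal{D}/\mcal{U}_t$ are triangle equivalent is itself only correct for odd $n$ --- a parity dichotomy the authors themselves build into Proposition \ref{p20Apr30}, where for even $n$ two consecutive restrictions are required. You should either restrict the final assertion to odd $n$ or state it as the two separate chains of equivalences that your argument actually yields.
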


Finally we study the case that triangle functors preserve
localizations, colocalizations or recollements, etc. 

\begin{defn} Let $\mcal{D}_1$ and $\mcal{D}_2$ be triangulated categories and 
let $F:\mathcal{D}_1 \to \mathcal{D}_2$ be a triangle functor.
\begin{enumerate}
\item Let $(\mathcal{U}_n , \mathcal{V}_n)$ be a stable t-structure in $\mathcal{D}_n$ $(n=1,2)$. 
We say that \emph{$F$ sends $( \mcal{U}_1 , \mcal{V}_1 )$ to $( \mcal{U}_2 , \mcal{V}_2 )$}
if $F(\mcal{U}_1 )$ is contained in $\mcal{U}_2$ and 
$F(\mcal{V}_1 )$ is in $\mcal{V}_2$. 

\item  Let $( \mcal{U}_{in} , \cdots, \mcal{U}_{in})$  be 
an $n$-gon of recollements in $\mathcal{D}_i$ $(i=1,2)$. 
We say that \emph{$F$ sends $( \mcal{U}_{1n} , \cdots , \mcal{U}_{1n})$ to 
$( \mcal{U}_{2n} , \cdots, \mcal{U}_{2n})$}
if $F(\mcal{U}_{1k} )$ is contained in $\mcal{U}_{2k}$ for any $k$.

\end{enumerate}
\end{defn}

\begin{lem}[\cite{IKM}]\label{17-1Apr30}
If a triangle functor $F:\mcal{D}_{1} \to \mcal{D}_{2}$ 
sends a stable t-structure $( \mcal{U}_1 , \mcal{V}_1 )$ in $\mcal{D}_1$ 
to a stable t-structure $( \mcal{U}_2 , \mcal{V}_2 )$ in $\mcal{D}_2$. 
Then we have the following: 
\begin{enumerate}
\item If $F\mid _{\mcal{U}_1 }$ is full (resp., faithful), then 
$\Hom _{\mcal{D}_1}( U, X) \to \Hom _{\mcal{D}_2}( FU, FX)$ is 
surjective (resp., injective) for 
$U\in  {\mcal{U}_1}$ and $X\in  {\mcal{D}_1}$. 
\item If $F\mid _{\mcal{V}_1 }$ is full (resp., faithful), then 
$\Hom _{\mcal{D}_1}( X, V) \to \Hom _{\mcal{D}_2}( FX, FV)$ is 
surjective (resp., injective) for 
$X\in  {\mcal{D}_1}$ and $V\in  {\mcal{V}_1}$. 
\item If $F$ is full and $F\mid _{\mcal{U}_1}:\mcal{U}_1\to\mcal{U}_2$ and
$F\mid_{\mcal{V}_1}:\mcal{V}_1\to\mcal{V}_2$ are dense, 
then $F$ is dense. 
\end{enumerate}
\end{lem}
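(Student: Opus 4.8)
The plan is a routine d\'evissage using the truncation triangles provided by axiom~(c) of a stable t-structure, together with the orthogonality~(b) and the shift-invariance $\Sigma\mcal{U}=\mcal{U}$, $\Sigma\mcal{V}=\mcal{V}$ of~(a). Parts~(1) and~(2) are formally dual, so I would only carry out~(1). Fix $U\in\mcal{U}_1$ and $X\in\mcal{D}_1$ and choose a triangle $U'\xarr{\alpha}X\arr V'\arr\Sigma U'$ with $U'\in\mcal{U}_1$, $V'\in\mcal{V}_1$. Applying $\Hom_{\mcal{D}_1}(U,-)$, the neighbouring terms $\Hom_{\mcal{D}_1}(U,\Sigma^{-1}V')$ and $\Hom_{\mcal{D}_1}(U,V')$ vanish by~(b) (note $\Sigma^{-1}V'\in\mcal{V}_1$), so $\alpha$ induces an isomorphism $\Hom_{\mcal{D}_1}(U,U')\iso\Hom_{\mcal{D}_1}(U,X)$. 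Since $F$ is a triangle functor sending $(\mcal{U}_1,\mcal{V}_1)$ into $(\mcal{U}_2,\mcal{V}_2)$, the image triangle $FU'\xarr{F\alpha}FX\arr FV'\arr\Sigma FU'$ has $FU'\in\mcal{U}_2$ and $FV'\in\mcal{V}_2$, so the same computation in $\mcal{D}_2$ gives $\Hom_{\mcal{D}_2}(FU,FU')\iso\Hom_{\mcal{D}_2}(FU,FX)$.

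I would then chase the commutative square
\[\begin{CD}
\Hom_{\mcal{D}_1}(U,U') @>{\alpha_*}>> \Hom_{\mcal{D}_1}(U,X)\\
@VVV @VVV\\
\Hom_{\mcal{D}_2}(FU,FU') @>{(F\alpha)_*}>> \Hom_{\mcal{D}_2}(FU,FX)
\end{CD}\]
whose vertical maps are induced by $F$ and whose horizontal maps are the isomorphisms just established. Because $U,U'\in\mcal{U}_1$, the left vertical map is precisely the map on Hom-sets given by $F\mid_{\mcal{U}_1}$, hence surjective (resp.\ injective) by hypothesis; therefore so is the right vertical map, which is the assertion of~(1). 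Part~(2) is obtained by the same argument applied to $\Hom_{\mcal{D}_i}(-,V)$ with $V\in\mcal{V}_1$, where now the two vanishing terms $\Hom(\Sigma U',V)$ and $\Hom(U',V)$ are killed by~(b) using $\Sigma U'\in\mcal{U}_1$.

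For~(3), let $Y\in\mcal{D}_2$ and take a truncation triangle $U_Y\arr Y\arr V_Y\xarr{w}\Sigma U_Y$ with $U_Y\in\mcal{U}_2$, $V_Y\in\mcal{V}_2$. Density of $F\mid_{\mcal{U}_1}$ and of $F\mid_{\mcal{V}_1}$ yields objects $\widetilde U\in\mcal{U}_1$, $\widetilde V\in\mcal{V}_1$ and isomorphisms $F\widetilde U\iso U_Y$, $F\widetilde V\iso V_Y$. Transporting $w$ along these isomorphisms and along the structure isomorphism $F\Sigma\widetilde U\iso\Sigma F\widetilde U$ produces a morphism $F\widetilde V\arr F\Sigma\widetilde U$, which by fullness of $F$ is $F(\varphi)$ for some $\varphi\colon\widetilde V\arr\Sigma\widetilde U$. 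Completing $\varphi$ to a triangle $\widetilde U\arr Z\arr\widetilde V\xarr{\varphi}\Sigma\widetilde U$ in $\mcal{D}_1$ and applying $F$ gives a triangle $F\widetilde U\arr FZ\arr F\widetilde V\arr\Sigma F\widetilde U$ which, after the above identifications, is a triangle on $w$; uniqueness of the cone then gives $FZ\cong Y$, so $F$ is dense.

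The argument is entirely routine. The only places that call for some care are the bookkeeping with the triangle-functor structure isomorphism $F\Sigma\cong\Sigma F$ when transporting the connecting morphism $w$ in~(3), and keeping straight in~(1)--(2) that~(b) is applied both to one half of a stable t-structure and to its $\Sigma^{\pm1}$-shift (permissible by~(a)). I do not expect a genuine obstacle.
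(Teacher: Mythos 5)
Your argument is correct in all three parts; note that the paper itself states Lemma \ref{17-1Apr30} only as a quotation from \cite{IKM} and supplies no proof, and your d\'evissage via the truncation triangle $U'\xarr{\alpha}X\to V'\to\Sigma U'$, the two vanishing Hom-groups coming from conditions (a) and (b), and the commutative square comparing $\Hom_{\mcal{D}_1}(U,U')\to\Hom_{\mcal{D}_2}(FU,FU')$ with $\Hom_{\mcal{D}_1}(U,X)\to\Hom_{\mcal{D}_2}(FU,FX)$ is exactly the standard argument of the cited source. The only delicate point in (3), the bookkeeping with the structure isomorphism $F\Sigma\cong\Sigma F$ when transporting the connecting morphism $w$ and invoking uniqueness of the cone, you have already flagged and handled correctly.
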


\begin{prop}\label{p20Apr30} 
Let $\mcal{D}_1$, $\mcal{D}_2$ be triangulated categories. 
Let $( \mcal{U}_1 ,\cdots , \mcal{U}_{n})$ and
 $( \mcal{V}_1 ,\cdots , \mcal{V}_{n})$
 be $n$-gons of recollements in $\mcal{D}_1$ and $\mcal{D}_2$, respectively.
 Assume a triangle functor $F:\mcal{D}_{1} \to \mcal{D}_{2}$ 
sends $( \mcal{U}_1 , \cdots, \mcal{U}_{n} )$ to
$( \mcal{V}_1 , \cdots, \mcal{V}_{n} )$. 
Then the following hold.
\begin{enumerate}
\item In the case that $n$ is odd, 
if $F\mid _{\mcal{U}_t }$ is fully faithful (equivalent) for some $t$, so is $F$. 
\item In the case that $n$ is even, 
if $F\mid _{\mcal{U}_t }$ and $F\mid _{\mcal{U}_{t+1}}$ 
are fully faithful (equivalent) for some $t$, so is $F$. 
\end{enumerate}
\end{prop}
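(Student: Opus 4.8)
The plan is to run an induction on $n$ using the ``one-step'' rigidity of a single recollement, exploiting the strong symmetry from Proposition \ref{added}. The base case is essentially the combination of Lemma \ref{17-1Apr30}(3) with Proposition \ref{added}: if $F\mid_{\mcal{U}_t}$ is dense (resp.\ fully faithful) and $F\mid_{\mcal{U}_{t\pm1}}$ is also dense (resp.\ fully faithful), then using the two stable $t$-structures $(\mcal{U}_{t-1},\mcal{U}_t)$ and $(\mcal{U}_t,\mcal{U}_{t+1})$ meeting at $\mcal{U}_t$ and the induced triangles for every $X\in\mcal{D}_1$, one upgrades fullness/faithfulness of the restrictions to fullness/faithfulness of $F$ on all of $\mcal{D}_1$, and likewise density. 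Concretely, for an object $X\in\mcal{D}_1$ the recollement triangles $i_{t!}i_t^\ast X\to X\to i_{t\ast}i_t^\ast X\to$ (using the notation of Proposition \ref{added}, with $\mcal{D}_1/\mcal{U}_t$ replaced via the equivalence with $\mcal{U}_{t-1}$) express $X$ as an extension of an object of $\mcal{U}_t$ by an object of $\mcal{U}_{t-1}$ (or $\mcal{U}_{t+1}$), so the five lemma applied to the long exact Hom-sequences, together with Lemma \ref{17-1Apr30}(1)(2), forces $F$ to be fully faithful; density follows from Lemma \ref{17-1Apr30}(3).

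\textbf{The inductive/parity argument.} The key observation is that if $F$ is fully faithful on $\mcal{U}_t$ for one index, then it is automatically fully faithful on the ``next'' subcategory obtained by rotating the polygon, \emph{provided} we have enough room. Precisely: because $(\mcal{U}_{t-1},\mcal{U}_t)$ and $(\mcal{U}_t,\mcal{U}_{t+1})$ are stable $t$-structures, Proposition \ref{added} gives equivalences $\mcal{D}_1/\mcal{U}_t\simeq\mcal{U}_{t-1}\simeq\mcal{U}_{t+1}$, and the same for $\mcal{D}_2$; one checks $F$ induces a functor between the quotients compatible with these equivalences (using that $F$ sends the polygon to the polygon), hence $F\mid_{\mcal{U}_{t+1}}$ fully faithful is equivalent to $F$ being fully faithful on $\mcal{D}_1/\mcal{U}_t$ in a way that matches $F\mid_{\mcal{U}_{t-1}}$. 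Iterating this rotation: when $n$ is odd, starting from a single index $t$ where $F\mid_{\mcal{U}_t}$ is fully faithful and walking around the $n$-gon, the ``$\pm2$-step'' propagation (fully faithful on $\mcal{U}_t$ plus fully faithful on $\mcal{D}/\mcal{U}_t\simeq\mcal{U}_{t-1}$) eventually lands on $\mcal{U}_{t+1}$ after going the long way around (since $n$ odd makes $t$ and $t+1$ connected by an even-length walk in steps of $2$ through the cycle), giving the second hypothesis of the base case for free. When $n$ is even this walk never reaches $\mcal{U}_{t+1}$ from $\mcal{U}_t$ in steps of $2$, which is exactly why the even case must assume fully faithfulness on two \emph{adjacent} subcategories $\mcal{U}_t,\mcal{U}_{t+1}$. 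In that case the base-case argument applies directly with the pair $(\mcal{U}_t,\mcal{U}_{t+1})$, and then rotation propagates full faithfulness of $F$ to all $\mcal{U}_k$, and finally to $\mcal{D}_1$.

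\textbf{Equivalence.} For the ``equivalent'' (i.e.\ triangle equivalence) versions one argues that $F$ fully faithful plus the restrictions dense implies $F$ dense: any $Y\in\mcal{D}_2$ sits in a triangle with terms in $\mcal{V}_t=F(\mcal{U}_t)$'s image and in the image of the quotient, and Lemma \ref{17-1Apr30}(3) packages this. Combined with full faithfulness of $F$ already established, $F$ is a triangle equivalence.

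\textbf{Main obstacle.} The routine parts are the five-lemma bookkeeping on Hom-sequences and invoking Lemma \ref{17-1Apr30}. The genuinely delicate step is the \emph{rotation/propagation} lemma: making precise that $F\mid_{\mcal{U}_{t+1}}$ being fully faithful (or dense) is \emph{equivalent}, under the equivalences of Proposition \ref{added}, to a full-faithfulness (density) statement for the induced functor on quotients, and that these induced functors are genuinely compatible with $F$ and with the rotation of the polygon. One must check that $F$ descends to $\bar F:\mcal{D}_1/\mcal{U}_t\to\mcal{D}_2/\mcal{U}_t$ (it does, since $F(\mcal{U}_t)\subseteq\mcal{U}_t$), that $\bar F$ transports the induced $n$-gon on the quotients to itself, and that under $\mcal{D}_i/\mcal{U}_t\simeq\mcal{U}_{i,t-1}$ the functor $\bar F$ corresponds to $F\mid_{\mcal{U}_{t-1}}$ up to natural isomorphism. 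Granting this, the parity dichotomy in the statement is forced by the elementary fact that in a cycle of length $n$, the subgroup generated by $2$ is everything iff $n$ is odd, and the proof is complete.
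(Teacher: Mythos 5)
Your argument is correct, and it fills in substantially more than the paper does: the paper's entire proof is the single line ``By Lemma \ref{17-1Apr30}.'' Where you and the paper genuinely diverge is in the mechanism for propagating full faithfulness around the polygon. You route the $\pm 2$-step propagation through the Verdier quotients, using Proposition \ref{added} to identify $\mcal{U}_{t-1}\simeq\mcal{D}_1/\mcal{U}_t\simeq\mcal{U}_{t+1}$ and checking that $F$ descends compatibly; this works, but it forces you to verify the compatibility of $F$ with the quotient functors and the equivalences (the ``delicate step'' you flag), and it imports machinery beyond the cited lemma. The propagation can instead be done entirely at the level of Hom-sets, in the spirit of Lemma \ref{17-1Apr30}(1)(2): writing $P_i$ for ``$\Hom(U,X)\to\Hom(FU,FX)$ is bijective for $U\in\mcal{U}_i$, $X\in\mcal{D}_1$'' and $Q_i$ for the dual statement, one has $F|_{\mcal{U}_i}$ fully faithful $\Rightarrow P_i\wedge Q_i$, and then $P_i\Rightarrow Q_{i+2}$ (decompose $X$ by $(\mcal{U}_i,\mcal{U}_{i+1})$ and use $\Hom(\mcal{U}_{i+1},\mcal{U}_{i+2})=0$ on both sides) and dually $Q_i\Rightarrow P_{i-2}$; this gives the same two-step rotation with no quotient categories, and $P_t\wedge P_{t+1}$ (or $Q_t\wedge Q_{t+1}$) plus the five lemma yields full faithfulness of $F$, with density then coming from Lemma \ref{17-1Apr30}(3). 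Your parity analysis ($2$ generates $\mathbb{Z}/n$ iff $n$ is odd) is exactly right and is the content of the odd/even dichotomy. Two small slips to fix if you write this up: the induced functor should be $\bar F:\mcal{D}_1/\mcal{U}_t\to\mcal{D}_2/\mcal{V}_t$ (not $\mcal{D}_2/\mcal{U}_t$), justified by $F(\mcal{U}_t)\subseteq\mcal{V}_t$; and the triangle you quote should be $j_!j^*X\to X\to i_*i^*X\to\Sigma j_!j^*X$ in the notation of Proposition \ref{st-t-1second}.
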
 

\begin{proof}
By Lemma \ref{17-1Apr30}.
\end{proof}

\section{Contravariantly finite subcategories and Stable $t$-structures}\label{n-gon}

In this section let $k$ be a field and $D:=\Hom_k(-,k)$.
The concept of stable $t$-structures is closely related to functorially finite subcategories \cite{AS}.
If $(\mcal{U},\mcal{V})$ is a stable $t$-structure of $\mcal{D}$, then clearly $\mcal{U}$ (resp., $\mcal{V}$) 
is a contravariantly (resp., covariantly) finite subcategory of $\mcal{D}$.
We shall show that a certain converse of this statement holds.
For a full subcategory $\mcal{U}$ of $\mcal{D}$, we put
\begin{eqnarray*}
\mcal{U}^\perp&:=&\{T\in\mcal{D}\ |\ \Hom_{\mcal{D}}(\mcal{U},T)=0\},\\
{}^\perp\mcal{U}&:=&\{T\in\mcal{D}\ |\ \Hom_{\mcal{D}}(T,\mcal{U})=0\}.
\end{eqnarray*}
Recall that an additive category is called \emph{Krull-Schmidt}
if any object is isomorphic to a finite direct sum of objects whose endomorphism rings are local.

\begin{defn}
Let $\mcal{C}$ be an additive category, and $\mcal{U}$ its full subcategory.
For an object $X$ of $\mcal{C}$, a morphism $U_X \xarr{f} X$ with $U_X \in \mcal{U}$ is called
a \emph{right $\mcal{U}$-approximation} if 
$\Hom_{\mcal{C}}(U,f):\Hom_{\mcal{C}}(U,U_X) \to \Hom_{\mcal{C}}(U,X)$ 
is surjective for any $U\in \mcal{U}$.
Moreover, a right $\mcal{U}$-approximation $U_X \xarr{f} X$ is called minimal if $g$ is an isomorphism 
whenever $g: U_X \to U_X$ satisfies $f\circ g=f$.

A full subcategory  $\mcal{U}$ is called a \emph{contravariantly finite} subcategory if
every object $X$ of $\mcal{C}$ has a right a $\mcal{U}$-approximation.
A left a $\mcal{U}$-approximation and a covariantly finite subcategory are defined dually.
$\mcal{U}$ is called a functorially finite subcategory if it is contravariantly finite and covariantly finite.
\end{defn}

\begin{prop}\label{wakamatsu}
Let $\mcal{D}$ be a Krull-Schmidt triangulated category.
For any contravariantly (resp., covariantly) finite thick subcategory $\mcal{U}$ of $\mcal{D}$, 
we have a stable $t$-structure $(\mcal{U},\mcal{U}^\perp)$ (resp., $({}^\perp\mcal{U},\mcal{U})$) in $\mcal{D}$.
\end{prop}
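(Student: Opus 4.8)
The plan is to verify the three axioms of Definition~\ref{st-tprime} for the pair $(\mcal{U},\mcal{U}^\perp)$; the covariantly finite case is dual, so I assume $\mcal{U}$ is contravariantly finite. Axiom (b) holds by the very definition of $\mcal{U}^\perp$. For axiom (a), since $\mcal{U}$ is thick it is a triangulated subcategory, so $\Sigma\mcal{U}=\mcal{U}$; hence $T\in\mcal{U}^\perp$ iff $\Hom_{\mcal{D}}(\Sigma\mcal{U},T)=0$ iff $\Hom_{\mcal{D}}(\mcal{U},\Sigma T)=0$ iff $\Sigma T\in\mcal{U}^\perp$, giving $\Sigma\mcal{U}^\perp=\mcal{U}^\perp$. (Applying $\Hom_{\mcal{D}}(U,-)$ to triangles also shows $\mcal{U}^\perp$ is closed under extensions and summands, so both members of the pair are in fact thick subcategories.)

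The real content is axiom (c), for which I would run a Wakamatsu-type argument. Fix $X\in\mcal{D}$. Since $\mcal{D}$ is Krull--Schmidt and $\mcal{U}$ is contravariantly finite, $X$ admits a \emph{right-minimal} right $\mcal{U}$-approximation $f\colon U_X\arr X$ (a standard fact in Krull--Schmidt categories: among all right $\mcal{U}$-approximations of $X$ take one whose source has the fewest indecomposable summands). Complete $f$ to a triangle $U_X\xarr{f}X\xarr{g}V\xarr{h}\Sigma U_X$; I claim $V\in\mcal{U}^\perp$. For $U\in\mcal{U}$, apply $\Hom_{\mcal{D}}(U,-)$: since $f$ is a right $\mcal{U}$-approximation $f_\ast$ is surjective, hence $g_\ast=0$, hence $h_\ast$ is injective and identifies $\Hom_{\mcal{D}}(U,V)$ with $\ker\big((\Sigma f)_\ast\colon\Hom_{\mcal{D}}(U,\Sigma U_X)\to\Hom_{\mcal{D}}(U,\Sigma X)\big)$. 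As $\Sigma$ is an autoequivalence with $\Sigma\mcal{U}=\mcal{U}$, the morphism $\Sigma f$ is again a right-minimal right $\mcal{U}$-approximation, so it remains to prove the following lemma.

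\textbf{Lemma.} If $f\colon U_X\arr X$ is a right-minimal right $\mcal{U}$-approximation and $U\in\mcal{U}$, then $f_\ast\colon\Hom_{\mcal{D}}(U,U_X)\to\Hom_{\mcal{D}}(U,X)$ is injective. To prove it, take $\psi\colon U\to U_X$ with $f\psi=0$ and complete $\psi$ to a triangle $U\xarr{\psi}U_X\xarr{p}C\xarr{q}\Sigma U$; since $\mcal{U}$ is thick, $C\in\mcal{U}$. From $f\psi=0$ one gets a factorization $f=f'\circ p$ with $f'\colon C\to X$, and $f'$ is again a right $\mcal{U}$-approximation (any $\beta\colon U'\to X$ with $U'\in\mcal{U}$ factors as $\beta=f\gamma=f'(p\gamma)$). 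Right-minimality of $f$ now enters: since $f$ is a right $\mcal{U}$-approximation and $C\in\mcal{U}$, write $f'=f\sigma$ for some $\sigma\colon C\to U_X$; then $f=f\sigma p$ forces $\sigma p\in\End_{\mcal{D}}(U_X)$ to be an isomorphism, so $p$ is a split monomorphism, and since $p\psi=0$ we conclude $\psi=0$.

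The main obstacle is precisely this lemma, i.e.\ upgrading the bare surjectivity built into the definition of a contravariantly finite subcategory to the orthogonality $\Hom_{\mcal{D}}(\mcal{U},V)=0$. The inclusion $\mcal{U}\inj\mcal{D}$ need not be coreflective (an approximation $U_X\to X$ need not be a bijection on $\Hom_{\mcal{D}}(U,-)$), so no adjoint-functor shortcut is available; the conclusion genuinely uses both the Krull--Schmidt hypothesis — to produce the right-minimal approximation — and the thickness of $\mcal{U}$ — to keep the cone $C$ inside $\mcal{U}$. Everything else is the routine bookkeeping of the rotation axiom and of long exact $\Hom$-sequences.
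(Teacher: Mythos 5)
Your proof is correct, and at bottom it is the same Wakamatsu-type argument the paper uses: both proofs take a right-minimal right $\mcal{U}$-approximation $f\colon U_X\to X$ (whose existence both of you assert from the Krull--Schmidt hypothesis together with $\mcal{U}$ being closed under summands), both use thickness of $\mcal{U}$ to keep an auxiliary cone inside $\mcal{U}$, and both use right-minimality to force a map out of $U_X$ to be a split monomorphism via the relation $f=f\sigma p$. The difference is in the packaging. The paper tests $V$ directly against a morphism $\alpha\colon U\to V$ and invokes the octahedral axiom to manufacture the object $M$ (the cone of the composite $\Sigma^{-1}U\to\Sigma^{-1}V\to U_X$) together with the factorization $f=f'\circ\gamma$; you instead identify $\Hom_{\mcal{D}}(U,V)$ with $\ker\bigl((\Sigma f)_*\bigr)$ via the long exact sequence and prove a clean standalone lemma --- a right-minimal right $\mcal{U}$-approximation from a thick subcategory induces an injection, hence a bijection, $\Hom_{\mcal{D}}(U,U_X)\to\Hom_{\mcal{D}}(U,X)$ for all $U\in\mcal{U}$ --- whose proof needs only the single triangle on the cone of $\psi$ and no octahedron. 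Up to rotation your cone $C$ is the paper's $M$, so nothing essentially new is happening, but your version avoids the three-by-four octahedral diagram and isolates a reusable statement about minimal approximations; the observations on axioms (a) and (b) and the dual covariantly finite case are routine and match what the paper leaves implicit.
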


\begin{proof}
This is a consequence of Wakamatsu-type Lemma (see \cite[Prop. 3.6]{IY} for example).
For the convenience of the reader, we give the proof here.
Since $\mcal{D}$ is Krull-Schmidt category and $\mcal{U}$ is closed under direct summands, 
for any object $X$ of $\mcal{D}$ there is a triangle
$U_X \xarr{f} X \xarr{g} V \xarr{h} \Sigma U_X$ such that
$U_X \xarr{f} X$ is a minimal right $\mcal{U}$-approximation.
For any morphism $\alpha : U \to V$ with $U \in \mcal{U}$, by octahedral axiom we have a commutative diagram
\[\xymatrix{
\Sigma^{-1}U \ar@{=}[r]\ar[d]^{\Sigma^{-1}\alpha} & \Sigma^{-1}U \ar[d] \\
\Sigma^{-1}V \ar[r]^{-\Sigma^{-1}h} \ar[d]^{\beta} & U_X  \ar[r]^{f} \ar[d]^{\gamma} 
& X \ar[r]^{g} \ar@{=}[d] & V \ar[d]^{\Sigma\beta} \\
Z \ar[r]^{h'} \ar[d] & M  \ar[r]^{f'} \ar[d] & X \ar[r]^{g'} & \Sigma Z \\
U \ar@{=}[r] & U
}\]
where all columns and rows of 4 terms are triangles.  Then $M$ belongs to $\mcal{U}$.
Since $f$ is a right $\mcal{U}$-approximation, there is a morphism $\delta:M \to U_X$
such that $f\circ\delta=f'$.  Then there is a morphism $\epsilon: Z \to \Sigma^{-1}V$
such that $-\Sigma^{-1}h\circ\epsilon=\delta\circ h'$.
Then $\gamma$ is a split monomorphism because of the minimality of $f$.
Therefore $\beta$ is a split monomorphism, and $\Sigma^{-1}\alpha$ and $\alpha$ are zero.
Hence we have $V \in \mcal{U}^{\perp}$.
\end{proof}

\begin{defn}
Let $\mcal{D}$ be a $k$-linear triangulated category such that \\
$\dim_k\Hom_{\mcal{D}}(X,Y)<\infty$ for any $X,Y\in\mcal{D}$.
An autofunctor $S:\mcal{D}\to\mcal{D}$ is called a \emph{Serre functor} \cite{BK,RV} if there exists a functorial isomorphism
\[\Hom_{\mcal{D}}(X,Y)\simeq D\Hom_{\mcal{D}}(Y, SX)\]
for any $X,Y\in\mcal{D}$.

We say that $\mcal{D}$ is \emph{$(m/n)$-Calabi-Yau} for a positive integer $n$ and an integer $m$ if we have an isomorphism $S^n\simeq\Sigma^m$ of functors
\footnote{We notice that we can not cancel a common divisor of $m$ and $n$.}.
\end{defn}

Immediately we have the following.
\begin{lem}\label{properties of serre}
Let $\mcal{D}$ be a Krull-Schmidt triangulated category with a Serre functor $S$. Then the following hold.
\begin{itemize}
\item[(1)] $\mcal{U}^\perp={}^\perp(S\mcal{U})$ for any subcategory $\mcal{U}$ of $\mcal{D}$.
\item[(2)] If $(\mcal{U},\mcal{V})$ is a stable $t$-structure and $\mcal{U}$ is functorially finite in $\mcal{D}$,
then $(\mcal{V},S\mcal{U})$ is a stable $t$-structure and $\mcal{V}$ is functorially finite in $\mcal{D}$.
\end{itemize}
\end{lem}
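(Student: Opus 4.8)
\emph{Proof sketch.} The plan is to get (1) straight from the Serre duality isomorphism, and then to bootstrap (2) out of (1) together with Proposition \ref{wakamatsu} applied to the subcategory $S\mcal{U}$ rather than to $\mcal{U}$ itself.

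For (1), fix $T\in\mcal{D}$. By definition $T\in\mcal{U}^\perp$ means $\Hom_{\mcal{D}}(U,T)=0$ for every $U\in\mcal{U}$. The functorial isomorphism $\Hom_{\mcal{D}}(U,T)\simeq D\Hom_{\mcal{D}}(T,SU)$ shows that $\Hom_{\mcal{D}}(U,T)=0$ if and only if $\Hom_{\mcal{D}}(T,SU)=0$; hence $T\in\mcal{U}^\perp$ if and only if $\Hom_{\mcal{D}}(T,SU)=0$ for all $U\in\mcal{U}$, i.e. $T\in{}^\perp(S\mcal{U})$. That is all of (1).

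For (2), the first thing I would record is the standard identity $\mcal{V}=\mcal{U}^\perp$ (and symmetrically $\mcal{U}={}^\perp\mcal{V}$), valid for any stable $t$-structure $(\mcal{U},\mcal{V})$. The inclusion $\mcal{V}\subseteq\mcal{U}^\perp$ is immediate from axiom (b). For the reverse inclusion, given $T\in\mcal{U}^\perp$ I would take the triangle $U_T\xarr{a}T\to V_T\to\Sigma U_T$ of axiom (c) with $U_T\in\mcal{U}$ and $V_T\in\mcal{V}$; here $a\in\Hom_{\mcal{D}}(U_T,T)=0$, so the triangle splits and produces a section $\Sigma U_T\to V_T$ of the map $V_T\to\Sigma U_T$. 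Since $\Sigma U_T\in\mcal{U}$ and $V_T\in\mcal{V}$, this section lies in $\Hom_{\mcal{D}}(\mcal{U},\mcal{V})=0$, forcing $\Sigma U_T=0$ and hence $T\simeq V_T\in\mcal{V}$. In particular $\mcal{U}={}^\perp\mcal{V}$ is a thick subcategory, and therefore so is $S\mcal{U}$, because $S$ is an autoequivalence. Likewise $S$ carries right (resp. left) $\mcal{U}$-approximations to right (resp. left) $S\mcal{U}$-approximations, so the functorial finiteness of $\mcal{U}$ passes to $S\mcal{U}$; in particular $S\mcal{U}$ is covariantly finite. Now I would apply Proposition \ref{wakamatsu} to the covariantly finite thick subcategory $S\mcal{U}$, obtaining the stable $t$-structure $({}^\perp(S\mcal{U}),S\mcal{U})$ in $\mcal{D}$. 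By part (1) and the identity $\mcal{V}=\mcal{U}^\perp$ just established, ${}^\perp(S\mcal{U})=\mcal{U}^\perp=\mcal{V}$, so $(\mcal{V},S\mcal{U})$ is a stable $t$-structure. Finally $\mcal{V}$ is covariantly finite as the second term of $(\mcal{U},\mcal{V})$ and contravariantly finite as the first term of $(\mcal{V},S\mcal{U})$, hence functorially finite.

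I do not expect a serious obstacle here, as the statement is genuinely elementary. The two points that need a little care are the recollection that $\mcal{V}=\mcal{U}^\perp$ for a stable $t$-structure (so that Proposition \ref{wakamatsu} can be invoked on $S\mcal{U}$), and the observation that an autoequivalence preserves both thickness and functorial finiteness of subcategories. Everything else is a direct unwinding of the defining isomorphism of the Serre functor.
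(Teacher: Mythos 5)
Your argument is correct and follows essentially the same route as the paper: part (1) is the Serre duality isomorphism applied objectwise, and part (2) identifies ${}^\perp(S\mcal{U})=\mcal{U}^\perp=\mcal{V}$ and then invokes the dual of Proposition \ref{wakamatsu} for the covariantly finite thick subcategory $S\mcal{U}$, deducing functorial finiteness of $\mcal{V}$ from its position in the two stable $t$-structures. The extra details you supply (the splitting argument showing $\mcal{V}=\mcal{U}^\perp$, and the preservation of thickness and approximations under the autoequivalence $S$) are exactly the steps the paper leaves implicit.
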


\begin{proof}
(1) Immediate from the definition of Serre functor.

(2) We have ${}^\perp(S\mcal{U})=\mcal{U}^\perp=\mcal{V}$ by (1).
Since $S\mcal{U}$ is a covariantly finite subcategory of $\mcal{D}$,
we have that $(\mcal{V},S\mcal{U})$ is a stable $t$-structure by the dual of Proposition \ref{wakamatsu}.
Consequently $\mcal{V}$ is a functorially finite subcategory of $\mcal{D}$ by Proposition \ref{wakamatsu}.
\end{proof}

\begin{prop} \label{n-gon1}
Let $\mcal{D}$ be an $(m/n)$-Calabi-Yau triangulated category.
For any functorially finite thick subcategory $\mcal{U}_1$ of $\mcal{D}$, we put $\mcal{U}_{i+1}:=\mcal{U}_i^\perp$ for any $i$.
Then we have an $l$-gon $(\mcal{U}_{1}, \cdots , \mcal{U}_{l})$ of recollements in $\mcal{D}$ 
for some positive divisor $l$ of $2n$.
\end{prop}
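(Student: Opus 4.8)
The plan is to iterate the two operations packaged in Lemma \ref{properties of serre}(2), starting from the functorially finite thick subcategory $\mcal{U}_1$, and to show that the resulting sequence $\mcal{U}_1, \mcal{U}_2, \cdots$ with $\mcal{U}_{i+1} = \mcal{U}_i^\perp$ is periodic with period dividing $2n$. First I would observe that $\mcal{D}$ is Krull--Schmidt (a $k$-linear Hom-finite triangulated category with a Serre functor is Krull--Schmidt over a perfect field — or this may be part of the standing hypotheses in the section; I would invoke it), so Proposition \ref{wakamatsu} and Lemma \ref{properties of serre} both apply. Since $\mcal{U}_1$ is functorially finite and thick, Proposition \ref{wakamatsu} gives that $(\mcal{U}_1, \mcal{U}_1^\perp) = (\mcal{U}_1, \mcal{U}_2)$ is a stable $t$-structure, and Lemma \ref{properties of serre}(2) then gives that $\mcal{U}_2 = \mcal{U}_1^\perp$ is again functorially finite (and thick, being the right perpendicular of a thick subcategory, hence closed under the relevant operations). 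So the construction can be repeated: by induction, every $\mcal{U}_i$ is functorially finite and thick, and $(\mcal{U}_i, \mcal{U}_{i+1})$ is a stable $t$-structure for every $i \geq 1$.

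Next I would pin down the periodicity. Lemma \ref{properties of serre}(2) says that if $(\mcal{U},\mcal{V})$ is a stable $t$-structure with $\mcal{U}$ functorially finite, then $(\mcal{V}, S\mcal{U})$ is a stable $t$-structure. Applying this to $(\mcal{U}_i, \mcal{U}_{i+1})$ identifies $\mcal{U}_{i+2} = \mcal{U}_{i+1}^\perp = S\mcal{U}_i$. Thus shifting the index by $2$ is the same as applying the Serre functor: $\mcal{U}_{i+2} = S\mcal{U}_i$ for all $i$. Iterating, $\mcal{U}_{i+2n} = S^n \mcal{U}_i = \Sigma^m \mcal{U}_i = \mcal{U}_i$, where the last equality uses that each $\mcal{U}_i$ is a triangulated subcategory, hence $\Sigma$-stable. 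Therefore the sequence $(\mcal{U}_i)_{i \geq 1}$ is periodic, and if $l$ denotes its minimal period then $l \mid 2n$. Writing $(\mcal{U}_1, \cdots, \mcal{U}_l)$, we have that $(\mcal{U}_i, \mcal{U}_{i+1})$ is a stable $t$-structure for $1 \leq i \leq l$ with indices read modulo $l$ (in particular $(\mcal{U}_l, \mcal{U}_{l+1}) = (\mcal{U}_l, \mcal{U}_1)$ by periodicity), which is exactly the definition of an $l$-gon of recollements.

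The main obstacle I anticipate is the bookkeeping around thickness and the claim "$\mcal{U}_i$ functorially finite" being stably inherited: Lemma \ref{properties of serre}(2) as stated gives functorial finiteness of $\mcal{V}$ but one must also know $\mcal{V} = \mcal{U}^\perp$ is thick in order to reapply Proposition \ref{wakamatsu} at the next step. This should follow because $\mcal{U}^\perp$ is automatically closed under extensions, shifts, and — using the stable $t$-structure decomposition — under direct summands, so it is a thick subcategory; I would spell this out briefly. A second minor point is making sure the two alternating operations "take $\perp$" and "apply $S$" genuinely commute with the periodicity count, i.e. that $S^n = \Sigma^m$ forces $\mcal{U}_{i+2n} = \mcal{U}_i$ rather than merely $S^n\mcal{U}_i = \Sigma^m \mcal{U}_i$; this is immediate since $\mcal{U}_i$ is triangulated, but it is worth a sentence. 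Everything else is a direct assembly of the already-established Proposition \ref{wakamatsu} and Lemma \ref{properties of serre}.
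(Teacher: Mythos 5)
Your proposal is correct and follows essentially the same route as the paper: apply Proposition \ref{wakamatsu} to get the first stable $t$-structure, iterate Lemma \ref{properties of serre}(2) to obtain $(\mcal{U}_i,\mcal{U}_{i+1})$ with $\mcal{U}_{i+2}=S\mcal{U}_i$, and conclude periodicity from $S^n\simeq\Sigma^m$. The extra care you take with thickness of $\mcal{U}_i^\perp$ and with $\Sigma$-stability forcing $\mcal{U}_{i+2n}=\mcal{U}_i$ is sound and merely makes explicit what the paper leaves implicit.
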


\begin{proof}
By Propositions \ref{wakamatsu}, we have a stable $t$-structure $(\mcal{U}_1,\mcal{U}_{2})$ in $\mcal{D}$.
Using Lemma \ref{properties of serre}(2) inductively, we have a stable $t$-structure $(\mcal{U}_i,\mcal{U}_{i+1})$ in $\mcal{D}$
such that $\mcal{U}_{i+2}=S\mcal{U}_i$ for any $i$.
We have the statement because of $S^n\simeq \Sigma^m$.
\end{proof}

\section{Constructions of Stable $t$-structures}\label{n-gonII}

In this section, we investigate polygons of recollements in derived categories and in stable module categories.

For a ring $R$, we denote by $\Mod R$ (resp., $\fmod R$) the category of right 
(resp., finitely generated right) $R$-modules, and denote by $\Pj {R}$ (resp., $\Ij R$, $\pj {R}$) 
the full subcategory of 
$\cat{Mod}R$ consisting of projective (resp., injective, finitely generated projective) modules.
For right (resp., left) $R$-module $M_R$ (resp., ${}_{R}N$), we denote by 
$\opn{idim}M_R$ (resp., $\opn{idim}{}_{R}N$) the injective dimension
of $M_{R}$ (resp., ${}_{R}N$), and by 
$\opn{pdim}M_R$ (resp., $\opn{pdim}{}_{R}N$) the projective dimension
of $M_{R}$ (resp., ${}_{R}N$).
For $\mcal{A}$ be an abelian category and its addtive subcategory $\mcal{B}$.
we denote by $\cat{D}^{\mrm{b}}(\mcal{A})$ (resp., $\cat{K}^{\mrm{b}}(\mcal{B}$)
the derived category (resp., the homotopy category) of bounded complexes of objects of $\mcal{A}$ (resp., $\mcal{B}$).

\begin{defn}
We call a ring $R$ \emph{Iwanaga-Gorenstein} if it is Noetherian
with $\opn{idim}_RR<\infty$ and $\opn{idim}R_R<\infty$ \cite{Iw}.
We define the category of \emph{Cohen-Macaulay $R$-modules}
\footnote{
In the representation theory of orders  \cite{CR, A2, Y}, there is another
notion of Cohen-Macaulay modules which generalizes the classical
notion in commutative ring theory. These two concepts coincide for
Gorenstein orders.
} 
and the category of \emph{large Cohen-Macaulay $R$-modules} by
\begin{eqnarray*}
\CM R&:=&\{X\in\fmod R\ |\ \Ext^i_R(X,R)=0\ (i>0)\},\\
\LCM R&:=&\{X\in\Mod R\ |\ \Ext^i_R(X, \Pj R)=0\ (i>0)\}.
\end{eqnarray*}
\end{defn}

Then $\CM R$ forms a Frobenius category with the subcategory
$\pj R$ of projective-injective objects, and the stable
category $\underline{\CM}R$ forms a triangulated category \cite{H1}.
By \cite{IKM} there exist triangle equivalences
\[
\underline{\CM} R\simeq\cat{D}^{\mrm{b}}\!(\fmod R)/\cat{K}^{\mrm{b}}(\pj {R}),\quad
\underline{\LCM} R\simeq\cat{D}^{\mrm{b}}\!(\Mod R)/\cat{K}^{\mrm{b}}(\Pj {R}).
\]

For subcategories $\mcal{U}$ and $\mcal{V}$ of a triangulated category $\mcal{D}$, we put
\[\mcal{U}*\mcal{V}:=\{X\in\mcal{D}\ |\  U \to X \to V \to \Sigma U\ 
\text{is a triangle in}\ \mcal{D}\ (U\in \mcal{U}, V \in \mcal{V})\}.\]
By octahedral axiom, we have $(\mcal{U}*\mcal{V})*\mcal{W}=\mcal{U}*(\mcal{V}*\mcal{W})$.

\begin{lem}\label{about *}
Let $\mcal{D}$ be a triangulated subcategory, and $\mcal{U}$ and $\mcal{V}$ triangulated (resp., thick) subcategories
of $\mcal{D}$ satisfying $\Hom_{\mcal{D}}(\mcal{U},\mcal{V})=0$.
Then $\mcal{U}*\mcal{V}$ is a triangulated (resp., thick) subcategory of $\mcal{D}$.
\end{lem}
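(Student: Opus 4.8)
**Proof proposal for Lemma \ref{about *}.**

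The plan is to verify directly that $\mcal{U}*\mcal{V}$ is closed under the translation functor, under cones of morphisms, and (in the thick case) under direct summands. The key input is the vanishing $\Hom_{\mcal{D}}(\mcal{U},\mcal{V})=0$, which makes the triangle $U\to X\to V\to\Sigma U$ with $U\in\mcal{U}$, $V\in\mcal{V}$ essentially unique and functorial in $X$: given $X,X'\in\mcal{U}*\mcal{V}$ with chosen triangles and a morphism $X\to X'$, the composite $U\to X\to X'\to V'$ is zero, so $U\to X\to X'$ factors through $U'\to X'$, and then by the triangle axioms the morphism lifts to a morphism of triangles $(U\to X\to V)\to(U'\to X'\to V')$. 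I would record this functoriality first, since it is used repeatedly.

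Next I would check stability under $\Sigma$ and $\Sigma^{-1}$: applying $\Sigma$ to a triangle $U\to X\to V\to\Sigma U$ gives $\Sigma U\to\Sigma X\to\Sigma V\to\Sigma^2 U$, and since $\mcal{U},\mcal{V}$ are triangulated they are closed under $\Sigma^{\pm1}$, so $\Sigma^{\pm1}X\in\mcal{U}*\mcal{V}$. For closure under cones, take $X\xarr{f}X'$ with both in $\mcal{U}*\mcal{V}$, lift $f$ to a morphism of the two chosen triangles as above, and apply the $3\times3$ (nine) lemma / octahedral axiom to produce a triangle $C_{U}\to C_f\to C_{V}\to\Sigma C_U$ where $C_U$ is the cone of $U\to U'$ and $C_V$ the cone of $V\to V'$. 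Since $\mcal{U}$ and $\mcal{V}$ are triangulated, $C_U\in\mcal{U}$ and $C_V\in\mcal{V}$, whence $C_f\in\mcal{U}*\mcal{V}$. Together with stability under $\Sigma$ and the fact that $0\in\mcal{U}*\mcal{V}$, this shows $\mcal{U}*\mcal{V}$ is a triangulated subcategory.

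Finally, for the thick case, suppose $X\oplus Y\in\mcal{U}*\mcal{V}$ with triangle $U\to X\oplus Y\to V\to\Sigma U$. The idempotent $e$ of $X\oplus Y$ projecting onto $X$ is an endomorphism, so by the functoriality above it lifts to an idempotent endomorphism of the triangle, i.e. compatible idempotents $e_U$ on $U$ and $e_V$ on $V$. In a triangulated category with split idempotents — which we may assume, or else one passes to the idempotent completion; in the Krull–Schmidt or thick setting idempotents split — these decompose the triangle as a direct sum of two triangles, one of which has the form $U_1\to X\to V_1\to\Sigma U_1$ with $U_1=\opn{Im}e_U\in\mcal{U}$ and $V_1=\opn{Im}e_V\in\mcal{V}$ (using that $\mcal{U}$, $\mcal{V}$ are thick, hence closed under summands). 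Hence $X\in\mcal{U}*\mcal{V}$. The main obstacle is the bookkeeping in the octahedral/nine-lemma step for cones — making sure the comparison morphism of triangles is chosen so that its cone again sits in a triangle with ends in $\mcal{U}$ and $\mcal{V}$ — but the $\Hom$-vanishing is exactly what rigidifies the choices enough to push this through.
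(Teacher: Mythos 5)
Your argument for the triangulated part is correct, but it takes a genuinely different route from the paper. You work object-by-object: the $\Hom$-vanishing makes the decomposition triangle functorial, closure under $\Sigma^{\pm1}$ is immediate, and the $3\times 3$ lemma applied to a morphism of decomposition triangles puts the cone of $f$ into a triangle with ends the cones of $U\to U'$ and $V\to V'$. The paper instead makes a one-line reduction: a full subcategory closed under shifts is triangulated iff it is closed under extensions, i.e.\ iff $(\mcal{U}*\mcal{V})*(\mcal{U}*\mcal{V})\subset\mcal{U}*\mcal{V}$; this follows from associativity of $*$ (octahedron) together with the identity $\mcal{V}*\mcal{U}=\add\{\mcal{U},\mcal{V}\}$, which holds because $\Hom_{\mcal{D}}(\mcal{U},\Sigma\mcal{V})=0$ forces every triangle $V\to X\to U\to\Sigma V$ to split. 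The paper's computation $\mcal{U}*(\mcal{V}*\mcal{U})*\mcal{V}\subset(\mcal{U}*\mcal{U})*(\mcal{V}*\mcal{V})=\mcal{U}*\mcal{V}$ replaces your nine-lemma bookkeeping entirely; your version is more explicit but proves no more.

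The thick case is where you have a genuine gap. Your plan is to lift the idempotent $e$ of $X\oplus Y$ to idempotents $e_U,e_V$ on the ends of the triangle (this part is fine: the lift is unique because $\Hom(U,\Sigma^{-1}V')=0$, hence idempotent) and then split $e_U$ and $e_V$. But the lemma is stated for an arbitrary triangulated category, where idempotents need not split, and neither of your escape routes works: \emph{assuming} split idempotents changes the hypotheses, and passing to the idempotent completion $\hat{\mcal{D}}$ produces summands $\opn{Im}e_U$ of $U$ that live only in $\hat{\mcal{D}}$, so thickness of $\mcal{U}$ \emph{in} $\mcal{D}$ does not let you conclude $\opn{Im}e_U\in\mcal{U}$. (Also, ``in the thick setting idempotents split'' is not true; thickness of the subcategories says nothing about idempotents in $\mcal{D}$.) The paper avoids all of this by citing \cite[Prop.~2.1]{IY}, which shows that $\Hom_{\mcal{D}}(\mcal{U},\mcal{V})=0$ alone forces $\mcal{U}*\mcal{V}$ to be closed under direct summands when $\mcal{U}$ and $\mcal{V}$ are, with no idempotent-splitting hypothesis; you would need to reproduce that argument (or an equivalent one) to close the gap.
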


\begin{proof}
We only have to show $(\mcal{U}*\mcal{V})*(\mcal{U}*\mcal{V})\subset\mcal{U}*\mcal{V}$.
Since $\Hom_{\dd}(\mcal{U},\Sigma\mcal{V})=0$, we have $\mcal{V}*\mcal{U}=\cat{add}\{\mcal{U},\mcal{V}\}$.
Thus we have $(\mcal{U}*\mcal{V})*(\mcal{U}*\mcal{V})=\mcal{U}*(\mcal{V}*\mcal{U})*\mcal{V}=\mcal{U}*\cat{add}\{\mcal{U},\mcal{V}\}*\mcal{V}\subset(\mcal{U}*\mcal{U})*(\mcal{V}*\mcal{V})=\mcal{U}*\mcal{V}$,
where $\add\{\mcal{U},\mcal{V}\}$ is the additive subcategory of consisting of finite direct sums of objects of $\mcal{U}$
and $\mcal{V}$.
Thus $\mcal{U}*\mcal{V}$ is a triangulated subcategory of $\mcal{D}$.
If $\mcal{U}$ and $\mcal{V}$ are closed under direct summand, then so is $\mcal{U}*\mcal{V}$ (e.g. \cite[Prop. 2.1]{IY}).
Thus the assertion for thick subcategories follows.
\end{proof}

Let $A$ and $R$ be $k$-algebras.
For a subcategory $\mcal{U}$ of $\cat{D}^{\mrm{b}}(\fmod A)$,
we denote by $\mcal{U}^R$ the thick subcategory of $\cat{D}^{\mrm{b}}(\fmod R\otimes_kA)$ generated by
\[\{L\otimes_kX\ |\ L\in\cat{D}^{\mrm{b}}(\fmod R),\ X\in\mcal{U}\}.\]
The following observation gives us a lot of examples of stable $t$-structures in derived categories.

\begin{prop} \label{n-gon2}
Let $R$ be a $k$-algebra and $A$ a finite dimensional $k$-algebra such that $A/J_A$ is a separable $k$-algebra, where $J_A$ is the Jacobson radical.
For any stable $t$-structure $(\mcal{U},\mcal{V})$ in $\cat{D}^{\mrm{b}}(\fmod A)$, we have a stable $t$-structure $(\mcal{U}^R,\mcal{V}^R)$ in $\cat{D}^{\mrm{b}}(\fmod R\otimes_kA)$.
\end{prop}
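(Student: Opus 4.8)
The plan is to transport the defining properties (a), (b), (c) of a stable $t$-structure from $\cat{D}^{\mrm{b}}(\fmod A)$ to $\cat{D}^{\mrm{b}}(\fmod R\ten_k A)$ along the operation $\mcal{U}\mapsto\mcal{U}^R$. First I would record the elementary but crucial fact that, because $A/J_A$ is separable over $k$, the functor $L\ten_k-\colon\cat{D}^{\mrm{b}}(\fmod A)\to\cat{D}^{\mrm{b}}(\fmod R\ten_k A)$ behaves well on Hom-spaces: for $X,Y\in\cat{D}^{\mrm{b}}(\fmod A)$ one has
\[
\Hom_{\cat{D}^{\mrm{b}}(\fmod R\ten_k A)}(L\ten_k X,\ L'\ten_k Y)\ \cong\ \bigoplus_{i}\Hom_{\cat{D}^{\mrm{b}}(\fmod R)}(L,\Sigma^i L')\ten_k\Hom_{\cat{D}^{\mrm{b}}(\fmod A)}(X,\Sigma^{-i}Y),
\]
i.e. the Künneth-type formula holds because separability makes $A$ a (derived) self-dual, smooth $k$-algebra and kills higher $\Tor$ over $A/J_A$ in the relevant spectral sequence. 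This is the technical heart of the argument and the step I expect to be the main obstacle; everything else is formal once it is in place.

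Granting this, condition (a) is immediate: $\mcal{U}^R$ and $\mcal{V}^R$ are thick subcategories by construction, hence stable under $\Sigma$ and $\Sigma^{-1}$. For condition (b), I would argue that $\Hom_{\cat{D}^{\mrm{b}}(\fmod R\ten_k A)}(\mcal{U}^R,\mcal{V}^R)=0$. Since $\mcal{U}^R$ is generated as a thick subcategory by the objects $L\ten_k X$ with $X\in\mcal{U}$, and likewise for $\mcal{V}^R$, and since vanishing of a Hom-functor in each variable separately propagates through cones and direct summands, it suffices to check $\Hom(L\ten_k X,\ L'\ten_k Y)=0$ for $X\in\mcal{U}$, $Y\in\mcal{V}$. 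By the Künneth formula above this Hom is a direct sum of terms involving $\Hom_{\cat{D}^{\mrm{b}}(\fmod A)}(X,\Sigma^{-i}Y)$; since $\mcal{V}$ is $\Sigma$-stable, $\Sigma^{-i}Y\in\mcal{V}$, so each such term vanishes by $\Hom_{\cat{D}^{\mrm{b}}(\fmod A)}(\mcal{U},\mcal{V})=0$. Hence (b) holds.

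For condition (c), I would first verify it on generators and then bootstrap. Every object of $\cat{D}^{\mrm{b}}(\fmod R\ten_k A)$ lies in the thick subcategory generated by the $L\ten_k X$; given the decomposition triangle $U_X\to X\to V_X\to\Sigma U_X$ in $\cat{D}^{\mrm{b}}(\fmod A)$ with $U_X\in\mcal{U}$, $V_X\in\mcal{V}$, applying $L\ten_k-$ yields a triangle $L\ten_k U_X\to L\ten_k X\to L\ten_k V_X\to\Sigma(L\ten_k U_X)$ with the outer terms in $\mcal{U}^R$ and $\mcal{V}^R$ respectively. Thus every generator has a decomposition triangle. It remains to see that the class of objects admitting such a triangle is closed under $\Sigma^{\pm1}$, cones, and direct summands — but that is exactly the statement that $(\mcal{U}^R,\mcal{V}^R)$ defines a stable $t$-structure once (a) and (b) are known, equivalently that $\mcal{U}^R*\mcal{V}^R$ is closed under those operations, which follows from Lemma \ref{about *} together with the octahedral axiom and the standard argument that $\mcal{U}^R*\mcal{V}^R$ is then all of $\cat{D}^{\mrm{b}}(\fmod R\ten_k A)$. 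Concretely: by (b) and Lemma \ref{about *}, $\mcal{U}^R*\mcal{V}^R$ is a thick subcategory containing all generators $L\ten_k X$, hence equals the whole category, giving (c). This completes the proof.
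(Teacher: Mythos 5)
Your treatment of condition (b) is essentially sound and agrees with the paper's: a K\"unneth-type formula for external tensor products over the field $k$ reduces orthogonality to $\Hom_{\cat{D}^{\mrm{b}}(\fmod A)}(\mcal{U},\Sigma^{-i}\mcal{V})=0$, and the vanishing propagates through shifts, cones and direct summands to the thick subcategories $\mcal{U}^R$ and $\mcal{V}^R$. Note, however, that this formula is not where separability enters: over a field one simply tensors projective resolutions, so the K\"unneth step is routine and your identification of it as ``the technical heart'' is misplaced.

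The genuine gap is in condition (c). You assert that every object of $\cat{D}^{\mrm{b}}(\fmod R\ten_k A)$ lies in the thick subcategory generated by the external tensor products $L\ten_k X$, and treat this as given. That claim is precisely the nontrivial content of the proposition and the only place the separability hypothesis is actually used; it is false without it. For instance, take $R=A=K$ a purely inseparable field extension of $k$: then $K\ten_kK$ is a non-semisimple local finite-dimensional algebra, the thick subcategory generated by the $L\ten_kX$ consists only of perfect complexes, and the simple module $K$ has infinite projective dimension, hence lies outside it. The paper closes this gap as follows: since $\mcal{U}^R*\mcal{V}^R$ is thick by Lemma \ref{about *}, it suffices to show it contains every module $M\in\fmod R\ten_kA$; filtering $M$ by the radical powers $MJ_A^i$ reduces to an $R\ten_kA$-module $N$ that is semisimple over $A$; separability of $A/J_A$ makes the multiplication $(A/J_A)\ten_k(A/J_A)\to A/J_A$ a split epimorphism of bimodules, hence $N\ten_k(A/J_A)\to N$ a split epimorphism of $R\ten_kA$-modules; and $N\ten_k(A/J_A)$ is an external tensor product lying in $\mcal{U}^R*\mcal{V}^R$ because $A/J_A\in\mcal{U}*\mcal{V}$, so $N$ belongs to it as a direct summand. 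You would need to supply this (or an equivalent) filtration-and-splitting argument for your proof to go through.
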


\begin{proof}
Let $\mcal{D}:=\cat{D}^{\mrm{b}}(\fmod R\otimes_kA)$.
Since 
\[
\Hom_{\dd}(L\otimes_kU,M\otimes_kV)=
\Hom_{\cat{D}^{\mrm{b}}(\fmod R)}(L,M)\otimes_k\Hom_{\cat{D}^{\mrm{b}}(\fmod A)}(U,V)
\]
for any $L, M \in \cat{D}^{\mrm{b}}(\fmod R)$ and any $U, V \in \cat{D}^{\mrm{b}}(\fmod A)$,
we have $\Hom_{\dd}(\mcal{U}^R,\mcal{V}^R)=0$.

Since $\mcal{U}^R*\mcal{V}^R$ is a thick subcategory of $\mcal{D}$ by Lemma \ref{about *},
we only have to show $\mcal{U}^R*\mcal{V}^R$ contains $\fmod R\otimes_kA$.
Any $R\otimes_kA$-module $M$ is filtered by $R\otimes_kA$-modules $MJ_A^i/MJ_A^{i+1}$ which are semisimple $A$-modules.
We only have to show that any $R\otimes_kA$-module $N$ which is a semisimple $A$-modules belongs to $\mcal{U}^R*\mcal{V}^R$.
Since the map $(A/J_A)\otimes_k(A/J_A)\to A/J_A$, $x\otimes y\mapsto xy$ is a split epimorphism of $A^{\mrm{op}}\otimes_kA$-modules,
we have that the map $N\otimes_k(A/J_A)\to N$, $n\otimes y\mapsto ny$ is a split epimorphism of $R\otimes_kA$-modules.
Since $A/J_A\in\mcal{U}*\mcal{V}$, we have that $N\otimes_k(A/J_A)\in\mcal{U}^R*\mcal{V}^R$. Thus $N\in\mcal{U}^R*\mcal{V}^R$.
\end{proof}

The following result gives a criterion for a stable $t$-structure in the derived category to give a stable $t$-structure in the stable category.

\begin{lem}[\cite{IKM}] \label{st-stK}
Let $\mathcal{D}$ be a triangulated category, $\mathcal{C}$ a thick
subcategory of $\mcal{D}$, and 
$Q:\mathcal{D} \to \mathcal{D}/\mathcal{C}$ the canonical quotient \cite{Ne2}.
For a stable $t$-structure $(\mathcal{U}, \mathcal{V})$ in $\mathcal{D}$, the following are equivalent,
where $Q(\mathcal{U})$ is the full subcategory of $\mcal{D}/\mcal{C}$ consisting of objects
$Q(X)$ for $X \in \mcal{E}$.
\begin{enumerate}
\item  $(Q(\mathcal{U}), Q(\mathcal{V}))$ is a stable $t$-structure in $\mathcal{D}/\mathcal{C}$.
\item  $(\mathcal{U}\cap\mathcal{C}, \mathcal{V}\cap\mathcal{C})$ is a stable $t$-structure in 
$\mathcal{C}$.
\end{enumerate}
\end{lem}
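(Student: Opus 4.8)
The plan is to isolate the essential content of each condition and then prove the two implications by a short direct argument. Write $U_X\to X\to V_X\to\Sigma U_X$ for the canonical triangle attached to $X\in\mcal{D}$ by the stable $t$-structure $(\mcal{U},\mcal{V})$; recall it is unique up to a canonical isomorphism. Several pieces of the two conditions are automatic. The subcategories $Q(\mcal{U}),Q(\mcal{V})$ and $\mcal{U}\cap\mcal{C},\mcal{V}\cap\mcal{C}$ inherit $\Sigma$-stability from $\mcal{U},\mcal{V}$; the orthogonality $\Hom_{\mcal{C}}(\mcal{U}\cap\mcal{C},\mcal{V}\cap\mcal{C})=0$ holds because $\mcal{C}$ is a full subcategory and $\Hom_{\mcal{D}}(\mcal{U},\mcal{V})=0$; and applying the triangle functor $Q$ to the canonical triangle of $X$ produces, for every object $Q(X)$ of $\mcal{D}/\mcal{C}$, a triangle with terms in $Q(\mcal{U})$ and $Q(\mcal{V})$. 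Hence (1) is equivalent to the single statement $\Hom_{\mcal{D}/\mcal{C}}(Q(\mcal{U}),Q(\mcal{V}))=0$. Likewise, using the uniqueness of the canonical triangle in $\mcal{D}$ to identify any $(\mcal{U}\cap\mcal{C},\mcal{V}\cap\mcal{C})$-decomposition of $X\in\mcal{C}$ with the canonical one, together with thickness of $\mcal{C}$ (two of the three terms in $\mcal{C}$ forces the third), one checks that (2) is equivalent to the statement that $U_X\in\mcal{C}$ for every $X\in\mcal{C}$.

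For (1) $\Rightarrow$ (2), take $X\in\mcal{C}$ and apply $Q$ to $U_X\to X\to V_X\to\Sigma U_X$. Since $Q(X)=0$, the connecting morphism gives an isomorphism $Q(V_X)\cong\Sigma Q(U_X)$, so $Q(U_X)\cong Q(\Sigma^{-1}V_X)$ belongs to $Q(\mcal{V})$ while obviously belonging to $Q(\mcal{U})$. Therefore $\mathrm{id}_{Q(U_X)}\in\Hom_{\mcal{D}/\mcal{C}}(Q(\mcal{U}),Q(\mcal{V}))=0$, which forces $Q(U_X)=0$, i.e. $U_X\in\mcal{C}$.

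For (2) $\Rightarrow$ (1), let $U\in\mcal{U}$, $V\in\mcal{V}$ and let $\alpha\in\Hom_{\mcal{D}/\mcal{C}}(Q(U),Q(V))$. By the calculus of right fractions for the Verdier quotient, write $\alpha=Q(s)^{-1}Q(a)$ with $a\colon U\to Z$ and $s\colon V\to Z$ such that $C:=\mathrm{cone}(s)\in\mcal{C}$. The composite $U\xrightarrow{a}Z\to V_Z$ lies in $\Hom_{\mcal{D}}(\mcal{U},\mcal{V})=0$, so $a$ factors as $U\to U_Z\xrightarrow{\pi_Z}Z$ through the canonical truncation $\pi_Z\colon U_Z\to Z$. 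Now apply the triangle functor $\tau_{\mcal{U}}\colon\mcal{D}\to\mcal{U}$ right adjoint to the inclusion $\mcal{U}\hookrightarrow\mcal{D}$ (which exists precisely because $(\mcal{U},\mcal{V})$ is a stable $t$-structure, and which satisfies $\tau_{\mcal{U}}(X)=U_X$) to the triangle $\Sigma^{-1}C\to V\xrightarrow{s}Z\to C$; since $\tau_{\mcal{U}}(V)=0$, this yields $U_Z=\tau_{\mcal{U}}(Z)\cong\tau_{\mcal{U}}(C)=U_C$, and $U_C\in\mcal{C}$ by (2). Hence $Q(U_Z)=0$, so $Q(a)=Q(\pi_Z)\circ Q(\text{the factorization})=0$ and $\alpha=Q(s)^{-1}Q(a)=0$. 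This gives $\Hom_{\mcal{D}/\mcal{C}}(Q(\mcal{U}),Q(\mcal{V}))=0$, i.e. (1).

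I expect no deep obstacle here; the work is bookkeeping. The two points to be careful about are the reformulation of (2) (which rests on uniqueness of the canonical triangle and on thickness of $\mcal{C}$) and the manipulation of fractions in the quotient category. The one structural input worth stating explicitly is that the truncation $X\mapsto U_X$ underlies a triangle functor — being the right adjoint of the exact inclusion $\mcal{U}\hookrightarrow\mcal{D}$ — so that it may be applied to triangles and commutes with $\Sigma$; everything else in the argument is formal.
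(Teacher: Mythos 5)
The paper does not prove this lemma; it is quoted from \cite{IKM} without argument, so there is nothing in-text to compare your proof against. Your argument is correct and self-contained. The two reductions you set up are exactly the right ones: since $Q$ carries the canonical triangle $U_X\to X\to V_X\to\Sigma U_X$ to a decomposition of $Q(X)$, condition (1) collapses to the orthogonality $\Hom_{\mcal{D}/\mcal{C}}(Q(\mcal{U}),Q(\mcal{V}))=0$; and by uniqueness of the $(\mcal{U},\mcal{V})$-decomposition together with closure of $\mcal{C}$ under cones, condition (2) collapses to ``$U_X\in\mcal{C}$ for all $X\in\mcal{C}$''. Both implications then go through as you wrote them: for (1)$\Rightarrow$(2) the rotation $Q(U_X)\cong Q(\Sigma^{-1}V_X)$ forces $\mathrm{id}_{Q(U_X)}=0$, and thickness of $\mcal{C}$ (which is genuinely needed here, since $\opn{Ker}Q=\mcal{C}$ only for thick $\mcal{C}$) gives $U_X\in\mcal{C}$; for (2)$\Rightarrow$(1) the factorization of $a$ through $U_Z$ and the identification $U_Z\cong U_C\in\mcal{C}$, obtained by applying the truncation functor $\tau_{\mcal{U}}$ to the triangle over $s$, kill the fraction. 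The one structural input you rightly flag --- that $X\mapsto U_X$ is a triangle functor, being right adjoint to the exact inclusion $\mcal{U}\inj\mcal{D}$ --- is standard and is implicit in the recollement of Proposition \ref{st-t-1second}, so no gap remains.
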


The following example provides us a rich source of triangulated categories with Serre functors.

\begin{prop}\label{serre functor for A}
Let $A$ be a finite dimensional $k$-algebra of finite self-injective dimension as both sides.
Then the following hold.
\begin{itemize}
\item[(1)] $\cat{K}^{\mrm{b}}(\pj {A})$ has a Serre functor $\nu_A:=-\lten_{A}(DA)$.
\item[(2)] $\cat{K}^{\mrm{b}}(\pj {A})$ is $(m/n)$-Calabi-Yau if and only if $(DA)^{\lten_{A}n}\simeq\Sigma^mA$ in \\
$\cat{D}^{\mrm{b}}(\fmod A^{\mrm{op}}\otimes_k A)$.
\end{itemize}
\end{prop}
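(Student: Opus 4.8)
The plan is to deduce both parts from the action of the Nakayama functor $\nu_A=-\lten_A(DA)$ on $\cat{K}^{\mrm{b}}(\pj A)$, which I regard as the thick subcategory of perfect complexes inside $\cat{D}^{\mrm{b}}(\fmod A)$. The first thing to do is to route the hypotheses through the $k$-duality: since $D$ sends a projective resolution to an injective resolution, $\opn{idim}A_A<\infty$ becomes $\opn{pdim}{}_A(DA)<\infty$, $\opn{idim}{}_AA<\infty$ becomes $\opn{pdim}(DA)_A<\infty$, and $\opn{idim}A_A<\infty$ also says that finitely generated projective modules have finite injective dimension. The bound $\opn{pdim}{}_A(DA)<\infty$ makes $-\lten_A(DA)$ well defined on $\cat{D}^{\mrm{b}}(\fmod A)$. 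On $\cat{K}^{\mrm{b}}(\pj A)$ the functor $\nu_A$ is the termwise Nakayama functor (each term is flat), so it restricts to the classical equivalence $\pj A\iso\add(DA)$ between finitely generated projectives and finitely generated injectives, hence to a triangle equivalence $\cat{K}^{\mrm{b}}(\pj A)\iso\cat{K}^{\mrm{b}}(\add DA)$. Finally, the two remaining finiteness statements show that inside $\cat{D}^{\mrm{b}}(\fmod A)$ an object has finite projective dimension if and only if it has finite injective dimension, i.e. $\cat{K}^{\mrm{b}}(\add DA)=\cat{K}^{\mrm{b}}(\pj A)$; so $\nu_A$ is a triangle autoequivalence of $\cat{K}^{\mrm{b}}(\pj A)$.

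For the Serre pairing in (1), write $X^{\vee}:=\rhom_A(X,A)$ for $X\in\cat{K}^{\mrm{b}}(\pj A)$. Termwise over finitely generated projectives (the standard natural isomorphisms assemble over the complex), one gets for $X\in\cat{K}^{\mrm{b}}(\pj A)$ and $Y\in\cat{D}^{\mrm{b}}(\fmod A)$
\[
\rhom_A(X,Y)\ \simeq\ Y\lten_A X^{\vee},\qquad\qquad \nu_A X\ =\ X\lten_A(DA)\ \simeq\ D(X^{\vee}).
\]
Feeding the second isomorphism into $\Hom_{\cat{D}^{\mrm{b}}(\fmod A)}(Y,\nu_A X)=H^0\rhom_A(Y,D(X^{\vee}))$ and using the adjunction $\rhom_A(Y,D(-))\simeq D((-)\lten_A Y)$ (exact, since $D$ is), one obtains a functorial isomorphism
\[
\Hom_{\cat{D}^{\mrm{b}}(\fmod A)}(Y,\nu_A X)\ \simeq\ D\,H^0(Y\lten_A X^{\vee})\ =\ D\Hom_{\cat{D}^{\mrm{b}}(\fmod A)}(X,Y).
\]
As all $\Hom$-spaces are finite dimensional this gives $\Hom(X,Y)\simeq D\Hom(Y,\nu_A X)$ functorially, so $\nu_A$ is a Serre functor on $\cat{K}^{\mrm{b}}(\pj A)$, which is (1).

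For (2), by (1) and uniqueness of Serre functors, $\cat{K}^{\mrm{b}}(\pj A)$ is $(m/n)$-Calabi-Yau exactly when $\nu_A^{\,n}\simeq\Sigma^m$ as autofunctors. Associativity of $\lten_A$ gives $\nu_A^{\,n}=-\lten_A(DA)^{\lten_A n}$, while clearly $\Sigma^m=-\lten_A(\Sigma^mA)$; the same duality bookkeeping as in the first step shows that $(DA)^{\lten_A n}$ is a genuine object of $\cat{D}^{\mrm{b}}(\fmod A^{\mrm{op}}\otimes_kA)$, perfect as a complex of right $A$-modules. It therefore remains to show that, for bimodule complexes $T,T'$ of this kind, $-\lten_A T$ and $-\lten_A T'$ are isomorphic as functors on $\cat{K}^{\mrm{b}}(\pj A)$ if and only if $T\simeq T'$ in $\cat{D}^{\mrm{b}}(\fmod A^{\mrm{op}}\otimes_kA)$. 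The implication $\Leftarrow$ is immediate. For $\Rightarrow$, evaluate a natural isomorphism at the generator $A$ to get $T\simeq T'$ in $\cat{D}^{\mrm{b}}(\fmod A)$; the canonical action of $\End_{\cat{K}^{\mrm{b}}(\pj A)}(A)=A$, which on $A\lten_A T\cong T$ is exactly the left $A$-module structure, is preserved by naturality, and this upgrades the isomorphism to one in $\cat{D}^{\mrm{b}}(\fmod A^{\mrm{op}}\otimes_kA)$ — a form of derived Morita theory, which in this finitely generated setting is the statement that a standard functor $-\lten_A T$ determines $T$. Applying this to $T=(DA)^{\lten_A n}$ and $T'=\Sigma^mA$ gives (2).

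The step I expect to be the real obstacle is precisely the last one: an isomorphism of the tensor functors is only triangulated data, so a priori it records an action of $A$ on $(DA)^{\lten_A n}$ only up to the homotopy category, and one must check that this action genuinely determines the left $A$-module structure so that the isomorphism lifts to $\cat{D}^{\mrm{b}}(\fmod A^{\mrm{op}}\otimes_kA)$; this is where finite generation (hence control of $\rhom$ over $A^{\mrm{op}}\otimes_kA$ in the relevant degrees) is essential. Everything in Part (1), and the direction $\Leftarrow$ of Part (2), is then formal once the finiteness hypotheses have been passed through the $k$-duality.
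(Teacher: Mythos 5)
The paper states Proposition \ref{serre functor for A} without any proof, so there is nothing to compare your argument against; I will assess it on its own terms. Your part (1) is correct and is the standard argument (going back to Happel): passing the two self-injective-dimension hypotheses through $D$ gives $\opn{pdim}{}_A(DA)<\infty$ and $\opn{pdim}(DA)_A<\infty$, whence $\nu_A=-\lten_A(DA)$ restricts to a triangle equivalence $\cat{K}^{\mrm{b}}(\pj A)\simeq\cat{K}^{\mrm{b}}(\add DA)$ and these two thick subcategories of $\cat{D}^{\mrm{b}}(\fmod A)$ coincide; the duality formula then follows from the termwise isomorphisms $\rhom_A(X,Y)\simeq Y\lten_AX^{\vee}$ and $X\lten_A(DA)\simeq D(X^{\vee})$ together with $\rhom_A(Y,D(-))\simeq D(-\lten_AY)$. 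The direction $\Leftarrow$ of part (2) is likewise formal.

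The one genuine gap is the step you yourself flag in $\Rightarrow$ of (2): the bare assertion that a natural isomorphism $-\lten_AT\simeq-\lten_AT'$ of triangle functors forces $T\simeq T'$ in $\cat{D}^{\mrm{b}}(\fmod A^{\mrm{op}}\ten_kA)$ is not a theorem of ``derived Morita theory'' in the generality you invoke it --- naturality only yields, for each $a\in A$, a square commuting in $\cat{D}^{\mrm{b}}(\fmod A)$, and such a coherence-free family of squares does not in general assemble into a morphism of bimodule complexes (derived Eilenberg--Watts fails). What saves you here is the special shape of $T'=\Sigma^mA$, and you should make this reduction explicit: evaluating the isomorphism at $A$ gives $T\simeq\Sigma^mA$ in $\cat{D}^{\mrm{b}}(\fmod A)$, so the cohomology of $T$ is concentrated in degree $-m$; truncation then gives $T\simeq\Sigma^mM$ already in $\cat{D}^{\mrm{b}}(\fmod A^{\mrm{op}}\ten_kA)$, where $M:=H^{-m}(T)$ is a bimodule with $M_A\cong A_A$, i.e.\ $M\cong{}_{\sigma}A_1$ for an algebra automorphism $\sigma$. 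Now restrict the resulting natural isomorphism $\eta:-\ten_AM\to\mathrm{id}$ to the full subcategory $\pj A$ of stalk complexes in degree $0$, where $\Hom_{\cat{K}^{\mrm{b}}(\pj A)}(P,Q)=\Hom_A(P,Q)$ consists of honest module maps; naturality of $\eta_A:M\to A$ with respect to $\End_{\cat{K}^{\mrm{b}}(\pj A)}(A)=A$ is then a genuine equation of module homomorphisms and says exactly that $\eta_A$ is a bimodule isomorphism, i.e.\ that $\sigma$ is inner. This is precisely Rickard's argument that a two-sided tilting complex is determined up to bimodule isomorphism by its standard functor; with that paragraph inserted, your proof is complete.
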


We have the following main result in this section.

\begin{thm} \label{n-gon3}
Let $A$ be a finite dimensional $k$-algebra of finite global dimension such that $A/J_A$ is separable over $k$ and
$\cat{D}^{\mrm{b}}(\fmod A)$ is $(m/n)$-Calabi-Yau.
Let $R$ be a coherent $k$-algebra of finite self-injective dimension as both sides.
For any functorially finite thick subcategory $\mcal{U}_1$ of $\cat{D}^{\mrm{b}}(\fmod A)$, we put $\mcal{U}_{i+1}:=\mcal{U}_i^\perp$ for any $i$.
Then there is a positive divisor $l$ of $2n$ such that we have an $l$-gon 
$(\mcal{U}_{1}^{R}, \cdots , \mcal{U}_{l}^{R})$ of recollements in $\cat{D}^{\mrm{b}}(\fmod R\ten_k A)$
and an $l$-gon $(Q(\mcal{U}_{1}^{R}), \cdots , Q(\mcal{U}_{l}^{R}))$ of recollements in
 $\underline{\CM}(R\ten_k A)$, where 
$Q :\cat{D}^{\mrm{b}}(\fmod R\ten_k A)\to \underline{\CM}(R\ten_k A)$ is the canonical quotient.
\end{thm}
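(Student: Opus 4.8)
The plan is to chain together the three results prepared above: Proposition \ref{n-gon1} produces an $l$-gon inside $\cat{D}^{\mrm{b}}(\fmod A)$ itself, Proposition \ref{n-gon2} transports each of its constituent stable $t$-structures up to $\cat{D}^{\mrm{b}}(\fmod R\ten_k A)$, and Lemma \ref{st-stK} then descends the whole $l$-gon to $\underline{\CM}(R\ten_k A)$.

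Concretely, since $A$ is a finite dimensional $k$-algebra, $\cat{D}^{\mrm{b}}(\fmod A)$ is a Hom-finite Krull-Schmidt triangulated category which is $(m/n)$-Calabi-Yau by hypothesis, and $\mcal{U}_1$ is a functorially finite thick subcategory; so Proposition \ref{n-gon1} gives a positive divisor $l$ of $2n$ and an $l$-gon $(\mcal{U}_1,\dots,\mcal{U}_l)$ of recollements in $\cat{D}^{\mrm{b}}(\fmod A)$ with $\mcal{U}_{i+1}=\mcal{U}_i^\perp$ and $\mcal{U}_{i+l}=\mcal{U}_i$ for all $i$. The hypotheses of Proposition \ref{n-gon2} hold ($A$ is finite dimensional and $A/J_A$ is separable over $k$), so each stable $t$-structure $(\mcal{U}_i,\mcal{U}_{i+1})$ yields a stable $t$-structure $(\mcal{U}_i^R,\mcal{U}_{i+1}^R)$ in $\cat{D}^{\mrm{b}}(\fmod R\ten_k A)$; since $\mcal{U}_{i+l}^R=\mcal{U}_i^R$ and every $\mcal{U}_i^R$ is thick, the tuple $(\mcal{U}_1^R,\dots,\mcal{U}_l^R)$ is an $l$-gon of recollements in $\cat{D}^{\mrm{b}}(\fmod R\ten_k A)$.

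For the stable category, note that $A$ being finite dimensional makes $R\ten_k A$ free of finite rank over $R$ on both sides, hence a coherent $k$-algebra of finite self-injective dimension on both sides (a routine base-change check, using $\opn{idim}_RR,\ \opn{idim}R_R<\infty$ and the finiteness of the injective dimensions of $A$); so by \cite{IKM} the canonical quotient $Q$ identifies $\underline{\CM}(R\ten_k A)$ with $\dd/\mcal{C}$, where $\dd:=\cat{D}^{\mrm{b}}(\fmod R\ten_k A)$ and $\mcal{C}:=\cat{K}^{\mrm{b}}(\pj(R\ten_k A))$ is the thick subcategory of $\dd$ generated by $R\ten_k A$. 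By Lemma \ref{st-stK}, producing the $l$-gon $(Q(\mcal{U}_1^R),\dots,Q(\mcal{U}_l^R))$ in $\underline{\CM}(R\ten_k A)$ reduces to showing that $(\mcal{U}_i^R\cap\mcal{C},\ \mcal{U}_{i+1}^R\cap\mcal{C})$ is a stable $t$-structure in $\mcal{C}$ for every $i$; triangulatedness of the $Q(\mcal{U}_i^R)$ and the cyclic relation $Q(\mcal{U}_{i+l}^R)=Q(\mcal{U}_i^R)$ are then automatic.

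To obtain this last stable $t$-structure I use that $A$ has finite global dimension, so that $\cat{D}^{\mrm{b}}(\fmod A)=\cat{K}^{\mrm{b}}(\pj A)$ and hence $\mcal{U}_i\subseteq\cat{K}^{\mrm{b}}(\pj A)$; consequently $L\ten_k X\in\cat{K}^{\mrm{b}}(\pj(R\ten_k A))=\mcal{C}$ whenever $L\in\cat{K}^{\mrm{b}}(\pj R)$ and $X\in\mcal{U}_i$. Let $\mcal{P}_i$ be the thick subcategory of $\mcal{C}$ generated by all such objects $L\ten_k X$; then $\mcal{P}_i\subseteq\mcal{U}_i^R\cap\mcal{C}$. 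The Hom-formula from the proof of Proposition \ref{n-gon2} gives $\Hom(L\ten_k X,\,M\ten_k X')=\Hom(L,M)\ten_k\Hom(X,X')=0$ for $X\in\mcal{U}_i$ and $X'\in\mcal{U}_{i+1}$, and Hom-vanishing passes from generators to the generated thick subcategories, so $\Hom_{\mcal{C}}(\mcal{P}_i,\mcal{P}_{i+1})=0$ and hence $\mcal{P}_i*\mcal{P}_{i+1}$ is thick by Lemma \ref{about *}. Since $A\in\cat{D}^{\mrm{b}}(\fmod A)=\mcal{U}_i*\mcal{U}_{i+1}$, applying $R\ten_k-$ to a triangle $U\to A\to V\to\Sigma U$ with $U\in\mcal{U}_i$, $V\in\mcal{U}_{i+1}$ puts $R\ten_k A$ into $\mcal{P}_i*\mcal{P}_{i+1}$; as $\mcal{C}$ is generated by $R\ten_k A$ and $\mcal{P}_i*\mcal{P}_{i+1}\subseteq\mcal{C}$, we get $\mcal{P}_i*\mcal{P}_{i+1}=\mcal{C}$, so $(\mcal{P}_i,\mcal{P}_{i+1})$ is a stable $t$-structure in $\mcal{C}$. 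Finally, for any $Y\in\mcal{C}$ a triangle $U\to Y\to V\to\Sigma U$ with $U\in\mcal{P}_i\subseteq\mcal{U}_i^R\cap\mcal{C}$ and $V\in\mcal{P}_{i+1}\subseteq\mcal{U}_{i+1}^R\cap\mcal{C}$, together with $\Hom(\mcal{U}_i^R,\mcal{U}_{i+1}^R)=0$, shows directly that $(\mcal{U}_i^R\cap\mcal{C},\ \mcal{U}_{i+1}^R\cap\mcal{C})$ is a stable $t$-structure in $\mcal{C}$, which finishes the proof. The main obstacle is exactly this last step: Proposition \ref{n-gon2} only yields a stable $t$-structure in the ambient category $\dd$, whereas Lemma \ref{st-stK} asks for one in the subcategory $\mcal{C}$, so the ``perfect'' base-change stable $t$-structure $(\mcal{P}_i,\mcal{P}_{i+1})$ must be built by hand --- and it is precisely there that the hypothesis $\opn{gl.dim}A<\infty$, forcing $\mcal{U}_i$ to consist of perfect complexes, is used.
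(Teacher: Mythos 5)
Your proposal is correct and follows essentially the same route as the paper: Proposition \ref{n-gon1} plus Proposition \ref{n-gon2} for the $l$-gon in $\cat{D}^{\mrm{b}}(\fmod R\ten_k A)$, then the finite global dimension of $A$ to decompose $A$ (hence $R\ten_k A$ after applying $R\ten_k-$) inside $\cat{K}^{\mrm{b}}(\pj(R\ten_k A))$, and finally Lemma \ref{st-stK}. Your explicit construction of the auxiliary thick subcategories $\mcal{P}_i$ and the appeal to Lemma \ref{about *} merely fill in the step the paper leaves implicit, namely that decomposing the generator $R\ten_k A$ suffices to produce the stable $t$-structure $(\mcal{U}_i^R\cap\mcal{C},\mcal{U}_{i+1}^R\cap\mcal{C})$ in $\mcal{C}$.
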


\begin{proof}
According to Propositions \ref{n-gon1} and \ref{n-gon2}, 
we have an $n$-gon $(\mcal{U}_{1}^{R}, \mcal{U}_{2}^{R}, \cdots , \mcal{U}_{2n}^{R})$ 
of recollements in $\cat{D}^{\mrm{b}}(\fmod R\otimes_kA)$.
Since $A$ is of finite global dimension, there exists a triangle
\[U_i\to A\to U_{i+1}\to\Sigma U_i\]
with $U_i\in\mcal{U}_i\cap\cat{K}^{\mrm{b}}(\pj {A})$ and $U_{i+1}\in\mcal{U}_{i+1}\cap\cat{K}^{\mrm{b}}(\pj {A})$.
Applying $R\ten_k-$, we have a triangle
\[R\ten_kU_i\to R\ten_kA\to R\ten_kU_{i+1}\to\Sigma R\ten_kU_i\]
with $R\ten_kU_i\in\mcal{U}_{i}^{R}\cap\cat{K}^{\mrm{b}}(\pj {R\ten_kA})$ and $R\ten_kU_{i+1}\in\mcal{U}_{i+1 }^{R}\cap\cat{K}^{\mrm{b}}(\pj {R\ten_kA})$.
By Lemma \ref{st-stK}, we have a stable $t$-structure $(Q(\mcal{U}_{i}^{R}), Q(\mcal{U}_{i+1 }^{R}))$ in $\underline{\CM}(R\ten_k A)$.
\end{proof}

We have the following example of recollements by \cite[Cor. 5.11]{Mi1}.
\begin{prop}\label{recollement from idempotent}
Let $A$ be a finite dimensional $k$-algebra, and $e$ an idempotent of $A$.
Assume that $\Ext_{A}^{i}(A/AeA,A/AeA)=0$ ($i>0$), $\opn{pdim}{}_A(AeA)<\infty$ and $\opn{pdim}(AeA)_A<\infty$.
Then we have a recollement
\[\xymatrix{
\cat{D}^{\mrm{b}}(\fmod A/AeA)  \ar@<-1ex>[r]^{i_{e*}}
& \cat{D}^{\mrm{b}}(\fmod A) 
\ar@/_1.5pc/[l]^{i_{e}^{*}} \ar@/^1.5pc/[l]_{i_{e}^{!}} \ar@<-1ex>[r]^{j_{e}^{*}} 
& \cat{D}^{\mrm{b}}(\fmod eAe) 
\ar@/_1.5pc/[l]^{j_{e!}} \ar@/^1.5pc/[l]_{j_{e*}}
}\]
In particuler, $(\opn{Im}j_{e!}, \opn{Im}i_{e}^{*})$ and
$(\opn{Im}i_{e}^{*}, \opn{Im}j_{e*})$ are stable $t$-structures in 
$\cat{D}^{\mrm{b}}(\fmod A)$.
\end{prop}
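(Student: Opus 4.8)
The final statement is Proposition \ref{recollement from idempotent}, which asserts the existence of a recollement of bounded derived categories attached to an idempotent $e$ under finiteness hypotheses, and deduces the two stable $t$-structures. The plan is to reduce the claim to the citation \cite[Cor.~5.11]{Mi1} by verifying that the three displayed hypotheses guarantee that the standard recollement for module categories survives to the bounded level. Concretely, I would proceed as follows.

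First I would set up the standard (unbounded or homotopy-level) recollement associated to the idempotent $e$: the functor $j_e^*$ is restriction along $eAe\hookrightarrow A$, i.e.\ $-\lten_A Ae$, its left adjoint $j_{e!}$ is $-\lten_{eAe}eA$, and its right adjoint $j_{e*}$ is $\rhom_{eAe}(Ae,-)$; the functor $i_{e*}$ is restriction of scalars along $A\epto A/AeA$, with left adjoint $i_e^*=-\lten_A A/AeA$ and right adjoint $i_e^!=\rhom_A(A/AeA,-)$. The content is that these functors restrict to the bounded subcategories $\cat{D}^{\mrm{b}}(\fmod-)$, and that the unit/counit morphisms identifying $\opn{Im}j_{e!}$ with $\opn{Ker}i_e^*$, etc., are isomorphisms. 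This is exactly what \cite[Cor.~5.11]{Mi1} provides once the three finiteness conditions are in place, so the real work is checking those conditions do their job.

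Second I would explain what each hypothesis buys. The condition $\opn{pdim}{}_A(AeA)<\infty$ and $\opn{pdim}(AeA)_A<\infty$ ensures that $A/AeA$ has finite projective dimension over $A$ on both sides (from the exact sequence $0\to AeA\to A\to A/AeA\to 0$), so that $i_e^*=-\lten_A A/AeA$ and $i_e^!=\rhom_A(A/AeA,-)$ preserve bounded complexes of finitely generated modules; it also guarantees $j_{e!}$ and $j_{e*}$ land in the bounded category, since $Ae$ has finite projective dimension as an $eAe$-module on the appropriate side. The condition $\Ext_A^i(A/AeA,A/AeA)=0$ for $i>0$ is precisely what forces $i_{e*}$ to be fully faithful on $\cat{D}^{\mrm{b}}$, equivalently that the counit $i_e^* i_{e*}\to\opn{id}$ is an isomorphism; this is the standard criterion (a ring epimorphism $A\to A/AeA$ with vanishing higher self-$\Ext$ gives a ``homological epimorphism''). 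With full faithfulness of $i_{e*}$, $j_{e!}$, $j_{e*}$ in hand and the adjunctions above, the defining axioms of a recollement follow formally.

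Finally, the stable $t$-structure statement is immediate from Proposition \ref{st-t-1second}(1) applied to the recollement just produced: taking $\mcal{U}=\opn{Im}j_{e!}$, $\mcal{V}=\opn{Im}i_{e*}$, $\mcal{W}=\opn{Im}j_{e*}$ gives that $(\opn{Im}j_{e!},\opn{Im}i_{e*})$ and $(\opn{Im}i_{e*},\opn{Im}j_{e*})$ are stable $t$-structures in $\cat{D}^{\mrm{b}}(\fmod A)$, and since $\opn{Im}j_{e!}=\opn{Ker}i_e^*$ and $\opn{Im}i_{e*}=\opn{Ker}j_e^*$ by axiom (3) of a recollement, one may rewrite these in whichever form is convenient. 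I expect the main obstacle to be bookkeeping rather than conceptual: one must be careful that all the derived functors genuinely preserve \emph{bounded} complexes of \emph{finitely generated} modules (this is where coherence/finite global-dimension-type hypotheses, or here the explicit finite-projective-dimension assumptions, are used), and that the identification $\opn{Im}j_{e!}=\opn{Ker}i_e^*$ is the honest kernel and not merely a fully faithful embedding. Since all of this is packaged in \cite[Cor.~5.11]{Mi1}, the cleanest route is simply to cite it and record the verification of its hypotheses.
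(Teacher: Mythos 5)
Your proposal matches the paper exactly: the paper gives no proof of this proposition beyond the single sentence attributing it to \cite[Cor.~5.11]{Mi1}, which is precisely the route you take, and your identification of the six functors together with the derivation of the two stable $t$-structures from Proposition \ref{st-t-1second}(1) is the standard unwinding of that citation. The only caveat is that your intermediate claim that $Ae$ has finite projective dimension over $eAe$ is not an obvious consequence of $\opn{pdim}{}_A(AeA)<\infty$ and $\opn{pdim}(AeA)_A<\infty$; but since both you and the paper ultimately defer the verification to \cite[Cor.~5.11]{Mi1}, this does not affect the argument.
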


\section{Recollement of $\cat{K}(\Morph_{N-1}^{\mrm{sm}}(\mcal{B}))$}\label{Tcpx}

In this section we study the properties of complexes of the category of $N-1$ sequences
of morphisms in an additive category $\mcal{B}$.
Throughout this section $\mcal{B}$ is an additive category.
Then the category $\cat{C}(\mcal{B})$ of complexes of objects of $\mcal{B}$
is a Frobenius category such that its cconflations are 
short exact sequences of which each term is a split exact sequence in $\mcal{B}$.

\begin{defn}\label{smcat}
We define the category $\Morph^{\mrm{sm}}_{N-1}(\mcal{B})$ (resp., $\Morph_{N-1}(\mcal{B})$) 
of sequences of morphisms in $\mcal{B}$ as follows.
\begin{itemize}
\item An object is a sequence of split monomorphisms (resp., morphisms) 
$X: X^{1} \xarr{\alpha_X^1} \cdots \xarr{\alpha_X^{N-2} } X^{N-1}$ in $\mcal{B}$. 
\item A morphism from $X$ to $Y$ is an $(N-1)$-tuple $f=(f^1,\cdots , f^{N-1})$ of morphisms $f^i:X^{i} \to Y^{i}$ such that
$f^{i+1}\alpha_X^{i} = \alpha_Y^{i+1} f^{i}$ for $1 \leq i \leq N-2$.
\end{itemize}
\end{defn}

We give technical tools to investigate the homotopy category
$\cat{K}(\Morph_{N-1}^{\mrm{sm}}(\mcal{B}))$.

\begin{defn}\label{comma01}
For an additive functor $G: \mcal{B} \to \mcal{B}'$ between additive categories,
let $(G\downarrow \mathbf{1}_{\mcal{B}'})$ be a comma category, that is the category of objects
$G(X) \xarr{\alpha} Y$ for $X \in \mcal{A}, Y \in \mcal{B}'$.
We denote by $(G\downarrow^{\mrm{sm}} \mathbf{1}_{\mcal{B}'})$ the subcategory of
$(G\downarrow \mathbf{1}_{\mcal{B}'})$ consisting of objects 
$G(X) \xarr{\alpha} Y$, where $\alpha$ are split monomorphisms.
\end{defn}

\begin{exmp}\label{comma02}
For $1 \leq r < N-1$, let $G:\Morph_{r}(\mcal{B}) \to \Morph_{N-r-1}(\mcal{B})$
(resp.,  $G:\Morph^{\mrm{sm}}_{r}(\mcal{B}) \to \Morph^{\mrm{sm}}_{N-r-1}(\mcal{B})$) 
be an additive functor defined by 
\[
G(X^{1} \xarr{\alpha^1} \cdots \xarr{\alpha^{r-1}} X^{r})=Y^{1} \xarr{\beta^1} \cdots \xarr{\beta^{N-r-2}} Y^{N-r-1}
\]
where $Y^{1}=\cdots =Y^{N-r-1}=X^{r}$ and $\beta^{1}=\cdots=\beta^{N-r-2}=1_{X^{r}}$.
Then the category $(G\downarrow \mathbf{1}_{\Morph_{N-r-1}(\mcal{B})})$
(resp., $(G\downarrow^{\mrm{sm}} \mathbf{1}_{\Morph_{N-r-1}(\mcal{B})})$)
is equivalent to $\Morph_{N-1}(\mcal{B})$
(resp., $\Morph^{\mrm{sm}}_{N-1}(\mcal{B})$).
\end{exmp}

\begin{lem}\label{lem:comma03}
For an additive functor $G: \mcal{B} \to \mcal{B}'$ between additive categories,
the following hold.
\begin{enumerate}
\item  Evrey complex of $(G\downarrow^{\mrm{sm}} \mathbf{1}_{\mcal{B}'})$ has the following form:
\[
G(X) \xarr{u_f} C(f)
\]
where $f: Y \to G(X)$ is a morphism of complexes of $\mcal{B}'$,
$C(f)$ is the mapping cone of $f$, and $u_f$ is the canonical morphism.
\item  If a complex $X$ of $\mcal{B}$ is homotopically trivial, then
a complex $G(X) \xarr{u_f} C(f)$ of $(G\downarrow^{\mrm{sm}} \mathbf{1}_{\mcal{B}'})$
is isomorphic to $0 \to\Sigma_{\mcal{B}'}Y$ in $\cat{K}(G\downarrow^{\mrm{sm}} \mathbf{1}_{\mcal{B}'})$, where $f: Y \to G(X)$.
\item  For a complex $G(X) \xarr{u_f} C(f)$ of  $(G\downarrow^{\mrm{sm}} \mathbf{1}_{\mcal{B}'})$,
if $C(f)$ is homotopically trivial, then $G(X) \xarr{u_f} C(f)$ is isomorphic to 
$G(X) \xarr{u_1} C(1_{G(X)})$ in in $\cat{K}(G\downarrow^{\mrm{sm}} \mathbf{1}_{\mcal{B}'})$.
\end{enumerate}
\end{lem}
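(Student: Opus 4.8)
The plan is to handle (1) by a direct choice of degreewise splittings, and to obtain (2) and (3) from one short exact sequence of comma complexes together with the triangulated structure of $\cat{K}(G\downarrow^{\mrm{sm}}\mathbf{1}_{\mcal{B}'})$.

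For (1): a complex of $(G\downarrow^{\mrm{sm}}\mathbf{1}_{\mcal{B}'})$ is precisely the data of a complex $X$ of $\mcal{B}$, a complex $Z$ of $\mcal{B}'$, and a chain map $\alpha\colon G(X)\to Z$ (here $G(X)$ denotes $G$ applied termwise) with each $\alpha^n$ a split monomorphism. I would fix splittings, identifying $Z^n$ with $G(X^n)\oplus W^n$ so that $\alpha^n$ becomes the canonical inclusion. The condition that $\alpha$ is a chain map then forces the differential of $Z$ to be ``upper triangular'' for this decomposition, with diagonal blocks $G(d_X^n)$ and $h^n\colon W^n\to W^{n+1}$ and off-diagonal block $g^n\colon W^n\to G(X^{n+1})$; and $d_Z^2=0$ yields $h^{n+1}h^n=0$ together with $G(d_X^{n+1})g^n+g^{n+1}h^n=0$. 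Re-indexing $W$ (with a sign on the $h^n$) then produces a complex $Y$ of $\mcal{B}'$, the $g^n$ assemble into a chain map $f\colon Y\to G(X)$, and under the chosen identifications the given complex becomes exactly $G(X)\xarr{u_f}C(f)$; the splittings provide the isomorphism, already in $\cat{C}(G\downarrow^{\mrm{sm}}\mathbf{1}_{\mcal{B}'})$. The only point needing care here is to fix the sign convention for the mapping cone so that ``upper triangular'' matches the differential of $C(f)$.

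For (2) and (3), the device I would set up is the following: for every chain map $f\colon Y\to G(X)$ of complexes of $\mcal{B}'$ there is a short exact sequence of complexes of $(G\downarrow^{\mrm{sm}}\mathbf{1}_{\mcal{B}'})$, split in each degree,
\[
0\to\bigl(G(X)\xarr{1}G(X)\bigr)\to\bigl(G(X)\xarr{u_f}C(f)\bigr)\to\bigl(0\to\Sigma_{\mcal{B}'}Y\bigr)\to 0,
\]
where the first map is the coordinate inclusion $G(X)^n\hookrightarrow G(X)^n\oplus Y^{n+1}=C(f)^n$ and the second is the coordinate projection onto $Y^{n+1}$. Checking that these are morphisms in the comma category and chain maps is routine, and degreewise splitness is automatic, so the sequence induces a distinguished triangle in $\cat{K}(G\downarrow^{\mrm{sm}}\mathbf{1}_{\mcal{B}'})$; moreover the construction is functorial in $f$, so viewing $f$ itself as a morphism from $f$ to $1_{G(X)}$ over $G(X)$ produces a morphism of these short exact sequences that is the identity on the left-hand terms and is induced by $\Sigma_{\mcal{B}'}f$ on the right-hand terms. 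Now for (2): if $X$ is homotopically trivial then so is $G(X)$, since an additive functor carries a contracting homotopy of $X$ to one of $G(X)$; consequently $\bigl(G(X)\xarr{1}G(X)\bigr)$ is homotopically trivial, the contraction of $X$ inducing one, hence it is zero in $\cat{K}(G\downarrow^{\mrm{sm}}\mathbf{1}_{\mcal{B}'})$, and the triangle above gives $\bigl(G(X)\xarr{u_f}C(f)\bigr)\simeq\bigl(0\to\Sigma_{\mcal{B}'}Y\bigr)$. For (3): if $C(f)$ is homotopically trivial, the triangle $Y\xarr{f}G(X)\xarr{u_f}C(f)\to\Sigma_{\mcal{B}'}Y$ in $\cat{K}(\mcal{B}')$ shows $f$ is an isomorphism in $\cat{K}(\mcal{B}')$, i.e.\ a homotopy equivalence; feeding $f$ into the functoriality above yields a morphism of distinguished triangles in $\cat{K}(G\downarrow^{\mrm{sm}}\mathbf{1}_{\mcal{B}'})$ whose two outer vertical maps — the identity and the map induced by $\Sigma_{\mcal{B}'}f$ — are isomorphisms, so by the five lemma for triangulated categories the middle vertical map $\bigl(G(X)\xarr{u_f}C(f)\bigr)\to\bigl(G(X)\xarr{u_1}C(1_{G(X)})\bigr)$ is an isomorphism.

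I expect everything to be formal, so there is no deep obstacle; the two spots demanding attention are pinning down the mapping-cone and shift conventions so that the matrices in (1) and the maps in the short exact sequence are literally correct, and correctly invoking the fact that a morphism of degreewise split short exact sequences of complexes induces a morphism of the associated distinguished triangles in the homotopy category — it is this fact (whose verification needs the connecting maps to be shown compatible) that licenses the five-lemma step in (3).
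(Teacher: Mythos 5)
Your argument is correct and follows essentially the same route as the paper: the paper likewise declares (1) trivial, proves (2) from the distinguished triangle $(0\to Y)\to(G(X)\xrightarrow{1}G(X))\to(G(X)\xrightarrow{u_f}C(f))\to(0\to\Sigma_{\mcal{B}'}Y)$ (your short exact sequence is just a rotation of this), and proves (3) by the same morphism of triangles induced by $f$ together with the observation that $(0,f)$ becomes an isomorphism when $C(f)$ is contractible. Your extra care about $X$ (not merely $G(X)$) supplying the contracting homotopy of $(G(X)\xrightarrow{1}G(X))$ in the comma category is a point the paper glosses over, and is handled correctly.
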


\begin{proof}
(1)  It is trivial.\par \noindent
(2)  For a morphism $Y \xarr{f} G(X)$ of complexes of $\mcal{B}'$,
we have a triangle 
in $\cat{K}(G\downarrow^{\mrm{sm}} \mathbf{1}_{\mcal{B}'})$:
\[\xymatrix{
0 \ar[d] \ar[r] & G(X) \ar@{=}[d] \ar[r]^{1} & G(X) \ar[d]^{u_f} \ar[r] &0 \ar[d]\\
Y \ar[r]^{f} & G(X) \ar[r]^{u_f} & C(f) \ar[r]^{v_f} & \Sigma_{\mcal{B}'}Y
}\]
If $G(X)$ is homotopically trivial, then $G(X) \xarr{1} G(X)$ is $0$ in 
$\cat{K}(G\downarrow^{\mrm{sm}} \mathbf{1}_{\mcal{B}'})$, and hence
$G(X) \xarr{u_f} C(f)$ is isomorphic to $0 \to\Sigma_{\mcal{B}'}Y$.\par\noindent
(3)  For a morphism $Y \xarr{f} G(X)$ of complexes of $\mcal{B}'$,
we have a morphism between triangles in $\cat{K}(G\downarrow^{\mrm{sm}} \mathbf{1}_{\mcal{B}'})$:
\[
\xymatrix@!0{
& 0\ \ar@{->}[rr]\ar@{->}'[d][dd]
& & G(X)\ \ar@{->}[rr]^{1}\ar@{=}'[d][dd]
& & G(X)\ \ar@{->}[rr]^{1}\ar@{->}'[d]^{u_f}[dd]
& & 0\ \ar@{->}[dd]
\\
0\ \ar@{<-}[ur]\ar@{->}[rr]\ar@{->}[dd]
& & G(X)\ \ar@{->}[rr]^{\quad 1}\ar@{<-}[ur]^{1}\ar@{=}[dd]
& & G(X)\ \ar@{->}[rr]\ar@{<-}[ur]^{1}\ar@{->}[dd]
& & 0\ \ar@{<-}[ur]\ar@{->}[dd]
\\
& Y\ \ar@{->}'[r]^{f}[rr]
& & G(X)\ \ar@{->}'[r]^{u_f}[rr]
& & C(f)\ \ar@{->}'[r]^{v_f}[rr]
& & \Sigma Y\
\\
G(X)\ \ar@{->}[rr]^{1}\ar@{<-}[ur]^{f}
& & G(X)\ \ar@{->}[rr]\ar@{<-}[ur]^{1}
& & C(1_{G(X)})\ \ar@{->}[rr]\ar@{<-}[ur]
& & \Sigma G(X)\ \ar@{<-}[ur]}
\]
If $C(f)$ is homotopically trivial, then $f:Y \to G(X)$ is an isomorphism in $\cat{K}(\mcal{B}')$,
and therefore $(0,f):(0 \to Y) \to (0 \to G(X))$ is an isomorphism in
$\cat{K}(G\downarrow^{\mrm{sm}} \mathbf{1}_{\mcal{B}'})$.
By the above morphism between triangles, $G(X) \xarr{u_f} C(f)$ is isomorphic to 
$G(X) \xarr{u_1} C(1_{G(X)})$ in in $\cat{K}(G\downarrow^{\mrm{sm}} \mathbf{1}_{\mcal{B}'})$.
\end{proof}

\begin{defn}\label{adj02}
We define the following functors:
\[\begin{array}{ll}
D_{[s,t]}: \Morph^{\mrm{sm}}_{N-1}(\mcal{B}) \to \Morph^{\mrm{sm}}_{t-s+1}(\mcal{B}) &
(1\leq s \leq t \leq N-1) \\
E^{\Uparrow_{r}^{N-1}}:\Morph_{r}(\mcal{B}) \to \Morph_{N-1}(\mcal{B}) &
(1\leq r \leq N-2) \\
U_{N-1}:\mcal{B} \to \Morph_{N-1}(\mcal{B})
\end{array}\]
as follows.
For $X^{1} \xarr{\alpha_X^1} \cdots \xarr{\alpha_X^{N-2} } X^{N-1} \in 
\Morph^{\mrm{sm}}_{N-1}(\mcal{B})$,
\[\begin{aligned}
D_{[s, t]}(X^{1} \xarr{\alpha_X^1} \cdots \xarr{\alpha_X^{N-2} } X^{N-1}) =& \ 
Y^{1} \xarr{\alpha_Y^1} \cdots \xarr{\alpha_Y^{t-s} } Y^{t-s+1} \\
\end{aligned}\]
where $Y^{i}=X^{i+s-1}, \alpha_Y^i=\alpha_X^{i+s-1}$ $(1 \leq i \leq t-s+1)$.
We denote $D_{[s, s]}$ by $D_{[s]}$.
For $X^{1} \xarr{\alpha_X^1} \cdots \xarr{\alpha_X^{r-1} } X^r \in 
\Morph^{\mrm{sm}}_{r}(\mcal{B})$,
\[\begin{aligned}
E^{\Uparrow_{r}^{N-1}}(X^{1} \xarr{\alpha_X^1} \cdots \xarr{\alpha_X^{r-1} } X^r) =& \ 
Y^{1} \xarr{\alpha_Y^1} \cdots \xarr{\alpha_Y^{N-2} } Y^{N-1} \\
\end{aligned}\]
{\small
\[
Y^{i}=\begin{cases} 0 \ (1 \leq i < N-r) \\ X^{i-N+r+1} \ (N-r \leq i \leq N-1)\end{cases}, 
\alpha_{Y}^{i}=\begin{cases} 0 \ (1 \leq i < N-r) \\ \alpha_{X}^{i-N+r+1} \ (N-r \leq i \leq N-1)\end{cases}
\]
}
For $X \in \mcal{B}$,
\[\begin{aligned}
U_{N-1}(X) =& \ 
Y^{1} \xarr{\alpha_Y^1} \cdots \xarr{\alpha_Y^{N-2} } Y^{N-1} \\
\end{aligned}\]
where $Y^{i}=X, \alpha_Y^i=1_X$ $(1 \leq i \leq N-1)$.
Moreover, we use the same symbols for the corresponding functors
$D_{[s,t]}: \cat{K}(\Morph^{\mrm{sm}}_{N-1}(\mcal{B})) \to 
\cat{K}(\Morph^{\mrm{sm}}_{t-s+1}(\mcal{B}))$,
$E^{\Uparrow_{r}^{N-1}}:\cat{K}(\Morph_{r}(\mcal{B})) \to \cat{K}(\Morph_{N-1}(\mcal{B}))$ and
$U_{N-1}:\cat{K}(\mcal{B}) \to \cat{K}(\Morph_{N-1}(\mcal{B}))$.
\end{defn}

\begin{defn}\label{ngon@trg01}
We define the following full triangulated subcategories of \\
$\cat{K}(\Morph^{\mrm{sm}}_{N-1}(\mcal{B}))$:
\[\begin{array}{llll}
\mcal{E}^{[2,N-1]}=\opn{Ker}D_{[1]} &
\mcal{E}^{[1,N-2]}=\opn{Ker}D_{[N-1]} &
\mcal{E}^{1}=\opn{Ker}D_{[2,N-1]} \\
\mcal{E}^{s}=\opn{Ker}D_{[1,s-1]}\bigcap\opn{Ker}D_{[s+1,N-1]} &
\mcal{E}^{N-1}=\opn{Ker}D_{[1,N-2]}
\end{array}\]
For $1\leq s < t \leq N-1$,
$\mcal{F}^{[s,t]}$ is the full triangulated subcategory of $\cat{K}(\Morph^{\mrm{sm}}_{N-1}(\mcal{B}))$
consisting of objects $X^{1} \xarr{\alpha^1} \cdots \xarr{\alpha^{N-2}} X^{N-1}$
such that $\alpha^{s}=\cdots=\alpha^{t-1}=1$.
\end{defn}

Immediately, we have the following.

\begin{prop}\label{lastpiece}
The following hold.
\begin{enumerate}
\item  A functor $E^{\Uparrow_{N-2}^{N-1}}:\cat{K}(\Morph_{N-2}(\mcal{B})) \to \cat{K}(\Morph_{N-1}(\mcal{B}))$
induces a triangle equivalence between $\cat{K}(\Morph_{N-2}(\mcal{B}))$ and $\mcal{E}^{[2,N-1]}$.
\item  A functor $U_{N-1}:\cat{K}(\mcal{B}) \to \cat{K}(\Morph_{N-1}(\mcal{B}))$ induces
a triangle equivalence between $\cat{K}(\mcal{B})$ and $\mcal{F}^{[1,N-1]}$.
\end{enumerate}
\end{prop}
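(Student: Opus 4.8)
The plan is, in each case, to identify the functor as fully faithful and then to pin down its essential image; it will then follow that the functor is a triangle equivalence, being induced by an additive functor between additive categories (hence automatically a triangle functor on homotopy categories). Part (2) is almost immediate. The functor $U_{N-1}$ is induced by the additive functor $\mcal{B}\to\Morph^{\mrm{sm}}_{N-1}(\mcal{B})$ sending $X$ to the constant sequence $X\xarr{1}\cdots\xarr{1}X$; this is fully faithful, since a morphism between two such constant sequences must (by commutativity with the identity transition maps) have all its components equal and so is given by a single morphism of $\mcal{B}$. Hence $U_{N-1}$ induces a fully faithful triangle functor on homotopy categories. A complex over $\Morph^{\mrm{sm}}_{N-1}(\mcal{B})$ all of whose transition chain maps are identities is literally $U_{N-1}$ of a complex of $\mcal{B}$, and conversely; so the essential image of $U_{N-1}$ on $\cat{K}(\mcal{B})$ is precisely $\mcal{F}^{[1,N-1]}$.

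For part (1), observe that $E^{\Uparrow_{N-2}^{N-1}}$ is induced by the additive functor $\Morph^{\mrm{sm}}_{N-2}(\mcal{B})\to\Morph^{\mrm{sm}}_{N-1}(\mcal{B})$ that prepends a zero object; this is fully faithful (a morphism out of a sequence whose first term is $0$ has first component $0$, and the remaining components form an arbitrary morphism of the truncated sequence), so it induces a fully faithful triangle functor on $\cat{K}$, and since $D_{[1]}\circ E^{\Uparrow_{N-2}^{N-1}}=0$ its essential image lies in $\mcal{E}^{[2,N-1]}=\opn{Ker}D_{[1]}$. It remains to prove density. I would apply Example \ref{comma02} with $r=1$ to the constant functor $G\colon\mcal{B}\to\mcal{B}':=\Morph^{\mrm{sm}}_{N-2}(\mcal{B})$: this identifies $\cat{K}(\Morph^{\mrm{sm}}_{N-1}(\mcal{B}))$ with $\cat{K}(G\downarrow^{\mrm{sm}}\mathbf{1}_{\mcal{B}'})$ in such a way that $D_{[1]}$ corresponds to the functor sending a comma object $G(X)\to Y$ to $X$. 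By Lemma \ref{lem:comma03}(1) every object of this homotopy category has the form $G(X)\xarr{u_f}C(f)$ for a complex $X$ of $\mcal{B}$ and a morphism $f\colon Y\to G(X)$ of complexes of $\mcal{B}'$, and lying in $\mcal{E}^{[2,N-1]}$ means precisely that $X$ is homotopically trivial. By Lemma \ref{lem:comma03}(2) such an object is isomorphic in $\cat{K}(G\downarrow^{\mrm{sm}}\mathbf{1}_{\mcal{B}'})$ to $0\to\Sigma_{\mcal{B}'}Y$, which, transported back along the equivalence of Example \ref{comma02}, is the object of $\cat{K}(\Morph^{\mrm{sm}}_{N-1}(\mcal{B}))$ with first component $0$ and remaining part $\Sigma_{\mcal{B}'}Y$, that is, $E^{\Uparrow_{N-2}^{N-1}}(\Sigma_{\mcal{B}'}Y)$. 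Hence $E^{\Uparrow_{N-2}^{N-1}}\colon\cat{K}(\Morph^{\mrm{sm}}_{N-2}(\mcal{B}))\to\mcal{E}^{[2,N-1]}$ is fully faithful and dense, hence a triangle equivalence.

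The ingredients that need a routine check are: that a fully faithful additive functor $\mcal{A}\to\mcal{A}'$ induces a fully faithful triangle functor $\cat{K}(\mcal{A})\to\cat{K}(\mcal{A}')$ --- one lifts chain maps and homotopies along fullness and detects null-homotopies along faithfulness --- and the bookkeeping that under the equivalence of Example \ref{comma02} the functor $D_{[1]}$ ("first term") matches the functor $(G(X)\to Y)\mapsto X$ on the comma category. I do not anticipate a genuine obstacle: the entire force of part (1) is carried by Lemma \ref{lem:comma03}, and part (2) is essentially a matter of unwinding the definitions.
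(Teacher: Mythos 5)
Your proposal is correct and follows essentially the same route as the paper: density of $E^{\Uparrow_{N-2}^{N-1}}$ onto $\mcal{E}^{[2,N-1]}$ via Example \ref{comma02} and Lemma \ref{lem:comma03}, with the fully faithfulness (which the paper dismisses as ``easy to see'') supplied by the standard fact that a fully faithful additive functor induces a fully faithful functor on homotopy categories. Part (2) is handled identically in both, by observing that objects of $\mcal{F}^{[1,N-1]}$ are literally in the image of $U_{N-1}$.
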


\begin{proof}
(1)
By Lemma \ref{lem:comma03}, every complex of $\mcal{E}^{[2,N-1]}$ is isomorphic to
some complex of the form
\[\xymatrix{
0 \ar[r] & X^2 \ar[r] & \cdots \ar[r] & X^{N-1}
}\]
Then it is easy to see that 
a triangle functor $E^{\Uparrow_{N-2}^{N-1}}:\cat{K}(\Morph_{N-2}(\mcal{B})) \to \mcal{E}^{[2,N-1]}$
is a triangle equivalence.

\par\noindent
(2)
Every complex of $\mcal{F}^{[1,N-1]}$ is of the form
\[\xymatrix{
X^1 \ar@{=}[r] & X^2 \ar@{=}[r] & \cdots \ar@{=}[r] & X^{N-1}
}\]
Then it is easy to see that 
a triangle functor $U^{N-1}:\cat{K}(\mcal{B}) \to \mcal{F}^{[1,N-1]}$
is a triangle equivalence.
\end{proof}

\begin{thm}\label{th:ngon@trg01}
Let $\mcal{B}$ be an additive category.
Then a $2N$-tuple of full subcategories
{\small
\[
(\mcal{F}^{[1,N-1]},\mcal{E}^{[2,N-1]}, \mcal{E}^{1}, \mcal{F}^{[1,2]}, \cdots,
\mcal{E}^{s}, \mcal{F}^{[s,s+1]}, \cdots, \mcal{E}^{N-2}, \mcal{F}^{[N-2,N-1]}, \mcal{E}^{N-1}, \mcal{E}^{[1,N-2]})
\]
}
is a $2N$-gon of recollements in $\cat{K}(\Morph^{\mrm{sm}}_{N-1}(\mcal{B}))$.
\end{thm}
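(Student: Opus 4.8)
The strategy is to verify that each consecutive pair in the listed $2N$-tuple forms a stable $t$-structure in $\cat{K}(\Morph^{\mrm{sm}}_{N-1}(\mcal{B}))$; cyclicity of the list then makes it a $2N$-gon by definition. The pairs come in two alternating types: pairs of the shape $(\mcal{F}^{[s,s+1]},\mcal{E}^{s+1})$ or $(\mcal{E}^{s},\mcal{F}^{[s,s+1]})$ involving a ``trivial stalk'' subcategory $\mcal{E}^s$ and a ``collapsed edge'' subcategory $\mcal{F}^{[s,s+1]}$, and the two boundary pairs $(\mcal{F}^{[1,N-1]},\mcal{E}^{[2,N-1]})$ and $(\mcal{E}^{[1,N-2]},\mcal{F}^{[1,N-1]})$. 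First I would observe that all the subcategories in Definition \ref{ngon@trg01} are visibly $\Sigma$-stable full triangulated subcategories (kernels of triangle functors, or defined by a ``$\alpha^i = 1$'' condition stable under shift), so axiom (a) of Definition \ref{st-tprime} is automatic throughout.

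The heart of the argument is Lemma \ref{lem:comma03}, which via Example \ref{comma02} gives a normal form for complexes in $\cat{K}(\Morph^{\mrm{sm}}_{N-1}(\mcal{B}))$: writing an object of $\Morph^{\mrm{sm}}_{N-1}(\mcal{B})$ through the comma-category description, every complex is (up to isomorphism in the homotopy category) built from a mapping cone, and the parts (2),(3) of that lemma say that a homotopically trivial ``$X$-part'' collapses the cone to a stalk and a homotopically trivial cone collapses the object to a split identity $G(X) \xarr{u_1} C(1_{G(X)})$. Applying this inductively in each of the $N-2$ coordinates (splitting the sequence $X^1 \to \cdots \to X^{N-1}$ at position $s$ and viewing the left half as $\Morph_s$, the right half glued on via the functor $G$ of Example \ref{comma02}), I would show: (i) $\opn{Hom}$-vanishing $\opn{Hom}(\mcal{E}^s,\mcal{F}^{[s,s+1]}) = 0$ and $\opn{Hom}(\mcal{F}^{[s,s+1]},\mcal{E}^{s+1}) = 0$ — this is axiom (b), and it reduces via the normal form to the statement that a map into a homotopically trivial complex, or out of one, is null-homotopic; and (ii) the decomposition triangles of axiom (c): given an arbitrary $X$, the mapping-cone presentation in the $s$-th coordinate exhibits a triangle with one term having the relevant coordinate homotopically trivial (hence in $\mcal{E}^s$ or $\mcal{E}^{s+1}$) and the complementary term being a collapsed edge (hence in $\mcal{F}^{[s,s+1]}$). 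The two boundary pairs are handled the same way but using Proposition \ref{lastpiece}, which already identifies $\mcal{F}^{[1,N-1]}\simeq\cat{K}(\mcal{B})$ via $U_{N-1}$ and $\mcal{E}^{[2,N-1]}\simeq\cat{K}(\Morph_{N-2}(\mcal{B}))$ via $E^{\Uparrow_{N-2}^{N-1}}$; the required triangle here is the canonical one $U_{N-1}(X^1) \to X \to C \to \Sigma U_{N-1}(X^1)$ where $C$, having first coordinate killed, lies in $\mcal{E}^{[2,N-1]}$, and dually for $\mcal{E}^{[1,N-2]}$.

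Concretely I would proceed as follows: (1) fix the comma-category identification of $\Morph^{\mrm{sm}}_{N-1}(\mcal{B})$ from Example \ref{comma02} with $r = s$, so that a complex of $\Morph^{\mrm{sm}}_{N-1}(\mcal{B})$ is a datum $G(\xx) \xarr{u_f} C(f)$ with $\xx$ a complex in $\cat{K}(\Morph_s(\mcal{B}))$ and $f$ a morphism into $G(\xx)$; (2) translate the subcategory conditions into this language: $\mcal{E}^{s+1} = \opn{Ker}D_{[1,s]}\cap\opn{Ker}D_{[s+2,N-1]}$ becomes the condition that $\xx$ is homotopically trivial (for the $D_{[1,s]}$ part) together with a vanishing on the top; $\mcal{F}^{[s,s+1]}$ becomes the condition $\alpha^s = 1$, i.e. in the comma picture that $f$ realizes an identity in that slot; (3) invoke Lemma \ref{lem:comma03}(2),(3) to get the normal forms and the $\opn{Hom}$-vanishings; (4) write down the octahedral/mapping-cone triangle realizing axiom (c) in coordinate $s$; (5) check the orientation (which subcategory is $\mcal{U}$, which is $\mcal{V}$) matches the cyclic order in the displayed $2N$-tuple, using that $D_{[s]}$ of a collapsed edge $\mcal{F}^{[s,s+1]}$ agrees with $D_{[s+1]}$, which is what forces $\mcal{E}^s$ and $\mcal{E}^{s+1}$ to sit on opposite sides of $\mcal{F}^{[s,s+1]}$.

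The main obstacle I anticipate is purely bookkeeping: there are several families of subcategories ($\mcal{E}^s$ for $1\le s\le N-1$, two boundary $\mcal{E}$'s, and $\mcal{F}^{[s,s+1]}$ for $1\le s\le N-2$, plus the corner $\mcal{F}^{[1,N-1]}$), and one must get the indices and the direction of every stable $t$-structure exactly right so that the cyclic list closes up — in particular checking that the last pair $(\mcal{E}^{[1,N-2]},\mcal{F}^{[1,N-1]})$ and the first pair $(\mcal{F}^{[1,N-1]},\mcal{E}^{[2,N-1]})$ glue consistently at $\mcal{F}^{[1,N-1]}$. The genuinely mathematical content, by contrast, is light: it is entirely concentrated in Lemma \ref{lem:comma03}, which reduces everything to the elementary behaviour of mapping cones under homotopy triviality of one of the two inputs, applied one coordinate at a time. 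I would organize the write-up by proving one representative interior pair in full detail and then indicating that the remaining interior pairs are identical after reindexing and the boundary pairs follow from Proposition \ref{lastpiece}.
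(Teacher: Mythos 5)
Your overall strategy --- reduce everything to the normal forms of Lemma \ref{lem:comma03} via the comma-category identification of Example \ref{comma02}, verify the $\Hom$-vanishing by exhibiting null-homotopies against the contractible coordinate, and produce the decomposition triangles coordinate by coordinate --- is exactly the route the paper takes, and the interior pairs $(\mcal{E}^{s},\mcal{F}^{[s,s+1]})$, $(\mcal{F}^{[s,s+1]},\mcal{E}^{s+1})$ together with the two pairs adjacent to $\mcal{F}^{[1,N-1]}$ are treated there just as you describe.

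However, your case enumeration is incomplete. The $2N$-tuple contains two further consecutive pairs, $(\mcal{E}^{[2,N-1]},\mcal{E}^{1})$ and $(\mcal{E}^{N-1},\mcal{E}^{[1,N-2]})$, which are neither of the shape $(\mcal{E}^{s},\mcal{F}^{[s,s+1]})$ or $(\mcal{F}^{[s,s+1]},\mcal{E}^{s+1})$ nor among the two boundary pairs you list; counting shows your plan addresses only $2N-2$ of the $2N$ required stable $t$-structures. These two pairs are not automatic: for instance, $\Hom(\mcal{E}^{[2,N-1]},\mcal{E}^{1})=0$ requires normalizing objects of $\mcal{E}^{1}$ to the form $X\to \opn{C}(1_X)\to\cdots\to \opn{C}(1_X)$ via Lemma \ref{lem:comma03}(3) and observing that $D_{[2,N-1]}$ of such an object is contractible, while the decomposition $\cat{K}(\Morph^{\mrm{sm}}_{N-1}(\mcal{B}))=\mcal{E}^{[2,N-1]}*\mcal{E}^{1}$ needs the triangle whose third term is $X^1\to \opn{C}(1_{X^2})\to\cdots\to \opn{C}(1_{X^{N-1}})$; the pair $(\mcal{E}^{N-1},\mcal{E}^{[1,N-2]})$ similarly uses the cone over the last coordinate. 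The tools are exactly the ones you already invoke, so the gap is repairable with no new ideas, but as organized your proof does not close the $2N$-gon.
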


\begin{proof}
First, we prove $\Hom_{\cat{K}(\Morph^{\mrm{sm}}_{N-1}(\mcal{B}))}(\mcal{X},\mcal{Y})=0$, where $\mcal{X}, \mcal{Y}$ 
are two successive subcategories of the above  $2N$-tuple.
By Example \ref{comma02}, Lemma \ref{lem:comma03} (2), 
any complex of $\mcal{E}^{[2,N-1]}$ is isomorphic to 
a complex $0 \to X^2 \to \cdots \to X^{N-1}$ in $\cat{K}(\Morph^{\mrm{sm}}_{N-1}(\mcal{B}))$.
Then $\Hom_{\cat{K}(\Morph^{\mrm{sm}}_{N-1}(\mcal{B}))}(\mcal{F}^{[1,N-1]},\mcal{E}^{[2,N-1]})=0$ is easy.
By Example \ref{comma02}, Lemma \ref{lem:comma03} (3),  for any object of $\mcal{E}^{1}$ there is a complex
$X$ such that it is isomorphic to an object
$
X \to \opn{C}(1_X) \to \cdots \to \opn{C}(1_X)
$
in $\cat{K}(\Morph^{\mrm{sm}}_{N-2}(\mcal{B}))$.
Since it is easy to see 
{\small
\[
\Hom_{\cat{K}(\Morph^{\mrm{sm}}_{N-1}(\mcal{B}))}(\mcal{E}^{[2,N-1]}, \mcal{E}^{1})\simeq
\Hom_{\cat{K}(\Morph^{\mrm{sm}}_{N-2}(\mcal{B}))}(D_{[2,N-1]}(\mcal{E}^{[2,N-1]}), D_{[2,N-1]}(\mcal{E}^{1}))
\]
}
and
\[
D_{[2,N-1]}(X \to \opn{C}(1_X) \to \cdots \to \opn{C}(1_X))=\opn{C}(1_X) \to \cdots \to \opn{C}(1_X)
\]
is homotopically $0$ in $\cat{K}(\Morph^{\mrm{sm}}_{N-2}(\mcal{B}))$, 
$\Hom_{\cat{K}(\Morph^{\mrm{sm}}_{N-1}(\mcal{B}))}(\mcal{E}^{[2,N-1]}, \mcal{E}^{1})=0$.
By Example \ref{comma02}, Lemma \ref{lem:comma03} (2), (3),  
we may assume any morphism from $\mcal{E}^{s}$ to $\mcal{F}^{[s,s+1]}$ is of the form
\[\xymatrix{
0 \ar[d] \ar[r] & \cdots \ar[r] & 0 \ar[d] \ar[r] & X^{s} \ar[d] \ar[r] & C(1_X)  \ar[d]\ar@{=}[r] & \cdots \ar@{=}[r] & C(1_X) \ar[d] \\
Y^{1} \ar[r] & \cdots \ar[r] & Y^{s-1} \ar[r] & Y^{s} \ar@{=}[r] & Y^{s+1} \ar[r] & \cdots \ar[r] & Y^{N-1}
}\]
It is easy that the above morphism is null homotopic, and then \\
$\Hom_{\cat{K}(\Morph^{\mrm{sm}}_{N-1}(\mcal{B}))}(\mcal{E}^{s}, \mcal{F}^{[s,s+1]})=0$.
Similarly, we may assume any morphism from $\mcal{F}^{[s,s+1]}$ to $\mcal{E}^{s+1}$ is of the form
\[\xymatrix{
X^{1} \ar[d] \ar[r] & \cdots \ar[r] & X^{s} \ar[d] \ar@{=}[r] & X^{s+1} \ar[d] \ar[r] & X^{s+2} \ar[d] \ar[r] & \cdots \ar[r] & X^{N-1} \ar[d] \\
0 \ar[r] & \cdots \ar[r] & 0 \ar[r] & Y  \ar[r] & C(1_Y)  \ar@{=}[r] & \cdots \ar@{=}[r] & C(1_Y) \\
}\]
It is easy that the above morphism is null homotopic, and then \\
$\Hom_{\cat{K}(\Morph^{\mrm{sm}}_{N-1}(\mcal{B}))}(\mcal{F}^{[s,s+1]}, \mcal{E}^{s+1})=0$.
Since any complex of $\mcal{E}^{N-1}$ is isomorphic to $0 \to \cdots  \to 0 \to X^{N-1}$, and
any complex of $\mcal{E}^{[1,N-2]}$ is isomorphic to $Y^1 \to \cdots  \to Y^{N-2} \to C(1_{Y^{N-1}})$
in $\Morph^{\mrm{sm}}_{N-1}(\mcal{B})$,
it is easy to see that $\Hom_{\cat{K}(\Morph^{\mrm{sm}}_{N-1}(\mcal{B}))}(\mcal{E}^{N-1}, \\ \mcal{E}^{[1,N-2]})=0$.
Since we may assume any morphism from $\mcal{E}^{[1,N-2]}$ to $\mcal{F}^{[1,N-1]}$ is of the form
\[\xymatrix{
X^1 \ar[d] \ar[r] & \cdots \ar[r] & X^{N-2} \ar[d] \ar[r] & C(1_{X^{N-1}}) \ar[d] \\
Y^{1} \ar@{=}[r] & \cdots \ar@{=}[r] & Y^{N-2} \ar@{=}[r] & Y^{N-1}
}\]
It is easy to see it is null homotopic, and then
$\Hom_{\cat{K}(\Morph^{\mrm{sm}}_{N-1}(\mcal{B})}(\mcal{E}^{[1,N-2]}, \mcal{F}^{[1,N-1]})=0$.
\par\noindent
Second, we prove $\cat{K}(\Morph^{\mrm{sm}}_{N-1}(\mcal{B}))=\mcal{X}*\mcal{Y}$, where $\mcal{X}, \mcal{Y}$ 
are two successive subcategories of the above  $2N$-tuple.
Let $X^{1} \xarr{\alpha^1} X^{2} \xarr{\alpha^2} \cdots  \xarr{\alpha^{N-2}} X^{N-1}$ be a complex
of $\cat{K}(\Morph^{\mrm{sm}}_{N-1}(\mcal{B}))$.
Since we have a short exact sequence of complexes:
\[\xymatrix{
X^1 \ar[d]^{1} \ar@{=}[r] & X^1 \ar[d]^{\alpha^1} \ar@{=}[r] &\cdots \ar@{=}[r]& X^{1} \ar[d]^{\alpha^{N-2}\cdots\alpha^1} \\
X^1 \ar[d] \ar[r]^{\alpha^1} & X^2 \ar[d] \ar[r]^{\alpha^2} &\cdots \ar[r]^{\alpha^{N-2}} & X^{N-1} \ar[d] \\
0 \ar[r] & \opn{Cok}\alpha^{1} \ar[r] & \cdots \ar[r] & \opn{Cok}\alpha^{N-2}\cdots\alpha^{1}
}\]
we have $\cat{K}(\Morph^{\mrm{sm}}_{N-1}(\mcal{B}))=\mcal{F}^{[1,N-1]}*\mcal{E}^{[2,N-1]}$.
Since we have a triangle in \\ $\cat{K}(\Morph^{\mrm{sm}}_{N-1}(\mcal{B}))$:
\[\xymatrix{
0 \ar[d] \ar[r]^{\alpha^1} & X^2 \ar[d]^{1} \ar[r]^{\alpha^2} &\cdots \ar[r]^{\alpha^{N-2}} & X^{N-1} \ar[d]^{1} \\
X^1 \ar[d] \ar[r]^{\alpha^1} & X^2 \ar[d] \ar[r]^{\alpha^2} &\cdots \ar[r]^{\alpha^{N-2}} & X^{N-1} \ar[d] \\
X^1 \ar[d] \ar[r] & \opn{C}(1_{X^2}) \ar[d] \ar[r] &\cdots \ar[r] & \opn{C}(1_{X^{N-1}}) \ar[d] \\
0 \ar[r]^{\Sigma \alpha^1} & \Sigma X^2 \ar[r]^{\Sigma \alpha^2} &\cdots \ar[r]^{\Sigma \alpha^{N-2}} & \Sigma X^{N-1} \\
}\]
we have $\cat{K}(\Morph^{\mrm{sm}}_{N-1}(\mcal{B}))=\mcal{E}^{[2,N-1]}*\mcal{E}^{1}$.
Since we have a triangle in $\cat{K}(\Morph^{\mrm{sm}}_{N-1}(\mcal{B}))$:
{\scriptsize
\[\xymatrix{
X^1 \ar[d]^{1} \ar[r]^{\alpha^1} & \cdots \ar[r]^{\alpha^{s-2}} & 
X^{s-1} \ar[d]^{1} \ar[r]^{\alpha^{s-1}} & X^{s} \ar[d]^{\alpha^{s}} \ar[r]^{\alpha^{s}} & 
X^{s+1} \ar[d]^{1} \ar[r]^{\alpha^{s+1}} & \cdots \ar[r]^{\alpha^{N-2}}  &X^{N-1} \ar[d]^{1} \\
X^1 \ar[d] \ar[r]^{\alpha^1} & \cdots \ar[r]^{\alpha^{s-2}}  & 
X^{s-1} \ar[d] \ar[r]^{\alpha^{s}\alpha^{s-1}} & X^{s+1} \ar[d] \ar@{=}[r] & 
X^{s+1} \ar[d] \ar[r]^{\alpha^{s+1}} & \cdots \ar[r]^{\alpha^{N-2}} &X^{N-1} \ar[d] \\
\opn{C}(1_{X^{1}}) \ar[d] \ar[r] & \cdots \ar[r] &
\opn{C}(1_{X^{s-1}}) \ar[d] \ar[r] & \opn{C}(\alpha^{s}) \ar[d] \ar[r]^{\alpha^{s+1}} & 
\opn{C}(1_{X^{s+1}}) \ar[d] \ar[r] & \cdots \ar[r] &\opn{C}(1_{X^{N-1}}) \ar[d] \\
\Sigma X^1 \ar[r]^{\Sigma \alpha^1} & \cdots \ar[r]^{\Sigma \alpha^{s-2}}  & 
\Sigma X^{s-1} \ar[r]^{\Sigma \alpha^{s-1}} & \Sigma X^{s} \ar[r]^{\Sigma \alpha^{s}} & 
\Sigma X^{s+1} \ar[r]^{\Sigma \alpha^{s+1}} & \cdots \ar[r]^{\Sigma\alpha^{N-2}}  & \Sigma X^{N-1} \\
}\]
}
we have $\cat{K}(\Morph^{\mrm{sm}}_{N-1}(\mcal{B}))=\mcal{E}^{s}*\mcal{F}^{[s,s+1]}$.
Since we have a triangle in $\cat{K}(\Morph^{\mrm{sm}}_{N-1}(\mcal{B}))$:
{\scriptsize
\[\xymatrix{
X^1 \ar[d]^{1} \ar[r] & \cdots \ar[r]^{\alpha^{s-1}}  & 
X^{s} \ar[d]^{1} \ar@{=}[r] & X^{s} \ar[d]^{\alpha^{s}} \ar[r]^{\alpha^{s+1}\alpha^{s}} & 
X^{s+2} \ar[d]^{1} \ar[r]^{\alpha^{s+2}} & \cdots \ar[r]^{\alpha^{N-2}} &X^{N-1} \ar[d]^{1} \\
X^1 \ar[d] \ar[r] & \cdots \ar[r]^{\alpha^{s-1}} & 
X^{s} \ar[d] \ar[r]^{\alpha^{s}} & X^{s+1} \ar[d] \ar[r]^{\alpha^{s+1}} & 
X^{s+2} \ar[d] \ar[r]^{\alpha^{s+2}} & \cdots \ar[r]^{\alpha^{N-2}}  &X^{N-1} \ar[d] \\
\opn{C}(1_{X^{1}}) \ar[d] \ar[r] & \cdots \ar[r] &
\opn{C}(1_{X^{s-1}}) \ar[d] \ar[r] & \opn{C}(\alpha^{s}) \ar[d] \ar[r]^{\alpha^{s+1}} & 
\opn{C}(1_{X^{s+1}}) \ar[d] \ar[r] & \cdots \ar[r] &\opn{C}(1_{X^{N-1}}) \ar[d] \\
\Sigma X^1 \ar[r]^{\Sigma \alpha^1} & \cdots \ar[r]^{\Sigma \alpha^{s-1}}  & 
\Sigma X^{s} \ar@{=}[r] & \Sigma X^{s} \ar[r]^{\Sigma \alpha^{s+1}\alpha^{s}} & 
\Sigma X^{s+2} \ar[r]^{\Sigma \alpha^{s+2}} & \cdots \ar[r]^{\Sigma\alpha^{N-2}}  & \Sigma X^{N-1} \\
}\]
}
we have $\cat{K}(\Morph^{\mrm{sm}}_{N-1}(\mcal{B}))=\mcal{F}^{[s,s+1]}*\mcal{E}^{s+1}$.
Since we have a triangle in {\small$\cat{K}(\Morph^{\mrm{sm}}_{N-1}(\mcal{B}))$:}
\[\xymatrix{
0 \ar[d] \ar[r] & \cdots \ar[r] & 0 \ar[d] \ar[r] & X^{N-1} \ar[d]^{1} \\
X^1 \ar[d]^{1} \ar[r]^{\alpha^1} & \cdots \ar[r]^{\alpha^{N-3}} & X^{N-2} \ar[d]^{1} \ar[r]^{\alpha^{N-2}} & X^{N-1} \ar[d] \\
X^1 \ar[d] \ar[r]^{\alpha^1} & \cdots \ar[r]^{\alpha^{N-3}} & X^{N-2} \ar[d] \ar[r] & \opn{C}(1_{X^{N-1}}) \ar[d] \\
0 \ar[r] & \cdots \ar[r] & 0 \ar[r] & \Sigma X^{N-1} \\
}\]
we have $\cat{K}(\Morph^{\mrm{sm}}_{N-1}(\mcal{B}))=\mcal{E}^{N-1}*\mcal{E}^{[1,N-2]}$.
Since we have a triangle in {\small$\cat{K}(\Morph^{\mrm{sm}}_{N-1}(\mcal{B}))$:}
\[\xymatrix{
X^{1} \ar[d]^{\alpha^{N-2}\cdots\alpha^{1}} \ar[r]^{\alpha^{1}} & \cdots \ar[r]^{\alpha^{N-3}}  & 
X^{N-2} \ar[d]^{\alpha^{N-2}}  \ar[r]^{\alpha^{N-2}} & X^{N-1} \ar[d]^{1} \\
X^{N-1} \ar[d] \ar@{=}[r] & \cdots \ar@{=}[r]  & 
X^{N-1} \ar[d] \ar@{=}[r] & X^{N-1} \ar[d] \\
\opn{C}(\alpha^{N-2}\cdots\alpha^{1}) \ar[d] \ar[r] & \cdots \ar[r] & 
\opn{C}(\alpha^{N-2}) \ar[d] \ar[r] & \opn{C}(1_{X^{N-1}}) \ar[d] \\
\Sigma X^{1} \ar[r]^{\Sigma \alpha^{1}} & \cdots \ar[r]^{\Sigma \alpha^{N-3}}  & 
\Sigma X^{N-2} \ar[r]^{\Sigma \alpha^{N-2}} & \Sigma X^{N-1}
}\]
we have $\cat{K}(\Morph^{\mrm{sm}}_{N-1}(\mcal{B}))=\mcal{E}^{N-1}*\mcal{F}^{[1,N-1]}$.
\end{proof}

\section{Recollement of $\cat{K}_{N}(\mcal{B})$}\label{TNcpx}

We fix a positive integer $N\ge2$.
Let $\mcal{B}$ be an additive category.
An \emph{$N$-complex} is a diagram
\[\cdots \xrightarrow{d_X^{i-1}}X^i\xrightarrow{d_X^i}X^{i+1}\xrightarrow{d_X^{i+1}}\cdots\]
with objects $X^i\in\mcal{B}$ and morphisms $d_X^i\in\Hom_{\mcal{B}}(X^i,X^{i+1})$ satisfying
\[d_{X}^{i+N-1}\cdots d_{X}^{i+1}d_{X}^{i}=0\]
for any $i\in\z$.

A \emph{morphism} between $N$-complexes is a commutative diagram
\[\begin{array}{ccccc}
\cdots \xrightarrow{d_X^{i-1}}&X^i&\xrightarrow{d_X^i}&X^{i+1}&\xrightarrow{d_X^{i+1}}\cdots\\
&\downarrow^{f^i}&&\downarrow^{f^{i+1}}&\\
\cdots \xrightarrow{d_Y^{i-1}}&Y^i&\xrightarrow{d_Y^i}&Y^{i+1}&\xrightarrow{d_Y^{i+1}}\cdots
\end{array}\]
with $f^i\in\Hom_{\mcal{B}}(X^i,Y^i)$ for any $i\in\z$.
We denote by $\cat{C}_N(\mcal{B})$ the category of $N$-complexes.
A collection $\mcal{S}_N(\mcal{B})$ of conflations is the collection of short exact sequences of $N$-complexes of which each term is a split short exact sequence in $\mcal{B}$.

\begin{prop}[\cite{IKM2}]\label{NcpxFrob}
A category $(\cat{C}_N(\mcal{B}), \mcal{S}_N(\mcal{B}))$ is a Frobenius category.
\end{prop}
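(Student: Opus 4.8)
The plan is to establish that $(\cat{C}_N(\mcal{B}), \mcal{S}_N(\mcal{B}))$ is an exact category first, and then exhibit enough projective-injective objects so that it becomes Frobenius. The exact structure is routine: a short exact sequence of $N$-complexes which is degreewise split automatically satisfies the axioms of an exact structure because the corresponding properties hold degreewise in $\mcal{B}$ and are checked componentwise; in particular the class $\mcal{S}_N(\mcal{B})$ is closed under the operations required (pushouts along admissible monos, pullbacks along admissible epis, composition of admissible monos/epis), and these can be verified levelwise using the split structure.

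The main content is to identify the projective and injective objects and show there are enough of them. First I would introduce, for each $i\in\z$ and each $B\in\mcal{B}$, the ``stalk-type'' $N$-complexes built as the minimal contractible $N$-complexes supported near degree $i$ — concretely, the $N$-complexes $\mu_i^r(B)$ whose consecutive nonzero terms are $B$ (in $r$ slots, $1\le r\le N$) joined by appropriate identity maps and zero elsewhere, so that the composite of any $N$ consecutive differentials vanishes. These play the role of the ``disk'' complexes in the ordinary ($N=2$) case. The key lemma is a Hom-computation: for an arbitrary $N$-complex $X$, the groups $\Hom_{\cat{C}_N(\mcal{B})}(\mu_i^r(B), X)$ and $\Hom_{\cat{C}_N(\mcal{B})}(X, \mu_i^r(B))$ are representable by components of $X$, which shows these objects are both $\mcal{S}_N(\mcal{B})$-projective and $\mcal{S}_N(\mcal{B})$-injective (exactness of $\Hom$ against the degreewise-split conflations reduces to the split exactness in $\mcal{B}$). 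Then one checks that every $N$-complex $X$ admits a conflation $X\rightarrowtail P\twoheadrightarrow X'$ with $P$ a direct sum of such objects (and dually an admissible mono into such a $P$), by the standard construction: map $X$ into $\bigoplus_i \mu_i^{N}(X^i)$ (or the appropriate width) using identity components, which is degreewise a split monomorphism by construction, with cokernel again an $N$-complex.

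The last step is to verify that the projective objects coincide with the injective ones, which follows because the generating objects $\mu_i^r(B)$ are simultaneously projective and injective by the Hom-computation, and in an exact category with enough projectives and enough injectives, once the class of projectives is closed under direct summands and agrees with a generating-cogenerating class of injectives, the two classes coincide; concretely, any projective is a summand of a sum of $\mu_i^r(B)$'s hence injective, and vice versa.

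The step I expect to be the main obstacle is the precise bookkeeping in the Hom-computation and in the construction of the enveloping projective-injective: because an $N$-complex has the relation $d^{i+N-1}\cdots d^i=0$ rather than $d^{i+1}d^i=0$, the contractible building blocks come in $N$ different ``widths'' $r=1,\dots,N$, and one must be careful that the natural map $X\to\bigoplus\mu_i^{?}(X^i)$ is a morphism of $N$-complexes (i.e. respects the $N$-fold vanishing condition) and is degreewise split mono with cokernel still lying in $\cat{C}_N(\mcal{B})$. Once the correct width is chosen this is a finite diagram chase, but it is the place where the $N>2$ case genuinely differs from the classical one and where sign/indexing errors are easy to make; everything else reduces to the degreewise split structure and the corresponding facts in $\mcal{B}$.
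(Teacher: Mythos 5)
The paper does not actually prove this proposition in the present text --- it is imported from \cite{IKM2} --- but the intended argument is the one you outline in broad strokes: the degreewise split exact structure is verified componentwise, and enough projective--injective objects are provided by the width-$N$ interval complexes, via the conflations $X\to C(1_X)\to\Sigma X$ and $\Sigma^{-1}C(1_X)\to C(1_X) \to X$ of Definition~\ref{cone01}. So your overall strategy is the right one.

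There is, however, a genuine error in your key lemma, and it propagates into your final step. In the paper's notation $\mu^s_rB$ (the interval complex with $r$ consecutive copies of $B$ joined by identities, top degree $s$), the object $\mu^s_rB$ is projective and injective \emph{only when $r=N$}; your claim that this holds for all $1\le r\le N$ is false. Indeed, a morphism $f:\mu^s_rB\to X$ is determined by its lowest component $f^{s-r+1}:B\to X^{s-r+1}$, but that component is subject to the constraint $d_X^{s}\cdots d_X^{s-r+1}f^{s-r+1}=0$ (because the differential leaving degree $s$ in $\mu^s_rB$ is zero), and this $r$-fold vanishing is automatic only for $r=N$; so for $r<N$ the functor $\Hom_{\cat{C}_N(\mcal{B})}(\mu^s_rB,-)$ is a proper subfunctor of $\Hom_{\mcal{B}}(B,(-)^{s-r+1})$ and is not exact on $\mcal{S}_N(\mcal{B})$. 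Concretely, for $r=1$ the identity of the stalk complex is not null-homotopic, since every term of the homotopy formula contains at least one differential of the stalk, which is zero. The paper itself depends on this failure: in the proof of Lemma~\ref{smcatcp} the conflation $0\to\mu_{r}^{iN+N-1}C\to\mu_{N}^{iN+N-1}C\to\mu_{N-r}^{(i+1)N-r-1}C\to0$ exhibits $\mu_{N-r}$ as a suspension of $\mu_{r}$ in $\cat{K}_{N}(\mcal{B})$, which would force these objects to vanish there if they were projective--injective, contradicting Lemma~\ref{smcatcp}(2). Your argument is repaired by restricting the generating class to $r=N$: the width-$N$ blocks are projective and injective by the Hom-computation above, the natural maps into and out of $C(1_X)$ give enough of them, and then projectives and injectives coincide because each class consists exactly of the direct summands of sums of width-$N$ blocks.
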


\begin{defn}[\cite{IKM2}]\label{cone01}
Let $(X,d)$, $(Y,e)$ be objects and $f: Y \to X$ be a morphism in $\cat{C}_{N}(\mcal{B})$. 
Then the mapping cone $C(f)$ of $f$ is given as 
{\footnotesize 
\[ C(f)^m = X^m \oplus {\displaystyle \coprod_{i=m+1}^{m+N-1}}Y^i, 
d_{C(f)}^m = \left( \begin{array}{c|ccccc}
d&f&0&\cdots&\cdots&0\\
\hline
0&0&1&\ddots&&\vdots\\
\vdots&\vdots&\ddots&\ddots&\ddots&\vdots\\
\vdots&0&\cdots&0&1&0\\
0&0&\cdots&\cdots&0&1\\
0&-e^{\{N-1\}}&-e^{\{N-2\}}&\cdots&\cdots&-e\\
\end{array} \right)
\] 
\[(\Sigma ^{-1}  C(f) )^m \!\!\! = \!\!\! \coprod_{i=m-N+1}^{m-1} X^i \oplus Y^m , 
d_{\Sigma ^{-1} C(f)}^m = \left( \begin{array}{ccccc|c}
-d&1&0&\cdots&\cdots&0\\
-d^2&0&1&\ddots&&\vdots\\
\vdots&\vdots&&\ddots&\ddots&\vdots\\
\vdots&\vdots&\ddots&\ddots&1&0\\
-d^{\{N-1\}}&0&\cdots&\cdots&0&f\\
\hline
0&\cdots&\cdots&\cdots&0&e\\
\end{array} \right).
\]
}
Here $d^{\{N-1\}}$ means the $(N-1)$-power of $d$.
\end{defn}

The above mapping cone induces a morpism between conflations:
\[\xymatrix{
0 \ar[r] & X \ar[d]^{f} \ar[r]^{u_X} & \opn{C}(1_X) \ar[d]^{\psi_f} \ar[r]^{v_X} & \Sigma X \ar@{=}[d] \ar[r] & 0 \\
0 \ar[r] & Y \ar[r]^{u_f} & \opn{C}(f) \ar[r]^{v_f} & \Sigma X \ar[r] & 0
}\]
Let $I(X)=\opn{C}(1_X)$, then $I(X)$ is a projective-injective object in $\cat{C}_{N}(\mcal{B})$.
We call a sequence $Y \xarr{f} X \xarr{u_f} \opn{C}(1_X) \xarr{v_f} \Sigma X $ a (distinguished) triangle.

A {morphism} $f:X \to Y$of $N$-complexes
is called \emph{null-homotopic} if there exists $s^i\in\Hom_{\mathcal{B}}(X^i,Y^{i-N+1})$
such that
\[f^i=\sum_{j=1}^{N-1}d_Y^{i-1}\cdots d_Y^{i-N+j}s^{i+j-1}d_X^{i+j-2}\cdots d_X^i\]
for any $i\in\z$.
We denote by $\cat{K}_N(\mcal{B})$ the homotopy category of $N$-complexes.

\begin{thm}[\cite{IKM2}]\label{NcpxTricat}
A category $\cat{K}_N(\mcal{B})$ is a triangulated category.
\end{thm}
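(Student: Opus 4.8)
The plan is to recognize $\cat{K}_N(\mcal{B})$ as the stable category of the Frobenius category $(\cat{C}_N(\mcal{B}),\mcal{S}_N(\mcal{B}))$ supplied by Proposition \ref{NcpxFrob}, and then to invoke the standard fact (Happel) that the stable category of a Frobenius category carries a canonical triangulated structure. Concretely the argument breaks into three parts: (i) identify the homotopy category $\cat{K}_N(\mcal{B})$ with the stable category $\underline{\cat{C}_N(\mcal{B})}$; (ii) transport the triangulated structure from the stable category; (iii) check that the structure so obtained agrees with the suspension and the mapping-cone triangles fixed in and after Definition \ref{cone01}.

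For (i) the key point is that a morphism $f\colon X\to Y$ of $N$-complexes is null-homotopic, in the sense of the displayed $N$-homotopy identity preceding the statement of this theorem, if and only if it factors through a projective-injective object of $\cat{C}_N(\mcal{B})$. Since every projective-injective object is a direct summand of some $I(Z)=\opn{C}(1_Z)$, it suffices to analyse factorizations through $I(Z)$. For one direction, expanding the two legs $X\to I(Z)\to Y$ of such a factorization into components and pairing them against the explicit block differential of $\opn{C}(1_Z)$ in Definition \ref{cone01}, the entries that shift degree by one at a time reassemble exactly into homotopy operators $s^i\in\Hom_{\mcal{B}}(X^i,Y^{i-N+1})$ of the prescribed telescoping shape, and the commuting-square condition for a chain map becomes the homotopy identity. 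For the converse, from homotopy data $s^i$ one writes down an explicit factorization of $f$ through a copy of $\opn{C}(1_{(-)})$, with one leg assembled from the iterated differentials $d_X^{\{k\}}$ and the other from the $s^i$ together with the $d_Y^{\{k\}}$ (the comparison morphism $\psi_f$ of the conflation square displayed after Definition \ref{cone01} serving as the template), and checks that the composite recovers $f$, which is again precisely the homotopy identity. This gives $\cat{K}_N(\mcal{B})\simeq\underline{\cat{C}_N(\mcal{B})}$ as additive categories.

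Granting (i), part (ii) is purely formal: the stable category of any Frobenius category is triangulated, with suspension the cosyzygy functor $X\mapsto\opn{Cok}(X\into I(X))$ and distinguished triangles the images of the sequences obtained by pushing a conflation $X\into I(X)\epto\Sigma X$ out along a morphism out of $X$. For (iii) one observes that Definition \ref{cone01} already displays, for each morphism, a conflation involving the mapping cone together with a morphism $\psi_f$ from the standard conflation $X\into I(X)\epto\Sigma X$, which exhibits $\opn{C}(f)$ as the pushout of that standard conflation along $f$; since every object $I(X)$ becomes zero in the stable category, the mapping-cone triangle declared in the text is canonically the Happel pushout triangle, so the two triangulated structures coincide and the suspension is the one of Definition \ref{cone01}. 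The main obstacle is step (i): reconciling the combinatorial notion of $N$-homotopy — a single operator dropping degree by $N-1$, contracted through a telescope of composable differentials of graded lengths — with the categorical notion of factoring through a projective-injective object is where all of the bookkeeping with the block matrices of Definition \ref{cone01} is concentrated; everything after that point is formal.
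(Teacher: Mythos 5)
Your proposal is correct and is essentially the argument of the cited source: the paper itself gives no proof of Theorem \ref{NcpxTricat}, quoting it from \cite{IKM2}, and the surrounding material (Proposition \ref{NcpxFrob}, the projective-injective objects $I(X)=\opn{C}(1_X)$, and the pushout square displayed after Definition \ref{cone01}) is exactly the scaffolding for the route you describe. You have also correctly located the only nontrivial point, namely that the $N$-homotopy identity is equivalent to factoring through a projective-injective object, which is the lemma on which the identification $\cat{K}_N(\mcal{B})\simeq\underline{\cat{C}_N(\mcal{B})}$ rests in \cite{IKM2}.
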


\begin{defn}\label{prolong01}
Let $N$ be an integer greater than 2.  For any integer $s$, we define functions 
$\iota_{s}^{(N-1)}:\z \to \z$, $\rho_{s}^{(N)}:\z \to \z$ as follows.
\[\begin{aligned}
\iota_{s}^{(N-1)}(s+i+kN) &= 
\begin{cases}\begin{aligned}
&s+k(N-1) &(i=0) \\
&s+i-1+k(N-1) &(0 < i < N)
\end{aligned}\end{cases} \\
\rho_{s}^{(N)}(s+i+k(N-1)) &= 
\begin{cases}\begin{aligned}
&s+kN &(i=0) \\
&s+i+1+kN &(0 < i < N-1)
\end{aligned}\end{cases}
\end{aligned}\]

For an $(N-1)$-complex $X=(X^{i}, d_{X}^{i})$, we define a complex $I_{s}^{(N-1)}(X)$ by
\[\begin{aligned}
I_{s}^{(N-1)}(X)^{i} &=X^{\iota_{s}^{(N-1)}(i)}, \\
d_{I_{s}^{(N-1)}(X)}^{i}&=\begin{cases}\begin{aligned}
&d^{\iota_{s}^{(N-1)}(i)} &(\iota_{s}^{(N-1)}(i) < \iota_{s}^{(N-1)}(i+1)) \\
&1 &(\iota_{s}^{(N-1)}(i) = \iota_{s}^{(N-1)}(i+1)) .
\end{aligned}\end{cases}
\end{aligned}\]

For an $N$-complex $Y=(Y^{i}, d_{Y}^{i})$, we define a complex $J_{s}^{(N)}(Y)$ by
\[\begin{aligned}
J_{s}^{(N)}(X)^{i} &=X^{\rho_{s}^{(N)}(i)}, \\
d_{J_{s}^{(N)}(X)}^{i}&=
d^{\rho_{s}^{(N)}(i+1)-1}\cdots d^{\rho_{s}^{(N)}(i)} .
\end{aligned}\]

Then $I_{s}^{(N-1)}: \cat{C}_{N-1}(\mcal{B}) \to  \cat{C}_{N}(\mcal{B})$ and 
$J_{s}^{(N)}: \cat{C}_{N}(\mcal{B}) \to  \cat{C}_{N-1}(\mcal{B})$ are functors.
\end{defn}

\begin{lem}\label{prolong02}
A functor  $I_{s}^{(N-1)}: \cat{C}_{N-1}(\mcal{B}) \to  \cat{C}_{N}(\mcal{B})$
 induces the triangle functor $\underline{I}_{s}^{(N-1)}: \cat{K}_{N-1}(\mcal{B}) \to  \cat{K}_{N}(\mcal{B})$.
\end{lem}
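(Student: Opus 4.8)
The plan is to show that the functor $I_{s}^{(N-1)}$ sends null-homotopic morphisms to null-homotopic morphisms, and sends conflations in $\cat{C}_{N-1}(\mcal{B})$ to conflations in $\cat{C}_{N}(\mcal{B})$, so that it descends to an exact functor between the stable categories $\cat{K}_{N-1}(\mcal{B})$ and $\cat{K}_{N}(\mcal{B})$. The first step is purely combinatorial bookkeeping with the indexing function $\iota_{s}^{(N-1)}$: the definition of $I_{s}^{(N-1)}(X)$ simply reindexes the objects $X^{i}$ and inserts identity maps at the positions where $\iota_{s}^{(N-1)}$ is not strictly increasing, so $I_{s}^{(N-1)}$ clearly preserves split short exact sequences termwise. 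Hence it takes $\mcal{S}_{N-1}(\mcal{B})$ into $\mcal{S}_{N}(\mcal{B})$ and is an exact functor of Frobenius categories.

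Next I would check that $I_{s}^{(N-1)}$ preserves the projective-injective objects, i.e. that it carries $I(X)=\opn{C}(1_{X})$ in $\cat{C}_{N-1}(\mcal{B})$ into $\add$ of the projective-injectives of $\cat{C}_{N}(\mcal{B})$. This is where one has to look carefully at how the cone construction of Definition \ref{cone01} interacts with the reindexing: an $(N-1)$-term acyclic piece $X\xarr{1}X$ gets stretched, via the inserted identities, to a direct sum of $N$-term pieces of the same contractible shape, possibly up to adding a split summand. Granting this, any null-homotopic morphism $f$ in $\cat{C}_{N-1}(\mcal{B})$ factors through a projective-injective object, and its image $I_{s}^{(N-1)}(f)$ then factors through a projective-injective object of $\cat{C}_{N}(\mcal{B})$, hence is null-homotopic there. (Alternatively, one can verify directly that the explicit homotopy formula $f^{i}=\sum_{j}d_{Y}\cdots s\cdots d_{X}$ is transported to the corresponding $N$-complex homotopy formula under the reindexing; this is the more hands-on route.) Therefore $I_{s}^{(N-1)}$ induces a well-defined additive functor $\underline{I}_{s}^{(N-1)}:\cat{K}_{N-1}(\mcal{B})\to\cat{K}_{N}(\mcal{B})$.

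Finally, I would confirm that this induced functor is triangulated. By the description of triangles in $\cat{K}_{N}(\mcal{B})$ following Definition \ref{cone01}, every triangle is isomorphic to one of the form $Y\xarr{f}X\xarr{u_{f}}\opn{C}(f)\to\Sigma X$, so it suffices to check that $I_{s}^{(N-1)}$ commutes with $\Sigma$ and with the mapping cone $\opn{C}(-)$ up to natural isomorphism. Commutation with $\Sigma$ is again a reindexing identity for $\iota_{s}^{(N-1)}$ (the suspension just shifts the arguments). For the cone, one compares the two explicit matrices in Definition \ref{cone01} for the $(N-1)$-complex cone and the $N$-complex cone after applying $I_{s}^{(N-1)}$; the inserted identity rows and columns match up, and one obtains a natural isomorphism $I_{s}^{(N-1)}\opn{C}(f)\cong\opn{C}(I_{s}^{(N-1)}(f))$ compatible with the structure maps $u$ and $v$.

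The main obstacle is the cone-compatibility verification in the last step: the mapping cone in $\cat{C}_{N}(\mcal{B})$ is defined by a sizeable block matrix whose shape depends on $N$, and one must track precisely how the reindexing function $\iota_{s}^{(N-1)}$ redistributes the rows and columns — in particular verifying that the powers $d^{\{N-1\}}$ appearing in the $N$-complex cone correspond, under the stretching by identities, to the powers $d^{\{N-2\}}$ in the $(N-1)$-complex cone, with no sign or indexing mismatch. Everything else is routine once the indexing conventions of Definition \ref{prolong01} are unwound.
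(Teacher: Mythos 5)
Your proposal follows essentially the same route as the paper: treat $I_{s}^{(N-1)}$ as an exact functor of Frobenius categories preserving projective-injective objects (so that it descends to the stable categories, cf.\ Proposition \ref{extr01}), and then verify compatibility with suspension and mapping cones by explicit computation with the reindexing function --- which is exactly what the paper's proof does for $s=0$. The one point to adjust is your expectation that the identity rows and columns ``match up'' on the nose: $C(I_{0}^{(N-1)}(f))$ and $I_{0}^{(N-1)}(C(f))$ actually have different underlying graded objects (the former carries extra contractible summands coming from the degrees where $\iota_{0}^{(N-1)}$ repeats a value), so the isomorphism holds only in $\cat{K}_{N}(\mcal{B})$ after splitting these off, which is precisely the content of the paper's explicit isomorphism $g$ onto the auxiliary complex $Z$.
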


\begin{proof}
Let $X \xarr{f} Y$ be a morphism in $\cat{C}_{N-1}(\mcal{B})$.
Consider the mapping cone $C(I_{0}^{(N-1)}(f))$ of $I_{0}^{(N-1)}(X) \xarr{I_{0}^{(N-1)}(f)} I_{0}^{(N-1)}(Y)$, then
we have an exact sequence $0 \to I_{0}^{(N-1)}(Y) \to C(I_{0}^{(N-1)}(f)) \to \Sigma(I_{0}^{(N-1)}(X)) \to 0$ 
in $\cat{C}_{N-1}(\mcal{B})$.
Let $Z$ be an $N$-complex defined by
{\footnotesize 
\[\begin{aligned}
Z^{k}
=\begin{cases}
Y^{\iota^{(N-1)}_{0}(k)}\oplus \coprod_{i=k+1}^{k+N-2}X^{\iota^{(N-1)}_{0}(i)}\oplus X^{\iota^{(N-1)}_{0}(k)} 
\ (k \equiv 0 \opn{mod} N) \\
Y^{\iota^{(N-1)}_{0}(k)}\oplus \coprod_{i=k+1}^{k+N-1}X^{\iota^{(N-1)}_{0}(i)} \ (k \not\equiv 0 \opn{mod} N)
\end{cases} \\
d_Z^{k}
=\begin{cases}
\left(\begin{array}{ccc|c}
1 & \cdots & 0  & 0 \\ 
\vdots & \ddots & \vdots & \vdots \\
0 & \cdots & 1 & 0 \\\hline 
0 & \cdots & 0 & 0 
\end{array}\right) \ (k \equiv 0 \opn{mod} N) \\
\left(\begin{array}{ccccc|c}
e&f&0&\cdots&0& 0\\
0&0&1&\ddots&\vdots&\vdots\\
\vdots&\vdots&\vdots&\ddots&\vdots&\vdots\\
0&0&0&\cdots&1&\vdots\\
0&-d^{N-1}&d^{N-2}&\cdots&-d&0\\\hline
0&0&0&\cdots&0&0\\
\end{array}\right)  \ (k \not\equiv 0 \opn{mod} N)
\end{cases} \\
\end{aligned}\]
}
and 
$W$ an $N$-complex defined by
{\footnotesize 
\[\begin{aligned}
W^{k}
=\begin{cases}
\coprod_{i=k+1}^{k+N-2}X^{\iota^{(N-1)}_{0}(i)}\oplus X^{\iota^{(N-1)}_{0}(k)} \ (k \equiv 0 \opn{mod} N) \\
\coprod_{i=k+1}^{k+N-1}X^{\iota^{(N-1)}_{0}(i)} \ (k \not\equiv 0 \opn{mod} N)
\end{cases} \\
d_W^{k}
=\begin{cases}
\left(\begin{array}{ccc|c}
1 & \cdots & 0  & 0 \\ 
\vdots & \ddots & \vdots & \vdots \\
0 & \cdots & 1 & 0 \\\hline 
0 & \cdots & 0 & 0 
\end{array}\right) \ (k \equiv 0 \opn{mod} N) \\
\left(\begin{array}{cccc|c}
0&1&\cdots&0&0\\
\vdots&\vdots&\ddots&\vdots&\vdots\\
0&0&\cdots&1&\vdots\\
-d^{N-1}&d^{N-2}&\cdots&-d&0\\\hline
0&0&\cdots&0&0\\
\end{array}\right)  \ (k \not\equiv 0 \opn{mod} N)
\end{cases} \\
\end{aligned}\]
}
Let $g: C(I_{0}^{(N-1)}(f)) \to Z$ be a isomorphism defined by
{\tiny 
\[
\begin{array}{ll}
g^{k}= 
\left(\begin{array}{c|c|ccc}
1 & f & 0 & \cdots & 0 \\\hline 
0 & 0 & 1 & \cdots & 0 \\
\vdots & \vdots & 0 & \ddots & \vdots \\
0 & 0 & 0 & \cdots & 1 \\\hline 
0 & -1 & 0 & \cdots & 0
\end{array}\right)
\ (k \equiv 0 \opn{mod} N), \\

\left(\begin{array}{c|ccc|c}
1 & 0 & \cdots & 0 & 0 \\\hline 
0 & 1 & \cdots & 0 & 0 \\
\vdots & \vdots & \ddots & \vdots & \vdots \\
0 & 0 & \cdots & 1 & 0 \\\hline 
0 & -d^{\{N-1\}} & \cdots & -d & 1
\end{array}\right)
\ (k \equiv 1 \opn{mod} N), &

\left(\begin{array}{c|ccc|c|c}
1 & 0 & \cdots & 0 & 0 & 0 \\\hline 
0 & 1 & \cdots & 0 & 0 &0 \\
\vdots & \vdots & \ddots & \vdots & \vdots \\
0 & 0 & \cdots & 1 & 0 & 0 \\\hline 
0 & 0 & \cdots &0 & 1 & 1\\\hline
0 & 0 & \cdots &0 & 0& -1
\end{array}\right)
\ (k \equiv 2 \opn{mod} N), \\

\left(\begin{array}{ccc|c|c|c}
1 & \cdots & 0 & 0 & 0 & 0 \\
\vdots & \ddots & \vdots & \vdots& \vdots& \vdots \\
0 & \cdots & 1 & 0 & 0 & 0 \\\hline
0 & 0 & \cdots & 1 & 1 & 0 \\\hline 
0 & 0 & \cdots &0 & 0 & 1\\\hline
0 & 0 & \cdots &0 &-1& 0
\end{array}\right)
\ (k \equiv 3 \opn{mod} N), &

\hspace{54pt} \cdots, \\

\left(\begin{array}{c|c|cccc}
1 & 0 & 0 & 0 & \cdots & 0 \\\hline
0 & 1 & 1 & 0 & \cdots& 0 \\\hline
0 & 0 & 0 & 1 & \cdots & 0 \\
\vdots & \vdots & \vdots & \vdots & \ddots & \vdots \\\hline 
0 & 0 & 0 & 0 & \cdots & 1\\\hline
0 & 0 & -1 &0 &\cdots & 0
\end{array}\right)
\ (k \equiv N-1 \opn{mod} N) \\
\end{array}
\]
}
Then we have the following isomorphism between short exact sequences:
\[\xymatrix{
0 \ar[r] &I_{0}^{(N-1)}(Y) \ar@{=}[d] \ar[r]^{u} & C(I_{0}^{(N-1)}(f)) \ar[d]_{\wr}^{g} \ar[r]^{v}  & \Sigma(I_{0}^{(N-1)}(X))\ar[d]_{\wr}^{g'} \ar[r] & 0\\ 
0 \ar[r] &I_{0}^{(N-1)}(Y) \ar[r]^{u'} & Z \ar[r]^{v'} & W \ar[r] & 0\\ 
}\]
where 
{\footnotesize 
$u'= \begin{pmatrix}
1\cr 0\cr \vdots\cr 0\cr
\end{pmatrix}$, 
$v'=
\left(\begin{array}{cccc}
0 & 1 & \cdots & 0 \\
\vdots & \vdots & \ddots & \vdots \\ 
0 & 0 & \cdots & 1 \end{array}\right)
$}.
Therefore, $C(I_{0}^{(N-1)}(f))\simeq I_{0}^{(N-1)}(C(f))$ and 
$\Sigma(I_{0}^{(N-1)}(X))\simeq I_{0}^{(N-1)}(\Sigma(X))$ in $\cat{K}_N(\mcal{B})$.
\end{proof}

\begin{prop}\label{prolong03}
For a functor $J_{s}^{(N)}: \cat{C}_{N}(\mcal{B}) \to  \cat{C}_{N-1}(\mcal{B})$,
the following hold.
\begin{enumerate}
 \item
 $J_{s}^{(N)}$ induces the triangle functor $\underline{J}_{s}^{(N)}: \cat{K}_{N}(\mcal{B}) \to  \cat{K}_{N-1}(\mcal{B})$.
\item
$\underline{J}_{s}^{(N)}$ is a right adjoint of $\underline{I}_{s}^{(N-1)}$.
\item
$\underline{I}_{s+1}^{(N-1)}$ is a right adjoint of $\underline{J}_{s}^{(N)}$.
\item
The adjunction arrow $\mathbf{1}\to \underline{J}_{s}^{(N)}\underline{I}_{s}^{(N-1)}$ is an isomorphism.
\item
The adjunction arrow $\underline{J}_{s}^{(N)}\underline{I}_{s+1}^{(N-1)}\to \mathbf{1}$ is an isomorphism.
\end{enumerate}
\end{prop}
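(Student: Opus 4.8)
The plan is to prove the five assertions essentially from the bottom up: the combinatorial identities (4) and (5) about the reindexing functions, the complex-level adjunctions behind (2) and (3), and the exactness that yields (1), and then to descend everything to the homotopy categories.

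\textbf{Part (1).} Since $(\cat{C}_N(\mcal{B}),\mcal{S}_N(\mcal{B}))$ and $(\cat{C}_{N-1}(\mcal{B}),\mcal{S}_{N-1}(\mcal{B}))$ are Frobenius categories (Proposition \ref{NcpxFrob}, Theorem \ref{NcpxTricat}), it is enough to check that $J_s^{(N)}$ is exact and sends projective-injective objects to projective-injective objects; such a functor induces a triangle functor on the stable categories $\cat{K}_N(\mcal{B})\to\cat{K}_{N-1}(\mcal{B})$. Exactness is immediate: on objects $J_s^{(N)}$ merely reindexes along $\rho_s^{(N)}$, so it carries a degreewise split short exact sequence of $N$-complexes to a degreewise split short exact sequence of $(N-1)$-complexes. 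For the projective-injectives I would compute $J_s^{(N)}(\opn{C}(1_X))$ directly from Definition \ref{cone01} and exhibit it as a finite direct sum of elementary contractible $(N-1)$-complexes; alternatively, argue exactly as in Lemma \ref{prolong02}, showing that $J_s^{(N)}$ takes null-homotopic morphisms to null-homotopic ones and that $\opn{C}(J_s^{(N)}(f))\cong J_s^{(N)}(\opn{C}(f))$ and $\Sigma J_s^{(N)}(X)\cong J_s^{(N)}(\Sigma X)$ in $\cat{K}_{N-1}(\mcal{B})$.

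\textbf{Parts (4) and (5).} These reduce to identities about the functions of Definition \ref{prolong01}. For (4) I would verify $\iota_s^{(N-1)}\circ\rho_s^{(N)}=\mathrm{id}_{\z}$ and that over each block from $\rho_s^{(N)}(i)$ to $\rho_s^{(N)}(i+1)-1$ exactly one of the inserted differentials of $I_s^{(N-1)}(X)$ is an $\alpha_X$ and the rest are identities, so that the composite differential defining $J_s^{(N)}$ returns the original differential of $X$; hence $J_s^{(N)}\circ I_s^{(N-1)}=\mathbf 1_{\cat{C}_{N-1}(\mcal{B})}$ on the nose, and the unit of (2) is this identity. For (5) I would instead compute $\iota_{s+1}^{(N-1)}\circ\rho_s^{(N)}$, which is no longer the identity, and show that the resulting endofunctor $J_s^{(N)}I_{s+1}^{(N-1)}$ of $\cat{C}_{N-1}(\mcal{B})$ carries a natural degreewise split epimorphism onto $\mathbf 1$ (the counit) whose kernel is a contractible $(N-1)$-complex, hence an isomorphism in $\cat{K}_{N-1}(\mcal{B})$. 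Writing out that kernel and its contracting homotopy explicitly is the one genuinely fiddly computation.

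\textbf{Parts (2) and (3).} The point is that $I_s^{(N-1)}$ and $I_{s+1}^{(N-1)}$ insert \emph{identity} differentials. Hence, for an $(N-1)$-complex $X$ and an $N$-complex $Y$, a morphism $I_s^{(N-1)}(X)\to Y$ in $\cat{C}_N(\mcal{B})$ is constant along each block of identity maps, and the remaining components at the ``jump'' indices, constrained by the commutativity squares at those blocks (which involve precisely the composites of $d_Y$ defining $J_s^{(N)}(Y)$), are exactly the data of a morphism $X\to J_s^{(N)}(Y)$ in $\cat{C}_{N-1}(\mcal{B})$; this gives a natural bijection $\Hom_{\cat{C}_N(\mcal{B})}(I_s^{(N-1)}(X),Y)\cong\Hom_{\cat{C}_{N-1}(\mcal{B})}(X,J_s^{(N)}(Y))$, i.e. $I_s^{(N-1)}\dashv J_s^{(N)}$ at the level of complexes, and the dual bookkeeping gives $J_s^{(N)}\dashv I_{s+1}^{(N-1)}$. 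All the functors $I_s^{(N-1)}$, $I_{s+1}^{(N-1)}$, $J_s^{(N)}$ are triangle functors on the stable categories (Lemma \ref{prolong02} and part (1)), hence send contractible complexes to contractible complexes; since a morphism factors through a projective-injective iff it factors through an injective hull, the complex-level adjunction bijections then identify null-homotopic morphisms with null-homotopic morphisms and so descend to give $\underline{I}_s^{(N-1)}\dashv\underline{J}_s^{(N)}\dashv\underline{I}_{s+1}^{(N-1)}$. The step I expect to be most delicate is exactly this descent — confirming that under the complex-level bijection a morphism $I_s^{(N-1)}(X)\to Y$ is null-homotopic precisely when the corresponding morphism $X\to J_s^{(N)}(Y)$ is — together with the explicit contractible kernel needed in (5).
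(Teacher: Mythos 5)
Your proposal is correct and follows essentially the same route as the paper: both arguments establish the adjunction triple $I_{s}^{(N-1)}\dashv J_{s}^{(N)}\dashv I_{s+1}^{(N-1)}$ already at the level of the Frobenius categories $\cat{C}_{N}(\mcal{B})$ and $\cat{C}_{N-1}(\mcal{B})$, by matching the components of a morphism across the blocks of identity differentials, and then descend to the homotopy categories. The only organizational difference is that the paper routes the descent through the appendix result Proposition \ref{triadjoint02} (exact adjoint functors between Frobenius categories induce adjoint triangle functors on the stable categories, and each adjoint automatically preserves projective--injectives), which yields (1)--(3) in one stroke; your hand-made version of this descent --- checking that the hom-set bijections carry morphisms factoring through projective--injectives to morphisms factoring through projective--injectives, using that both $I$ and $J$ preserve such objects --- is a correct substitute, as is your direct verification of (1). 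One small correction concerning (5): the composite $\iota_{s+1}^{(N-1)}\circ\rho_{s}^{(N)}$ \emph{is} the identity (for $i=0$ one has $\rho_{s}^{(N)}(s+k(N-1))=s+kN=(s+1)+(N-1)+(k-1)N$, which $\iota_{s+1}^{(N-1)}$ sends back to $s+k(N-1)$, and similarly for $0<i<N-1$), and in each block exactly one genuine differential of $Z$ occurs among the inserted identities, so $J_{s}^{(N)}I_{s+1}^{(N-1)}=\mathbf{1}$ on the nose just as in (4); the counit is an isomorphism already in $\cat{C}_{N-1}(\mcal{B})$ and the contractible-kernel computation you anticipate does not arise.
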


\begin{proof}
For $X, Z \in \cat{C}_{N-1}(\mcal{B}), Y \in \cat{C}_{N}(\mcal{B})$,
consider the following morphisms in $\cat{C}_{N}(\mcal{B})$:
\[\xymatrix{ 
I_s^{(N-1)}(X) : \ar[d] & \cdots \ar[r] & X^{s} \ar@{=}[r] \ar[d]^{f^s} & X^{s} \ar[r]^{d^s} \ar[d]^{d^sf^s}  & X^{s+1} \ar[r] \ar[d]^{f^{s+1}} & \cdots \\
Y :  \ar[d] : & \cdots \ar[r] & Y^s \ar[r]^{d^s} \ar[d]^{g^s} & Y^{s+1} \ar[r]^{d^{s+1}} \ar[d]^{g^{s+1}}  & Y^{s+2} \ar[r] \ar[d]^{g^{s+2}} & \cdots\\
I_{s+1}^{(N-1)}(Z) : & \cdots \ar[r] & Z^s \ar[r]^{d^s} & Z^{s+1} \ar@{=}[r]  & Z^{s+1} \ar[r] & \cdots
} \] 
Then these morphisms correspond to the following morphisms in $\cat{C}_{N-1}(\mcal{B})$: 
\[\xymatrix{ 
X : \ar[d] & \cdots \ar[r] & X^{s} \ar[r]^{d^s} \ar[d]^{f^s} & X^{s+1} \ar[r] \ar[d]^{f^{s+1}} & \cdots \\
J_{s}^{(N)}(Y) :  \ar[d] : & \cdots \ar[r] & Y^s \ar[r]^{d^{s+1}d^s} \ar[d]^{g^s} & Y^{s+2} \ar[r] \ar[d]^{g^{s+2}} & \cdots\\
Z : & \cdots \ar[r] & Z^s \ar[r]^{d^s} & Z^{s+1} \ar[r] & \cdots
} \] 
Then it is easy to see that ${J}_{s}^{(N)}$ is a right adjoint of ${I}_{s}^{(N-1)}$, that
${I}_{s+1}^{(N-1)}$ is a right adjoint of ${J}_{s}^{(N)}$, and that
the adjunction arrows $\mathbf{1}\to {J}_{s}^{(N)}{I}_{s}^{(N-1)}$ and
${J}_{s}^{(N)}{I}_{s+1}^{(N-1)}\to \mathbf{1}$ are isomorphisms.
By Lemmas \ref{prolong02} and Proposition \ref{triadjoint02}, we have statements.
\end{proof}

\begin{defn}\label{updown01}
We denote 
$\underline{I}_{s}^{\Uparrow_{N-r}^{N}}=
\underline{I}_{s}^{(N-1)}\cdots\underline{I}_{s}^{(N-r)}:
\cat{K}_{N-r}(\mcal{B}) \to \cat{K}_{N}(\mcal{B})$ and
$\underline{J}_{s}^{\Downarrow_{N-r}^{N}}=
\underline{J}_{s}^{(N-r+1)} \cdots \underline{J}_{s}^{(N)}:
\cat{K}_{N}(\mcal{B}) \to \cat{K}_{N-r}(\mcal{B})$.
For $1\leq r < N$, we define the full subcategory of  $\cat{K}_{N}(\mcal{B})$
\[
\mcal{F}_{s}^{r}=\{(X^i, d^i) \in \cat{K}_{N}(\mcal{B}) \mid d^{i+k}=1_{X^{i+k}} (0 \leq k < r)
\text{for}\  i\equiv s \mod N\}
\]
\end{defn}

\begin{cor}\label{updown02}
$\underline{I}_{s}^{\Uparrow_{N-r}^{N}}$, $\underline{J}_{s}^{\Downarrow_{N-r}^{N}}$
are triangle functors such that 
$\underline{I}_{s}^{\Uparrow_{N-r}^{N}}$ is a left (reps., right) adjoint of
$\underline{J}_{s}^{\Downarrow_{N-r}^{N}}$ 
(resp., $\underline{J}_{s+1}^{\Downarrow_{N-r}^{N}}$),
and that $\mcal{F}_{s}^{r}=\cat{Im}\underline{I}_{s}^{\Uparrow_{N-r}^{N}}$ is 
a full triangulated subcategory of $\cat{K}_{N}(\mcal{B})$.
\end{cor}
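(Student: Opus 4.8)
The plan is to obtain everything formally from Lemma~\ref{prolong02} and Proposition~\ref{prolong03} by composing $r$ elementary steps, the only genuine computation being a combinatorial check against the explicit formulas of Definition~\ref{prolong01}.

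First I would note that $\underline{I}_{s}^{\Uparrow_{N-r}^{N}}$ and $\underline{J}_{s}^{\Downarrow_{N-r}^{N}}$ are triangle functors: they are finite composites of the triangle functors $\underline{I}_{s}^{(M)}$ $(N-r\le M\le N-1)$ and $\underline{J}_{s}^{(M)}$ $(N-r+1\le M\le N)$ supplied by Lemma~\ref{prolong02} (applied with $N$ replaced by $M+1$) and Proposition~\ref{prolong03}(1) (applied with $N$ replaced by $M$), and a composite of triangle functors is a triangle functor. For the adjunctions I would use the elementary fact that if $L_{i}$ is left adjoint to $R_{i}$ for $1\le i\le k$, then $L_{k}\cdots L_{1}$ is left adjoint to $R_{1}\cdots R_{k}$. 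Applying this to Proposition~\ref{prolong03}(2) over $M=N-r+1,\dots,N$ shows that $\underline{I}_{s}^{\Uparrow_{N-r}^{N}}$ is left adjoint to $\underline{J}_{s}^{\Downarrow_{N-r}^{N}}$, and applying it to Proposition~\ref{prolong03}(3) shows that $\underline{J}_{s}^{\Downarrow_{N-r}^{N}}$ is left adjoint to $\underline{I}_{s+1}^{\Uparrow_{N-r}^{N}}$; after the evident reindexing this is the asserted right-adjointness, the only subtlety being that the subscript in part~(3) shifts by $1$ at each stage of the composite.

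Next I would show that $\underline{I}_{s}^{\Uparrow_{N-r}^{N}}$ is fully faithful. The unit of the composite adjunction $\underline{I}_{s}^{\Uparrow_{N-r}^{N}}\dashv\underline{J}_{s}^{\Downarrow_{N-r}^{N}}$ is assembled by whiskering the units $\mathbf{1}\to\underline{J}_{s}^{(M)}\underline{I}_{s}^{(M-1)}$, each of which is an isomorphism by Proposition~\ref{prolong03}(4); hence $\mathbf{1}\to\underline{J}_{s}^{\Downarrow_{N-r}^{N}}\underline{I}_{s}^{\Uparrow_{N-r}^{N}}$ is an isomorphism, so $\underline{I}_{s}^{\Uparrow_{N-r}^{N}}$ is fully faithful. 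It then follows by a standard argument that $\cat{Im}\underline{I}_{s}^{\Uparrow_{N-r}^{N}}$ is a full triangulated subcategory of $\cat{K}_{N}(\mcal{B})$: it is closed under $\Sigma^{\pm 1}$ because $\underline{I}_{s}^{\Uparrow_{N-r}^{N}}$ is a triangle functor, and given a triangle in $\cat{K}_{N}(\mcal{B})$ whose base morphism lies, up to isomorphism, in the image, full faithfulness lets one lift that morphism to $\cat{K}_{N-r}(\mcal{B})$, complete it to a triangle there, and apply $\underline{I}_{s}^{\Uparrow_{N-r}^{N}}$; since cones are unique up to isomorphism, the third term of the original triangle lies in the image as well.

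Finally I would identify the essential image with $\mcal{F}_{s}^{r}$. From the formula for $\iota_{s}^{(M)}$ in Definition~\ref{prolong01} one reads off that $\iota_{s}^{(M)}$ takes the same value at $s+k(M+1)$ and at $s+k(M+1)+1$, so $I_{s}^{(M)}$ inserts an identity differential at every position congruent to $s$ modulo $M+1$; chasing the indices through $I_{s}^{(N-1)}\cdots I_{s}^{(N-r)}$ shows that $\underline{I}_{s}^{\Uparrow_{N-r}^{N}}(Z)$ has $d^{s+kN}=d^{s+kN+1}=\cdots=d^{s+kN+r-1}=1$ for all $k$, i.e. $\cat{Im}\underline{I}_{s}^{\Uparrow_{N-r}^{N}}\subseteq\mcal{F}_{s}^{r}$. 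Conversely, for $X\in\mcal{F}_{s}^{r}$ the functor $\underline{J}_{s}^{\Downarrow_{N-r}^{N}}$ contracts the $r$ strings of identities, and since $\iota_{s}^{(M)}$ and $\rho_{s}^{(M)}$ are mutually inverse on the index ranges that occur, re-applying $\underline{I}_{s}^{\Uparrow_{N-r}^{N}}$ returns $X$; equivalently, the iterated counit coming from Proposition~\ref{prolong03}(5) is an isomorphism on $\mcal{F}_{s}^{r}$. Thus $\mcal{F}_{s}^{r}=\cat{Im}\underline{I}_{s}^{\Uparrow_{N-r}^{N}}$. I expect the one real obstacle to be precisely this index bookkeeping---checking that the $r$ inserted identities occupy exactly the positions $s,s+1,\dots,s+r-1$ modulo $N$, and matching the subscript shifts of Proposition~\ref{prolong03}(3),(5) with the statement; the categorical steps are routine.
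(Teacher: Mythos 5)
Your argument is correct and is essentially the expansion of the paper's own proof, which simply cites Lemma \ref{prolong02} and Proposition \ref{prolong03}: compose the elementary triangle functors and adjunctions, get full faithfulness from the unit isomorphisms of Proposition \ref{prolong03}(4), and read off the essential image from the formulas for $\iota_{s}$ and $\rho_{s}$. Two small caveats: composing Proposition \ref{prolong03}(3) yields $\underline{J}_{s}^{\Downarrow_{N-r}^{N}}\dashv\underline{I}_{s+1}^{\Uparrow_{N-r}^{N}}$, i.e.\ $\underline{I}_{s}^{\Uparrow_{N-r}^{N}}$ is right adjoint to $\underline{J}_{s-1}^{\Downarrow_{N-r}^{N}}$ (as Corollary \ref{n-cpxgon04} in fact uses, via $i_{s}^{*}=\underline{J}_{s-1}^{\Downarrow_{N-r}^{N}}$), so your ``evident reindexing'' is really a correction of a typo ($s+1$ should be $s-1$) in the printed statement rather than a match with it; and your parenthetical appeal to Proposition \ref{prolong03}(5) concerns the counit of the other adjunction, which lives on $\cat{K}_{N-1}(\mcal{B})$ --- the direct index computation you give is what actually identifies $\mcal{F}_{s}^{r}$ with the image.
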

\begin{proof}
By Lemma \ref{prolong02}, Proposition \ref{prolong03}.
\end{proof}

\begin{thm}\label{tstNcpx01}
For $1\leq r < N$, $(\mcal{F}_{s}^{r}, \mcal{F}_{r+s+1}^{N-r-1})$ is a stable $t$-structure
in $\cat{K}_{N}(\mcal{B})$.
\end{thm}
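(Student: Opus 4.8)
The three conditions of Definition \ref{st-tprime} split as follows. Condition (a) is automatic, since by Corollary \ref{updown02} both $\mcal{F}_s^r=\operatorname{Im}\underline{I}_s^{\Uparrow_{N-r}^N}$ and $\mcal{F}_{r+s+1}^{N-r-1}=\operatorname{Im}\underline{I}_{r+s+1}^{\Uparrow_{r+1}^N}$ are full triangulated subcategories of $\cat{K}_N(\mcal{B})$, hence closed under $\Sigma$ and $\Sigma^{-1}$. For (b) and (c) the plan is to realise $\mcal{F}_s^r$ as the essential image of a fully faithful triangle functor admitting a right adjoint, and then run the standard Bousfield colocalization. Put $i_\ast:=\underline{I}_s^{\Uparrow_{N-r}^N}$; iterating Proposition \ref{prolong03}(4) gives $\underline{J}_s^{\Downarrow_{N-r}^N}\circ i_\ast\cong\mathbf{1}$, so $i_\ast$ is fully faithful, with right adjoint $i^!:=\underline{J}_s^{\Downarrow_{N-r}^N}$ (Corollary \ref{updown02}). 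For $X\in\cat{K}_N(\mcal{B})$ complete the counit to a triangle
\[ i_\ast i^! X\longrightarrow X\longrightarrow V\longrightarrow \Sigma\, i_\ast i^! X ; \]
then $i_\ast i^! X\in\mcal{F}_s^r$, and applying $i^!$ and using $i^! i_\ast\cong\mathbf{1}$ together with the triangle identities shows that $i^!$ of the counit is an isomorphism, so $i^!V=0$, i.e.\ $V\in\operatorname{Ker}i^!$. Moreover $\Hom_{\cat{K}_N(\mcal{B})}(i_\ast A,V)\cong\Hom_{\cat{K}_{N-r}(\mcal{B})}(A,i^!V)=0$ for every $A$, so $\Hom(\mcal{F}_s^r,\operatorname{Ker}i^!)=0$. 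Hence $(\mcal{F}_s^r,\operatorname{Ker}\underline{J}_s^{\Downarrow_{N-r}^N})$ is already a stable $t$-structure, and the proof reduces to the identification
\[ \operatorname{Ker}\underline{J}_s^{\Downarrow_{N-r}^N}=\mcal{F}_{r+s+1}^{N-r-1}. \]

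The inclusion $\supseteq$ is a short computation with the functor $\underline{J}_s^{\Downarrow_{N-r}^N}$, which removes the objects in degrees $\equiv s+1,\dots,s+r\pmod N$ and replaces the $r+1$ differentials of that stretch by their composite. On an object of $\mcal{F}_{r+s+1}^{N-r-1}$ every other differential of a period is an identity, so this composite is exactly the defining $N$-complex relation over the period and therefore vanishes; the output is a direct sum of objects $C(1_B)$, which are zero in $\cat{K}_{N-r}(\mcal{B})$ since $B\xrightarrow{1}B\to C(1_B)\to\Sigma B$ is a triangle. Thus $\underline{J}_s^{\Downarrow_{N-r}^N}\circ\underline{I}_{r+s+1}^{\Uparrow_{r+1}^N}\cong 0$, i.e.\ $\mcal{F}_{r+s+1}^{N-r-1}\subseteq\operatorname{Ker}\underline{J}_s^{\Downarrow_{N-r}^N}$.

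The reverse inclusion is the real content. Running the same colocalization for $i_\ast=\underline{I}_{r+s+1}^{\Uparrow_{r+1}^N}$, which is fully faithful with left adjoint $\underline{J}_{r+s+2}^{\Downarrow_{r+1}^N}$, produces a second stable $t$-structure $(\operatorname{Ker}\underline{J}_{r+s+2}^{\Downarrow_{r+1}^N},\mcal{F}_{r+s+1}^{N-r-1})$, and the computation of the previous paragraph with the two families interchanged gives $\mcal{F}_s^r\subseteq\operatorname{Ker}\underline{J}_{r+s+2}^{\Downarrow_{r+1}^N}$. Now take $X\in\operatorname{Ker}\underline{J}_s^{\Downarrow_{N-r}^N}$ and decompose it by the second stable $t$-structure as $A\to X\to B\to\Sigma A$ with $B\in\mcal{F}_{r+s+1}^{N-r-1}$ and $A\in\operatorname{Ker}\underline{J}_{r+s+2}^{\Downarrow_{r+1}^N}$; since $B$ and $X$ both lie in the thick subcategory $\operatorname{Ker}\underline{J}_s^{\Downarrow_{N-r}^N}$, so does $A$, whence $A\in\operatorname{Ker}\underline{J}_s^{\Downarrow_{N-r}^N}\cap\operatorname{Ker}\underline{J}_{r+s+2}^{\Downarrow_{r+1}^N}$. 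Everything therefore comes down to showing this intersection is zero, after which $A=0$ and $X\cong B\in\mcal{F}_{r+s+1}^{N-r-1}$.

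This last vanishing is the main obstacle. Conceptually it should hold because the two functors retain complementary systems of residues — their collapsed ranges meet only in the single class $\equiv s+r\pmod N$ — so an $N$-complex killed by both ought to become, after splitting off a sum of objects $C(1_\bullet)$, one whose differentials are all identities except in degree $\equiv s+r$, where the period relation then forces $0$; such a complex is itself a sum of objects $C(1_\bullet)$ and so is zero in $\cat{K}_N(\mcal{B})$. Making this rigorous is a degree-chase through the $N$-complex null-homotopies (the formula preceding Theorem \ref{NcpxTricat}) that witness $\underline{J}_s^{\Downarrow_{N-r}^N}X\cong 0$ and $\underline{J}_{r+s+2}^{\Downarrow_{r+1}^N}X\cong 0$, and assembling them into a contraction of $X$; this bookkeeping, rather than any structural difficulty, is where the work lies, and it is naturally organised by induction on $r$ via the factorisation $\underline{J}_s^{\Downarrow_{N-r}^N}=\underline{J}_s^{\Downarrow_{N-r}^{N-1}}\circ\underline{J}_s^{(N)}$ with base case $\operatorname{Ker}\underline{J}_s^{(N)}=\mcal{F}_{s+2}^{N-2}$. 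Alternatively, in the spirit of the proof of Theorem \ref{th:ngon@trg01}, one may bypass the adjoint machinery entirely and write down for a general $N$-complex $X$ an explicit conflation $0\to U\to X'\to V\to 0$ in $\cat{C}_N(\mcal{B})$ with $X'\cong X$ in $\cat{K}_N(\mcal{B})$, $U\in\mcal{F}_s^r$ and $V\in\mcal{F}_{r+s+1}^{N-r-1}$, the crux then being the choice of the right split fattening $X'$ of $X$ so that the prescribed differentials become identities and the quotient is manifestly in $\mcal{F}_{r+s+1}^{N-r-1}$.
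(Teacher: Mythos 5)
Your formal reduction is sound as far as it goes: realizing $\mcal{F}_{s}^{r}$ as the image of the fully faithful functor $\underline{I}_{s}^{\Uparrow_{N-r}^{N}}$ with right adjoint $\underline{J}_{s}^{\Downarrow_{N-r}^{N}}$, running the Bousfield colocalization, and verifying the inclusion $\mcal{F}_{r+s+1}^{N-r-1}\subseteq\operatorname{Ker}\underline{J}_{s}^{\Downarrow_{N-r}^{N}}$ are all correct. But the reverse inclusion --- which you yourself flag as ``the real content'' and ``the main obstacle'' --- is precisely the substance of the theorem, and you do not prove it. You reduce it to the vanishing of $\operatorname{Ker}\underline{J}_{s}^{\Downarrow_{N-r}^{N}}\cap\operatorname{Ker}\underline{J}_{r+s}^{\Downarrow_{r+1}^{N}}$ and then describe in general terms how a ``degree-chase through the null-homotopies'' ought to establish it, without carrying out that chase. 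That vanishing is essentially equivalent to the recollement one is trying to build (in a recollement, $i^{!}X=0=j^{*}X$ forces $X=0$), so it cannot be obtained from formal adjoint nonsense; some explicit computation at the level of $N$-complexes is unavoidable, and it is exactly the part you omit. (There is also a small indexing issue: by Corollary \ref{n-cpxgon04} the left adjoint of $j_{s*}=\underline{I}_{r+s+1}^{\Uparrow_{r+1}^{N}}$ is $j_{s}^{*}=\underline{J}_{r+s}^{\Downarrow_{r+1}^{N}}$, not $\underline{J}_{r+s+2}^{\Downarrow_{r+1}^{N}}$, though the discrepancy traces back to a typo in Corollary \ref{updown02} itself.)

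The paper's proof does precisely the explicit construction you mention only as an alternative in your last sentence: starting from the counit triangle $\underline{I}_{0}^{\Uparrow_{N-r}^{N}}\underline{J}_{0}^{\Downarrow_{N-r}^{N}}(X)\xrightarrow{\varepsilon_{X}}X\to\operatorname{C}(\varepsilon_{X})$, it exhibits a conflation
\[
0\to V\xrightarrow{h}\operatorname{C}(\varepsilon_{X})\xrightarrow{p}\underline{I}_{r+1}^{\Uparrow_{r+1}^{N}}\underline{J}_{r}^{\Downarrow_{r+1}^{N}}(X)\to 0
\]
in $\cat{C}_{N}(\mcal{B})$ with $V$ projective--injective, by writing down the matrices $h^{k}$ and $p^{k}$ in each residue class of $k$ modulo $N$. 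Since $V$ vanishes in $\cat{K}_{N}(\mcal{B})$, the cone is isomorphic to $\underline{I}_{r+1}^{\Uparrow_{r+1}^{N}}\underline{J}_{r}^{\Downarrow_{r+1}^{N}}(X)\in\mcal{F}_{r+1}^{N-r-1}$, which gives condition (c) directly and bypasses the kernel identification altogether. To complete your argument you would need either to supply this explicit conflation or to produce the contracting homotopy for an arbitrary object of $\operatorname{Ker}\underline{J}_{s}^{\Downarrow_{N-r}^{N}}\cap\operatorname{Ker}\underline{J}_{r+s}^{\Downarrow_{r+1}^{N}}$; neither is done, so the proof has a genuine gap at its central step.
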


\begin{proof}
We may assume $s=0$.
It is easy to see that 
$\Hom_{\cat{K}_{N}(\mcal{B})}(\mcal{F}_{0}^{r}, \mcal{F}_{r+1}^{N-r-1})=0$.
Let $X$ be an $N$-complex.
By Proposition \ref{prolong03}, the adjunction arrow induce a triangle
\[
\underline{I}_{0}^{\Uparrow_{N-r}^{N}}
\underline{J}_{0}^{\Downarrow_{N-r}^{N}}(X) \xarr{\varepsilon_{X}} X \xarr{u} \opn{C}(\varepsilon_{X}) \xarr{v}
\Sigma\underline{I}_{0}^{\Uparrow_{N-r}^{N}}
\underline{J}_{0}^{\Downarrow_{N-r}^{N}}(X) .
\]
Let $\sigma:=\iota_{0}^{(N-1)}\cdots\iota^{(N-r)}_{0}\rho_{0}^{(N-r+1)}\cdots\rho_{0}^{(N)}$,
and $V$ be an $N$-complex defined by
{\footnotesize 
\[\begin{aligned}
V^{k}
&=\begin{cases}
\coprod_{i=k+1}^{k+N-1}X^{\sigma(i)} \
 (k\equiv 0, \cdots , r \opn{mod} N)\\
X^{k}\oplus \coprod_{i=k+1, i\not\equiv 0 \opn{mod} N}^{k+N-1}X^{\sigma(i)} \
\ (k\equiv r+1, \cdots , N-1 \opn{mod} N) 
\end{cases}
\\
d_{V}^{k}
&=\begin{cases}
\left(\begin{array}{ccc}
1 & \cdots & 0 \\
\vdots & \ddots & \vdots \\
0 & \cdots & 1\end{array}\right)
\ (k \equiv r \opn{mod} N) \\
\left(\begin{array}{c|ccc}
0 & 1  & \cdots & 0 \\ 
\vdots & \vdots & \ddots & \vdots \\
0 & 0 & \cdots  & 1 \\\hline 
0 & 0 & \cdots  & 0 
\end{array}\right) \ (\text{othewise}) \\
\end{cases} \\
\end{aligned}\]
}
Let $h: V \to C(\epsilon_{X})$ be a monomorphism defined by
{\scriptsize 
\[\begin{array}{ll}
h^{k}=
\left(\begin{array}{cccccc}
0  & \cdots & 0& 0   \\\hline
1  & \cdots & 0& 0   \\
-d  & \ddots & \vdots& \vdots   \\
\vdots  & \ddots & 1&0   \\
0  & \cdots & -d &1
\end{array}\right)
& (k \equiv r \opn{mod} N), 
\end{array}
\]
\[\begin{array}{lll} 
& \hspace{72pt} N-r\\
N-r
&
\left(\begin{array}{cccccccc}
1  & \cdots & 0& 0 & 0 & \cdots & 0   \\
-d  & \ddots & \vdots & \vdots & \vdots& \cdots & \vdots    \\
\vdots  & \ddots & 1& 0 & 0 & \cdots & 0    \\
0  & \cdots & -d &1 & 0 & \cdots & 0 \\
0  & \cdots & 0 &-1&1 & \cdots & 0 \\
0  & \cdots & 0 &\vdots &\ddots & \ddots & \vdots \\
0  & \cdots & 0 &0 &\cdots & -1 &1 \\
0  & \cdots & 0 &0 &\cdots & 0 &-1
\end{array}\right)
& (k \equiv r+1 \opn{mod} N)
\end{array}
\]
\[\begin{array}{lll} 
& \hspace{72pt} N-r\\
N-r
&
\left(\begin{array}{ccccccc|c}
1  & \cdots & 0& 0 & 0 & \cdots & 0 &0  \\
-d  & \ddots & \vdots & \vdots & \vdots& \cdots & \vdots & \vdots  \\
\vdots  & \ddots & 1& 0 & 0 & \cdots & 0 & 0     \\
0  & \cdots & -d &1 & 0 & \cdots & 0 & 0 \\
0  & \cdots & 0 &-1&1 & \cdots & 0 & 0 \\
0  & \cdots & 0 &\vdots &\ddots & \ddots & \vdots & \vdots \\
0  & \cdots & 0 &0 &\cdots & -1 &1 &0\\
0  & \cdots & 0 &0 &\cdots & 0 &-1&0 \\\hline
0  & \cdots & 0 &0 &\cdots & 0 &0 &1
\end{array}\right)
& (k \equiv r+2 \opn{mod} N),\cdots
\end{array}\]
\[\begin{array}{lll} 
& \hspace{96pt} r+1\\
r+1
&\left(\begin{array}{cccc|ccccc}
1  & 0 & \cdots & 0 & 0  & \cdots & 0 & 0   \\
-1 & 1 & \cdots& 0 & 0  & \cdots & 0& 0   \\
0 & \ddots & \ddots & \vdots & \vdots  & \ddots & \vdots& \vdots   \\
\vdots & \ddots  & \ddots & 1 & 0  &\cdots & 0& 0   \\
0 & \cdots & 0 & -1 & 0  & \cdots & 0& 0   \\\hline
0 & \cdots & \cdots & 0 & 1  & \cdots & 0& 0   \\
0 & \cdots & \cdots & 0 & -d  & \ddots & \vdots& \vdots   \\
\vdots & \vdots & \vdots & \vdots & \vdots  & \ddots & 1& 0   \\
0 & \cdots & \cdots & 0 & 0  & \cdots & -d & 1
\end{array}\right)
& (k \equiv 0 \opn{mod} N)
\end{array}\]
\[\begin{array}{lll} 
& \hspace{96pt} r\\
r
&\left(\begin{array}{cccc|cccc}
d  & 0 & \cdots & 0 & 0  & \cdots & 0& 0   \\
-1 & 1 & \cdots& 0 & 0  & \cdots & 0& 0   \\
0 & -1 & \ddots & \vdots & \vdots  & \ddots & \vdots& \vdots   \\
\vdots & \ddots  & \ddots & 1 & 0  &\cdots & 0& 0   \\
0 & \cdots & 0 & -1 & 0  & \cdots & 0& 0   \\\hline
0 & \cdots & \cdots & 0 & 1  & \cdots & 0 & 0   \\
0 & \cdots & \cdots & 0 & -d  & \ddots & \vdots & \vdots   \\
\vdots & \vdots & \vdots & \vdots & \vdots  & \ddots & 1 & 0  \\
0 & \cdots & \cdots & 0 & 0  & \cdots & -d & 1
\end{array}\right)
& (k \equiv 1 \opn{mod} N), \cdots 
\end{array}\]

\[\begin{array}{ll} 
\left(\begin{array}{c|cccccc}
d^{r-1}   &  0  & \cdots & 0& 0   \\
-1  &  0  & \cdots & 0& 0   \\\hline
0  & 1  & \cdots & 0& 0   \\
0  & -d  & \ddots & \vdots& \vdots   \\
\vdots  & \vdots  & \ddots & 1&0   \\
0  &  0  & \cdots & -d &1
\end{array}\right)
& (k \equiv r-1 \opn{mod} N)
\end{array}\]
}

Let $p: C(\epsilon_{X}) \to \underline{I}_{r+1}^{\Uparrow_{r+1}^{N}}
\underline{J}_{r}^{\Downarrow_{r+1}^{N}}(X)$ be a epimorphism defined by
{\scriptsize
\[\begin{array}{ll}
p^{k}=
& \left(\begin{array}{cccccc}
1  & 0 & \cdots & 0
\end{array}\right)
\ (k \equiv r \opn{mod} N),
\\

& \left(\begin{array}{cccccccc}
d^{N-r-1}  & d^{N-r-2}& \cdots & d & 1& \cdots & 1& 1
\end{array}\right)
\ (k \equiv r+1 \opn{mod} N), 
\\

& \left(\begin{array}{cccccccc}
d^{N-r-2}  & d^{N-r-3}& \cdots & d & 1& \cdots & 1& 0
\end{array}\right)
\ (k \equiv r+2 \opn{mod} N), 
\\

& \hspace{120pt} \vdots \\

& \begin{array}{l} 
\hspace{54pt} r+1\\
\left(\begin{array}{cccccccc}
1  & 1& \cdots & 1 & 0& \cdots & 0
\end{array}\right)
\ (k \equiv 0 \opn{mod} N), 
\end{array} 
\\

&  \begin{array}{l} 
\hspace{60pt} r\\
\left(\begin{array}{cccccccc}
1  & d& \cdots & d & 0& \cdots & 0 
\end{array}\right)
\ (k \equiv 1 \opn{mod} N), 
\end{array} 
\\

&  \begin{array}{l} 
\hspace{60pt} r-1\\
\left(\begin{array}{cccccccc}
1  & d^2 & \cdots & d^2 & 0& \cdots & 0
& \end{array}\right)
\ (k \equiv 2 \opn{mod} N), 
\end{array} 
\\

& \hspace{120pt} \vdots \\

& \left(\begin{array}{cccccccc}
1  & d^{r-1} & 0 \cdots &  0
\end{array}\right)
\ (k \equiv r-1 \opn{mod} N).
\\

\end{array}
\]
}
Then we have the following conflations.
\[
0 \to V \xarr{h} \opn{C}(\varepsilon_{X}) \xarr{p} 
\underline{I}_{r+1}^{\Uparrow_{r+1}^{N}}
\underline{J}_{r}^{\Downarrow_{r+1}^{N}}(X) \to 0
\]
Since $V$ is a projective object in $\cat{C}_{N}(\mcal{B})$, we have an isomorphism
in $\cat{K}_{N}(\mcal{B})$
\[
\opn{C}(\varepsilon_{X}) \simeq  \underline{I}_{r+1}^{\Uparrow_{r+1}^{N}}
\underline{J}_{r}^{\Downarrow_{r+1}^{N}}(X) 
\] 
Therefore, we have a triangle $U \to X \to V \to \Sigma X$ such that $U \in \mcal{F}_{s}^{r}$, 
$V \in \mcal{F}_{r+s+1}^{N-r-1}$.\end{proof}

\begin{cor}\label{n-cpxgon04}
We have a recollement of $\cat{K}_{N}(\mcal{B})$:
\[\xymatrix{
\cat{K}_{N-r}(\mcal{B}) \ar@<-1ex>[r]^{i_{s*}} 
& \cat{K}_{N}(\mcal{B})
\ar@/_1.5pc/[l]^{i_{s}^{*}} \ar@/^1.5pc/[l]_{i_{s}^{!}} \ar@<-1ex>[r]^{j_{s}^{*}} 
& \cat{K}_{r+1}(\mcal{B})
\ar@/_1.5pc/[l]^{j_{s!}} \ar@/^1.5pc/[l]_{j_{s*}}
}\]
where 
$i_{s}^{*}=\underline{J}_{s-1}^{\Downarrow_{N-r}^{N}}$,
$i_{s*}=\underline{I}_{s}^{\Uparrow_{N-r}^{N}}$,
$i_{s}^{!}=\underline{J}_{s}^{\Downarrow_{N-r}^{N}}$,
$j_{s!}=\underline{I}_{r+s}^{\Uparrow_{r+1}^{N}}$,
$j_{s}^{*}=\underline{J}_{r+s}^{\Downarrow_{r+1}^{N}}$ and
$j_{s!}=\underline{I}_{r+s+1}^{\Downarrow_{r+1}^{N}}$.
\end{cor}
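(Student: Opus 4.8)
The plan is to deduce this recollement from Proposition~\ref{st-t-1second}(2), applied to a pair of consecutive stable $t$-structures whose common middle term is $\mcal{F}_s^r$, and then to identify the two outer categories together with the six functors. Throughout we take $1\le r\le N-2$, so that both $\cat{K}_{N-r}(\mcal{B})$ and $\cat{K}_{r+1}(\mcal{B})$ are genuine categories of fewer-complexes.

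\emph{Step 1: two consecutive stable $t$-structures.} Theorem~\ref{tstNcpx01} applied directly shows that $(\mcal{F}_s^r,\mcal{F}_{r+s+1}^{N-r-1})$ is a stable $t$-structure in $\cat{K}_N(\mcal{B})$. Applying the same theorem with $N-r-1$ in place of $r$ and $r+s$ in place of $s$ shows that $(\mcal{F}_{r+s}^{N-r-1},\mcal{F}_{s}^{r})$ is a stable $t$-structure: the second term it produces is $\mcal{F}_{(N-r-1)+(r+s)+1}^{\,r}=\mcal{F}_{N+s}^{r}$, which equals $\mcal{F}_{s}^{r}$ since the lower index of $\mcal{F}_\bullet^\bullet$ only matters modulo $N$. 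Hence $(\mcal{F}_{r+s}^{N-r-1},\mcal{F}_s^r)$ and $(\mcal{F}_s^r,\mcal{F}_{r+s+1}^{N-r-1})$ are consecutive stable $t$-structures, and Proposition~\ref{st-t-1second}(2) produces a recollement with middle category $\cat{K}_N(\mcal{B})$, left term the subcategory $\mcal{F}_s^r$, and $\opn{Im}j_!=\mcal{F}_{r+s}^{N-r-1}$, $\opn{Im}j_*=\mcal{F}_{r+s+1}^{N-r-1}$.

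\emph{Step 2: identifying the outer categories.} Iterating Proposition~\ref{prolong03}(4), the unit $\mathbf{1}\to\underline{J}_s^{\Downarrow_{N-r}^N}\underline{I}_s^{\Uparrow_{N-r}^N}$ is invertible, so $\underline{I}_s^{\Uparrow_{N-r}^N}$ is fully faithful; by Corollary~\ref{updown02} its essential image is $\mcal{F}_s^r$, so it restricts to a triangle equivalence $\cat{K}_{N-r}(\mcal{B})\iso\mcal{F}_s^r$. Similarly each $\underline{I}_a^{\Uparrow_{r+1}^N}\colon\cat{K}_{r+1}(\mcal{B})\to\cat{K}_N(\mcal{B})$ is fully faithful with essential image $\mcal{F}_a^{N-r-1}$. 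In the recollement $j_*$ is fully faithful with essential image $\mcal{F}_{r+s+1}^{N-r-1}$, so the quotient $\cat{K}_N(\mcal{B})/\mcal{F}_s^r$ is triangle equivalent to $\mcal{F}_{r+s+1}^{N-r-1}$, hence to $\cat{K}_{r+1}(\mcal{B})$. Composing the functors of the recollement with the equivalences $\cat{K}_{N-r}(\mcal{B})\simeq\mcal{F}_s^r$ and $\cat{K}_{r+1}(\mcal{B})\simeq\cat{K}_N(\mcal{B})/\mcal{F}_s^r$ brings it into the displayed shape, with $i_{s*}=\underline{I}_s^{\Uparrow_{N-r}^N}$ and with $j_!$, $j_*$ becoming $\underline{I}_{r+s}^{\Uparrow_{r+1}^N}$ and $\underline{I}_{r+s+1}^{\Uparrow_{r+1}^N}$, since these are the fully faithful prolongation functors with essential images $\mcal{F}_{r+s}^{N-r-1}$ and $\mcal{F}_{r+s+1}^{N-r-1}$.

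\emph{Step 3: remaining functors and the main obstacle.} In a recollement $(i^*,i_*)$, $(i_*,i^!)$, $(j_!,j^*)$, $(j^*,j_*)$ are adjoint pairs, and adjoints are unique up to natural isomorphism; hence $i_s^*$, $i_s^!$, $j_s^*$ are determined, and comparing with the adjoint pairs of prolongation functors recorded in Corollary~\ref{updown02} and Proposition~\ref{prolong03} identifies them with $\underline{J}_{s-1}^{\Downarrow_{N-r}^N}$, $\underline{J}_s^{\Downarrow_{N-r}^N}$ and $\underline{J}_{r+s}^{\Downarrow_{r+1}^N}$. The one genuinely delicate point is the bookkeeping of base-point indices modulo $N$: one must verify that the two applications of Theorem~\ref{tstNcpx01} share exactly the middle term $\mcal{F}_s^r$, that the essential images $\opn{Im}j_!$, $\opn{Im}j_*$ produced by Proposition~\ref{st-t-1second}(2) carry precisely the base points $r+s$ and $r+s+1$, and that each prolongation functor's adjoints shift the base point exactly as stated in Corollary~\ref{updown02}. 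Once those indices are aligned, the rest of the argument is formal.
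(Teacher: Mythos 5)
Your proposal is correct and follows essentially the same route as the paper: Theorem \ref{tstNcpx01} applied twice gives the consecutive stable $t$-structures $(\mcal{F}_{r+s}^{N-r-1},\mcal{F}_s^r)$ and $(\mcal{F}_s^r,\mcal{F}_{r+s+1}^{N-r-1})$ (the paper writes the first as $\mcal{F}_{r+s-N}^{N-r-1}$, the same subcategory since the base point only matters mod $N$), and the recollement then comes from Proposition \ref{st-t-1second}(2) with the six functors pinned down by the adjunctions and essential images in Proposition \ref{prolong03} and Corollary \ref{updown02}. Your write-up is in fact more explicit than the paper's two-line proof about identifying the outer categories with $\cat{K}_{N-r}(\mcal{B})$ and $\cat{K}_{r+1}(\mcal{B})$ and about the index bookkeeping.
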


\begin{proof}
By Theorem \ref{tstNcpx01}, 
$(\mcal{F}_{r+s-N}^{N-r-1}, \mcal{F}_{s}^{r})$ and
$(\mcal{F}_{s}^{r}, \mcal{F}_{r+s+1}^{N-r-1})$ are stable $t$-structures in
$\cat{K}_{N}(\mcal{B})$.
By the proof of Theorem \ref{tstNcpx01}, the adjunction arrows induce
triangles in $\cat{K}(\mcal{B})$.
Hence we have the statement.
\end{proof}

\begin{cor}\label{cpx2N-gon}
For any integer $s$, 
\[
(\mcal{F}_{s+1}^{N-2}, \mcal{F}_{s}^{1}, \mcal{F}_{s+2}^{N-2},  \mcal{F}_{s+1}^{1}, 
\cdots, \mcal{F}_{s+r+1}^{N-2}, \mcal{F}_{s+r}^{1}, 
\cdots, \mcal{F}_{s+N-1}^{N-2}, \mcal{F}_{s+N-2}^{1}, \mcal{F}_{s}^{N-2}, \mcal{F}_{s+N-1}^{1})
\]
is a $2N$-gon of recollements in $\cat{K}_{N}(\mcal{B})$.
\end{cor}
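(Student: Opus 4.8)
The plan is to deduce this directly from Theorem~\ref{tstNcpx01}; since the statement is a corollary, only index bookkeeping is involved. Write the given $2N$-tuple as $(\mcal{U}_1,\dots,\mcal{U}_{2N})$ with $\mcal{U}_{2k-1}=\mcal{F}_{s+k}^{N-2}$ and $\mcal{U}_{2k}=\mcal{F}_{s+k-1}^{1}$ for $k=1,\dots,N$, where all subscripts are read modulo $N$, as is legitimate because an $N$-complex is indexed by $\z$. What must be checked is that each cyclically consecutive pair $(\mcal{U}_i,\mcal{U}_{i+1})$ is a stable $t$-structure in $\cat{K}_{N}(\mcal{B})$; that every $\mcal{F}_{s}^{r}$ is itself a full triangulated subcategory is already recorded in Corollary~\ref{updown02}. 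There are exactly two shapes of consecutive pair, and I would dispatch them one at a time.

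First, for the pairs $(\mcal{U}_{2k},\mcal{U}_{2k+1})=(\mcal{F}_{s+k-1}^{1},\mcal{F}_{s+k+1}^{N-2})$, I would invoke Theorem~\ref{tstNcpx01} with $r=1$: it asserts that $(\mcal{F}_{t}^{1},\mcal{F}_{t+2}^{N-2})$ is a stable $t$-structure for every integer $t$, and substituting $t=s+k-1$ produces precisely the desired pair. This case also accounts for the wrap-around pair $(\mcal{U}_{2N},\mcal{U}_{1})=(\mcal{F}_{s+N-1}^{1},\mcal{F}_{s+1}^{N-2})$, since $s+N+1\equiv s+1\pmod N$.

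Second, for the pairs $(\mcal{U}_{2k-1},\mcal{U}_{2k})=(\mcal{F}_{s+k}^{N-2},\mcal{F}_{s+k-1}^{1})$, I would apply Theorem~\ref{tstNcpx01} with $r=N-2$, which is permitted because we are in the range $1\le r<N$, i.e.\ $N\ge 3$ (the standing hypothesis for the constructions of this section). It gives that $(\mcal{F}_{t}^{N-2},\mcal{F}_{t+N-1}^{1})$ is a stable $t$-structure for every $t$, and since $\mcal{F}_{t+N-1}^{1}=\mcal{F}_{t-1}^{1}$ by the modulo-$N$ periodicity of the subscript, this pair equals $(\mcal{F}_{t}^{N-2},\mcal{F}_{t-1}^{1})$; substituting $t=s+k$ yields the desired pair.

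Putting the two cases together, every cyclically consecutive pair of the tuple is a stable $t$-structure, so $(\mcal{U}_1,\dots,\mcal{U}_{2N})$ is a $2N$-gon of recollements by definition. I expect no genuine obstacle here: the only point demanding care is tracking the two interleaved arithmetic progressions of subscripts and consistently using that subscripts are well-defined only modulo $N$, so that the case $r=N-2$ of Theorem~\ref{tstNcpx01} produces the $\mcal{F}^{N-2}$-to-$\mcal{F}^{1}$ pairs and the case $r=1$ produces the $\mcal{F}^{1}$-to-$\mcal{F}^{N-2}$ pairs, including the wrap-around.
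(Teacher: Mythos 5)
Your proposal is correct and is exactly the argument the paper intends: the corollary is stated without proof as an immediate consequence of Theorem~\ref{tstNcpx01}, obtained by specializing to $r=1$ and $r=N-2$ and using the mod-$N$ periodicity of the subscript in $\mcal{F}_{s}^{r}$. Your index bookkeeping, including the wrap-around pair, checks out.
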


\section{Triangle equivalence between homotopy categories}\label{TrieqDN}

\[
\mu^{s}_{r}C: X^{s-r+1} \xarr{d^{s-r+1}} \cdots \xarr{d^{s-2}} X^{s-1} \xarr{d^{s-1}} X^{s}
\] 
be an $N$-complex satisfying that 
$
X^{s-i}=C  \ (0 \leq i \leq r-1), \quad d^{s-i}=1_{C}  \ (0 < i \leq r-1).
$

\begin{lem}\label{smcatcp}
Let $\mcal{B}$ be an additive category.
Consider $\Morph^{\mrm{sm}}_{N-1}(\mcal{B})$ as a subcategory
of $\cat{C}_{N}(\mcal{B})$, then the following hold.
\begin{enumerate}
\item 
For every object $X$ of $\Morph^{\mrm{sm}}_{N-1}(\mcal{B})$,
there are objects $C_1, \cdots ,C_{N-1}$ of $\mcal{B}$ such that
$X \simeq \coprod_{i=1}^{N-1}\mu^{N-1}_{i}C_{i}$.
\item 
For any $X, Y \in \Morph^{\mrm{sm}}_{N-1}(\mcal{B})$, we have isomorphisms
\[\begin{aligned}
\Hom_{\Morph^{\mrm{sm}}_{N-1}(\mcal{B})}(X,Y) &=\Hom_{\cat{C}_{N}(\mcal{B})}(X,Y) \\
&=\Hom_{\cat{K}_{N}(\mcal{B})}(X,Y) .
\end{aligned}\]
\item 
For any $X, Y \in \Morph^{\mrm{sm}}_{N-1}(\mcal{B})$, we have
$\Hom_{\cat{K}_{N}(\mcal{B})}(X,\Sigma^{i}Y) = 0 \ (i\not=0)$.
\end{enumerate}
\end{lem}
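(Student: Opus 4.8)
The plan is to prove $(1)$, $(2)$, $(3)$ in turn, using $(1)$ to reduce $(3)$ to the building blocks $\mu^{N-1}_rC$. For $(1)$ I would argue by induction on $N$. Given $X\colon X^1\xarr{\alpha^1}\cdots\xarr{\alpha^{N-2}}X^{N-1}$ with each $\alpha^i$ a split monomorphism, fix retractions $\sigma^i$ of the $\alpha^i$. Then the morphism $\mu^{N-1}_{N-1}X^1\to X$ with components $1_{X^1},\alpha^1,\alpha^2\alpha^1,\dots,\alpha^{N-2}\cdots\alpha^1$ is a split monomorphism in $\Morph^{\mrm{sm}}_{N-1}(\mcal{B})$, a retraction being $\sigma^1\cdots\sigma^{i-1}$ in degree $i$. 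Its cokernel $X'$ (which lies in $\Morph^{\mrm{sm}}_{N-1}(\mcal{B})$, since a split monomorphism in $\mcal{B}$ admits a complement) is again a sequence of split monomorphisms and vanishes in degree $1$; deleting that leading $0$ and applying the inductive hypothesis to the resulting object of $\Morph^{\mrm{sm}}_{N-2}(\mcal{B})$ (and shifting indices up by one) gives $X'\simeq\coprod_{i=1}^{N-2}\mu^{N-1}_iC_i$, whence $X\simeq\mu^{N-1}_{N-1}X^1\oplus\coprod_{i=1}^{N-2}\mu^{N-1}_iC_i$. For $(2)$, regard $X,Y\in\Morph^{\mrm{sm}}_{N-1}(\mcal{B})$ as $N$-complexes concentrated in degrees $1,\dots,N-1$ (a legitimate $N$-complex, since no $N$ consecutive terms are nonzero): a morphism of $N$-complexes between them is precisely a family $(f^i)$ with $f^{i+1}\alpha^i_X=\alpha^i_Yf^i$ for $1\le i\le N-2$, which is the first equality; and a homotopy is a family $s^i\colon X^i\to Y^{i-N+1}$, but $X^i\ne0$ forces $i\in[1,N-1]$ while $Y^{i-N+1}\ne0$ forces $i\in[N,2N-2]$, so every $s^i$ vanishes and the only null-homotopic morphism is $0$.

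For $(3)$, by additivity and $(1)$ it suffices to treat $X=\mu^{N-1}_pC$ and $Y=\mu^{N-1}_qD$ with $1\le p,q\le N-1$. The main input is the conflation
\[
0\to\mu^{a}_{b}D\to P_a(D)\to\mu^{a-b}_{N-b}D\to0
\]
in $\cat{C}_N(\mcal{B})$, valid for every integer $a$ and every $1\le b\le N-1$, where $P_a(D)=\opn{C}(1_Z)$ for $Z$ the $N$-complex concentrated in degree $a$ with value $D$ (so $P_a(D)$ has $D$ in degrees $a-N+1,\dots,a$ with identity differentials, and is projective-injective by the discussion following Definition \ref{cone01}), and the first map is the inclusion of the top $b$ terms. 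Since $P_a(D)$ is projective-injective this yields $\Sigma(\mu^a_bD)\simeq\mu^{a-b}_{N-b}D$ in $\cat{K}_N(\mcal{B})$, hence $\Sigma^2(\mu^a_bD)\simeq\mu^{a-N}_bD$, and iterating gives an explicit description of $\Sigma^i(\mu^{N-1}_qD)$ for every $i$. Comparing with the degrees $[N-p,N-1]$ occupied by $\mu^{N-1}_pC$, one checks that the supports of $\mu^{N-1}_pC$ and $\Sigma^i\mu^{N-1}_qD$ are disjoint for all $i\ne0$, with the single exception of $i=1$ when $p>q$; in every disjoint case $\Hom_{\cat{C}_N(\mcal{B})}$, hence also $\Hom_{\cat{K}_N(\mcal{B})}$, vanishes. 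In the exceptional case $\Sigma\mu^{N-1}_qD=\mu^{N-1-q}_{N-q}D$ is concentrated in degrees $[0,N-1-q]$, overlapping $[N-p,N-1]$ along $[N-p,N-1-q]$; a morphism there is given by a single $\phi\in\Hom_{\mcal{B}}(C,D)$ repeated along the overlap, the $N$-complex homotopy formula collapses to the single term through $s^{N-1}$, and taking $s^{N-1}=\phi$ (all other components $0$) realises that morphism as null-homotopic. Hence $\Hom_{\cat{K}_N(\mcal{B})}(X,\Sigma^iY)=0$ for all $i\ne0$.

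The step I expect to be most delicate is the bookkeeping in $(3)$: pinning down the differentials and signs in the conflation so that $P_a(D)$ is genuinely the cone $\opn{C}(1_Z)$, and then tracking the supports of the various $\Sigma^i\mu^{N-1}_qD$ and carrying out the explicit homotopy verification in the one overlapping case. Part $(1)$ also tacitly uses that split monomorphisms in $\mcal{B}$ have complements, a mild idempotent-completeness requirement on $\mcal{B}$.
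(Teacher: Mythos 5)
Your proposal is correct and follows essentially the same route as the paper: part (3) rests on exactly the conflation $0 \to \mu_{r}^{iN+N-1}C \to \mu_{N}^{iN+N-1}C \to \mu_{N-r}^{(i+1)N-r-1}C \to 0$ with projective-injective middle term that the paper uses to compute $\Sigma^{j}\mu_{r}^{N-1}C$, while parts (1) and (2) are the arguments the paper dismisses as ``trivial''/``easy''. You in fact supply details the paper omits, notably the explicit null-homotopy in the one case ($i=1$, $p>q$) where the supports of $\mu^{N-1}_{p}C$ and $\Sigma^{i}\mu^{N-1}_{q}D$ genuinely overlap, and the idempotent-splitting caveat implicitly needed for (1).
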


\begin{proof}
(1)  It is trivial.\par\noindent
(2)  It is easy because the term-length of objects of $\Morph^{\mrm{sm}}_{N-1}(\mcal{B})$ is less than $N$.
\par\noindent
(3)  For every $C \in \mcal{B}$, $1 \leq r \leq N-1$ and any integer $i$, the canonical injection and projection induce an conflation in 
$\cat{C}(\mcal{B})$
\[\begin{aligned}
0 \to \mu_{r}^{iN+N-1}C \to \mu_{N}^{iN+N-1}C \to \mu_{N-r}^{(i+1)N-r-1}C \to 0 .
\end{aligned}\]
Since $\mu_{N}^{iN+N-1}C$ 
is a projective-injective objet in 
$\cat{C}_{N}(\mcal{B})$,
we have isomorphisms in $\cat{K}_{N}(\mcal{B})$:
\[
\Sigma^{j}\mu_{r}^{N-1}C\simeq\begin{cases}
\mu_{N-r}^{(1-j)N/2-r-1}C \ (j\equiv 1 \opn{mod} 2)\\
\mu_{r}^{(2-j)N/2-1}C \ (j\equiv 0 \opn{mod} 2)
\end{cases}
\]
For every $C, C' \in \mcal{B}$ and any $1 \leq r,r' \leq N-1$, we have
\[
\Hom_{\cat{K}(\mcal{B})}(\mu_{r}^{N-1}C,\Sigma^{j}\mu_{r'}^{N-1}C') = 0 \ (j\not=0) .
\]
By (1), we have the statement.
\end{proof}

\begin{defn}\label{simpleF}
For every $C \in \mcal{B}$, let
\[
\Xi^{j}\mu_{r}^{N-1}C=\begin{cases}
\mu_{N-r}^{(1-j)N/2+N-r-1}C \ (j\equiv 1 \opn{mod} 2)\\
\mu_{r}^{(2-j)N/2-1}C \ (j\equiv 0 \opn{mod} 2)
\end{cases}\]
By the proof of Lemma \ref{smcatcp},
for every $M \in \Morph^{\mrm{sm}}_{N-1}(\mcal{B})$ and any $i \in \mathbb{Z}$, there exist
the projective-injective object $I'(\Xi^iM)$ and
a functorial conflation in $\cat{C}_N(\mcal{B})$ 
\[
0 \arr \Xi^{i}M \xarr{u'_{\Xi^iM}} I'(\Xi^{i} M) \xarr{v'_{\Xi^iM}} \Xi^{i+1}M \arr   0
\]
that is, every morphism $f: M \to N$ in $\Morph^{\mrm{sm}}_{N-1}(\mcal{B})$
uniquely determines morphisms $I'(f):I'(M) \to I'(N)$ and $\Xi^if:\Xi^iM \to \Xi^iN$
which have the following commutative diagram:
\[\xymatrix{
0 \ar[r] & \Xi^{i}M \ar[r] \ar[d]^{\Xi^if} & I'(\Xi^{i} M) \ar[r]  \ar[d]^{I'(\Xi^if)} & \Xi^{i+1}M \ar[r] 
\ar[d]^{\Xi^{i+1}f} & 0 \\
0 \ar[r] & \Xi^{i}N \ar[r] & I'(\Xi^{i} N) \ar[r] & \Xi^{i+1}N \ar[r] & 0 
}\]
\end{defn}

For any (ordinary and $N$) complex $X = (X^{i}, d^{i})$, we define 
the following truncations:
\[\begin{aligned}
{\tau}_{\leq n}X & : \cdots \arr X^{n-2} \arr X^{n-1}
\arr X^{n} \arr 0 \arr \cdots ,\\
{\tau }_{\geq n}X & : \cdots \arr 0 \arr X^{n}
\arr X^{n+1}\arr X^{n+2}\arr \cdots, \\
{\tau}_{[m,n]}X & : \cdots \arr X^{m} \arr \cdots \arr X^{n} \arr 0 \arr \cdots ,\\
{\tau }_{n}X & : \cdots \arr 0 \arr X^{n} \arr 0 \arr \cdots .
\end{aligned}\]

\begin{lem}\label{prolong}
Let $\mcal{B}$ be an additive category.
There exists an exact functor 
$F_{N}: \cat{C}(\Morph^{\mrm{sm}}_{N-1}(\mcal{B})) \to \cat{C}_{N}(\mcal{B})$
which sends $\Morph^{\mrm{sm}}_{N-1}(\mcal{B})$ to $\Morph^{\mrm{sm}}_{N-1}(\mcal{B})$ as a subcategory
of $\cat{K}_{N}(\mcal{B})$
such that $F_{N}$ induces a triangle functor 
$\underline{F}_{N}: \cat{K}(\Morph^{\mrm{sm}}_{N-1}(\mcal{B})) \to \cat{K}_{N}(\mcal{B})$.
\end{lem}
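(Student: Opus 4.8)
The plan is to construct $F_{N}$ by an \emph{unfolding} (totalisation) procedure. A complex $(X^{\bullet},\delta)$ of objects of $\Morph^{\mrm{sm}}_{N-1}(\mcal{B})$ carries two gradings: the cohomological degree $n$ of the complex, and the internal degree $1,\dots,N-1$ of each $\Morph^{\mrm{sm}}_{N-1}(\mcal{B})$-object viewed (via the inclusion of Lemma~\ref{smcatcp}) as an $N$-complex. The object $F_{N}(X^{\bullet})$ will be the single $N$-complex obtained by weaving these two gradings into one, the functorial conflations of Definition~\ref{simpleF} being used to realise the required shifts at the level of honest $N$-complexes rather than merely up to homotopy. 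Once $F_{N}$ is defined on objects and on morphisms, it remains to verify: (a) $F_{N}$ is exact, i.e. sends the (degreewise split) conflations of $\cat{C}(\Morph^{\mrm{sm}}_{N-1}(\mcal{B}))$ to conflations of $\cat{C}_{N}(\mcal{B})$; (b) $F_{N}$ carries projective-injective objects to projective-injective objects; and (c) on the stalk complexes concentrated in degree $0$, $F_{N}$ agrees (in $\cat{K}_{N}(\mcal{B})$) with the canonical inclusion $\Morph^{\mrm{sm}}_{N-1}(\mcal{B})\hookrightarrow\cat{C}_{N}(\mcal{B})$. Items (a) and (b) imply that $F_{N}$ descends to $\underline{F}_{N}\colon\cat{K}(\Morph^{\mrm{sm}}_{N-1}(\mcal{B}))\to\cat{K}_{N}(\mcal{B})$, which is then automatically a triangle functor, since an exact functor of Frobenius categories that preserves projective-injective objects commutes with the suspension and preserves the standard triangles; item (c) is the normalisation demanded in the statement.

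For the construction, fix $(X^{\bullet},\delta)$. Chaining the conflations $0\to\Xi^{i}M\to I'(\Xi^{i}M)\to\Xi^{i+1}M\to 0$ of Definition~\ref{simpleF} over all $i\in\z$ produces, for each $n$, a complete resolution of $X^{n}$ by projective-injective $N$-complexes
\[
\cdots \to I'(\Xi^{-1}X^{n}) \to I'(X^{n}) \to I'(\Xi X^{n}) \to \cdots
\]
in which $X^{n}$ is recovered as the image of the middle differential, and these resolutions are functorial in $n$ via the morphisms $I'(\Xi^{i}\delta^{n})$. One assembles $F_{N}(X^{\bullet})$ by interleaving these resolutions along $\delta$: the positions of the $N$-complex are partitioned into consecutive blocks indexed by $n$, the $n$-th block carrying the relevant part of $X^{n}$ together with its $I'$-resolution, and consecutive blocks are glued by means of $\delta^{n}$. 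For a bounded complex this amounts exactly to the iterated mapping cone of the embedded objects $X^{n}$ along $\delta$ in $\cat{C}_{N}(\mcal{B})$ (using the cones of Definition~\ref{cone01}); passing through the $I'$-resolutions is what promotes this to a strict functor defined on all complexes, bounded or not. Since each $I'(\Xi^{i}X^{n})$ is supported in $N$ consecutive positions and, by Lemma~\ref{smcatcp}(1), $X^{n}$ is a finite coproduct of standard pieces $\mu_{j}^{N-1}C$, only finitely many summands occupy any fixed position, so $F_{N}(X^{\bullet})$ is a well-defined $N$-complex; a morphism of complexes induces in the evident way a morphism of the assembled $N$-complexes.

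The verification of (a) is essentially formal: a conflation in $\cat{C}(\Morph^{\mrm{sm}}_{N-1}(\mcal{B}))$ is split in each degree, and $F_{N}$ is additive and built from additive functors and coproducts, so it carries such a conflation to an exact sequence of $N$-complexes which is split in $\mcal{B}$ in each position, hence a conflation of $\cat{C}_{N}(\mcal{B})$. For (b), the projective-injective objects of $\cat{C}(\Morph^{\mrm{sm}}_{N-1}(\mcal{B}))$ are the retracts of the contractible complexes $M\xarr{1_{M}}M$, and the totalisation of such a complex is assembled entirely out of the objects $I'(\Xi^{i}M)$, which are projective-injective in $\cat{C}_{N}(\mcal{B})$ by Definition~\ref{simpleF}; hence $F_{N}$ preserves projective-injectivity. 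For (c), on a stalk complex $M$ in degree $0$ the interleaving collapses to the single resolution of $M$, whose image is $M$ itself, so $F_{N}(M)$ is the canonical copy of $M$ in $\cat{C}_{N}(\mcal{B})$.

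The principal obstacle lies in the combinatorial bookkeeping underlying the construction: one has to coordinate the three interacting gradings --- the complex degree $n$, the resolution index $i$, and the internal position $k$ of the $N$-complexes --- keeping track of the fact that $\Xi$ behaves differently in even and in odd degrees, and one must fix the blockwise layout and the sign conventions (matching Definition~\ref{cone01}) so that the assembled object genuinely satisfies the $N$-complex relation and so that $F_{N}$ is strictly functorial, including on the contracting homotopies of the $I'$-resolutions. Once these choices are pinned down, the checks in (a), (b), (c) reduce to routine diagram chases.
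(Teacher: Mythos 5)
Your proposal is correct and follows essentially the same route as the paper's proof: you build $F_{N}$ from the functorial conflations $0\to\Xi^{i}M\to I'(\Xi^{i}M)\to\Xi^{i+1}M\to 0$ of Definition \ref{simpleF}, realise it on bounded complexes as an iterated mapping cone along $\delta$ (the paper's inductive Step 2, with the unbounded case handled by degreewise stabilisation, as in the paper's Steps 3--4 via $\tau_{\geq -i}$ and $\tau_{\leq i}$ limits), and then descend to the homotopy categories by checking exactness and preservation of projective-injective objects and invoking the Frobenius-category formalism of Proposition \ref{extr01}. The combinatorial bookkeeping you defer is exactly what the paper's truncation-by-truncation mapping-cone formulation is designed to absorb, so no essential idea is missing.
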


\begin{proof}
Let $T: \cat{C}(\Morph^{\mrm{sm}}_{N-1}(\mcal{B})) \to \cat{C}(\Morph^{\mrm{sm}}_{N-1}(\mcal{B}))$
 be a translation functor. 
We will prove the statement by the following steps:

\par\noindent
Step 1.  We have a functor $F_{N}: \coprod_{i}T^{i}\mcal{B} \to \cat{C}_{N}(\mcal{B})$ which preserves split exact sequences.

\par\noindent
For $T^{i} X \in T^{i}\Morph^{\mrm{sm}}_{N-1}(\mcal{B})$, let $F_{N}(T^{i} X)=\Xi^{i}X$, then 
$F_{N}: \coprod_{i}T^{i}\Morph^{\mrm{sm}}_{N-1}(\mcal{B}) \to \cat{K}_{N}(\mcal{B})$ which preserves split exact sequences.

\par\noindent
Step 2.  We have a functor $F_{N}: \cat{C}^{\mrm{b}}(\Morph^{\mrm{sm}}_{N-1}(\mcal{B})) \to \cat{C}_{N}(\mcal{B})$ which preserves conflations.

\par\noindent
For $i<j$, let $X:X^{i} \xarr{d^i} X^{i+1} \to \cdots \to X^{j} \in \cat{C}^{\mrm{b}}(\Morph^{\mrm{sm}}_{N-1}(\mcal{B}))$.
By induction on $j-i$, we comstruct a functor $F_{N}$.
For $X$, we have a commutative diagram
in $\cat{C}^{\mrm{b}}(\mcal{B})$:
\[\xymatrix{
0 \ar[r] & T^{-i}X^{i} \ar[r] \ar[d]^{d^i} & I(T^{-i}X^{i}) \ar[r] \ar[d] & T^{-i-1}X^i \ar[r] \ar@{=}[d] & 0 \\
0 \ar[r] & \tau_{\geq i+1}X \ar[r] &  \tau_{\geq i}X \ar[r] & T^{-i-1}X^i \ar[r] & 0
}
\]
where all rows are exact.  Then we have a commutative diagram
in $\cat{C}_{N}(\mcal{B})$:
\begin{equation}\label{limit01}
\xymatrix@1{
0 \ar[r] & \ar @{} [dr] |{(A)} F_{N}(T^{-i}X^{i}) \ar[r] \ar[d]^{F_{N}(d^i)} & I'(F_{N}(T^{-i}X^{i})) \ar[r] \ar[d]^{\gamma_{i}} & F_{N}(T^{-i-1}X^i) \ar[r] \ar@{=}[d] & 0 \\
0 \ar[r] & F_{N}(\tau_{\geq i+1}X) \ar[r] & C(F_{N}(d^{i})) \ar[r] & F_{N}(T^{-i-1}X^i) \ar[r] & 0
}
\end{equation}
where all rows are conflations.  We take $F_{N}(\tau_{\geq i}X)=C(F_{N}(d^i))$.  
By successive step, we have a functor $F_{N}: \cat{C}^{\mrm{b}}(\Morph^{\mrm{sm}}_{N-1}(\mcal{B})) \to \cat{C}_{N}(\mcal{B})$.
For any inflation (resp., deflation) $f:X \to Y$ in $\cat{C}^{\mrm{b}}(\mcal{B})$,
$F_{N}(T^if):F_{N}(T^iX) \to F_{N}(T^iY)$ and $I'(F_{N}(T^if)):I'(F_{N}(T^iX)) \to I'(F_{N}(T^iY))$ are split monomorphism 
(resp., epimorphism) by Definition \ref{simpleF}.
Then \[
0 \to F_{N}(T^iX) \to F_{N}(\tau_{\geq i+1}X)\oplus I'(F_{N}(T^{-i}X^i)) \to F_{N}(\tau_{\geq i}X) \to 0
\]
is an inflation.
Let $0 \to X \to Y \to Z \to 0$ be an conflation in 
$\cat{C}^{\mrm{b}}(\Morph^{\mrm{sm}}_{N-1}(\mcal{B}))$.
Consider a small Frobenius subcategory $\mcal{C}$ of 
$\cat{C}_{N}(\Morph^{\mrm{sm}}_{N-1}(\mcal{B}))$ which contains
$F_{N}(T^{i}X), \\ F_{N}(T^{i}Y), F_{N}(T^{i}Z), I'(F_{N}(T^{-i}X), I'(F_{N}(T^{-i}Y), I'(F_{N}(T^{-i}Z),
F_{N}(\tau_{\geq i}X), \\ F_{N}(\tau_{\geq i}Y)$ and $F_{N}(\tau_{\geq i}Z)$ $(i \in \mathbb{Z})$.
Then by the induction on $j-i$,  9-lemma implies that
\[
0 \to F_{N}(\tau_{\geq i}X) \to F_{N}(\tau_{\geq i}Y) \to F_{N}(\tau_{\geq i}Z) \to 0
\]
is a short exact sequence in some abelian category.
According to Proposition \ref{excatemb},  it is a conflation in $\mcal{C}$.
Therefore, $F_{N}$ preserves conflations.
\par\noindent
Step 3. We have a functor $F_{N}: \cat{C}^{-}(\Morph^{\mrm{sm}}_{N-1}(\mcal{B})) \to \cat{C}_{N}(\mcal{B})$ which preserves 
conflations.

\par\noindent
For $X \in \cat{C}^{-}(\mcal{B})$, we have $X =\displaystyle{\lim_{i \to \infty}}\tau_{\geq -i}X$.
By the diagram \ref{limit01}, we have
\[
\tau_{\geq (\left[\frac{i+1}{2}\right])N-1}F_{N}(\tau_{\geq -i}X) =
\tau_{\geq (\left[\frac{i+1}{2}\right])N-1}F_{N}(\tau_{\geq -i-1}X)
\]
Then there exists $\displaystyle{\lim_{i \to \infty}}F_{N}(\tau_{\geq -i}X)$, and
we take $F_{N}(X)=\displaystyle{\lim_{i \to \infty}}F_{N}(\tau_{\geq -i}X)$. 
It is not hard to see that $F_{N}$ becomes a functor and 
preserves conflations.

\par\noindent
Step 4.  We have a functor $F_{N}: \cat{C}(\Morph^{\mrm{sm}}_{N-1}(\mcal{B})) \to \cat{C}_{N}(\mcal{B})$ which preserves 
conflations.

\par\noindent
Let $X:\cdots \to X^{i} \xarr{d^i} X^{i+1} \to \cdots \in \cat{C}(\mcal{B})$.
By induction on $i$, we comstruct a functor $F_{N}$.
For $X$, we have a commutative diagram in $\cat{C}^{-}(\Morph^{\mrm{sm}}_{N-1}(\mcal{B}))$:
\[\xymatrix{
0 \ar[r] & T^{-i-1}X^{i+1} \ar[r] \ar@{=}[d] & \tau_{\leq i+1}X\ar[r] \ar[d] & \tau_{\leq i}X \ar[r] \ar[d]^{d^i} & 0 \\
0 \ar[r] & T^{-i-1}X^{i+1} \ar[r] & I(T^{-i-1}X^{i+1}) \ar[r] & T^{-i}X^{i+1} \ar[r] & 0
}
\]
where all rows are exact.  Then we have a commutative diagram
in $\cat{C}_{N}(\mcal{B})$:
\begin{equation}\label{limit02}
\xymatrix{
0 \ar[r] & F_{N}(T^{-i-1}X^{i+1}) \ar[r] \ar@{=}[d] & \ar @{} [dr] |{(B)} 
\Sigma^{-1}\opn{C}(F_{N}(d^{i})) \ar[r] \ar[d] & 
F_{N}(\tau_{\leq i}X) \ar[r]\ar[d]^{F_{N}(d^i)} & 0 \\
0 \ar[r] & F_{N}(T^{-i-1}X^{i+1}) \ar[r] & I'(T^{-i-1}X^{i+1}) \ar[r] & F_{N}(T^{-i}X^{i+1}) \ar[r] & 0
}
\end{equation}
where all rows are conflations.  We take $F_{N}(\tau_{\leq i+1}X)=C(F_{N}(d^i))$.  
By the diagram \ref{limit02}, we have
\[
\tau_{\leq \left[\frac{i+2}{2}\right]N-1}F_{N}(\tau_{\leq i}X) =
\tau_{\leq \left[\frac{i+2}{2}\right]N-1}F_{N}(\tau_{\leq i+1}X)
\]
Then there exists $\displaystyle{\lim_{\infty \gets i}}F_{N}(\tau_{\leq i}X)$, and
we take $F_{N}(X)=\displaystyle{\lim_{\infty \gets i}}F_{N}(\tau_{\leq i}X)$. 
Since the commutative diagram (B) is a exact square,
it is not hard to see that $F_{N}$ becomes a functor and preserves 
conflations.

\par\noindent
Step 5.  $F_{N}$
sends projective-injective objects in $\cat{C}(\Morph^{\mrm{sm}}_{N-1}(\mcal{B}))$
to projective-\\ injective objects in $\cat{C}_{N}(\mcal{B})$.

\par\noindent
Every projective-injective object in $\cat{C}(\Morph^{\mrm{sm}}_{N-1}(\mcal{B}))$
is a direct summand of some biproduct $\oplus_{i \in \mathbb{Z}}T^i\opn{C}(1_{M_i})$
with $M_i \in \Morph^{\mrm{sm}}_{N-1}(\mcal{B})$. 
Since $F_{N}(\opn{C}(1_{M_i})) \simeq \opn{C}(1_{F_{N}(M_i)})$ is projective-injective
in $\cat{C}_{N}(\mcal{B})$, we have the statement.

\par\noindent
According to Proposition \ref{extr01}, 
$F_{N}: \cat{C}(\Morph^{\mrm{sm}}_{N-1}(\mcal{B})) \to \cat{C}_{N}(\mcal{B})$ induces
a triangle functor 
$\underline{F}_{N}: \cat{K}(\Morph^{\mrm{sm}}_{N-1}(\mcal{B})) \to \cat{K}_{N}(\mcal{B})$.
\end{proof}

\begin{lem}\label{send2Ngons00}
The following hold.
\begin{enumerate}
\item  Every complex $X$ of $\cat{C}(\Morph^{\mrm{sm}}_{N-1}(\mcal{B}))$ is of the form
\[\xymatrix{
X_{1} \ar[r]^{\alpha^1} 
& \oplus_{i=1}^{2}X_{i} \ar[r]^{\alpha^2}
&\cdots \ar[r]^{\alpha^{N-2}} & \oplus_{i=1}^{N-1}X_{i}
}\]
where $\alpha^i=
\left[\begin{smallmatrix}
1 & \cdots & 0 \\ 
 & \ddots  & \\
0 & \cdots & 1 \\
0 & \cdots & 0 \\
\end{smallmatrix}\right]$
and $(\oplus_{i=1}^{r}X_{i}, d_X^i=
\left[\begin{smallmatrix}
d^{i}_{11} & d^{i}_{12} & \cdots & d^{i}_{1r} \\ 
0 & d^{i}_{22} & \cdots & d^{i}_{2r} \\ 
\vdots  & \vdots  & \ddots &\vdots \\
0 & 0 & \cdots & d^{i}_{rr}
\end{smallmatrix}\right])$
is a complex of $\mcal{B}$.
\item  For a complex $X$ of $\cat{C}(\Morph^{\mrm{sm}}_{N-1}(\mcal{B}))$, 
the $N$-complex $F_{N}(X)$ is equal to $(Y^j, d_Y^j)$
where
\[
Y^j=\left(\oplus_{r=1}^{k}X_{r}^{2i}\right)\oplus
\left(\oplus_{r=k}^{N-1}X_{r}^{2i-1}\right) \\
\]
\[
d_Y^j=
\begin{cases}
\left[\begin{smallmatrix}
d^{i}_{11} & d^{i}_{12} & \cdots & d^{i}_{1N-1} \\ 
0 & d^{i}_{22} & \cdots & d^{i}_{2N-1} \\ 
\vdots  & \vdots  & \ddots &\vdots \\
0 & 0 & \cdots & d^{i}_{N-1N-1}
\end{smallmatrix}\right]
&\ (j\equiv -1 \mod N), 
\\
\left[\begin{matrix}
1 & 0 & \cdots & 0 &d^{i-1}_{1k}   & 0 & \cdots & 0 \\ 
0 & 1 & \ddots & \vdots & d^{i-1}_{2k}  &  \vdots & & \vdots \\ 
\vdots & \ddots & \ddots & 0 & \vdots &  \vdots  & &  \vdots  \\ 
\vdots & & \ddots & 1 &  d^{i-1}_{k-1k}  &  \vdots  & &  \vdots   \\ 
\vdots & & & \ddots & d^{i-1}_{kk} & 0 & & \vdots  \\ 
\vdots & & & & \ddots &  1 & \ddots  &  \vdots  \\ 
\vdots & & & & & \ddots   & \ddots& 0\\ 
0 & \cdots &\cdots &\cdots &\cdots &\cdots & 0 & 1\\ 
\end{matrix}\right] 
&\ \text{otherwise}
\end{cases}\]
where $i=2\left[\frac{j}{N}\right]$, $0 \leq k \leq N-1$ and $k \equiv j \mod N$.
\item  $\underline{F}_{N}(\mcal{E}^{[1, N-2]}) \subset 
\mcal{F}_{N-1}^{1}$
\end{enumerate}
\end{lem}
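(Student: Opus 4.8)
The plan is to establish the three parts in the order stated: (1) is a normal form obtained by splitting, (2) evaluates the functor $F_{N}$ of Lemma \ref{prolong} on that normal form, and then (3) falls out of (1) and (2) together with Lemma \ref{lem:comma03}(3). For \textbf{(1)} I would apply Lemma \ref{smcatcp}(1) termwise: given $X\in\cat{C}(\Morph^{\mrm{sm}}_{N-1}(\mcal{B}))$ with terms $X^{n}=(X^{n,1}\into\cdots\into X^{n,N-1})$, choose splittings $X^{n,j}\cong\coprod_{r=1}^{j}X_{r}^{n}$, compatible in $j$, turning the structure maps into the canonical inclusions. That the differential of $X$ commutes with the structure maps forces each $d_{X}^{n}$ to be block upper triangular for this decomposition, and $d_{X}^{n+1}d_{X}^{n}=0$ on the diagonal makes the diagonal blocks into complexes $X_{1},\dots,X_{N-1}$ of $\mcal{B}$; the off-diagonal blocks are the entries $d^{i}_{\bullet\bullet}$ appearing in (2). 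This is routine.

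\textbf{(2).} Here I would run the construction of $F_{N}$ from the proof of Lemma \ref{prolong} on a complex in the normal form of (1), degree by degree. On a stalk $T^{i}M$ one has $F_{N}(T^{i}M)=\Xi^{i}M$ (Step 1); passing from $\tau_{\geq i+1}X$ to $\tau_{\geq i}X$ replaces $F_{N}$ by the mapping cone of $F_{N}(d^{i})$ along the conflation \ref{limit01} (Step 2); iterating and forming the two limits (Steps 3--4) yields an $N$-complex whose terms, in each fixed residue class modulo $N$, are assembled from the $\Xi$-shifts $\Xi^{j}X_{r}$ exactly as displayed, and whose differential in the residue class $\equiv -1\pmod{N}$ is the block upper triangular matrix $[d^{i}_{\bullet\bullet}]$, i.e. the differential of the $\mcal{B}$-complex $\coprod_{r=1}^{N-1}X_{r}=D_{[N-1]}(X)$, while in the remaining residue classes it is the displayed matrix of identities together with the off-diagonal entries $d^{i-1}_{\bullet k}$. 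The only work is the bookkeeping of the functions $\iota^{(N-1)}_{s}$, $\rho^{(N)}_{s}$ of Definition \ref{adj02}, and of the fact that $\Xi$ of Definition \ref{simpleF} is $2$-periodic up to a shift by $N$, pushed through the iterated cones. I expect this to be the longest step, but it is mechanical.

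\textbf{(3).} Let $X\in\mcal{E}^{[1,N-2]}=\opn{Ker}D_{[N-1]}$ (Definition \ref{ngon@trg01}). By Example \ref{comma02} together with Lemma \ref{lem:comma03}(3), applied to $G\colon\Morph^{\mrm{sm}}_{N-2}(\mcal{B})\to\mcal{B}$, the complex $X$ is isomorphic in $\cat{K}(\Morph^{\mrm{sm}}_{N-1}(\mcal{B}))$ to one whose $(N-1)$st component is the mapping cone $C(1_{X^{\bullet,N-2}})$, with the structure map $X^{\bullet,N-2}\into X^{\bullet,N-1}$ the canonical split monomorphism $u_{1}$; since $\underline{F}_{N}$ is a functor of homotopy categories we may replace $X$ by this complex. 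By (2) the differential of $F_{N}(X)$ in every degree $\equiv N-1\pmod{N}$ is then the differential of $C(1_{X^{\bullet,N-2}})$, and the mapping cone of an identity of ordinary complexes is isomorphic, as a complex, to a direct sum of two-term identity complexes $Z^{m}\xarr{1}Z^{m}$. Feeding this isomorphism into the explicit form of $F_{N}(X)$ from (2) produces an isomorphism in $\cat{K}_{N}(\mcal{B})$ between $F_{N}(X)$ and $\underline{I}^{(N-1)}_{N-1}$ of an $(N-1)$-complex, whence $F_{N}(X)\in\opn{Im}\underline{I}^{(N-1)}_{N-1}=\mcal{F}^{1}_{N-1}$ by Definition \ref{updown01} and Corollary \ref{updown02}.

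The real obstacle is this last reduction in (3): one must choose the splitting of $C(1_{X^{\bullet,N-2}})$ into two-term identity complexes and check that the induced change of basis, inserted into the description of $F_{N}(X)$ from (2), makes the differentials in the degrees $\equiv N-1\pmod{N}$ honest identities while leaving the remaining ones in the shape of the prolongation $\underline{I}^{(N-1)}_{N-1}$, so that $F_{N}(X)$ visibly lies in its image. One could instead bypass the explicit computation here by using that $(\mcal{F}^{1}_{N-1},\mcal{F}^{N-2}_{1})$ is a stable $t$-structure (Theorem \ref{tstNcpx01}), reducing the claim to $\Hom_{\cat{K}_{N}(\mcal{B})}(F_{N}(X),\mcal{F}^{N-2}_{1})=0$, or by the adjunction criterion of Proposition \ref{prolong03} that the relevant (co)unit of $\underline{I}^{(N-1)}_{N-1}$ be invertible on $F_{N}(X)$. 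Everything else is the long but mechanical verification of (1) and (2).
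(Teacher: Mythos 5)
Your treatment of parts (1) and (2) follows the paper's own route: (1) is dismissed there as trivial, and (2) is proved exactly as you outline, by running the inductive construction of $F_{N}$ from Lemma \ref{prolong} on the normal form of (1) and tracking the diagrams \ref{limit01} and \ref{limit02} degree by degree; your sketch of the bookkeeping is consistent with that computation.

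The gap is in your primary argument for (3). You propose to split the contractible top component $D_{[N-1]}(X)\simeq \opn{C}(1_{X^{\bullet,N-2}})$ into two-term identity complexes and then change basis so that the differentials of $F_{N}(X)$ in degrees $\equiv N-1\pmod N$ become ``honest identities''. This cannot work at the level of $N$-complexes: by (2) those differentials are precisely the differentials of the contractible complex $D_{[N-1]}(X)$, and the differential of a contractible complex is not an isomorphism in each degree (already for $\opn{C}(1_{Z})$ with $Z$ a stalk complex, the two outer neighbouring differentials are maps into or out of $0$). Since an isomorphism of $N$-complexes preserves which differentials are isomorphisms, no change of basis places $F_{N}(X)$ in $\mcal{F}_{N-1}^{1}$ on the nose; the containment only holds up to isomorphism in $\cat{K}_{N}(\mcal{B})$, i.e.\ after splitting off a projective-injective $N$-complex. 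That is exactly how the paper argues, and it is the alternative you mention only in passing: one writes the object of $\mcal{E}^{[1,N-2]}$ as the cone of the morphism $f\colon X'\to X$ with $X'$ the top component placed as a stalk in position $N-1$, computes from (2) that $F_{N}(X')=\underline{I}_{0}^{\Uparrow_{1}^{N}}\underline{J}_{0}^{\Downarrow_{1}^{N}}(F_{N}(X))$ with $F_{N}(f)$ equal to the adjunction counit $\varepsilon_{F_{N}(X)}$, and then invokes the conflation $0\to V\to \opn{C}(\varepsilon)\to \underline{I}\,\underline{J}(\,\cdot\,)\to 0$ with $V$ projective from the proof of Theorem \ref{tstNcpx01} to conclude $\opn{C}(F_{N}(f))\simeq\opn{C}(\varepsilon)\in\mcal{F}_{N-1}^{1}$ in the homotopy category. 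Your other fallback, proving $\Hom_{\cat{K}_{N}(\mcal{B})}(F_{N}(X),\mcal{F}_{1}^{N-2})=0$, is a valid criterion but not obviously easier, since nothing about fullness of $F_{N}$ is available at this stage; you need to carry out the counit identification to close (3).
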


\begin{proof}
(1) 
It is trivial.

\par\noindent
(2)
For a complex $X \in \cat{C}^{\mrm{b}}(\Morph^{\mrm{sm}}_{N-1}(\mcal{B}))$, 
we assume $F_{N}(\tau_{\geq 2j}X)$ satisfies the statement.
For $2j=i+1$, we have the following equations in the diagram \ref{limit01}
\[\begin{aligned}
F_{N}(T^{-i}X^{i}) & =\oplus_{r=1}^{N-1}\mu_{N-r}^{N-1+jN}X^{2j-1}_r &
I'(F_{N}(T^{-i}X^{i})) & = \oplus_{r=1}^{N-1}\mu_{N-1}^{N-1+jN}X^{2j-1}_r \\
F_{N}(T^{-i-1}X^{i}) & =\oplus_{r=1}^{N-1}\mu_{r}^{r-1+jN}X^{2j-1}_r \\
\end{aligned}\]
By easy calculations, 
$
\gamma_j^{k+jN}: I'(F_{N}(T^{-i}X^{i}))^k \to Y^{k+jN}
$
is equal to
$d_Y^{k+N}$ in the statement for $0 \leq k < N$.
Therefore, we have the statement for $F_{N}(\tau_{\geq i}X)$.

\par\noindent
For $2j=i$, we have the following equations in the Diagram \ref{limit01}
\[\begin{aligned}
F_{N}(T^{-i}X^{i}) & =\oplus_{r=1}^{N-1}\mu_{r}^{r-1+(j+1)N}X^{2j}_r &
I'(F_{N}(T^{-i}X^{i})) & = \oplus_{r=1}^{N-1}\mu_{N-1}^{r-1+(j+1)N}X^{2j}_r\\
F_{N}(T^{-i}X^{i}) & =\oplus_{r=1}^{N-1}\mu_{N-r+1}^{-1+jN}X^{2j}_r \\
\end{aligned}\]
By easy calculations, 
$
\gamma_j^{k+jN}: I'(F_{N}(T^{-i}X^{i}))^k \to Y^{k+jN}
$
is a $(k+1)\times k$ matrix 
$\left[\begin{smallmatrix}
1 & \cdots & 0 \\ 
 & \ddots  & \\
0 & \cdots & 1 \\
0 & \cdots & 0 \\
\end{smallmatrix}\right]$
for $1 \leq k < N-1$, and
$
\gamma_j^{N-1+jN}: I'(F_{N}(T^{-i}X^{i}))^k \to Y^{k+jN}
$
is equal to
$d_Y^{N-1+jN}$ in the statement.
Therefore, we have the statement for $F_{N}(\tau_{\geq i}X)$.

\par\noindent
Similarly, we have the same result for the diagram \ref{limit02}.

\par\noindent
(3) By Lemma \ref{lem:comma03}, any complex of $\mcal{E}^{[1,N-2]}$
is isomorphic to the mapping cone of the following morphism
between complexes:
\[\xymatrix{
X' : \ar[d]_{f} & 0\ar[r] \ar[d] & 0 \ar[r] \ar[d]
&\cdots \ar[r] & 0 \ar[r] \ar[d] & \oplus_{i=1}^{N-1}X_{i} \ar[d]^{1} \\
X:  & X_{1} \ar[r]^{\alpha^1} 
& \oplus_{i=1}^{2}X_{i} \ar[r]^{\alpha^2}
&\cdots \ar[r]^{\alpha^{N-2}} & \oplus_{i=1}^{N-2}X_{i} \ar[r] & \oplus_{i=1}^{N-1}X_{i}
}\]
Consider a morphism between the Diagram \ref{limit01} for $F_{N}(\tau_{\geq i}X')$
and the Diagram \ref{limit01} for $F_{N}(\tau_{\geq i}X)$.
Then $F_{N}(X')=(Y'^j, d_{Y'}^i)$,
where
\[
Y^j=\begin{cases}
\oplus_{r=1}^{N-1}X_{r}^{2i}  &\ (j \equiv -1 \mod N) \\
\oplus_{r=1}^{N-1}X_{r}^{2i-1}&\ \text{otherwise}
\end{cases}
\]
\[
d_Y^j=
\begin{cases}
\left[\begin{smallmatrix}
d^{i-1}_{11} & d^{i-1}_{12} & \cdots & d^{i-1}_{1N-1} \\ 
0 & d^{i-1}_{22} & \cdots & d^{i-1}_{2N-1} \\ 
\vdots  & \vdots  & \ddots &\vdots \\
0 & 0 & \cdots & d^{i-1}_{N-1N-1}
\end{smallmatrix}\right]
&\ (j\equiv -2 \mod N), 
\\
\text{identity} &\ \text{otherwise}
\end{cases}
\]
where $i=2\left[\frac{j}{N}\right]$, $0 \leq k \leq N-1$ and $k \equiv j \mod N$.
Moreover, $F_{N}(f): F_{N}(X') \to F_{N}(X)$ is equal to $g:Y' \to Y$
where
\[g^j=
\begin{cases}
\text{identity} &\ (j\equiv 0, -1 \mod N) \\
\left[\begin{matrix}
d_{11}^{i-1} & \cdots & d_{1k}^{i-1} &  0  &  \cdots &  0  \\ 
0 & \ddots &  \vdots &  \vdots  & &  \vdots   \\ 
\vdots & \ddots & d^{i-1}_{kk} & 0 & & \vdots  \\ 
\vdots & & \ddots &  1 & \ddots  &  \vdots  \\ 
\vdots & & & \ddots   & \ddots& 0\\ 
0 &\cdots &\cdots &\cdots &0 & 1 \\ 
\end{matrix}\right] &\ \text{otherwise}
\end{cases}\]
where $i=2\left[\frac{j}{N}\right]$, $0 < k < N-1$ and $k \equiv j \mod N$.
By the proof of Theorem \ref{tstNcpx01}, we have
$Y'=\underline{I}_{0}^{\Uparrow_{1}^{N}}
\underline{J}_{0}^{\Downarrow_{1}^{N}}(Y)$ and $\epsilon_Y=g$.
Therefore $\opn{C}(F_{N}(f)) \simeq \opn{C}(g) \in \mcal{F}_{N-1}^{1}$.
\end{proof}

\begin{lem}\label{send2Ngons01}
The following hold for the above triangle functor 
$\underline{F}_{N}: \cat{K}(\Morph^{\mrm{sm}}_{N-1}(\mcal{B})) \\ \to \cat{K}_{N}(\mcal{B})$.
\begin{enumerate}
\item  $\underline{F}_{N}(\mcal{E}^{[s, N-1]}\cap\mcal{F}^{[s,N-1]}) \subset 
\mcal{F}_{0}^{s-1}\cap\mcal{F}_{s}^{N-s-1}$.
\item  $\underline{F}_{N}(\mcal{E}^{s}) \subset \mcal{F}_{s+1}^{N-2}$.
\end{enumerate}
\end{lem}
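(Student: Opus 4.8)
The plan is to prove both statements by the method already used for Lemma~\ref{send2Ngons00}(3): replace the objects of $\mcal{E}^{[s,N-1]}\cap\mcal{F}^{[s,N-1]}$, resp.\ of $\mcal{E}^{s}$, by the normal forms furnished by Example~\ref{comma02} and Lemma~\ref{lem:comma03}, compute $F_{N}$ on them through the explicit matrix description of Lemma~\ref{send2Ngons00}(2), and recognize the resulting $N$-complex --- after cancelling the contractible direct summands, which become $0$ in $\cat{K}_{N}(\mcal{B})$ --- as lying in the required $\mcal{F}_{\bullet}^{\bullet}$. (Together with Lemma~\ref{send2Ngons00}(3), this is what accounts for the vertices $\mcal{F}^{[1,N-1]}$ and $\mcal{E}^{1},\dots,\mcal{E}^{N-1}$ of the $2N$-gon when one checks that $\underline F_{N}$ carries the $2N$-gon of Theorem~\ref{th:ngon@trg01} to the one of Corollary~\ref{cpx2N-gon}: part~(1) with $s=1$ gives $\underline F_{N}(\mcal{F}^{[1,N-1]})\subset\mcal{F}_{1}^{N-2}$ since the first kernel in $\mcal{E}^{[1,N-1]}$ is vacuous.)

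First I would do (1). By Example~\ref{comma02} and Lemma~\ref{lem:comma03}(2), every object of $\mcal{E}^{[s,N-1]}\cap\mcal{F}^{[s,N-1]}$ is isomorphic in $\cat{K}(\Morph^{\mrm{sm}}_{N-1}(\mcal{B}))$ to one of the shape $0\to\cdots\to0\to X= X=\cdots= X$ with $s-1$ leading zeros and $X\in\cat{C}(\mcal{B})$. In the standard form of Lemma~\ref{send2Ngons00}(1) such an object has $X_{s}=X$ and $X_{r}=0$ for $r\ne s$, so its differential is block diagonal; feeding this into Lemma~\ref{send2Ngons00}(2) one reads off directly that $F_{N}$ of it is an $N$-complex built from the single complex $X$ whose differential is an identity at every degree $\not\equiv s-1,\,N-1\pmod N$. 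By Definition~\ref{updown01} such an $N$-complex lies in $\mcal{F}_{0}^{s-1}\cap\mcal{F}_{s}^{N-s-1}$, which is precisely the full subcategory of $N$-complexes whose differentials outside the residues $s-1$ and $N-1$ are identities.

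For (2): exactly as in the proof of Theorem~\ref{th:ngon@trg01}, Example~\ref{comma02} and Lemma~\ref{lem:comma03}(2),(3) show that every object $M$ of $\mcal{E}^{s}$ is isomorphic in $\cat{K}(\Morph^{\mrm{sm}}_{N-1}(\mcal{B}))$ to $0\to\cdots\to0\to X\xarr{u_X} C(1_{X})= C(1_{X})=\cdots= C(1_{X})$ with $s-1$ leading zeros. This object is the mapping cone in $\cat{C}(\Morph^{\mrm{sm}}_{N-1}(\mcal{B}))$ of the morphism $f$ from $0\to\cdots\to0\to 0\to X=\cdots= X$ (with $s$ leading zeros, an object of $\mcal{E}^{[s+1,N-1]}\cap\mcal{F}^{[s+1,N-1]}$) to $0\to\cdots\to0\to X= X=\cdots= X$ (with $s-1$ leading zeros, the normal form of (1)) which is the identity in degrees $s+1,\dots,N-1$ and $0\to X$ in degree $s$. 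Since $\underline F_{N}$ is a triangle functor, $\underline F_{N}(M)\cong\opn{C}(\underline F_{N}(f))$, and by~(1) both ends of $\underline F_{N}(f)$ are the $N$-complexes described above; computing $\underline F_{N}(f)$ from Lemma~\ref{send2Ngons00}(2) one checks, after a cyclic index shift, that it is the adjunction arrow $\varepsilon$ of the proof of Theorem~\ref{tstNcpx01} in the case $r=1$, whence $\underline F_{N}(M)\cong\opn{C}(\varepsilon)\in\opn{Im}\underline I_{s+1}^{\Uparrow_{2}^{N}}=\mcal{F}_{s+1}^{N-2}$ by Corollary~\ref{updown02}. (Equivalently, a direct look at Lemma~\ref{send2Ngons00}(2) shows that $F_{N}(M)$ has identity differentials except at the residues $s-1,s,s+1,N-1\pmod N$, and that the identity entries created at residues $s+1$ and $N-1$ by the differential of $C(1_{X})$ form a contractible direct summand whose removal lands $F_{N}(M)$ in $\mcal{F}_{s+1}^{N-2}$.)

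The step I expect to be the main obstacle is the middle step of (2): identifying $\underline F_{N}(f)$ with the adjunction arrow of Theorem~\ref{tstNcpx01}, or equivalently pruning the contractible summand from $F_{N}(M)$. Lemma~\ref{send2Ngons00}(2) presents $F_{N}$ through sizeable block matrices, and one has to verify, uniformly in $s$, precisely which $N$-complex direct summand the auxiliary identity entries of $C(1_{X})$ cut out and that it is contractible --- the same bookkeeping carried out for $s=N-1$ in Lemma~\ref{send2Ngons00}(3). Routing this through the cone of $\underline F_{N}(f)$, rather than simplifying $F_{N}(M)$ by hand, should keep the argument short.
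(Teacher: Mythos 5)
Your proposal reproduces the paper's own argument: part (1) by reducing to the normal form $0\to\cdots\to0\to X=\cdots=X$ and reading off that the only non-identity differentials of $F_N$ of it sit at residues $s-1$ and $N-1$ modulo $N$, and part (2) by realizing an object of $\mcal{E}^{s}$ as the mapping cone of the morphism from $(0,\dots,0,0,X,\dots,X)$ to $(0,\dots,0,X,X,\dots,X)$, identifying $\underline{F}_N$ of that morphism with the adjunction arrow from the proof of Theorem \ref{tstNcpx01} (with $r=1$, up to the index shift you note), and invoking the computation of its cone there. The only cosmetic difference is that you route the explicit computation through the matrix description of Lemma \ref{send2Ngons00}(2) instead of directly through the diagrams \eqref{limit01} and \eqref{limit02} from which that description is derived, so the approach is essentially identical and correct.
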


\begin{proof}
(1)
According to Lemma \ref{lem:comma03}, we may assume any complex  $Y$ of 
$\mcal{E}^{[s, N-1]}\cap\mcal{F}^{[s,N-1]}$
is of the form
\[
0 \to \cdots \to 0 \arr  Y^s \xarr{\alpha^{s}}  \cdots \xarr{\alpha^{N-2}} Y^{N-1}
\]
where $X=(X^i, d^i)=Y^s = \cdots =Y^{N-1}$ is a complex of $\mcal{B}$ and
$\alpha^{s}=\cdots =\alpha^{N-2}=1_X$.
Then by Diagrams \ref{limit01}, \ref{limit02}, 
$\underline{F}_{N}(Y)$ is isomorphic to a complex $(X'^i, d'^i)$
where
\[\begin{array}{ll}
X'^{i} = \begin{cases}
X^{2\left[\frac{i}{N}\right]-1} \ (i\equiv 0, 1, \cdots, s-1 \mod N) \\
X^{2\left[\frac{i}{N}\right]} \ (i\equiv s, s+1, \cdots, N-1 \mod N)
\end{cases} 
\\
d'^{i} = \begin{cases}
1_{X^{2\left[\frac{i}{N}\right]}} \ (i\equiv 0, 1, \cdots, s-2 \mod N) \\
d^{2\left[\frac{i}{N}\right]-1} \ (i\equiv s-1 \mod N) \\
1_{X^{2\left[\frac{i}{N}\right]}} \ (i\equiv s, s+1, \cdots, N-2 \mod N) \\
d^{2\left[\frac{i}{N}\right]} \ (i\equiv N-1 \mod N) \\
\end{cases} 
\end{array}\]
Therefore $\underline{F}_{N}(\mcal{E}^{[s, N-1]}\cap\mcal{F}^{[s,N-1]}) \subset 
\mcal{F}_{0}^{[0,s-1]}\cap\mcal{F}_{s}^{[0,N-s-1]}$.

\par\noindent
(2)
Any complex $Y$ of $\mcal{E}^{s}$ is isomorphic to the mapping cone of a morphism $\iota:Y_1 \to Y_2$
in $\cat{C}(\Morph^{\mrm{sm}}_{N-1}(\mcal{B}))$:
\[\xymatrix{
Y_1: \ar[d]^{\iota} & 0 \ar[d] \ar[r] & \cdots \ar[r] & 0 \ar[d] \ar[r] & 0 \ar[d] \ar[r] & Y^{s+1} \ar[d]^{1} \ar[r]^{\alpha^{s+1}} & \cdots \ar[r]^{\alpha^{N-2}} & Y^{N-1} \ar[d]^{1} \\
Y_2: & 0 \ar[r] & \cdots \ar[r] & 0 \ar[r] & Y^s \ar[r]^{\alpha^{s}} & Y^{s+1} \ar[r]^{\alpha^{s+1}} & \cdots \ar[r]^{\alpha^{N-2}} & Y^{N-1}
}\]
where $X=(X^i, d^i)=Y^s = \cdots =Y^{N-1}$ is a complex of $\mcal{B}$ and
$\alpha^{s}=\cdots =\alpha^{N-2}=1_X$.
By the construction of $F_{N}$ in Theorem \ref{prolong},
$\underline{F}_{N}(Y_2)=\underline{I}_{0}^{\Uparrow_{N-r}^{N}}
\underline{J}_{0}^{\Downarrow_{N-r}^{N}}(\underline{F}_{N}(Y_1)) \in \mcal{F}_{0}^{s}$ and 
$\underline{F}_{N}(\iota)=\underline{I}_{0}^{\Uparrow_{N-r}^{N}}
\underline{J}_{0}^{\Downarrow_{N-r}^{N}}(\underline{F}_{N}(Y_1)) \xarr{\varepsilon_{\underline{F}_{N}(Y_1)}} \underline{F}_{N}(Y_1)$.
By the Proof of Theorem \ref{tstNcpx01}, 
the mapping cone $\opn{C}(\underline{F}_{N}(\iota))$ is isomorphic to a complex $(X'^i, d'^i)$
where
\[\begin{array}{ll}
X'^{i} = \begin{cases}
X^{2\left[\frac{i}{N}\right]-1} \ (i\equiv 0, 1, \cdots, s-1 \mod N) \\
X^{2\left[\frac{i}{N}\right]} \ (i\equiv s \mod N) \\
X^{2\left[\frac{i}{N}\right]+1} \ (i\equiv s+1, \cdots, N-1 \mod N) 
\end{cases} 
\\
d'^{i} = \begin{cases}
1_{X^{2\left[\frac{i}{N}\right]-1}} \ (i\equiv 0, 1, \cdots, s-2 \mod N) \\
d^{2\left[\frac{i}{N}\right]-1} \ (i\equiv s-1 \mod N) \\
d^{2\left[\frac{i}{N}\right]} \ (i\equiv s \mod N) \\
1_{X^{2\left[\frac{i}{N}\right]}} \ (i\equiv s+1, \cdots, N-1 \mod N) \\
\end{cases} 
\end{array}\]
Therefore $\underline{F}_{N}(\mcal{E}^{s}) \subset \mcal{F}_{s+1}^{N-2}$.

\end{proof}

\begin{thm}\label{KNhtp}
Let $\mcal{B}$ be an additive category, then we have triangle equivalences:
\[
\cat{K}^{\sharp}(\Morph^{\mrm{sm}}_{N-1}(\mcal{B})) \simeq \cat{K}^{\sharp}_{N}(\mcal{B})
\]
where $\sharp=\text{nothing}, -, +, \mrm{b}$.
\end{thm}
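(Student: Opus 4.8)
The plan is to prove that the triangle functor $\underline{F}_N\colon \cat{K}(\Morph^{\mrm{sm}}_{N-1}(\mcal{B})) \to \cat{K}_N(\mcal{B})$ constructed in Lemma \ref{prolong} is a triangle equivalence which moreover restricts to triangle equivalences on the bounded-above, bounded-below and bounded parts. I argue by induction on $N$. The case $N = 2$ is trivial, since $\Morph^{\mrm{sm}}_1(\mcal{B}) = \mcal{B}$, $\cat{K}_2(\mcal{B}) = \cat{K}(\mcal{B})$, and $\underline{F}_2$ is the identity; so assume the four equivalences hold for $N-1$.

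First, $\underline{F}_N$ respects boundedness in each direction: by the explicit term-formula of Lemma \ref{send2Ngons00}(2), $F_N$ carries $\cat{C}^{\sharp}(\Morph^{\mrm{sm}}_{N-1}(\mcal{B}))$ into $\cat{C}^{\sharp}_N(\mcal{B})$ for $\sharp \in \{-,+,\mrm{b}\}$, so $\underline{F}_N$ restricts to triangle functors $\cat{K}^{\sharp}(\Morph^{\mrm{sm}}_{N-1}(\mcal{B})) \to \cat{K}^{\sharp}_N(\mcal{B})$. Likewise the $2N$-gon of recollements of Theorem \ref{th:ngon@trg01} restricts to $\cat{K}^{\sharp}(\Morph^{\mrm{sm}}_{N-1}(\mcal{B}))$, and the one of Corollary \ref{cpx2N-gon} (with $s=0$) restricts to $\cat{K}^{\sharp}_N(\mcal{B})$, because the triangles built in their proofs use only mapping cones of identity maps and because the functors $D_{[s,t]}$, $\underline{I}_s^{\Uparrow_{N-r}^{N}}$, $\underline{J}_s^{\Downarrow_{N-r}^{N}}$ preserve the relevant boundedness. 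I would then verify, simultaneously for all $\sharp$, that $\underline{F}_N$ sends the first $2N$-gon to the second positionwise, under the matching
\[
\mcal{F}^{[1,N-1]} \leftrightarrow \mcal{F}_1^{N-2},\quad \mcal{E}^{[2,N-1]} \leftrightarrow \mcal{F}_0^1,\quad \mcal{E}^{s} \leftrightarrow \mcal{F}_{s+1}^{N-2},\quad \mcal{F}^{[s,s+1]} \leftrightarrow \mcal{F}_s^1,\quad \mcal{E}^{N-1} \leftrightarrow \mcal{F}_0^{N-2},\quad \mcal{E}^{[1,N-2]} \leftrightarrow \mcal{F}_{N-1}^1 .
\]
The inclusions $\underline{F}_N(\mcal{E}^{s}) \subseteq \mcal{F}_{s+1}^{N-2}$ (in particular $\underline{F}_N(\mcal{E}^{N-1}) \subseteq \mcal{F}_0^{N-2}$) and $\underline{F}_N(\mcal{E}^{[1,N-2]}) \subseteq \mcal{F}_{N-1}^1$ are Lemmas \ref{send2Ngons01}(2) and \ref{send2Ngons00}(3), and the remaining ones are obtained the same way, first replacing a complex by a normal form via Lemma \ref{lem:comma03} and then applying the formula of Lemma \ref{send2Ngons00}(2).

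By Proposition \ref{p20Apr30}(2) (here $n = 2N$ is even) it then suffices to prove that $\underline{F}_N$ restricts to a triangle equivalence on two consecutive vertices, for which I take $\mcal{E}^{N-1}$ and $\mcal{E}^{[1,N-2]}$. For the first, $\mcal{E}^{N-1}\cap\cat{K}^{\sharp} \simeq \cat{K}^{\sharp}(\mcal{B})$ via $D_{[N-1]}$ (with normal form $0 \to \cdots \to 0 \to X^{N-1}$ by Lemma \ref{lem:comma03}), and $\mcal{F}_0^{N-2}\cap\cat{K}^{\sharp}_N \simeq \cat{K}^{\sharp}_2(\mcal{B}) = \cat{K}^{\sharp}(\mcal{B})$ since $\mcal{F}_0^{N-2} = \cat{Im}\underline{I}_0^{\Uparrow_2^N}$ with $\underline{I}_0^{\Uparrow_2^N}$ fully faithful (Corollary \ref{updown02}, Proposition \ref{prolong03}(4)); comparing normal forms shows $\underline{F}_N|_{\mcal{E}^{N-1}}$ becomes the identity under these identifications, hence is an equivalence. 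For the second, $\mcal{E}^{[1,N-2]}\cap\cat{K}^{\sharp} \simeq \cat{K}^{\sharp}(\Morph^{\mrm{sm}}_{N-2}(\mcal{B}))$ via $D_{[1,N-2]}$ (with normal form $X^1 \to \cdots \to X^{N-2} \to \opn{C}(1_{X^{N-1}})$ by Lemma \ref{lem:comma03}), and $\mcal{F}_{N-1}^1\cap\cat{K}^{\sharp}_N \simeq \cat{K}^{\sharp}_{N-1}(\mcal{B})$ since $\mcal{F}_{N-1}^1 = \cat{Im}\underline{I}_{N-1}^{(N-1)}$; unwinding the recursion defining $F_N$ in the proof of Lemma \ref{prolong} identifies $\underline{F}_N|_{\mcal{E}^{[1,N-2]}}$, through these identifications, with $\underline{F}_{N-1}$, a triangle equivalence in each degree by the inductive hypothesis. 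Proposition \ref{p20Apr30}(2), applied with $t$ the position of $\mcal{E}^{N-1}$, then gives that $\underline{F}_N$ is a triangle equivalence $\cat{K}^{\sharp}(\Morph^{\mrm{sm}}_{N-1}(\mcal{B})) \simeq \cat{K}^{\sharp}_N(\mcal{B})$ for every $\sharp$, closing the induction.

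The step I expect to be the real obstacle is this last identification of $\underline{F}_N|_{\mcal{E}^{[1,N-2]}}$ with $\underline{F}_{N-1}$: both functors are defined by intricate recursions on truncations and on the comparison squares of Lemma \ref{prolong}, so transporting one onto the other along $\mcal{E}^{[1,N-2]} \simeq \cat{K}(\Morph^{\mrm{sm}}_{N-2}(\mcal{B}))$ and $\mcal{F}_{N-1}^1 \simeq \cat{K}_{N-1}(\mcal{B})$ and checking compatibility with the limits taken in Steps 3--4 of that proof is where the genuine work lies; a lesser nuisance is the routine bookkeeping for the vertex inclusions in the positionwise matching of the two $2N$-gons not already recorded in Lemmas \ref{send2Ngons00}--\ref{send2Ngons01}.
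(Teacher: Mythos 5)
Your proposal is correct and follows essentially the same route as the paper: induction on $N$, the positionwise comparison of the two $2N$-gons, and an application of Proposition \ref{p20Apr30}(2) to two consecutive vertices, one identified with $\cat{K}(\mcal{B})$ and the other with the image of $\underline{F}_{N-1}$ via the inductive hypothesis. The only difference is the choice of consecutive pair --- you take $(\mcal{E}^{N-1},\mcal{E}^{[1,N-2]})\mapsto(\mcal{F}_0^{N-2},\mcal{F}_{N-1}^{1})$ where the paper takes $(\mcal{F}^{[1,N-1]},\mcal{E}^{[2,N-1]})\mapsto(\mcal{F}_1^{N-2},\mcal{F}_0^{1})$, for which the commuting square $\underline{F}_N\circ E^{\Uparrow_{N-2}^{N-1}}\simeq \underline{I}_0^{\Uparrow_{N-1}^{N}}\circ\underline{F}_{N-1}$ is slightly more direct to verify from the construction of $F_N$ than your identification along the cone-appending normal form of $\mcal{E}^{[1,N-2]}$.
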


\begin{proof}
We prove the statement by the following steps:
\par\noindent
Step 1. 
$\underline{F}_{N}: \cat{K}(\Morph^{\mrm{sm}}_{N-1}(\mcal{B})) \to \cat{K}_{N}(\mcal{B})$
sends \\
{\small $
(\mcal{F}^{[1,N-1]},\mcal{E}^{[2,N-1]}, \mcal{E}^{1}, \mcal{F}^{[1,2]}, \cdots,
\mcal{E}^{s}, \mcal{F}^{[s,s+1]}, \cdots, \mcal{E}^{N-2}, \mcal{F}^{[N-2,N-1]}, \mcal{E}^{N-1}, \mcal{E}^{[1,N-2]})
$}
to \\
$
(\mcal{F}_{1}^{N-2}, \mcal{F}_{0}^{1}, \mcal{F}_{2}^{N-2},  \mcal{F}_{1}^{1}, 
\cdots, \mcal{F}_{s+1}^{N-2}, \mcal{F}_{s}^{1}, 
\cdots, \mcal{F}_{N-1}^{N-2}, \mcal{F}_{N-2}^{1}, \mcal{F}_{0}^{N-2}, \mcal{F}_{N-1}^{1}).
$

\par\noindent
According to Lemma \ref{send2Ngons00} (2),
it is easy to see that 
\[\begin{array}{lll}
\underline{F}_{N}(\mcal{F}^{[1,N-1]})\subset \mcal{F}_{1}^{N-2}, &\quad
\underline{F}_{N}(\mcal{F}^{[2,N-1]}) \subset \mcal{F}_{0}^{1}, &\quad
\underline{F}_{N}(\mcal{E}^{N-1}) \subset \mcal{F}_{0}^{N-2},  \\
\underline{F}_{N}(\mcal{F}^{[s,s+1]}) \subset \mcal{F}_{s}^{1} & \quad (1\leq s <N-1)
\end{array}\]
By Lemma \ref{send2Ngons01}, we have 
$\underline{F}_{N}(\mcal{E}^{s}) \subset \mcal{F}_{s+1}^{N-2}$ $(1 \leq s < N-1)$.
By Lemma \ref{send2Ngons00} (3), we have 
$\underline{F}_{N}(\mcal{E}^{[1, N-2]}) \subset \mcal{F}_{N-1}^{1}$.

\par\noindent
Step 2. $\underline{F}_{N}$ induces a triangle equivalence
between $\mcal{F}^{[1,N-1]}$ and $\mcal{F}_{1}^{N-2}$.

\par\noindent
Consider the following diagram:
\[\xymatrix{
\cat{K}(\mcal{B})
\ar[d]_{U_{N-1}}\ar[r]^{id} & 
\cat{K}(\mcal{B}) \ar[d]^{\underline{I}_{1}^{\Uparrow_{2}^{N}}} \\
\cat{K}(\Morph_{N-1}(\mcal{B})) \ar[r]^{\quad \underline{F}_{N}} & \cat{K}_{N}(\mcal{B})
}\]
By Proposition \ref{lastpiece}, Lemma \ref{send2Ngons00}, 
it is not hard to see that we have a functorial isomorphism
$\underline{F}_{N}\circ U_{N-1} \simeq
\underline{I}_{1}^{\Uparrow_{2}^{N}}$.
By Proposition \ref{lastpiece}, $U_{N-1}$ induces a triangle equivalence between $\cat{K}(\mcal{B})$ and $\mcal{F}^{[1,N-1]}$.
On the other hand, by Corollary \ref{n-cpxgon04}
$\underline{I}_{1}^{\Uparrow_{2}^{N}}$ induces a triangle equivalence between
 $\cat{K}_{N-1}(\mcal{B})$ and $\mcal{F}_{1}^{N-2}$.
Therefore  $\underline{F}_{N}$ induces a triangle equivalence
between $\mcal{F}^{[1,N-1]}$ and $\mcal{F}_{1}^{N-2}$.

\par\noindent
Step 3. $\underline{F}_{N}$ induces a triangle equivalence
between $\mcal{E}^{[2,N-1]}$ and $\mcal{F}_{0}^{1}$.

\par\noindent
Consider the following diagram:
\[\xymatrix{
\cat{K}(\Morph_{N-2}(\mcal{B}))
\ar[d]_{E^{\Uparrow_{N-2}^{N-1}}} \ar[r]^{\quad \underline{F}_{N-1}} & 
\cat{K}_{N-1}(\mcal{B}) \ar[d]^{\underline{I}_{0}^{\Uparrow_{N-1}^{N}}} \\
\cat{K}(\Morph_{N-1}(\mcal{B})) \ar[r]^{\quad \underline{F}_{N}} & \cat{K}_{N}(\mcal{B})
}\]
Similarly, we have a functorial isomorphism
$\underline{F}_{N}\circ E^{\Uparrow_{N-2}^{N-1}} \simeq
\underline{I}_{0}^{\Uparrow_{N-1}^{N}}\circ \underline{F}_{N-1}$.
By Proposition \ref{lastpiece}, $E^{\Uparrow_{N-2}^{N-1}}$ induces
a triangle equivalence between $\cat{K}(\mcal{B})$ and $\mcal{F}^{[1,N-1]}$.
On the other hand, by Corollary \ref{n-cpxgon04}
$\underline{I}_{1}^{\Uparrow_{2}^{N}}$ induces a triangle equivalence between
 $\cat{K}_{N-1}(\mcal{B})$ and $\mcal{F}_{0}^{1}$.
By the assumption of induction on $N$, $\underline{F}_{N-1}$ is a triangle equivalence.
Therefore $\underline{F}_{N}$ induces a triangle equivalence
between $\mcal{E}^{[2,N-1]}$ and $\mcal{F}_{0}^{1}$.

\par\noindent
According to Proposition \ref{p20Apr30}, 
$\underline{F}_{N}:\cat{K}(\Morph^{\mrm{sm}}_{N-1}(\mcal{B})) \to \cat{K}_{N}(\mcal{B})$
is a triangle equivalence.
Moreover, it is easy to see the above proof is available for the case 
$\underline{F}^{\sharp}_{N}:\cat{K}^{\sharp}(\Morph^{\mrm{sm}}_{N-1}(\mcal{B})) \to \cat{K}^{\sharp}_{N}(\mcal{B})$, where $\sharp=-,+, \mrm{b}$.
\end{proof}

In \cite{IKM2}, we studied the derived category of $N$-complexes.
We have results of \cite{IKM2} Corollaries 4.11,  4.12 under the weaker condition.
 
\begin{lem}\label{qtricat01}
 Let $\mcal{D}$ be a triangulated category, $\mcal{C}, \mcal{U}$ 
 full triangulated subcategories of $\mcal{C}$.
 We assume that for any $X \in \mcal{D}$ there is a triangle 
 $C_X \to X \to U_X \to \Sigma C_X$ such that
 $C_X \in \mcal{C}$ and $U_X \in \mcal{U}$.
 Then we have a triangle equivalence
 \[
 \mcal{C}/(\mcal{C}\cap\mcal{U}) \simeq \mcal{D}/\mcal{U}.
 \]
\end{lem}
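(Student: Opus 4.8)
The plan is to verify that the canonical triangle functor $F\colon\mcal{C}/(\mcal{C}\cap\mcal{U})\to\mcal{D}/\mcal{U}$ is an equivalence. First I would construct $F$: the composite $\mcal{C}\inj\mcal{D}\xarr{Q}\mcal{D}/\mcal{U}$ is a triangle functor annihilating $\mcal{C}\cap\mcal{U}$ (every object of $\mcal{C}\cap\mcal{U}$ lies in $\mcal{U}$, hence becomes zero in $\mcal{D}/\mcal{U}$), so by the universal property of the Verdier quotient it factors through a triangle functor $F$ which is the identity on objects. It then remains to show $F$ is dense, full and faithful. Density is immediate from the hypothesis: for $X\in\mcal{D}$ the triangle $C_X\to X\to U_X\to\Sigma C_X$ has $U_X\in\mcal{U}$, so $X\cong C_X$ in $\mcal{D}/\mcal{U}$ with $C_X\in\mcal{C}$.

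The key observation, used for both fullness and faithfulness, is that the hypothesis lets one replace an arbitrary object of $\mcal{D}$ by an object of $\mcal{C}$ ``modulo $\mcal{U}$'' compatibly with both subcategories. Concretely: if $p\colon C_Z\to Z$ is the first map of a hypothesis triangle (so $\opn{C}(p)\cong U_Z\in\mcal{U}$ and $C_Z\in\mcal{C}$) and $s\colon Z\to C$ is any morphism with $C\in\mcal{C}$, the octahedral axiom places $\opn{C}(sp)$ in a triangle together with $\opn{C}(p)$ and $\opn{C}(s)$; hence $\opn{C}(s)\in\mcal{U}$ implies $\opn{C}(sp)\in\mcal{U}$, while $C_Z,C\in\mcal{C}$ forces $\opn{C}(sp)\in\mcal{C}$, so $\opn{C}(sp)\in\mcal{C}\cap\mcal{U}$. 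For fullness I would take a morphism $C\to C'$ in $\mcal{D}/\mcal{U}$ presented by a roof $C\xleftarrow{s}Z\xarr{a}C'$ with $\opn{C}(s)\in\mcal{U}$, apply the hypothesis to $Z$ to obtain $p\colon C_Z\to Z$ as above, and note that $Q(p)$ is invertible, so $a/s$ is also presented by $C\xleftarrow{sp}C_Z\xarr{ap}C'$. By the previous remark this is a roof entirely inside $\mcal{C}$ with $\opn{C}(sp)\in\mcal{C}\cap\mcal{U}$, i.e.\ a legitimate roof for $\mcal{C}/(\mcal{C}\cap\mcal{U})$, and $F$ carries it to $a/s$.

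For faithfulness I would use the standard criterion: a roof $C\xleftarrow{s}Z\xarr{a}C'$ is zero in $\mcal{D}/\mcal{U}$ if and only if there is a morphism $u\colon V\to Z$ with $\opn{C}(u)\in\mcal{U}$ and $au=0$ (and likewise with $\mcal{C}\cap\mcal{U}$ in place of $\mcal{U}$, the cones being computed inside the full triangulated subcategory $\mcal{C}$). So suppose a morphism $\phi$ of $\mcal{C}/(\mcal{C}\cap\mcal{U})$, presented by a roof with $Z\in\mcal{C}$ and $\opn{C}(s)\in\mcal{C}\cap\mcal{U}$, maps to zero; choose $u\colon V\to Z$ with $\opn{C}(u)\in\mcal{U}$ and $au=0$, apply the hypothesis to $V$, and compose $u$ with the first map $C_V\to V$ of the resulting triangle. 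Exactly as in the fullness step the octahedral axiom shows the composite $u'\colon C_V\to Z$ satisfies $\opn{C}(u')\in\mcal{C}\cap\mcal{U}$, while $au'=0$ still holds, so the criterion applied inside $\mcal{C}$ gives $\phi=0$. Hence $F$ is faithful, and therefore a triangle equivalence.

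The main obstacle, such as it is, is the repeated octahedral bookkeeping — keeping track that the mapping cones involved land simultaneously in $\mcal{U}$ and in $\mcal{C}$, hence in $\mcal{C}\cap\mcal{U}$ — together with pinning down the precise form of the ``roof is zero'' criterion for a Verdier quotient; everything else is the formal calculus of fractions.
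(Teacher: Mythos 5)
Your proposal is correct and follows essentially the same route as the paper's proof: construct the induced functor $F$ on the Verdier quotients, get density directly from the hypothesis triangle, and establish fullness and faithfulness by using the hypothesis to replace the apex of a roof (or the test object in the ``roof is zero'' criterion) by an object of $\mcal{C}$, checking via the octahedral axiom that the relevant cones land in $\mcal{C}\cap\mcal{U}$. Your write-up is in fact somewhat more careful than the paper's about the zero-roof criterion, but the underlying argument is the same.
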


\begin{proof}
Let $E: \mcal{C} \to \mcal{D}$ be the canonical embedding, 
$Q': \mcal{C} \to \mcal{C}/(\mcal{C}\cap\mcal{U})$, 
$Q: \mcal{D} \to \mcal{D}/\mcal{U}$ the canonical quotients.
Then there is a triangle functor $F: \mcal{C}/(\mcal{C}\cap\mcal{U}) \to \mcal{D}/\mcal{U}$
such that $Q\circ E = F\circ Q'$.
By the assumption, $F$ is obviously dense.
Given $X_1, X_2 \in \mcal{C}$, any morphism in $\mcal{D}/\mcal{U}$ is represented by the following in $\mcal{D}$:
\begin{equation}\label{qfmor}
\xymatrix{
X_1 \ar[d]_{s_1}  \ar[dr]^{g}\\
F(C_1) & F(C_2)
}
\end{equation}
where $X_1 \xarr{s_1} F(C_1) \arr U_1 \arr \Sigma X_1$ is a triangle in $\mcal{D}$
with $U_1 \in \mcal{U}$.
By the assumption, there is a triangle $C_2 \xarr{s_2} X_1 \arr U_2 \arr \Sigma C_2$
with $U_2 \in \mcal{U}$.
Therefore, $F$ is a full dense functor.
Let $f: C_1 \to C_2$ be a morphism in $\mcal{C}/(\mcal{C}\cap\mcal{U})$
such that $F(f)=0$ in $\mcal{D}_\mcal{U}$.
Then $F(f)$ is represented by the diagram \ref{qfmor}
where $g=0$. 
In the above, $gs_2=0$, and then $f=0$ in $\mcal{C}/(\mcal{C}\cap\mcal{U})$.
Hence $F$ is an equivalence.
\end{proof}

We say that $\mcal{A}$ is an Ab3 category provided that
it has any coproduct of objects.
Moreover, $\mcal{A}$ is an Ab4 category provided that
it has any coproduct of objects, and that
the coproduct of monics is monic.

\begin{prop}\label{qtricat02}
Let $\mcal{A}$ be an AB4 category with enough projectives, and $\mcal{P}$ the category of projective objects, $\mcal{P}$ the full subcategory of $\mcal{A}$ consisting of projective objects.
Then the following hold.
\begin{enumerate}
\item We have a triangle equivalence
\[
\cat{K}(\Morph^{\mrm{sm}}_{N-1}(\mcal{P}))/
\cat{K}^{\phi}(\Morph^{\mrm{sm}}_{N-1}(\mcal{P}))
\simeq
\cat{D}(\Morph_{N-1}(\mcal{A})).
\]
\item  We have a triangle equivalence
\[
\cat{K}_{N}(\mcal{P})/\cat{K}^{\phi}_{N}(\mcal{P})
\simeq
\cat{D}_{N}(\mcal{A}).
\]
\end{enumerate}
Here $\cat{K}^{\phi}(\Morph^{\mrm{sm}}_{N-1}(\mcal{P}))$
(resp., $\cat{K}^{\phi}_{N}(\mcal{P})$) is the full subcategory of
Here $\cat{K}(\Morph^{\mrm{sm}}_{N-1}(\mcal{P}))$
(resp., $\cat{K}_{N}(\mcal{P})$) consisting of
complexes (resp., $N$-complexes) of which all hmologies are null.
\end{prop}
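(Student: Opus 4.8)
The plan is to derive both equivalences from Lemma~\ref{qtricat01}. For part~(1) I will apply it with $\mcal{D}=\cat{K}(\Morph_{N-1}(\mcal{A}))$, $\mcal{U}=\cat{K}_{\mrm{ac}}(\Morph_{N-1}(\mcal{A}))$ the subcategory of acyclic complexes, and $\mcal{C}=\cat{K}(\Morph^{\mrm{sm}}_{N-1}(\mcal{P}))$; for part~(2) with $\mcal{D}=\cat{K}_N(\mcal{A})$, $\mcal{U}=\cat{K}_{N,\mrm{ac}}(\mcal{A})$ and $\mcal{C}=\cat{K}_N(\mcal{P})$. In either case $\mcal{D}/\mcal{U}$ is, by definition, the derived category appearing on the right-hand side, while $\mcal{C}\cap\mcal{U}$ is tautologically the full subcategory of those objects of $\mcal{C}$ all of whose homologies vanish, i.e.\ $\cat{K}^{\phi}(\Morph^{\mrm{sm}}_{N-1}(\mcal{P}))$, resp.\ $\cat{K}^{\phi}_N(\mcal{P})$. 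So the statement reduces to two points: (a) that $\mcal{C}$ and $\mcal{U}$ are triangulated subcategories of $\mcal{D}$, and that, in case~(1), $\mcal{C}$ is exactly the homotopy category of complexes of projective objects of $\Morph_{N-1}(\mcal{A})$; and (b) the hypothesis of Lemma~\ref{qtricat01}, i.e.\ that every object of $\mcal{D}$ admits a quasi-isomorphism from an object of $\mcal{C}$.

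First I would dispose of the bookkeeping~(a). The category $\Morph_{N-1}(\mcal{A})$ is the category of representations of the quiver $1\to 2\to\cdots\to N-1$ in $\mcal{A}$; it is abelian, it is AB4 because kernels, cokernels and coproducts in it are computed vertexwise and $\mcal{A}$ is AB4, and it has enough projectives, its projective objects being the coproducts $\coprod_{r=1}^{N-1}\mu^{N-1}_{r}C_{r}$ with $C_{r}\in\mcal{P}$ --- here one uses that $\mcal{A}$, having coproducts, is idempotent complete, so that summands of such objects are again of this shape. By Lemma~\ref{smcatcp}(1) this subcategory of projectives coincides with $\Morph^{\mrm{sm}}_{N-1}(\mcal{P})$, so $\mcal{C}=\cat{K}(\Morph^{\mrm{sm}}_{N-1}(\mcal{P}))$ is the homotopy category of complexes of projectives; it is triangulated since mapping cones and suspensions of complexes of projectives have projective terms. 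For $N$-complexes the explicit mapping cone and suspension formulas of Definition~\ref{cone01} show directly that $\cat{K}_N(\mcal{P})$ and $\cat{K}^{\phi}_N(\mcal{P})$ are triangulated subcategories of $\cat{K}_N(\mcal{A})$; and in both settings the acyclic objects are closed under suspension and mapping cones, via the long exact sequences of the relevant (amplitude) homologies, so $\mcal{U}$ is a triangulated subcategory too.

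The heart of the argument is~(b): producing projective resolutions of arbitrary, possibly unbounded, objects. For a \emph{bounded-below} object --- an ordinary complex over $\Morph_{N-1}(\mcal{A})$, resp.\ an $N$-complex over $\mcal{A}$, vanishing in all sufficiently low degrees --- the standard inductive procedure yields a quasi-isomorphism from a bounded-below complex, resp.\ $N$-complex, of projectives; in the $N$-complex case this rests on the fact, recorded in \cite{IKM2}, that bounded-above $N$-complexes of projectives are, with respect to the homotopy category, as well behaved as in the classical situation. For a general object $X$ one then glues these: writing $X$ as the increasing union $\bigcup_n X_{\ge -n}$ of its brutal truncations, one chooses the resolutions $P^{(n)}\to X_{\ge -n}$ compatibly and \emph{nested}, so that each $P^{(n)}$ is a degreewise split subobject of $P^{(n+1)}$ --- at the $n$-th step only the single new term of $X_{\ge -n-1}$ needs to be resolved, and the passage from $P^{(n)}$ to $P^{(n+1)}$ is effected by a mapping cone, hence is degreewise split. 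Setting $P:=\opn{colim}_n P^{(n)}=\bigcup_n P^{(n)}$, a complex, resp.\ $N$-complex, of projectives, the short exact sequence $0\to\coprod_n P^{(n)}\to\coprod_n P^{(n)}\to P\to 0$ together with the exactness of coproducts (AB4) shows that every homology of $P$ is the colimit of the corresponding homologies of the $P^{(n)}$, hence coincides with that of $X$; thus $P\to X$ is a quasi-isomorphism with cone in $\mcal{U}$. This is the $N$-complex, resp.\ arrow-category, analogue of the construction in \cite{IKM}. With (a) and (b) established, Lemma~\ref{qtricat01} gives $\cat{K}(\Morph^{\mrm{sm}}_{N-1}(\mcal{P}))/\cat{K}^{\phi}(\Morph^{\mrm{sm}}_{N-1}(\mcal{P}))\simeq\cat{D}(\Morph_{N-1}(\mcal{A}))$ and $\cat{K}_N(\mcal{P})/\cat{K}^{\phi}_N(\mcal{P})\simeq\cat{D}_N(\mcal{A})$, as asserted.

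The step I expect to be the main obstacle is~(b) for $N$-complexes: one must check that the amplitude-homology theory $H^{i}_{(r)}$ of $N$-complexes commutes with coproducts and with the above sequential colimit of split monomorphisms exactly as ordinary homology does, and that the bounded resolutions of \cite{IKM2} can be assembled into a nested system. It is exactly here that the AB4 hypothesis is used --- in place of the stronger assumption under which the corresponding results of \cite{IKM2} were obtained --- and, while routine, this has to be carried out carefully from the homology long exact sequences for $N$-complexes. Everything else (identifying the projectives of $\Morph_{N-1}(\mcal{A})$, the triangulatedness of $\mcal{C}$ and $\mcal{U}$, the bounded-below resolutions) is standard or already available.
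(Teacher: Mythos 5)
Your proposal is correct and follows essentially the same route as the paper: both arguments reduce the statement to Lemma \ref{qtricat01} after identifying $\Morph^{\mrm{sm}}_{N-1}(\mcal{P})$ with the projective objects of the AB4 category $\Morph_{N-1}(\mcal{A})$ and establishing that every (ordinary or $N$-) complex admits a quasi-isomorphism from a complex of projectives. The only difference is that the paper obtains the resolution step by citing \cite{BN} and \cite{IKM2} Theorem 2.23, whereas you unpack those citations into the explicit B\"ockstedt--Neeman telescope construction; this is a legitimate substitute but not a different method.
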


\begin{proof}
(1)
It is easy to see that 
$\Morph^{\mrm{sm}}_{N-1}(\mcal{P})$ is the full subcategory of $\Morph_{N-1}(\mcal{A})$
consisting of projective objects, and that $\Morph_{N-1}(\mcal{A})$
is an Ab4 category with enough projectives.
According to \cite{BN}, for any complex $X \in \cat{K}(\Morph_{N-1}(\mcal{A}))$, there
is a quasi-isomorphism $P \to X$ with $P \in \cat{K}(\Morph^{\mrm{sm}}_{N-1}(\mcal{P}))$.
By Lemma \ref{qtricat01}, we have 
$\cat{K}(\Morph^{\mrm{sm}}_{N-1}(\mcal{P}))/
\cat{K}^{\phi}(\Morph^{\mrm{sm}}_{N-1}(\mcal{P}))
\simeq
\cat{D}(\Morph_{N-1}(\mcal{A}))$.

\par\noindent
(2)
According to \cite{IKM2} Theorem 2.23, for any complex $X \in \cat{K}_{N}(\mcal{A})$, there
is a quasi-isomorphism $P \to X$ with $P \in \cat{K}_{N}(\mcal{P})$.
By Lemma \ref{qtricat01}, we have 
$\cat{K}_{N}(\mcal{P})/\cat{K}^{\phi}_{N}(\mcal{P})
\simeq
\cat{D}_{N}(\mcal{A})$.
\end{proof}
 
\begin{cor}\label{DNAb}
Let $\mcal{A}$ be an abelian category with enough projectives, and $\mcal{P}$ the category of projective objects.
Then we have triangle equivalences
\[\begin{aligned}
\cat{K}^{-}(\Morph^{\mrm{sm}}_{N-1}(\mcal{P})) & \simeq \cat{K}^{-}_{N}(\mcal{P}), 
\cat{K}^{\mrm{b}}(\Morph^{\mrm{sm}}_{N-1}(\mcal{P})) & \simeq \cat{K}^{\mrm{b}}_{N}(\mcal{P}),  \\
\cat{D}^{-}(\Morph_{N-1}(\mcal{A})) & \simeq \cat{D}^{-}_{N}(\mcal{A}) .
\end{aligned}\]
Moreover if $\mcal{A}$ is an Ab4 category, then
Then we have triangle equivalences:
\[
\cat{D}(\Morph_{N-1}(\mcal{A})) \simeq \cat{D}_{N}(\mcal{A}).
\]
\end{cor}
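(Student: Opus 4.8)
The plan is to reduce everything to Theorem~\ref{KNhtp} and Proposition~\ref{qtricat02}. Applying Theorem~\ref{KNhtp} to the additive category $\mcal{B}=\mcal{P}$ gives at once the triangle equivalences $\cat{K}^{-}(\Morph^{\mrm{sm}}_{N-1}(\mcal{P}))\simeq\cat{K}^{-}_{N}(\mcal{P})$ and $\cat{K}^{\mrm{b}}(\Morph^{\mrm{sm}}_{N-1}(\mcal{P}))\simeq\cat{K}^{\mrm{b}}_{N}(\mcal{P})$, realized by the functor $\underline{F}_{N}$ of Lemma~\ref{prolong}; only the statements about derived categories then require work.

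For these, I would recall, as in the proof of Proposition~\ref{qtricat02}, that $\Morph^{\mrm{sm}}_{N-1}(\mcal{P})$ is exactly the subcategory of projectives of $\Morph_{N-1}(\mcal{A})$, which has enough projectives. Since bounded-above complexes always admit projective resolutions, Lemma~\ref{qtricat01} applied with $\mcal{U}=\cat{K}^{-,\phi}(\Morph^{\mrm{sm}}_{N-1}(\mcal{P}))$ (no Ab4 hypothesis needed here) gives $\cat{D}^{-}(\Morph_{N-1}(\mcal{A}))\simeq\cat{K}^{-}(\Morph^{\mrm{sm}}_{N-1}(\mcal{P}))/\cat{K}^{-,\phi}(\Morph^{\mrm{sm}}_{N-1}(\mcal{P}))$, and the same reasoning, using \cite{IKM2}~Theorem~2.23 for projective resolutions of bounded-above $N$-complexes, gives $\cat{D}^{-}_{N}(\mcal{A})\simeq\cat{K}^{-}_{N}(\mcal{P})/\cat{K}^{-,\phi}_{N}(\mcal{P})$. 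When $\mcal{A}$ is Ab4, Proposition~\ref{qtricat02}(1),(2) give the analogous descriptions of $\cat{D}(\Morph_{N-1}(\mcal{A}))$ and $\cat{D}_{N}(\mcal{A})$ as quotients of $\cat{K}(\Morph^{\mrm{sm}}_{N-1}(\mcal{P}))$ and $\cat{K}_{N}(\mcal{P})$ by the homologically-trivial subcategories. In each case it then suffices to check that the triangle equivalence $\underline{F}_{N}$ carries the subcategory of homologically trivial objects onto the corresponding one, for then $\underline{F}_{N}$ descends to an equivalence of Verdier quotients.

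The hard part is thus the claim that $\underline{F}_{N}$ preserves and reflects homological triviality. I would prove it from the explicit description of $F_{N}$ in Lemma~\ref{send2Ngons00}(1)--(2): writing a complex $X$ over $\Morph^{\mrm{sm}}_{N-1}(\mcal{P})$ in the upper-triangular form of Lemma~\ref{send2Ngons00}(1), with graded components complexes $X_{1},\dots,X_{N-1}$ over $\mcal{P}$, one has that $X$ is homologically trivial in $\cat{C}(\Morph_{N-1}(\mcal{A}))$ precisely when every $X_{r}$ is an acyclic complex of $\mcal{A}$ --- homology in $\Morph_{N-1}(\mcal{A})$ is computed componentwise, and the triangular shape lets one pass between the components by long exact sequences. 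On the other hand, the formula for $F_{N}(X)=(Y^{j},d_{Y}^{j})$ in Lemma~\ref{send2Ngons00}(2) exhibits $F_{N}(X)$ as the $N$-periodic unfolding of these same component complexes, with identity morphisms inserted according to the bracket pattern, so a direct check of the amplitude homologies shows that $F_{N}(X)$ is $N$-acyclic exactly when each $X_{r}$ is acyclic. Hence $\underline{F}_{N}(\cat{K}^{\phi})\subseteq\cat{K}^{\phi}_{N}$ and, reading the computation backwards, $F_{N}$ reflects homological triviality; since $\underline{F}_{N}$ is dense this forces equality, and the bounded and bounded-above versions are identical. A minor point to verify along the way is that the bounded-above analogue of Proposition~\ref{qtricat02} is valid without Ab4, which holds because bounded-above projective resolutions exist in any abelian category with enough projectives.
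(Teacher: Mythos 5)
Your proposal is correct in outline, but it takes a noticeably different route from the paper's for the derived--category statements. The paper's own proof is three lines: it quotes \cite{IKM2} Lemma 4.8 for a \emph{direct} triangle equivalence $\cat{K}^{-}(\Morph^{\mrm{sm}}_{N-1}(\mcal{P})) \simeq \cat{D}^{-}(\Morph_{N-1}(\mcal{A}))$ and \cite{IKM2} Theorem 2.18 for $\cat{K}^{-}_{N}(\mcal{P})\simeq \cat{D}^{-}_{N}(\mcal{A})$, and then composes these with the equivalence of Theorem \ref{KNhtp}. No Verdier quotient ever appears, because in the bounded-above setting an acyclic complex of projectives is null-homotopic, so $\cat{K}^{-}(\mcal{P}\text{-complexes})$ \emph{is} already the derived category; in particular the subcategories $\cat{K}^{-,\phi}$ you quotient by are zero and the ``hard part'' you isolate --- that $\underline{F}_{N}$ preserves and reflects homological triviality --- is vacuous there. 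Your quotient formulation is not wrong, just a detour for $\sharp=-$. Where your approach genuinely earns its keep is the unbounded Ab4 case: the paper's written proof does not address it at all (one is left to combine Proposition \ref{qtricat02} with Theorem \ref{KNhtp} oneself), and there the matching of the acyclic subcategories under $\underline{F}_{N}$ really is needed. Your strategy for it --- componentwise homology in $\Morph_{N-1}(\mcal{A})$, the long-exact-sequence reduction to acyclicity of each $X_r$, and the explicit form of $F_N(X)$ from Lemma \ref{send2Ngons00}(2) --- is the right one, but the assertion that ``a direct check of the amplitude homologies shows'' $N$-acyclicity is equivalent to acyclicity of each $X_r$ is a genuine computation with the $N$-complex homologies $\ker d^{\{r\}}/\operatorname{im}d^{\{N-r\}}$ that you have only gestured at; as written it is the one substantive gap, and it is exactly the step the paper's citations to \cite{IKM2} are designed to replace in the bounded-above case.
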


\begin{proof}
According to \cite{IKM2} Lemma 4.8, we have a triangle equivalence \\
$\cat{K}^{-}(\Morph^{\mrm{sm}}_{N-1}(\mcal{P})) \simeq \cat{D}^{-}(\Morph_{N-1}(\mcal{A}))$.
By \cite{IKM2} Theorem 2.18, we have a triangle equivalence
$\cat{K}^{-}_{N}(\mcal{P})\simeq \cat{D}^{-}_{N}(\mcal{A})$.
By Theorem \ref{KNhtp}, we have the statement.

\end{proof}

\section{Appendix}\label{appendix}

In this section, we give results concerning Frobenius categories.
Let $\mcal{C}$ be an exact category with a collection $\mcal{E}$ of exact sequences in the sense of Quillen \cite{Qu}.
An exact sequence $0 \to X \xarr{f} Y \xarr{g} Z \to 0$ in $\mcal{E}$ is called a conflation, and 
$f$ and $g$ are called an inflation and a deflation, respectively.
An additive functor $F: \mcal{C} \to \mcal{C}'$ is called exact if it sends conflations in $\mcal{C}$
to conflations in $\mcal{C}'$.
An exact category $\mcal{C}$ is called a Frobenius category provided that it has enough projectives
and enough injectives, and that any object of $\mcal{C}$ is projective if and only if it is injective.
In this case, the stable category $\underline{C}$ of $\mcal{C}$ by projective objects is a triangulated category
(see \cite{H1}).

\begin{rem}\label{extr00}
For a Frobenius category $\mcal{C}$, we treat the only case that
for any object $X$ of $\mcal{C}$ we can choose conflations
\[\begin{aligned}
0 \arr X \xarr{u_X} I_{\mcal{C}}(X) \xarr{v_X} \Sigma_{\mcal{C}} X \arr 0 \\
0 \arr \Sigma^{-1}_{\mcal{C}}X \xarr{u_{\Sigma^{-1}X}} P_{\mcal{C}}(X) \xarr{v_{\Sigma^{-1}X}} X \arr 0
\end{aligned}\]
where $I_{\mcal{C}}(X)$ and $P_{\mcal{C}}(X)$ are projective-injective objects in $\mcal{C}$.
For a morphism $f:X \to Y$ in $\mcal{C}$, we have a commutative diagram
\[\xymatrix{
0 \ar[r] & X \ar[d]^{f}\ar[r]^{u_X} & I_{\mcal{C}}(X) \ar[d]^{I_f}\ar[r]^{v_X} & \Sigma_{\mcal{C}} X \ar[d]^{\Sigma_f}\ar[r] & 0 \\
0 \ar[r] & Y \ar[r]^{u_X} & I_{\mcal{C}}(Y) \ar[r]^{v_Y} & \Sigma_{\mcal{C}} Y \ar[r] & 0 \\
}\]
It is easy to see that $\Sigma_f$ is uniquely determined in the stable category $\underline{\mcal{C}}$.
Therefore $\underline{C}$ has a suspension functor $\Sigma_{\underline{\mcal{C}}}: \underline{\mcal{C}}
\to \underline{\mcal{C}}$.
\end{rem}
x
\begin{prop}[\cite{Ke1} A.2 Proposition]\label{excatemb}
If $(\mcal{C}, \mcal{E})$ is a small exact category, there is
an equivalence $G : \mcal{C} \to \mcal{M}$ onto a full subcategory $M$ of an abelian category $\mcal{A}$ such
that $\mcal{M}$ is closed under extensions and that $\mcal{E}$ is formed by the collection of sequences
$0 \to X \xarr{f} Y \xarr{g} Z \to 0$ inducing exact sequences in $\mcal{A}$:
\[
0 \to G(X) \xarr{G(f)} G(Y) \xarr{G(g)} G(Z) \to 0
\]
\end{prop}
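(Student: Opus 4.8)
The plan is to prove this as the Gabriel--Quillen embedding theorem, following \cite{Ke1}: I would take for $\mcal{A}$ a category of left exact functors on $\mcal{C}$ and for $G$ the Yoneda functor. Concretely, let $\msf{Mod}\,\mcal{C}$ be the Grothendieck abelian category of additive functors $\mcal{C}^{\mrm{op}} \to \msf{Ab}$, in which limits and colimits are computed pointwise. On $\mcal{C}$ I would impose the Grothendieck topology generated by declaring each deflation $Y \epto Z$ to be a covering --- a legitimate topology, since in an exact category deflations are stable under base change and closed under composition --- and set $\mcal{A} := \msf{Lex}(\mcal{C}^{\mrm{op}})$, the full subcategory of those $F$ such that $0 \to F(Z) \to F(Y) \to F(X)$ is exact for every conflation $0 \to X \to Y \to Z \to 0$; this is exactly the category of sheaves for that topology. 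Then $\mcal{A}$ is again a Grothendieck abelian category, the inclusion $\mcal{A} \into \msf{Mod}\,\mcal{C}$ has an exact left adjoint $a$ (sheafification), kernels in $\mcal{A}$ are computed pointwise, and cokernels are $a$ applied to the pointwise cokernel.

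Next I would run the Yoneda functor $h : X \mapsto \Hom_{\mcal{C}}(-,X)$ into $\msf{Mod}\,\mcal{C}$. First, $h_X$ lies in $\mcal{A}$: applying $\Hom_{\mcal{C}}(-,X)$ to a conflation $0 \to A \to B \to C \to 0$ yields $0 \to \Hom_{\mcal{C}}(C,X) \to \Hom_{\mcal{C}}(B,X) \to \Hom_{\mcal{C}}(A,X)$, because a deflation is an epimorphism and the conflation is a cokernel sequence. So $h$ corestricts to a fully faithful functor $G : \mcal{C} \to \mcal{A}$ (Yoneda); put $\mcal{M} := \opn{Im}G$. To see $G$ is exact, for a conflation $0 \to A \to B \to C \to 0$ the sequence $0 \to h_A \to h_B \to h_C$ is exact pointwise, and the one remaining point is that $h_B \to h_C$ is an epimorphism in $\mcal{A}$: its cokernel there is $a$ applied to the pointwise cokernel of $h_A \to h_B$, and this sheafifies to $h_C$ precisely because $B \epto C$ is a cover and the conflation is a cokernel sequence.

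The last and most substantial step is extension-closedness of $\mcal{M}$, together with the fact that any short exact sequence in $\mcal{A}$ with outer terms in $\mcal{M}$ comes from a conflation in $\mcal{C}$. Given $0 \to h_A \to F \to h_C \to 0$ exact in $\mcal{A}$, I would lift $1_C$ locally along the sheaf epimorphism $F \to h_C$: there is a deflation $p : B' \epto C$ and a morphism $h_{B'} \to F$ over $h_C$. Taking a kernel $A' \into B'$ of $p$ gives, by the previous step, an exact sequence $0 \to h_{A'} \to h_{B'} \to h_C \to 0$, and a short diagram chase (using full faithfulness to produce the map $A' \to A$) identifies $F$ with the pushout $h_{B'} \amalg_{h_{A'}} h_A$. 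Since in an exact category the pushout $B := B' \amalg_{A'} A$ of the inflation $A' \into B'$ exists, $A \into B$ is an inflation, $B \epto C$ is a deflation with kernel $A$, and the exact fully faithful $h$ preserves this pushout, one obtains $F \cong h_B$ with $0 \to A \to B \to C \to 0$ a conflation. Specialising to $0 \to h_A \to h_B \to h_C \to 0$ and invoking full faithfulness then shows that this sequence is isomorphic to the image of a conflation, so $\mcal{E}$ is precisely the class of sequences in $\mcal{C}$ sent by $G$ to short exact sequences of $\mcal{A}$, as required.

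The main obstacle I anticipate is the sheaf-theoretic core of the exactness step --- verifying that deflations are carried to epimorphisms, equivalently that the pointwise cokernel of $h_A \to h_B$ sheafifies to $h_C$ --- since this is the one place where the Grothendieck topology genuinely interacts with the representable functors. Everything else is routine diagram chasing in $\mcal{A}$ and the standard pushout calculus in an exact category, and full details are in \cite{Ke1}.
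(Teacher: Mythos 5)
The paper gives no proof of this proposition at all; it is quoted as a black box from Keller \cite{Ke1}, A.2, and your argument is exactly the Gabriel--Quillen embedding proof given there (sheaves for the topology whose covers are the deflations, the Yoneda embedding, local lifting of $1_C$ plus the pushout calculus for extension-closedness and for reflecting conflations). Your sketch is correct; the only points left implicit --- the identification of sheaves with left exact functors via $Y\times_Z Y\cong Y\oplus\ker p$, and the fact that $G$ carries the pushout square in $\mcal{C}$ to a pushout in $\mcal{A}$ because that square embeds in the conflation $0\to A'\to A\oplus B'\to B\to 0$ --- are standard and are exactly the details supplied in the cited reference.
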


\begin{prop}\label{extr01}
Let $\mcal{C}, \mcal{C}'$ be Frobenius categories, $F: \mcal{C} \to \mcal{C}'$ an exact functor.
If $F$ sends projective objects in $\mcal{C}$ to projective objents in $\mcal{T}'$, then
it induces the triangle functor $\underline{F} : \underline{\mcal{C}} \to \underline{\mcal{C}}'$.
\end{prop}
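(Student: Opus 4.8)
The plan is to check directly that $F$ descends to a functor $\underline{F}\colon\underline{\mcal{C}}\to\underline{\mcal{C}}'$, that this functor commutes with suspension up to a natural isomorphism, and that it carries distinguished triangles to distinguished triangles.

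First I would verify that $\underline{F}$ is well-defined. A morphism $f\colon X\to Y$ in $\mcal{C}$ becomes zero in $\underline{\mcal{C}}$ exactly when it factors through a projective-injective object $P$ of $\mcal{C}$. Since $F$ is additive and sends projective objects to projective objects (equivalently, projective-injective objects to projective-injective objects, as both $\mcal{C}$ and $\mcal{C}'$ are Frobenius), $F(f)$ then factors through the projective-injective object $F(P)$ of $\mcal{C}'$, hence is zero in $\underline{\mcal{C}}'$. Thus $F$ induces an additive functor $\underline{F}\colon\underline{\mcal{C}}\to\underline{\mcal{C}}'$.

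Next I would produce a natural isomorphism $\phi\colon\underline{F}\,\Sigma_{\underline{\mcal{C}}}\iso\Sigma_{\underline{\mcal{C}}'}\,\underline{F}$. For each object $X$, apply the exact functor $F$ to the chosen conflation $0\to X\xarr{u_X}I_{\mcal{C}}(X)\xarr{v_X}\Sigma_{\mcal{C}}X\to0$ of Remark \ref{extr00}; this yields a conflation $0\to F(X)\to F(I_{\mcal{C}}(X))\to F(\Sigma_{\mcal{C}}X)\to0$ whose middle term is projective-injective in $\mcal{C}'$. Comparing it with the chosen conflation $0\to F(X)\xarr{u_{F(X)}}I_{\mcal{C}'}(F(X))\xarr{v_{F(X)}}\Sigma_{\mcal{C}'}F(X)\to0$, and lifting the identity of $F(X)$ along the projective-injective middle terms in both directions, gives morphisms $F(\Sigma_{\mcal{C}}X)\to\Sigma_{\mcal{C}'}F(X)$ and back that are mutually inverse in $\underline{\mcal{C}}'$; the resulting isomorphism $\phi_X$ in $\underline{\mcal{C}}'$ is independent of the chosen lifts and natural in $X$, by exactly the argument of Remark \ref{extr00} showing $\Sigma_{\underline{\mcal{C}}}$ is well-defined, now applied to the two systems of conflations linked by $F$.

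Finally I would check that $(\underline{F},\phi)$ sends triangles to triangles. Every distinguished triangle of $\underline{\mcal{C}}$ is isomorphic to one arising from a conflation $0\to X\xarr{f}Y\xarr{g}Z\to0$, namely $X\xarr{f}Y\xarr{g}Z\xarr{h}\Sigma_{\mcal{C}}X$, where $h$ is read off from the comparison diagram between this conflation and $0\to X\to I_{\mcal{C}}(X)\to\Sigma_{\mcal{C}}X\to0$ (using that $I_{\mcal{C}}(X)$ is injective). Applying $F$ gives the conflation $0\to F(X)\xarr{F(f)}F(Y)\xarr{F(g)}F(Z)\to0$, hence a distinguished triangle $F(X)\xarr{F(f)}F(Y)\xarr{F(g)}F(Z)\xarr{h'}\Sigma_{\mcal{C}'}F(X)$; applying $F$ to the comparison diagram defining $h$ and splicing with $\phi_X$ identifies $h'$ with $\phi_X\circ F(h)$ in $\underline{\mcal{C}}'$, since $F(I_{\mcal{C}}(X))$ is projective-injective and may therefore be used as the middle term in the comparison computing $h'$. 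Hence $\underline{F}$ carries $(f,g,h)$ to $(F(f),F(g),\phi_X\circ F(h))$, which is distinguished, and $\underline{F}$ is a triangle functor.

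The step I expect to be the main obstacle is establishing the naturality and choice-independence of $\phi$ together with the identification $h'=\phi_X\circ F(h)$: here one must be careful that the comparison morphisms between conflations with projective-injective middle terms are canonical in the stable category and that forming them commutes with applying $F$. Everything else is formal.
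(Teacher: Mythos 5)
Your proposal is correct and follows essentially the same route as the paper: both construct the comparison isomorphism $F(\Sigma_{\mcal{C}}X)\to\Sigma_{\mcal{C}'}F(X)$ by playing the $F$-image of the chosen conflation against the chosen conflation for $F(X)$ (mutually inverse in $\underline{\mcal{C}}'$ because both middle terms are projective-injective), verify its naturality, and then identify the image of a standard triangle with a standard triangle on $F(f)$ via that isomorphism. The paper merely writes out explicitly the homotopies and the mutually inverse maps $z,z'$ on the cone that you defer to the standard independence-of-choices argument.
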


\begin{proof}
For $X \in \mcal{C}$, since $F(I_{\mcal{C}}(X)),  I_{\mcal{C}'}(F(X))$ are projective-injective
objects in $\mcal{\underline{C}}'$, we have a commutative diagram
\[\xymatrix{
0 \ar[r] & F(X) \ar@{=}[d]\ar[r]^{F(u_X)} & F(I_{\mcal{C}}(X)) \ar[d]^{\theta_X}\ar[r]^{F(v_{X})} 
& F(\Sigma_{\mcal{C}} X) \ar[d]^{\eta_X}\ar[r] & 0 \\
0 \ar[r] & F(X) \ar@{=}[d]\ar[r]^{u_{F(X)}} & I_{\mcal{C}'}(F(X)) \ar[d]^{\theta'_X}\ar[r]^{v_{F(X)}} & 
\Sigma_{\mcal{C}'}F(X)\ar[d]^{\eta'_X}\ar[r] & 0 \\
0 \ar[r] & F(X) \ar[r]^{F(u_X)} & F(I_{\mcal{C}}(X)) \ar[r]^{F(v_{X})} 
& F(\Sigma_{\underline{\mcal{C}}} X) \ar[r] & 0 \\
}\]
There are $\gamma_X: F(\Sigma_{\mcal{C}} X) \to F(I_{\mcal{C}}(X))$
and $\gamma'_X: \Sigma_{\mcal{C}'}F(X) \to I_{\mcal{C}'}(F(X))$ such that 
$F(v_X)\gamma_X=1_{F(\Sigma_{\underline{\mcal{C}}} X)}-\eta'_X\eta_X$ and 
$v_{F(X)}\gamma'_X=1_{\Sigma_{\mcal{C}'}F(X)}-\eta_X\eta'_X$ in ${\mcal{C}}'$.
Then $\underline{\eta}_X$ is an isomorphism in $\underline{\mcal{C}}'$.
For a morphism $f:X \to Y$ in $\mcal{C}$, 
we have the following diagram by the diagrams  of the above and Definition \ref{extr00}:
\[
\xymatrix@!0{
& 0\ \ar@{->}[rr]
& & F(X) \ \ar@{->}[rr]^<{F(u_X)}\ar@{=}'[d][dd]
& & F(I(X))\ \ar@{->}[rr]^<{F(v_X)}\ar@{->}'[d][dd]^{\theta_X}
& & F(\Sigma X)\ \ar@{->}[rr]\ar@{->}'[d][dd]^{\eta_X}
& & 0
\\
0\ \ar@{->}[rr]
& & F(Y) \ \ar@{<-}[ur]^(.25){F(f)}\ar@{->}[rr]\ar@{=}[dd]
& & F(I(Y))\ \ar@{->}[rr]\ar@{<-}[ur]^(.25){F(I_f)}\ar@{->}[dd]^<<{\theta_Y}
& & F(\Sigma Y)\ \ar@{->}[rr]\ar@{<-}[ur]^(.25){F(\Sigma_f)}\ar@{->}[dd]^<<{\eta_Y}
& & 0\
\\
& 0\ \ar@{->}'[r][rr]
& & F(X)\ \ar@{->}'[r][rr]
& & I(F(X))\ \ar@{->}'[r][rr]
& & \Sigma F(X)\ \ar@{->}[rr]
& & 0\
\\
0\ \ar@{->}[rr]
& & F(Y)\ \ar@{->}[rr]_>>{u_{F(Y)}}\ar@{<-}[ur]_(.8){F(f)}
& & I(F(Y))\ \ar@{->}[rr]_>{v_{F(Y)}}\ar@{<-}[ur]_(.8){I_{F(f)}}
& & \Sigma F(Y)\ \ar@{->}[rr]\ar@{<-}[ur]_(.8){\Sigma_{F(f)}}
& & 0\ \ar@{}[ur]}
\]
where the diagrams on the top, the bottom, the front and the back surfaces are commutative.
Since
$
(\theta_YF(I_f) -I_{F(f)}\theta_X)F(u_X) =u_{F(Y)}F(f) -I_{F(f)}u_{F(X)} =0
$
there is a morphism $\delta:F(\Sigma X) \to F(I(Y))$ such that
$\theta_YF(I_f) -I_{F(f)}\theta_X=\delta F(v_X)$.
Then we have $\eta_YF(\Sigma_f)-\Sigma_{F(f)}\eta_X=v_{F(Y)}\delta$, and then
$\underline{\eta}_Y\underline{F}(\Sigma(f))=\Sigma(\underline{F}(f)) \underline{\eta}_X$ in $\mcal{C}'$.
Therefore we have a functorial isomorphism 
$\underline{\eta}: \underline{F}\circ\Sigma_{\underline{\mcal{C}}} \iso 
\Sigma_{\underline{\mcal{C}}'}\circ\underline{F}$.
For a triangle $X \xarr{\underline{f}} Y  \xarr{\underline{g}} Z  \xarr{\underline{h}} \Sigma X$
in $\underline{\mcal{C}}$, 
we may have a morphism between conflations:
\[\xymatrix{
0 \ar[r] & X \ar[d]^{f}\ar[r]^{u_X} & I(X) \ar[d]^{\psi_f}\ar[r]^{v_X} & \Sigma X \ar@{=}[d]\ar[r] & 0 \\
0 \ar[r] & Y \ar[r]^{g} & Z \ar[r]^{h} & \Sigma X \ar[r] & 0 
}\]
There are morphisms $\psi_{F(f)}:I_{\mcal{C}'}(F(X)) \arr Z'$ and 
$\xymatrix{z:F(Z) \ar@{-->}[r] & Z'}$ such that we have a commutative diagram
\[
\xymatrix@!0{
& 0\ \ar@{->}[rr]
& & F(X) \ \ar@{->}[rr]^<{F(u_X)}\ar@{=}'[d][dd]
& & F(I(X))\ \ar@{->}[rr]^<{F(v_X)}\ar@{->}'[d][dd]^{\theta_X}
& & F(\Sigma X)\ \ar@{->}[rr]\ar@{->}'[d][dd]^{\eta_X}
& & 0
\\
0\ \ar@{->}[rr]
& & F(Y) \ \ar@{<-}[ur]^(.25){F(f)}\ar@{->}[rr]\ar@{=}[dd]
& & F(Z)\ \ar@{->}[rr]\ar@{<-}[ur]^(.25){F(\psi_{f})}\ar@{-->}[dd]^<<{z}
& & F(\Sigma X)\ \ar@{->}[rr]\ar@{=}[ur]\ar@{->}[dd]^<<{\eta_X}
& & 0\
\\
& 0\ \ar@{->}'[r][rr]
& & F(X)\ \ar@{->}'[r][rr]
& & I(F(X))\ \ar@{->}'[r][rr]
& & \Sigma F(X)\ \ar@{->}[rr]
& & 0\
\\
0\ \ar@{->}[rr]
& & F(Y)\ \ar@{->}[rr]\ar@{<-}[ur]_(.8){F(f)}
& & Z' \ \ar@{->}[rr]\ar@{<-}[ur]_(.8){\psi_{F(f)}}
& & \Sigma F(X)\ \ar@{->}[rr]\ar@{=}[ur]
& & 0\ \ar@{}[ur]}
\]
Similarly, there is a morphism $z':Z' \to F(Z)$ such that
we have  the above commutative diagram of which all vertical arrows are reversed.
Put $\zeta=z'z+F(\psi_{f})\gamma_XF(h)$, we have a commutative diagram
\[\xymatrix{
0 \ar[r] & F(Y) \ar@{=}[d] \ar[r]^{F(g)} & F(Z) \ar[d]^{\zeta} \ar[r]^{F(h)} & F(\Sigma X) \ar@{=}[d]\ar[r] & 0 \\
0 \ar[r] & F(Y) \ar[r]^{F(g)} & F(Z) \ar[r]^{F(h)} & F(\Sigma X) \ar[r] & 0
}\]
Then $\underline{z}'\underline{z}$ is an isomorphism in $\underline{\mcal{C}}'$.
Similarly $\underline{z}\underline{z}'$ is also an isomorphism in $\underline{\mcal{C}}'$, and 
then $\underline{z}$ is an isomorphism in $\underline{\mcal{C}}'$.
Therefore we have a commutative diagram in $\underline{\mcal{C}}'$:
\[\xymatrix{
\underline{F}(X) \ar@{=}[d]\ar[r]^{\underline{F}(\underline{f})} & \underline{F}(Y) \ar@{=}[d]\ar[r]^{\underline{F}(\underline{g})} 
& F(Z) \ar[d]^{\underline{z}}\ar[r]^{\underline{F}(\underline{h})} & \underline{F}(\Sigma X) \ar[r]^{\underline{\eta}_X}  & \Sigma \underline{F}(X)\ar@{=}[d]\\
\underline{F}(X) \ar[r]^{\underline{F}(\underline{f})} & \underline{F}(Y) \ar[r]^{\underline{g}'} & Z' \ar[rr]^{\underline{h}'} & & \Sigma \underline{F}(X)
}\]
where all vertical arrows are isomorphisms.
Hence $F$ induces a triangle functor $\underline{F}:\underline{\mcal{C}} \to \underline{\mcal{C}}'$.
\end{proof}

\begin{prop}\label{triadjoint02}
Let $\mcal{C}, \mcal{C}'$ be Frobenius categories, $F: \mcal{C} \to \mcal{C}'$, $G: \mcal{C}' \to \mcal{C}$ 
exact functors such that $F$ is a right adjoint of $G$.
Then $F$ and $G$ induce triangle functors
$\underline{F}: \underline{\mcal{C}} \to \underline{\mcal{C}}'$, 
$\underline{G}: \underline{\mcal{C}}' \to \underline{\mcal{C}}$ such that
$\underline{F}$ is a right adjoint of $\underline{G}$.
\end{prop}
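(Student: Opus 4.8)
The plan is to deduce the statement from Proposition \ref{extr01} together with a direct inspection of how the adjunction isomorphism behaves modulo morphisms that factor through projective-injective objects.

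First I would check that both $F$ and $G$ preserve the class of projective-injective objects, so that Proposition \ref{extr01} applies to each of them. Since $G$ is a left adjoint of the exact functor $F$, for a projective object $P'$ of $\mcal{C}'$ the functor $\Hom_{\mcal{C}}(G(P'),-)\cong\Hom_{\mcal{C}'}(P',F(-))$ is exact, being the composite of $F$ with $\Hom_{\mcal{C}'}(P',-)$; hence $G(P')$ is projective. Dually, since $F$ is a right adjoint of the exact functor $G$, for an injective object $I$ of $\mcal{C}$ the functor $\Hom_{\mcal{C}}(-,F(I))\cong\Hom_{\mcal{C}'}(G(-),I)$ is exact, so $F(I)$ is injective. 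As $\mcal{C}$ and $\mcal{C}'$ are Frobenius categories, projective and injective objects coincide, so both $F$ and $G$ send projective-injective objects to projective-injective objects; by Proposition \ref{extr01} they induce triangle functors $\underline{F}:\underline{\mcal{C}}\to\underline{\mcal{C}}'$ and $\underline{G}:\underline{\mcal{C}}'\to\underline{\mcal{C}}$.

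The core step is to show that the adjunction bijection $\Phi:\Hom_{\mcal{C}'}(G(X'),X)\iso\Hom_{\mcal{C}}(X',F(X))$ descends to the stable categories. Write $\eta$ and $\epsilon$ for the unit and counit of $G\dashv F$, so that $\Phi(\phi)=F(\phi)\circ\eta_{X'}$ and $\Phi^{-1}(\psi)=\epsilon_X\circ G(\psi)$. If $\phi:G(X')\to X$ factors as $G(X')\xarr{a}P\xarr{b}X$ with $P$ projective-injective in $\mcal{C}'$, then $\Phi(\phi)=F(b)\circ\bigl(F(a)\circ\eta_{X'}\bigr)$ factors through $F(P)$, which is projective-injective in $\mcal{C}$. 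Conversely, if $\psi:X'\to F(X)$ factors as $X'\xarr{a'}Q\xarr{b'}F(X)$ with $Q$ projective-injective in $\mcal{C}$, then $\Phi^{-1}(\psi)=\bigl(\epsilon_X\circ G(b')\bigr)\circ G(a')$ factors through $G(Q)$, which is projective-injective in $\mcal{C}'$. Hence $\Phi$ restricts to a bijection between the subgroups of morphisms factoring through projective-injective objects, and thus induces a bijection $\Hom_{\underline{\mcal{C}}'}(\underline{G}(X'),X)\cong\Hom_{\underline{\mcal{C}}}(X',\underline{F}(X))$.

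Finally I would note that these bijections are natural in $X$ and $X'$ since the original adjunction bijections are, that the induced unit and counit on the stable categories are just $\eta$ and $\epsilon$ read modulo projective-injective objects and hence still satisfy the triangle identities, and that an additive adjunction between triangle functors is automatically compatible with the triangulated structures; so $\underline{F}$ is a right adjoint of $\underline{G}$ as triangle functors. The only point that requires any care is the middle paragraph: keeping track of which of $F$, $G$ a priori preserves projectives and which preserves injectives --- harmless here because the two classes coincide in a Frobenius category --- and checking in both directions that the ideal of morphisms factoring through a projective-injective object is respected by $\Phi$; everything else is formal.
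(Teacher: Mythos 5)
Your proof is correct and follows the same overall strategy as the paper: first verify that $F$ and $G$ preserve projective-injective objects using the adjunction together with exactness, then invoke Proposition \ref{extr01} to obtain the induced triangle functors $\underline{F}$ and $\underline{G}$, and finally show that the adjunction isomorphism descends to the stable categories. The only point where you genuinely diverge is that last step: the paper presents the stable Hom groups as cokernels, applying $\Hom_{\mcal{C}'}(-,F(Y))\simeq\Hom_{\mcal{C}}(G(-),Y)$ to the inflation $X\to I(X)$ and using that a morphism out of $X$ (resp.\ out of $G(X)$) factoring through a projective-injective object factors through $I(X)$ (resp.\ through $G(I(X))$), so that the stable adjunction is an isomorphism of cokernels; you instead check directly, via the formulas $\Phi(\phi)=F(\phi)\circ\eta_{X'}$ and $\Phi^{-1}(\psi)=\epsilon_X\circ G(\psi)$, that the adjunction bijection and its inverse preserve the ideal of morphisms factoring through projective-injective objects. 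The two arguments are equivalent in substance and length; yours avoids choosing a particular inflation $X\to I(X)$ and makes the two-sided nature of the verification explicit, while the paper's gives the right-exact presentation of the stable Hom for free. Your closing remarks on naturality and the triangle identities address a point the paper passes over silently, so there is no gap relative to the paper's own standard. One purely notational slip: with $G:\mcal{C}'\to\mcal{C}$ the morphism $\phi:G(X')\to X$ lives in $\mcal{C}$, so in your middle paragraph $P$ and $G(Q)$ are projective-injective in $\mcal{C}$ while $F(P)$ and $Q$ are projective-injective in $\mcal{C}'$ (your labels are consistently swapped); this does not affect the argument.
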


\begin{proof}
Let $0 \to A \to B \to C \to 0$ be a conflation in $\mcal{C}$, then
$0 \to F(A) \to F(B) \to F(C) \to 0$ is a conflation in $\mcal{C}'$.
For a projective-injective object $P$ of $\mcal{C}'$, 
we have an isomorphism between exact sequences:
\[\xymatrix{
0 \ar[r] & \Hom_{\mcal{C}}(P, F(A)) \ar[d]^{\wr}\ar[r] & \Hom_{\mcal{C}}(P, F(B)) \ar[d]^{\wr}\ar[r]
& \Hom_{\mcal{C}}(P, F(C)) \ar[d]^{\wr}\ar[r] & 0 \\
0 \ar[r] & \Hom_{\mcal{C}}(G(P), A) \ar[r] & \Hom_{\mcal{C}}(G(P), B) \ar[r]
& \Hom_{\mcal{C}}(G(P), C)
}\]
Then $G(P)$ is a projective-injective object in $\mcal{C}$.
Similarly, if $Q$ is a projective-injective object of $\mcal{C}$, then
$F(Q)$ is also a projective-injective object of $\mcal{C}'$.
By Proposition \ref{extr01}, $F$ and $G$ induce the triangle functors
$\underline{F}: \underline{\mcal{C}} \to \underline{\mcal{C}}'$, 
$\underline{G}: \underline{\mcal{C}}' \to \underline{\mcal{C}}$.
Given $X \in \mcal{C}'$, consider a conflation
\[
0 \arr X \arr I(X) \arr \Sigma X \arr 0
\]
For any $Y \in \mcal{C}$, we have a isomorphism between exact sequences
\[\xymatrix{
\Hom_{\mcal{C}}(I(X),F(Y)) \ar[d]^{\wr} \ar[r] 
& \Hom_{\mcal{C}}(X,F(Y)) \ar[d]^{\wr} \ar[r] 
& \Hom_{\underline{\mcal{C}}}(X,F(Y)) \ar[d] \ar[r] 
& 0 \\
\Hom_{\mcal{C}}(G(I(X)),Y) \ar[r] 
& \Hom_{\mcal{C}}(G(X),Y) \ar[r] 
& \Hom_{\underline{\mcal{C}}}(G(X),Y) \ar[r] 
& 0 \\
}\]
Then we have an isomorphism 
$\Hom_{\underline{\mcal{C}}}(X, F(-)) \simeq \Hom_{\underline{\mcal{C}}}(G(X),-)$.
Similarly we have an isomorphism 
$\Hom_{\underline{\mcal{C}}}(-, F(Y)) \simeq \Hom_{\underline{\mcal{C}}}(G(-),Y)$.
\end{proof}


\end{document}